\documentclass[10pt]{amsart}

\usepackage{amssymb}
\usepackage{amsthm}
\usepackage{amsmath}

\usepackage[usenames]{color}
\definecolor{ANDREW}{RGB}{255,127,0}

\usepackage[foot]{amsaddr}

\usepackage{hyperref}

\setcounter{tocdepth}{1}
\numberwithin{equation}{section}

\theoremstyle{plain}
\newtheorem{proposition}{Proposition}[section]
\newtheorem{theorem}[proposition]{Theorem}
\newtheorem{lemma}[proposition]{Lemma}
\newtheorem{corollary}[proposition]{Corollary}
\theoremstyle{definition}
\newtheorem{example}[proposition]{Example}
\newtheorem{definition}[proposition]{Definition}

\newtheorem{question}[proposition]{Question}
\theoremstyle{remark}
\newtheorem{remark}[proposition]{Remark}
\newtheorem{conjecture}[proposition]{Conjecture}

\DeclareMathOperator{\Ad}{Ad}
\DeclareMathOperator{\Aff}{Aff}
\DeclareMathOperator{\Aut}{Aut}

\DeclareMathOperator{\dimension}{dim}

\DeclareMathOperator{\tr}{tr}

\DeclareMathOperator{\SL}{SL}
\DeclareMathOperator{\GL}{GL}
\DeclareMathOperator{\Gr}{Gr}
\DeclareMathOperator{\SO}{SO}
\DeclareMathOperator{\PSO}{PSO}

\DeclareMathOperator{\PSL}{PSL}
\DeclareMathOperator{\PGL}{PGL}
\DeclareMathOperator{\Hom}{Hom}

\DeclareMathOperator{\End}{End}

\DeclareMathOperator{\Spanset}{Span} 
\DeclareMathOperator{\Proj}{Proj} 
\DeclareMathOperator{\Id}{Id} 
\DeclareMathOperator{\rank}{rank} 
\DeclareMathOperator{\length}{length} 
\DeclareMathOperator{\Isom}{Isom}
\DeclareMathOperator{\cd}{cd}
\DeclareMathOperator{\vcd}{vcd} 
\DeclareMathOperator{\Ext}{Ext_{\Rc}} 
\DeclareMathOperator{\Lsl}{\mathfrak{sl}} 
\DeclareMathOperator{\Lso}{\mathfrak{so}} 
\DeclareMathOperator{\Lin}{Lin} 
\DeclareMathOperator{\Inn}{Inn}

\DeclareMathOperator{\Bc}{\mathcal{B}}
\DeclareMathOperator{\Cc}{\mathcal{C}}

\DeclareMathOperator{\Fc}{\mathcal{F}}

\DeclareMathOperator{\Lc}{\mathcal{L}}
\DeclareMathOperator{\Nc}{\mathcal{N}}
\DeclareMathOperator{\Oc}{\mathcal{O}}

\DeclareMathOperator{\Rc}{\mathcal{R}}
\DeclareMathOperator{\Tc}{\mathcal{T}}

\DeclareMathOperator{\Cb}{\mathbb{C}}

\DeclareMathOperator{\Hb}{\mathbb{H}}

\DeclareMathOperator{\Kb}{\mathbb{K}}
\DeclareMathOperator{\Mb}{\mathbb{M}}
\DeclareMathOperator{\Nb}{\mathbb{N}}
\DeclareMathOperator{\Pb}{\mathbb{P}}
\DeclareMathOperator{\Rb}{\mathbb{R}}
\DeclareMathOperator{\Xb}{\mathbb{X}}
\DeclareMathOperator{\Zb}{\mathbb{Z}}

\let\inf\relax \DeclareMathOperator*\inf{\vphantom{p}inf}

\newcommand{\abs}[1]{\left|#1\right|}

\newcommand{\norm}[1]{\left\|#1\right\|}

\newcommand{\wh}[1]{\widehat{#1}}


\begin{document}

\title{Rigidity of convex divisible domains in flag manifolds}
\author{Wouter van Limbeek}
\email{wouterv@umich.edu}
\address{Department of Mathematics, University of Michigan, Ann Arbor, MI 48109.}
\author{Andrew Zimmer}
\email{aazimmer@uchicago.edu}
\address{Department of Mathematics, University of Chicago, Chicago, IL 60637.}\date{\today}
\keywords{}
\subjclass[2010]{}

\maketitle

\begin{abstract} 
In contrast to the many examples of convex divisible domains in real projective space, we prove that up to projective isomorphism there is only one convex divisible domain in the Grassmannian of $p$-planes in $\Rb^{2p}$ when $p>1$. Moreover, this convex divisible domain is a model of the symmetric space associated to the simple Lie group $\SO(p,p)$. 
\end{abstract}

\tableofcontents 

\section{Introduction}
\label{sec:intro}

The Lie group $\PGL_{d+1}(\Rb)$ acts naturally on real projective space $\Pb(\Rb^{d+1})$ and for an open set $\Omega \subset \Pb(\Rb^{d+1})$ we define the \emph{automorphism group} of $\Omega$ as
\begin{align*}
\Aut(\Omega) = \{ \varphi \in \PGL_{d+1}(\Rb) : \varphi \Omega = \Omega \}.
\end{align*}
An open set $\Omega$ is then called a \emph{convex divisible domain} if it is a bounded convex open set in some affine chart of $\Pb(\Rb^{d+1})$ and there exists a discrete group $\Gamma \leq \Aut(\Omega)$ which acts properly, freely, and cocompactly on $\Omega$. 
 
The fundamental example of a convex divisible domain comes from the Klein-Beltrami model of real hyperbolic $d$-space $\Hb^d_{\Rb}$. In particular, if $\Bc \subset \Pb(\Rb^{d+1})$ is the unit ball in some affine chart, then $\Bc$ is a symmetric domain in the following sense: the group $\Aut(\Bc)$ acts transitively on $\Bc$, $\Aut(\Bc)$ is a simple group, and the stabilizer of a point $x \in \Bc$ is a maximal compact subgroup of $\Aut(\Bc)$. Moreover, there is a natural metric $H_{\Bc}$ on $\Bc$ called the Hilbert metric such that $(\Bc, H_{\Bc})$ is isometric to  $\Hb^d_{\Rb}$ and $\Aut(\Bc)$ coincides with $\Isom_0(\Hb^d_{\Rb})$. Finally since $\Aut(\Bc)$ is a simple Lie group, there exist cocompact torsion free lattices $\Gamma \leq \Aut(\Bc)$. Then any such $\Gamma$ acts properly, freely, and cocompactly on $\Bc$, so that $\Bc$ is a convex divisible domain.  

There are many more examples of convex divisible domains, for instance:
\begin{enumerate}
\item The symmetric spaces associated to $\SL_d(\Rb)$, $\SL_d(\Cb)$, $\SL_d(\Hb)$, and $E_{6(-26)}$ can all be realized as convex divisible domains.  For instance, consider the convex set
\begin{align*}
\mathcal{P} = \{ [X] \in \Pb(S_{d,d}) : X \text{ is positive definite}\}
\end{align*}
where $S_{d,d}$ is the vector space of real symmetric $d$-by-$d$ matrices. Then the group $\SL_{d}(\Rb)$ acts transitively on $\mathcal{P}$ by $g \cdot [X] = [g^t X g]$ and the stabilizer of a point is $\SO(d)$. Hence, if $\Gamma \leq \PSL_d(\Rb)$ is a cocompact torsion free lattice then 
$\Gamma$ acts properly, freely, and cocompactly on $\mathcal{P}$.
\item Let $\Bc\subseteq \Pb(\Rb^{d+1})$ be the Klein-Beltrami model of $\Hb^d_{\Rb}$. Results of Johnson-Millson~\cite{JM1987} and Koszul~\cite{K1968} imply that the domain $\Bc$ can be deformed to a divisible convex domain $\Omega$ where $\Aut(\Omega)$ is discrete (see~\cite[Section 1.3]{B2000} for $d>2$ and~\cite{G1990} for $d=2$).
\item Many examples in low dimensions (see for instance \cite{V1971, VK1967}). 
\item For every $d \geq 4$, Kapovich~\cite{K2007} has constructed divisible convex domains $\Omega \subset \Pb(\Rb^{d+1})$ such that $\Aut(\Omega)$ is discrete, Gromov hyperbolic, and not quasi-isometric to any symmetric space,.
\item Benoist~\cite{B2006} and Ballas, Danciger, and Lee~\cite{BDL2015} have constructed divisible convex domains $\Omega \subset \Pb(\Rb^{4})$ such that $\Aut(\Omega)$ is discrete, not Gromov hyperbolic, and not quasi-isometric to any symmetric space. 
\end{enumerate}
More background can be found in the survey papers by Benoist~\cite{B2008}, Goldman~\cite{Go2015}, Marquis~\cite{M2013}, and Quint~\cite{Q2010}.

In this paper we consider convex divisible domains in Grassmannians. The Lie group $\PGL_{p+q}(\Rb)$ acts naturally on the Grassmannian $\Gr_p(\Rb^{p+q})$ of $p$-planes in $\Rb^{p+q}$ and for an open set $\Omega \subset \Gr_p(\Rb^{p+q})$ we define the \emph{automorphism group} of $\Omega$ as
\begin{align*}
\Aut(\Omega) = \{ \varphi \in \PGL_{p+q}(\Rb) : \varphi \Omega = \Omega \}.
\end{align*}
As before, we say an open set $\Omega \subset \Gr_p(\Rb^{p+q})$ is a \emph{convex divisible domain} if it is a bounded convex open set in some affine chart of $\Gr_p(\Rb^{p+q})$ and there exists a discrete group $\Gamma \leq \Aut(\Omega)$ that acts properly, freely, and cocompactly on $\Omega$. 

As in the real projective setting, geometric models of symmetric spaces provide examples of convex divisible domains. The set of $q$-by-$p$ real matrices $M_{q,p}(\Rb)$ can be naturally identified with an affine chart of $\Gr_p(\Rb^{p+q})$. Now let $\Bc_{q,p}$ be the unit ball (with respect to the operator norm) in $M_{q,p}(\Rb)$. As in the real projective setting $\Bc_{q,p}$ is a symmetric domain: the group $\Aut(\Bc_{q,p})$ acts transitively on $\Bc_{q,p}$, $\Aut(\Bc_{q,p})$ is a simple group (in fact isomorphic to $\PSO(p,q)$), and the stabilizer of a point $x \in \Bc_{q,p}$ is a maximal compact subgroup in $\Aut(\Bc_{q,p})$. 

Given the plethora of convex divisible domains in real projective space, it is natural to ask:

\begin{question}
When $p,q > 1$, are there non-symmetric convex divisible domains in $\Gr_p(\Rb^{p+q})$? 
\end{question}

In contrast to the many examples of convex divisible domains in real projective space, we prove that every convex divisible domain in $\Gr_p(\Rb^{2p})$ is symmetric and even more precisely that up to projective isomorphism $\Bc_{p,p}$ is the only convex divisible domain in $\Gr_p(\Rb^{2p})$. The following is our main result.

\begin{theorem}\label{thm:main}
Suppose $p > 1$, $\Omega \subset \Gr_p(\Rb^{2p})$ is a bounded convex open subset of some affine chart, and there exists a discrete group $\Gamma \leq \Aut(\Omega)$ so that $\Gamma$ acts cocompactly on $\Omega$. Then $\Omega$ is projectively isomorphic to $\Bc_{p,p}$. 
\end{theorem}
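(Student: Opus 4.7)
The plan is to exploit the higher-rank nature of the ambient Grassmannian to preclude exotic examples, ultimately showing that $\Aut(\Omega)$ acts transitively on $\Omega$ so that $\Omega$ can be identified with $\Bc_{p,p}$ by classification. I would begin by fixing an affine chart $M_{p,p}(\Rb) \subset \Gr_p(\Rb^{2p})$ containing $\Omega$ as a bounded convex body; the complement of this chart is a Schubert divisor $\Sigma$ consisting of $p$-planes meeting a fixed transverse $p$-plane nontrivially. The key structural feature of $\Gr_p(\Rb^{2p})$ is its rich family of projective lines---pencils $\{V : U \subset V \subset W\}$ with $\dim U = p-1$, $\dim W = p+1$---which appear in the affine chart as affine lines in $M_{p,p}(\Rb)$ whose direction matrix has rank one, and which meet $\Sigma$ generically in a single point. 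These lines tightly constrain how $\partial \Omega$ can sit inside the affine chart and, crucially, they force nontrivial affine pieces to appear in $\partial \Omega$.

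Next I would introduce a Hilbert-type metric $H_\Omega$ on $\Omega$, defined by the cross-ratio along chords in the affine chart. Since $\Gamma$ acts cocompactly by $H_\Omega$-isometries, the Benoist framework for divisible convex sets applies: $\partial \Omega$ is $C^1$ at a dense set of extremal points, and $\Gamma$ contains many biproximal elements whose north-south dynamics on $\partial \Omega$ propagate local structure globally. The dynamical analysis then controls the tangent cone of $\Omega$ at a generic boundary point in terms of the Grassmannian line structure described above.

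The heart of the argument is to analyze, at a generic $C^1$-boundary point $x \in \partial \Omega$, the family of rank-one-direction lines through $x$. Those that are tangent to $\Omega$ at $x$ sweep out a positive-dimensional affine subspace contained in the intersection of $\partial \Omega$ with its tangent hyperplane at $x$. This produces a $\Gamma$-equivariant foliation of the regular part of $\partial \Omega$ by affine flats---a hallmark of higher-rank symmetric domains and incompatible with the strict convexity seen in Kapovich's examples (item~(4) of the introduction). Pushing further, the holonomy of these flats under proximal elements of $\Gamma$, combined with the Zariski closure of $\Gamma$ inside $\PGL_{2p}(\Rb)$, should force $\Aut(\Omega)$ to contain a connected semisimple Lie subgroup acting transitively on $\Omega$.

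Once $\Omega$ is known to be homogeneous, Vinberg-type classification results for homogeneous convex domains, combined with the ambient constraint that $\Omega$ sits inside $\Gr_p(\Rb^{2p})$, single out $\Bc_{p,p}$ (the $\PSO(p,p)$-symmetric space realized as the operator-norm unit ball in $M_{p,p}(\Rb)$). The principal obstacle is the passage from \emph{many flats on $\partial \Omega$} to \emph{transitive automorphism group}: one must rule out analogs inside $\Gr_p(\Rb^{2p})$ of the non-symmetric divisible convex examples from items~(2), (4), (5) of the introduction. I expect this to be where the higher-rank hypothesis $p > 1$ enters decisively, since the Schubert structure of $\Gr_p(\Rb^{2p})$ imposes rank-one tangential constraints at \emph{every} boundary point, and collectively these are too rigid to admit nontrivial deformations of the symmetric model $\Bc_{p,p}$.
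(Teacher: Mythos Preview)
Your proposal has a foundational gap that undermines the first step and propagates through the rest. You introduce the Hilbert metric $H_\Omega$ ``defined by the cross-ratio along chords in the affine chart'' and claim $\Gamma$ acts by $H_\Omega$-isometries. But elements of $\Aut(\Omega) \leq \PGL_{2p}(\Rb)$ act on the affine chart $M_{p,p}(\Rb)$ by fractional-linear maps $X \mapsto (AX+B)(CX+D)^{-1}$, which do \emph{not} send straight chords to straight chords in general; equivalently, the Pl\"ucker embedding $M_{p,p}(\Rb) \hookrightarrow \Pb(\wedge^p\Rb^{2p})$ is nonlinear, so convexity of $\Omega$ in the affine chart does not make $\Omega$ convex in the ambient projective space. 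Hence the ordinary Hilbert metric is not $\Aut(\Omega)$-invariant, and the Benoist machinery (density of $C^1$ points, biproximal dynamics, etc.) cannot be imported wholesale. The paper resolves this by building a different metric $K_\Omega$ from the rank-one lines only---precisely the projective lines of $\Pb(\wedge^p\Rb^{2p})$ that lie in the Grassmannian---and proving completeness and continuity under local Hausdorff limits for that metric.

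Beyond this, the mechanism you sketch for producing continuous symmetry is also off-target. You look for $C^1$ boundary points and tangent flats; the paper does the opposite, rescaling at $\Rc$-extreme points where the tangent cone is $\Rc$-proper and hence highly non-smooth (see Remark~\ref{rmk:rescaling}). This blow-up, together with the Hausdorff-continuity of $K_\Omega$, yields a one-parameter subgroup in $\Aut(\Omega)$. Finally, your passage from ``flats on $\partial\Omega$'' to ``transitive action'' is a hope rather than an argument; the paper devotes the bulk of its work (Sections~\ref{sec:initial}--\ref{sec:final_step}) to a case-by-case Lie-theoretic analysis---ruling out nontrivial centralizers, normal abelian unipotent subgroups, and two exceptional $p=2$ configurations---before concluding that $\Aut_0(\Omega)$ is simple of rank $\geq p$ and identifying it with $\PSO(p,p)$ via representation theory. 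None of these obstructions is addressed by the boundary-flat heuristic you propose.
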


\begin{remark}\label{rmk:assumptions} 
There is much more flexibility for domains which are not bounded in an affine chart. For instance, if $\Omega$ is an entire affine chart, there exists a discrete group $\Gamma \leq \Aut(\Omega)$ which acts freely, properly, and cocompactly on $\Omega$ (see Subsection~\ref{subsec:trivial_ex} below).

More generally there are recent constructions by Guichard-Wienhard~\cite{GW2008, GW2012}, Gu\'eritaud-Guichard-Kassel-Wienhard~\cite{GGKW2015}, and by Kapovich-Leeb-Porti~\cite{KLP2013, KLP2014b, KLP2014} of open domains $\Omega$ in certain flag manifolds where there exists a discrete group $\Gamma \leq \Aut(\Omega)$ that acts properly, freely, and cocompactly on $\Omega$. These constructions come from the theory of Anosov representations, and give many examples of nonsymmetric divisible domains $\Omega$. However, (to the best of our knowledge) these constructions never produce domains which are bounded in any affine chart. 
\end{remark}

\begin{remark} It is well-known that convex domains in real projective space are very similar to nonpositively curved Riemannian manifolds (see for instance~\cite{B2004, B2006, C2009, CLT2015}). In particular the flexibility of domains in real projective space and the rigidity of domains in $\Gr_p(\Rb^{2p})$ when $p > 1$ can be compared to the well known dichotomy for the rigidity of a nonpositively curved metric based on its \emph{Euclidean rank}. Nonpositively curved metrics of rank one are very flexibile (e.g. negatively curved metrics), but in higher rank there is an amazing amount of rigidity. Namely, the Higher Rank Rigidity Theorem of Ballmann~\cite{Ba85} and Burns-Spatzier~\cite{BS87a, BS87b} states that any nonpositively curved, irreducible, closed Riemannian manifold whose Euclidean rank is at least two, is isometric to a locally symmetric space. 

Inspired by this analogy we conjecture that a version of Theorem~\ref{thm:main} also holds for $p\neq q$, as long as $p, q>1$ (see Conjecture \ref{conj:pnotq} below for a precise statement). However, our methods do not extend to this setting (see Remark \ref{rmk:pnotqfail} below for more information). \end{remark}

\begin{remark} In Theorem \ref{thm:main} we only assume that there is a discrete group $\Gamma\leq \Aut(\Omega)$ acting cocompactly on $\Omega$. However this implies that there exists a discrete group $\Gamma_0 \leq \Aut(\Omega)$ which acts acts freely, properly discontinuously, and cocompactly on $\Omega$. Namely, by Proposition~\ref{prop:proper} below, $\Aut(\Omega)$ acts properly on $\Omega$. Thus if $\Gamma \leq \Aut(\Omega)$ is a discrete group and $\Gamma$ acts cocompactly on $\Omega$ then $\Gamma$ is finitely generated (by the \v{S}varc-Milnor lemma, see~\cite[Chapter I.8 Proposition 8.19]{BH1999}). Then Selberg's lemma (see~\cite{A1987}) implies that $\Gamma$ has a finite index torsion free subgroup $\Gamma_0 \leq \Gamma$. Then $\Gamma_0$ acts freely, properly discontinuously, and cocompactly on $\Omega$. 
\end{remark}

\subsection{General flag manifolds} Suppose $1 \leq d_1 < \dots < d_r \leq d$ and $\Kb$ is either the real numbers $\Rb$, the complex numbers $\Cb$, or the quaternions $\Hb$. Then let $\Fc(d_1, \dots, d_r; \Kb^d)$ denote the space of flags
\begin{align*}
(0) \leq V_1 \leq \dots \leq V_r \leq \Kb^d
\end{align*}
with $\dim_{\Kb} V_i = d_i$. The group $\PGL_d(\Kb)$ acts on $\Fc(d_1, \dots, d_r;\Kb^d)$ and for an open set $\Omega \subset \Fc(d_1, \dots, d_r;\Kb^d)$ we define the \emph{automorphism group} of $\Omega$ as
\begin{align*}
\Aut(\Omega) = \{ \varphi \in \PGL_{d}(\Rb) : \varphi \Omega = \Omega \}.
\end{align*}
As before, we say an open set $\Omega \subset  \Fc(d_1, \dots, d_r;\Kb^d)$ is a \emph{convex divisible domain} if it is a bounded convex open set in some affine chart of  $\Fc(d_1, \dots, d_r;\Kb^d)$ and there exists a discrete group $\Gamma \leq \Aut(\Omega)$ which acts properly, freely, and cocompactly on $\Omega$. 

\begin{question}
Are there non-symmetric convex divisible domains in $\Fc(d_1, \dots, d_r; \Kb^d)$? 
\end{question}

There are a number of partial answers:
\begin{enumerate}
\item When $r > 1$, there are no convex divisible domains in $\Fc(d_1, \dots, d_r; \Rb^d)$ by~\cite[Theorem 1.11]{Z2015}. The same argument can be used to show that there are no convex divisible domains in $\Fc(d_1, \dots, d_r; \Kb^d)$ for $\Kb = \Cb$ or $\Kb=\Hb$.
\item When $\Kb=\Cb$, a result of Frankel~\cite{F1989} implies that every convex divisible domain in $\Gr_p(\Cb^{p+q})$ is a bounded symmetric domain in the sense that $\Aut(\Omega)$ is a semisimple Lie group which acts transitively on $\Omega$. 
\item When $\Kb = \Hb$, an argument of Frankel~\cite[Section 6]{F1989} can be used to show that $\Aut(\Omega)$ is non-discrete. 
\end{enumerate}

These partial answers motivate the following:

\begin{conjecture}\label{conj:pnotq}
If $\Omega \subset \Gr_p(\Kb^{p+q})$ is a convex divisible domain and $p,q > 1$, then $\Omega$ is a bounded symmetric domain.
\end{conjecture}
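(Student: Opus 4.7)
The plan is to handle the three fields separately. For $\Kb = \Cb$ the conjecture already follows from Frankel's theorem in \cite{F1989}, as observed in the excerpt. For $\Kb = \Hb$ the non-discreteness of $\Aut(\Omega)$ established by Frankel's argument reduces matters to upgrading non-discreteness to transitivity of $\Aut(\Omega)_0$; once this is done, the classification of bounded homogeneous convex domains in Grassmannians forces $\Omega \cong \Bc_{q,p}$ over $\Hb$. To upgrade, I would combine cocompactness of $\Gamma$ with an orbit-closure argument: a non-trivial orbit of $\Aut(\Omega)_0$ together with a cocompact $\Gamma$-action should saturate $\Omega$, possibly after passing to an accumulation limit. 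For $\Kb = \Rb$ with $p = q$ the conjecture is Theorem \ref{thm:main}.

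The heart of the problem is therefore $\Kb = \Rb$ with $p \neq q$ and $\min(p,q) > 1$. My strategy is to adapt the proof of Theorem \ref{thm:main} through a Frankel-style scaling argument: equip $\Omega \subset M_{q,p}(\Rb)$ with its $\Aut(\Omega)$-invariant Funk--Hilbert pseudo-metric and analyze the boundary via Benoist's theory of faces together with the projective techniques of \cite{Z2015}. Cocompactness produces sequences $\gamma_n \in \Gamma$ with $\gamma_n x_0 \to \xi \in \partial \Omega$; after Cartan decomposition in $\PGL_{p+q}(\Rb)$ and suitable renormalization by elements that center the picture back on a basepoint, I would extract a scaling limit that is projectively isomorphic to $\Omega$ itself and carries a non-trivial one-parameter group of automorphisms. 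This would force $\Aut(\Omega)$ to be non-discrete, at which point the orbit-closure argument above concludes $\Omega \cong \Bc_{q,p}$.

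The hard step will be the renormalization itself. In the case $p = q$ the map $X \mapsto X^{-1}$ on the invertible part of $M_{p,p}(\Rb)$ is projective and exchanges the two natural affine charts at $0$ and $\infty$; this self-duality synchronizes the asymptotic geometries at opposite boundary points of $\Omega$ and appears to be used crucially in Theorem \ref{thm:main}. For $p \neq q$ this symmetry vanishes: points of $\Omega \subset \Gr_p(\Rb^{p+q})$ and their natural duals in $\Gr_q(\Rb^{p+q})$ live in genuinely different spaces, and the Schubert types of the two boundary pictures can differ. The main obstacle is to set up a bilateral scaling argument that simultaneously renormalizes at $\xi \in \partial \Omega$ and at a matched face in the dual Grassmannian, and to show that the pairing between these two unequal limit geometries still yields a single one-parameter family of automorphisms of $\Omega$ rather than two uncoupled limits. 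This is precisely where the methods of Theorem \ref{thm:main} break down, and new ideas exploiting the richer Schubert stratification when $\min(p,q) \geq 2$ appear necessary.
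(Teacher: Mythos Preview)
The statement you are addressing is a \emph{conjecture} in the paper, not a theorem; the paper offers no proof of it, so there is nothing to compare your attempt against. What you have written is not a proof either, and you acknowledge as much in your final paragraph: the central case $\Kb=\Rb$, $p\neq q$ is left with ``new ideas \ldots\ appear necessary.'' So at best this is a research outline, not a proof proposal.

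Two substantive comments on the outline itself. First, your diagnosis of where $p=q$ enters the proof of Theorem~\ref{thm:main} is off. The self-duality $X\mapsto X^{-1}$ is not the crux. The essential use of $p=q$ is in Theorem~\ref{thm:extreme} (the equivalence of $\Rc$-extremality with $\Rc$-properness of the tangent cone), specifically in the step $(2)\Rightarrow(3)$: one needs $\det(T+tS)=\det(T)(1+t\,\tr(T^{-1}S))$ for a rank-one $S$, and this only makes sense for square $T$. The paper states explicitly in Remark~\ref{rmk:pnotqfail} that the tangent-cone result fails for $\Bc_{p,q}$ when $p\neq q$; this is what blocks the rescaling step that produces the one-parameter subgroup, not a failure of some bilateral duality between $\Gr_p$ and $\Gr_q$.

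Second, your ``orbit-closure argument'' to pass from non-discreteness of $\Aut(\Omega)$ to transitivity is a serious gap, not a routine step. In the paper this passage occupies all of Part~3 (Sections~\ref{sec:initial}--\ref{sec:final_step}): one must rule out a nontrivial centralizer, rule out normal abelian unipotent subgroups meeting $\Gamma$ in a lattice, handle sporadic low-dimensional cases, and only then conclude simplicity and transitivity of $\Aut(\Omega)_0$. None of this is an ``orbit-closure'' argument, and several of those steps again use $p=q$ (for instance Theorem~\ref{thm:no_center} and Theorem~\ref{thm:no_unipotent} invoke the extreme-point machinery of Theorem~\ref{thm:extreme}). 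So even granting non-discreteness over $\Hb$, you have not indicated how to finish.
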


\subsection{Outline of the proof of Theorem~\ref{thm:main}} The proof of Theorem~\ref{thm:main} uses a variety of techniques from real projective geometry, several complex variables, Riemannian geometry, Lie theory, and algebraic topology. Here is an outline of the three mains steps: \newline

\noindent \textbf{Step 1: Constructing an invariant metric}.  A convex domain $\Omega$ in an affine chart of $\Pb(\Rb^{d+1})$ that is proper 
(that is, does not contain any affine real lines) has a complete metric called the Hilbert metric. 
One of the main steps in the proof is the construction of a metric $K_\Omega$ that generalizes this classical construction.

We say a convex domain $\Omega$ in an affine chart of $\Gr_p(\Rb^{p+q})$ is $\Rc$-proper if it does contain any ``rank one affine real lines'' (see Definition~\ref{defn:r_proper} below). 

\begin{theorem}\label{thm:intrinsic_metric}(Theorem~\ref{thm:completeness} and Theorem~\ref{thm:haus_conv} below)
Suppose $\Mb \subset \Gr_p(\Rb^{p+q})$ is an affine chart and $\Omega \subset \Mb$ is a $\Rc$-proper convex subset of $\Mb$. Then there exists a complete  length metric $K_{\Omega}$ with the following properties:
\begin{enumerate}
\item (Invariance) the group $\Aut(\Omega)$ acts by isometries on $(\Omega, K_\Omega)$. 
\item (Equivariance) if $\Phi \in  \PGL_{p+q}(\Rb)$ then 
\begin{align*}
K_{\Omega}(x,y) = K_{\Phi\Omega}(\Phi x, \Phi y),
\end{align*} 
\item (Continuity in the local Hausdorff topology) if $\Omega_n \subset \Mb$ is a sequence of $\Rc$-proper convex sets converging in the local Hausdorff topology to a $\Rc$-proper convex open set $\Omega \subset \Mb$ then $K_{\Omega_n}$ converges to $K_\Omega$, 
\item if $p=1$ then $K_\Omega$ coincides with the classical Hilbert metric. 
\end{enumerate}
\end{theorem}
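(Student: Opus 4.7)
The plan is to generalize Hilbert's cross-ratio construction, taking the role of straight lines to be played by the \emph{rank-one affine lines} in $\Mb \cong M_{q,p}(\Rb)$, i.e.\ subsets of the form $x + \Rb v$ with $v$ a rank-one matrix; equivalently, these are the projective lines in $\Pb(\Lambda^p \Rb^{p+q})$ that lie on $\Gr_p(\Rb^{p+q})$ under the Pl\"ucker embedding. For $x \neq y$ in $\Omega$ lying on a common rank-one line $L$, the $\Rc$-proper hypothesis ensures that $L \cap \Omega$ is an open segment with two distinct endpoints $a, b \in \partial \Omega$, so the quantity
\[
\delta_\Omega(x,y) = \tfrac{1}{2}\log[a,x,y,b]
\]
is a well-defined positive number. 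I then set
\[
K_\Omega(x,y) = \inf \sum_{i=1}^n \delta_\Omega(x_{i-1}, x_i),
\]
where the infimum runs over all finite chains $x = x_0, x_1, \dots, x_n = y$ with consecutive points $x_{i-1}, x_i$ on a common rank-one line of $\Omega$.

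First I would establish that $K_\Omega$ is a finite, symmetric, positive-definite length metric. Finiteness reduces to showing any two points of $\Omega$ can be joined by a rank-one chain staying in $\Omega$; this uses that every matrix is a sum of at most $\min(p,q)$ rank-one matrices, combined with openness to stay near the straight segment $[x,y]$. Symmetry of $\delta_\Omega$ (and hence $K_\Omega$) is automatic from the cross-ratio. The triangle inequality and the length-metric axiom follow formally from the infimum-over-chains definition together with subdivision. Positive-definiteness near a fixed $x$ comes from a local comparison of $\delta_\Omega$ on short rank-one segments with Euclidean distance, using that the rank-one cone spans $M_{q,p}(\Rb)$. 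Properties (1) and (2) are immediate from $\PGL_{p+q}(\Rb)$-invariance of both the set of rank-one lines and the cross-ratio. Property (4) is automatic: when $p = 1$ every affine line is rank-one, so no chain of length $>1$ is ever shorter than the single-segment cross-ratio, and $K_\Omega = \delta_\Omega$ is the classical Hilbert metric.

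For continuity in the local Hausdorff topology, given $\Omega_n \to \Omega$, I would show that the boundary intersections $L \cap \partial \Omega_n$ along a rank-one line $L$ through a point of $\Omega$ converge to $L \cap \partial \Omega$; hence $\delta_{\Omega_n} \to \delta_\Omega$ uniformly on compact subsets of the rank-one incidence space, and convergence of the associated length metrics follows by a standard infimum-of-sums argument. The hardest step will be completeness. Here I would argue that a sequence $(x_n)$ escaping to $\partial \Omega$ cannot be $K_\Omega$-Cauchy from a fixed basepoint $y \in \Omega$: every chain from $y$ to $x_n$ must incur an unbounded cost on some terminal leg as $n \to \infty$. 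The delicate issue is that $x_n$ may approach $\partial \Omega$ transversally to all rank-one directions, so one must use the $\Rc$-proper hypothesis quantitatively to force every rank-one line through $x_n$ to exit $\Omega$ close to $x_n$; controlling chains that try to avoid this by zig-zagging in non-rank-one directions is, I expect, the main technical obstacle.
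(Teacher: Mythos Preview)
Your construction is exactly the paper's: $K_\Omega$ is built via Kobayashi's procedure using the projective lines contained in $\Gr_p(\Rb^{p+q})$ (equivalently, rank-one affine lines in $\Mb$), and properties (1), (2), (4), and (3) are argued essentially as you describe. The factor $\tfrac12$ is a harmless normalization.

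The one place where your sketch diverges from the paper is completeness, which you correctly flag as the hardest step. Your proposed attack --- showing that rank-one lines through $x_n$ must exit $\Omega$ close to $x_n$ as $x_n\to\partial\Omega$, and then controlling zig-zagging chains --- is not how the paper proceeds, and it is not clear it would go through directly. The paper's key device is the inequality $K_\Omega \geq H_\Omega$, where $H_\Omega$ is the \emph{ordinary} Hilbert metric on the convex set $\Omega\subset M_{q,p}(\Rb)$: the rank-one chains in your infimum form a subclass of all straight-segment chains, so the rank-one infimum dominates the Hilbert infimum. For $x_n$ converging to a finite boundary point this immediately gives $K_\Omega(x_0,x_n)\geq H_\Omega(x_0,x_n)\to\infty$, sidestepping zig-zagging entirely. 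For $x_n\to\infty$ in the affine chart (remember $\Omega$ may be unbounded: $\Rc$-proper only rules out rank-one affine lines), the paper writes $\Omega$ affinely as $\Rb^d\times\Omega'$ with $\Omega'$ proper, uses the Hilbert comparison to confine the $\Omega'$-projection of any short chain to a compact set, and on that compact set bounds the rank-one exit distance $\delta_\Omega(z;v)$ uniformly to obtain a Euclidean lower bound on chain length. Your local Euclidean comparison for positive-definiteness is precisely this second ingredient; the comparison with $H_\Omega$ is the missing first one.
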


The above theorem allow us to establish an analogue of the powerful ``rescaling'' method from several complex variables (see the survey articles~\cite{F1991, KK2008}), although for completely different reasons. See Remark \ref{rmk:rescaling} below for further details on this analogy ( or lack thereof). We prove:

\begin{theorem}\label{thm:rescaling_intro}(Theorem~\ref{thm:rescaling} below)
Suppose $\Mb \subset \Gr_p(\Rb^{p+q})$ is an affine chart, $\Omega \subset \Mb$ is an $\Rc$-proper convex subset of $\Mb$, and $\Aut(\Omega)$ acts cocompactly on $\Omega$. If $A_n \in \Aff(\Mb) \cap \PGL_{p+q}(\Rb)$  and $A_n \Omega$ is a sequence of $\Rc$-proper convex sets converging in the local Hausdorff topology to an $\Rc$-proper convex open set $\wh{\Omega}$ then there exists some $\Phi \in \PGL_{p+q}(\Rb)$ so that $\Phi(\Omega) = \wh{\Omega}$.
\end{theorem}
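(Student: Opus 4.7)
The plan is to renormalize the sequence $A_n$ by automorphisms of $\Omega$ so that the resulting sequence has a subsequential limit in $\PGL_{p+q}(\Rb)$, and then conclude using local Hausdorff continuity. Fix $\hat x \in \wh{\Omega}$ and a compact set $K\subset \Omega$ with nonempty interior such that $\Aut(\Omega)\cdot K = \Omega$ (which exists by cocompactness and properness of the $\Aut(\Omega)$-action, cf.\ Proposition~\ref{prop:proper}). Since $A_n\Omega \to \wh{\Omega}$, for $n$ large we can write $\hat x = A_n z_n$ with $z_n\in \Omega$, and we may choose $\gamma_n\in \Aut(\Omega)$ with $y_n:=\gamma_n z_n\in K$. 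Define $\Phi_n := A_n\gamma_n^{-1}\in \PGL_{p+q}(\Rb)$. Then $\Phi_n\Omega = A_n\Omega\to \wh{\Omega}$ in the local Hausdorff topology, and $\Phi_n(y_n)=\hat x$ for every $n$ with $y_n\in K$.

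The main step is to show that $\{\Phi_n\}$ is precompact in $\PGL_{p+q}(\Rb)$. By the equivariance property of $K_\Omega$ (Theorem~\ref{thm:intrinsic_metric}(2)), each $\Phi_n\colon (\Omega,K_\Omega)\to (\Phi_n\Omega,K_{\Phi_n\Omega})$ is an isometry. Letting $D:=\diam_{K_\Omega}(K)$, we obtain $\Phi_n(K)\subset \overline B_{K_{\Phi_n\Omega}}(\hat x,D)$. By the local Hausdorff continuity of the intrinsic metric (Theorem~\ref{thm:intrinsic_metric}(3)) and completeness of $K_{\wh{\Omega}}$, the ball $\overline B_{K_{\wh{\Omega}}}(\hat x,D+1)$ is compact in $\wh{\Omega}$, so for $n$ large $\Phi_n(K)$ lies in a fixed compact set $K'\subset \wh{\Omega}\subset \Mb$. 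Next, the isometry property together with the local bi-Lipschitz comparison of $K_\Omega$ with a Euclidean metric on the chart $\Mb$ makes the family $\{\Phi_n|_K\}$ equicontinuous, so by Arzela--Ascoli a subsequence converges uniformly on $K$ to a continuous $\varphi\colon K\to K'$. Passing to the limit of the isometry relations and using Theorem~\ref{thm:intrinsic_metric}(3), $\varphi$ is an isometric embedding $(K,K_\Omega)\hookrightarrow (\wh{\Omega},K_{\wh{\Omega}})$; in particular $\varphi$ is injective, so by invariance of domain $\varphi(\inte K)$ is open in $\Gr_p(\Rb^{p+q})$.

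To rule out $\Phi_n\to \infty$ in $\PGL_{p+q}(\Rb)$, choose matrix representatives $\wt\Phi_n\in \GL_{p+q}(\Rb)$, normalize their action on Pl\"ucker space $\Lambda^p\Rb^{p+q}$ so $\|\Lambda^p\wt\Phi_n\|=1$, and extract a limit $\Lambda^p\wt\Phi_n\to \Psi\ne 0$. If $\Psi$ is not invertible, then on the open dense subset of $\Gr_p(\Rb^{p+q})$ where $\Psi$ does not annihilate the Pl\"ucker vector, $\Phi_n V$ converges to a point of $\Gr_p(\Rb^{p+q})\cap \Pb(\mathrm{Im}\,\Psi)$, a proper subvariety of the Grassmannian. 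Combined with the previous paragraph, this forces $\varphi(\inte K)$ to lie in that subvariety, of dimension strictly less than $pq=\dim \Gr_p(\Rb^{p+q})$, contradicting invariance of domain. Hence $\Psi$ is invertible, and along a subsequence $\Phi_n\to \Phi\in \PGL_{p+q}(\Rb)$. Finally, since $\Phi_n\Omega\to\Phi\Omega$ and $\Phi_n\Omega\to \wh{\Omega}$ in the local Hausdorff topology, uniqueness of limits yields $\Phi\Omega = \wh{\Omega}$.

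The hard part is the compactness step: ensuring no subsequence of the renormalized $\Phi_n$ degenerates to a non-invertible limit in the closure of $\PGL_{p+q}(\Rb)$ inside $\Pb(\End \Lambda^p\Rb^{p+q})$. This blends two inputs of quite different character: the geometric fact that any limit $\varphi$ must be an honest isometric embedding with open image (forced by $K_\Omega$ and Theorem~\ref{thm:intrinsic_metric}), versus the algebraic fact that a rank-drop in $\Psi$ collapses generic Grassmannian points to a proper subvariety. The remaining steps (setting up $\Phi_n$, extracting the continuous limit $\varphi$, and passing to the projective limit) are routine once the invariant metric from Theorem~\ref{thm:intrinsic_metric} is in hand.
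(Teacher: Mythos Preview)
Your argument is correct and follows essentially the same route as the paper's proof. Both proofs renormalize $A_n$ by automorphisms of $\Omega$, use the equivariance and local Hausdorff continuity of $K_\Omega$ together with Arzel\`a--Ascoli to extract a limiting isometric map, pass to representatives in $\End(\wedge^p\Rb^{p+q})$ to get a limit $\Psi$, and then argue that $\Psi$ must be invertible because the limit map is injective on an open subset of the Grassmannian. Your phrasing of this last step via invariance of domain and the observation that $\Gr_p(\Rb^{p+q})\cap\Pb(\operatorname{Im}\Psi)$ is a proper subvariety is a mild repackaging of the paper's injectivity argument, but the content is the same; likewise your restriction to the compact set $K$ rather than all of $\Omega$ is a cosmetic difference, since the paper ultimately only uses that the limit map is injective on some open set.
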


To explain how the properties of the metric $K_\Omega$ imply Theorem~\ref{thm:rescaling_intro}, let us sketch the proof:

\begin{proof}[Proof Sketch] 
Suppose that $A_n \Omega \rightarrow \wh{\Omega}$. Fix a point $x_0 \in \Omega$. Since $\Aut(\Omega)$ acts cocompactly on $\Omega$, we can pass to a subsequence and find $\varphi_n \in \Aut(\Omega)$ so that $A_n\varphi_n x_0 \rightarrow \wh{x}_0 \in \wh{\Omega}$. Now consider the maps $f_n: = A_n \varphi_n$. By part (1) and (2) of Theorem~\ref{thm:intrinsic_metric}, each $f_n$ induces an isometry $(\Omega, K_\Omega) \rightarrow (\Omega_n, K_{\Omega_n})$.  Then by part (3) of Theorem~\ref{thm:intrinsic_metric}, one can pass to a subsequence so that $f_n \rightarrow f$ and $f$ will be an isometry $(\Omega, K_\Omega) \rightarrow (\wh{\Omega}, K_{\wh{\Omega}})$. A simple argument then shows that $f$ is actually the restriction of a element in $\PGL_{p+q}(\Rb)$. 
\end{proof}

Theorem~\ref{thm:rescaling_intro} should also be compared to a theorem of Benz{\'e}cri from real projective geometry. Let $\Xb_d$ be the space of proper convex open sets in $\Pb(\Rb^d)$ with the Hausdorff topology. Then $\Xb_d$ is closed in the Hausdorff topology and $\PGL_d(\Rb)$ acts on $\Xb_d$. With this notation Benz{\'e}cri proved:

\begin{theorem}\cite{B1960}(see also~\cite[Theorem 6.15]{G2009})
Suppose $\Omega$ is a proper convex open set in $\Pb(\Rb^d)$. If $\Aut(\Omega)$ acts cocompactly on $\Omega$ then $\PGL_d(\Rb) \cdot \Omega$ is a closed subset of $\Xb_d$. 
\end{theorem}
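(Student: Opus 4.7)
The plan is to adapt the proof sketch of Theorem~\ref{thm:rescaling_intro} to the classical projective setting, using the Hilbert metric in place of $K_\Omega$. Suppose $g_n \in \PGL_d(\Rb)$ and $g_n \Omega \to \Omega^*$ in the Hausdorff topology on $\Xb_d$; the goal is to find a subsequence of $g_n$ that converges in $\PGL_d(\Rb)$ to some $g$ with $g\Omega = \Omega^*$. First I would use cocompactness to control a basepoint: fix $x_0 \in \Omega$ and, using cocompactness of $\Aut(\Omega)$ on $\Omega$ together with the Hausdorff convergence $g_n\Omega \to \Omega^*$, find $\varphi_n \in \Aut(\Omega)$ so that, after extracting a subsequence, $g_n\varphi_n(x_0) \to x^* \in \Omega^*$. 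Setting $h_n := g_n\varphi_n$, one has $h_n\Omega \to \Omega^*$ and $h_n x_0 \to x^*$.

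Next I would invoke the classical Hilbert metric: to each proper convex open set $U \subset \Pb(\Rb^d)$ it associates a complete length metric $H_U$ which is $\PGL_d(\Rb)$-equivariant, i.e.\ $H_{gU}(gx, gy) = H_U(x,y)$, and depends continuously on $U$ in the Hausdorff topology. Each $h_n$ thus induces an isometry $(\Omega, H_\Omega) \to (h_n\Omega, H_{h_n\Omega})$, so for every $y \in \Omega$ we have $H_{h_n\Omega}(h_n x_0, h_n y) = H_\Omega(x_0, y)$, and continuity of the Hilbert metric combined with $h_n\Omega \to \Omega^*$ and $h_n x_0 \to x^*$ forces $\{h_n y\}$ to lie in a compact subset of $\Omega^*$. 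A diagonal argument on a countable dense subset of $\Omega$, together with equicontinuity of isometries, then yields a subsequence along which $h_n$ converges uniformly on compact subsets of $\Omega$ to a map $h : \Omega \to \Omega^*$. Applying the same reasoning to the inverses $h_n^{-1}$ (which by the Hausdorff convergence are defined on any fixed compact subset of $\Omega^*$ once $n$ is large) shows $h$ is a homeomorphism onto $\Omega^*$.

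To finish I would upgrade $h$ to an element of $\PGL_d(\Rb)$. Since $\Omega^*$ is open in $\Pb(\Rb^d)$, choose $d+1$ points $q_0, \ldots, q_d \in \Omega^*$ forming a projective frame, i.e.\ no $d$ of them lie in a projective hyperplane. Set $p_i := h^{-1}(q_i) \in \Omega$; by uniform convergence on compacta we have $h_n p_i \to h(p_i) = q_i$. Because a projective transformation of $\Pb(\Rb^d)$ is determined by its action on a projective frame, convergence $h_n p_i \to q_i$ to a projective frame forces $h_n$ itself to converge in $\PGL_d(\Rb)$ to the unique $g$ with $g p_i = q_i$, and then $g\Omega = \lim h_n\Omega = \Omega^*$, so $\Omega^* \in \PGL_d(\Rb)\cdot\Omega$. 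The main obstacle is this last step: verifying that pointwise convergence of $h_n$ on $\Omega$ upgrades to convergence in $\PGL_d(\Rb)$. The key trick is to pull the test frame back from the open set $\Omega^*$ through the limit homeomorphism $h$, which guarantees that the limiting configuration $\{q_i\}$ is itself a projective frame and hence prevents the matrices representing $h_n$ from degenerating in the limit.
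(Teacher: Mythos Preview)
The paper does not prove this theorem; it is quoted with references to Benz\'ecri and Goldman and used only as a point of comparison. So there is no ``paper's own proof'' to match. That said, your argument is correct and is exactly the specialization to $\Pb(\Rb^d)$ of the paper's proof of its rescaling theorem (Theorem~\ref{thm:rescaling}): cocompactness to control a basepoint, equivariance and Hausdorff-continuity of the (Hilbert) metric to extract a limiting isometry, and then an upgrade to a projective map.

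The only place where your write-up differs from the paper's treatment of Theorem~\ref{thm:rescaling} is the final upgrade step. The paper takes representatives $\hat h_n \in \GL_d(\Rb)$ with $\|\hat h_n\|=1$, passes to a subsequential limit $S \in \End(\Rb^d)$, and observes that $[S]$ agrees with the injective map $h$ on the open set $\Omega \setminus [\ker S]$, forcing $S$ to be invertible. You instead use a projective frame. Your version works, but the justification you give is slightly off: you say the trick ``guarantees that the limiting configuration $\{q_i\}$ is itself a projective frame,'' whereas $\{q_i\}$ is a frame by construction. What actually needs to be said is that $\{q_i\}$ being a frame forces $\{p_i\}$ to be a frame as well (projective maps send hyperplanes to hyperplanes, so a degenerate source tuple would have a degenerate limit), after which the map $g \mapsto (g p_0,\dots,g p_d)$ is a homeomorphism from $\PGL_d(\Rb)$ onto the open set of frames and convergence of $h_n$ follows. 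Either route closes the argument.
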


It is important to note that unlike in the real projective setting, when $p,q > 1$, convexity is not invariant under the action of $\PGL_{p+q}(\Rb)$ on $\Gr_p(\Rb^{p+q})$: If $\Omega$ is a convex subset of some affine chart $\Mb \subset \Gr_p(\Rb^{p+q})$ and $\phi(\Omega) \subset \Mb$ for some $\phi \in \PGL_p(\Rb^{p+q})$, then $\phi(\Omega)$ may not be a convex subset of $\Mb$. Thus to preserve convexity we are forced to consider the orbit of $\Omega$ under the group  $\Aff(\Mb) \cap \PGL_{p+q}(\Rb)$. \newline

\noindent \textbf{Step 2: The automorphism group is non-discrete}. In the second step of the proof we use the rescaling theorem from step one to show that $\Aut(\Omega)$ is non-discrete when $\Omega \subset \Gr_p(\Rb^{2p})$ is a convex divisible domain. 

We can identify $M_{p,p}(\Rb)$ with the affine chart 
\begin{align*}
\left\{ \begin{bmatrix} \Id_p \\ X \end{bmatrix} : X \in M_{p,p}(\Rb) \right\}
\end{align*}
of $\Gr_p(\Rb^{2p})$. Then consider the symmetric domain $\Bc_{p,p} \subset M_{p,p}(\Rb)$. Note that $\Bc_{p,p}$ is a convex set and the extreme points of $\Bc_{p,p}$ are exactly the orthogonal matrices. Given an orthogonal matrix $A \in \partial \Bc_{p,p}$, define the projective transformation 
\begin{align*}
F(X) := \begin{bmatrix} -\Id_p & A^{-1} \\ \Id_p & A^{-1} \end{bmatrix} \cdot X =  (A^{-1}X+\Id_p)(A^{-1}X-\Id_p)^{-1}.
\end{align*}
Then we see that 
\begin{align*}
F(\Bc_{p,p}) = \{ X \in M_{p,p}(\Rb): X^t + X > 0\}
\end{align*}
and $F(A) = 0$. Now $F(\Bc_{p,p})$ is a cone and in particular $\Aut(F(\Bc_{p,p}))$ contains a one-parameter group of homotheties. Translating this back to $\Bc_{p,p}$ shows that $A \in \partial \Bc_{p,p}$ is the attracting fixed point of a one-parameter group of automorphisms of $\Bc_{p,p}$. 

Using the rescaling theorem from Step 1 we will recover these one parameter groups for a general divisible domain. The key result is the following: 

\begin{theorem}\label{thm:tangent_cone}(Theorem~\ref{thm:extreme} below)
Suppose $\Mb \subset \Gr_p(\Rb^{2p})$ is an affine chart, $\Omega \subset \Mb$ is a $\Rc$-proper convex subset of $\Mb$, and $\Aut(\Omega)$ acts cocompactly on $\Omega$. If $e \in \partial \Omega$ is an extreme point, then the tangent cone of $\Omega$ at $e$ is $\Rc$-proper. 
\end{theorem}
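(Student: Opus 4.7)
The plan is to argue by contradiction. Suppose $T_e\Omega$ contains a rank-one affine line $\ell$. Since $T_e\Omega$ is a convex cone with vertex $e$, letting points on $\ell$ go to infinity in opposite directions and taking rays from $e$ produces two asymptotic rays $\{e \pm sR : s > 0\}$ with $R$ a rank-one matrix; by convexity $\overline{T_e\Omega}$ then contains the full rank-one line $\{e + tR : t \in \Rb\}$. Applying the definition of the tangent cone yields sequences $\lambda_n \to \infty$ and points $y_n^\pm \in \Omega$ with $\lambda_n(y_n^\pm - e) \to \pm R$.

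Next, consider the affine rescalings $A_n \in \Aff(\Mb) \cap \PGL_{p+q}(\Rb)$ given by $A_n(z) := \lambda_n(z - e)$. Each $A_n\Omega$ is $\Rc$-proper and convex, since affine maps preserve both convexity and the rank of differences. Along a subsequence, $A_n\Omega$ converges in the local Hausdorff topology to a translate $\widehat\Omega$ of $T_e\Omega$. Using cocompactness of $\Aut(\Omega)$, pick $\varphi_n \in \Aut(\Omega)$ so that for a fixed base point $x_0 \in \Omega$ the sequence $A_n\varphi_n(x_0)$ is bounded in the chart, and pass to a subsequence so $A_n\varphi_n(x_0) \to \hat{x}_0$. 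Following the strategy of the proof sketch of Theorem~\ref{thm:rescaling_intro}, each $f_n := A_n\varphi_n$ is an isometry $(\Omega, K_\Omega) \to (A_n\Omega, K_{A_n\Omega})$.

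The heart of the argument is that the line $\{e+tR\}\subset\overline{\widehat\Omega}$ forces the metric $K_{A_n\Omega}$ to collapse along the line through $A_n y_n^\pm$. A direct cross-ratio computation on $(A_n\Omega, K_{A_n\Omega})$ gives $K_{A_n\Omega}(A_n y_n^+, A_n y_n^-) \to 0$, because the line through $A_n y_n^\pm$ meets $\partial(A_n\Omega)$ at points escaping to infinity as $A_n\Omega \to \widehat\Omega$. By the affine equivariance of Theorem~\ref{thm:intrinsic_metric}(2), this transports back to $K_\Omega(y_n^+, y_n^-) \to 0$, and the isometry property forces $\varphi_n(y_n^\pm) \to y_\infty$ for a common limit $y_\infty \in \Omega$. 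Applying the same analysis along a full rank-one family of test points $y_n(t) \in \Omega$ with $\lambda_n(y_n(t) - e) \to tR$ for $t$ in a bounded interval, a subsequential projective limit of suitably normalized lifts of $\varphi_n \in \GL_{2p}(\Rb)$ must be singular, with the rank-one direction $R$ collapsed by the limit in a precise sense.

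The main obstacle is converting this collapse into a concrete contradiction with the hypotheses on $\Omega$. The cleanest path is to use extremality of $e$: the singular projective limit should pair the rank-one direction $R$ with distinct points of $\overline\Omega$ whose midpoint lies at $e$, contradicting extremality; alternatively, the kernel structure of the projective limit should produce a rank-one affine line inside $\Omega$ itself, contradicting $\Rc$-properness. Either route requires delicate control on how the rank-one structure transports through the limiting projective map, since general projective transformations of $\Gr_p(\Rb^{2p})$ do not preserve the rank of differences. The saving facts are that the $A_n$ are affine (so rank-one structure is preserved on one side of each $f_n$) and that $e$ is extreme (which rigidifies the dynamics at $e$), and orchestrating these two inputs is the technical heart of the proof.
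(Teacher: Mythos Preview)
Your proposal has a genuine gap, and in fact the approach runs into a circularity that you do not fully confront. You want to rescale by $A_n$ and then invoke the compactness/convergence machinery behind Theorem~\ref{thm:rescaling_intro} (Arzel\`a--Ascoli for the maps $f_n = A_n\varphi_n$, convergence $K_{A_n\Omega}\to K_{\widehat\Omega}$). But Theorem~\ref{thm:intrinsic_metric}(3) and the rescaling theorem both require the \emph{limit} domain to be $\Rc$-proper, which is precisely what you are trying to prove. If $\widehat\Omega = \Tc\Cc_e\Omega$ is not $\Rc$-proper, then $K_{\widehat\Omega}$ is not even a metric, and there is no limiting isometry to which the $f_n$ can converge. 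Moreover, the partial conclusion you do obtain without this machinery, namely $K_\Omega(y_n^+,y_n^-)\to 0$, is not a contradiction: both $y_n^\pm$ converge to the boundary point $e$, and nothing prevents the $K_\Omega$-distance between two sequences tending to the same boundary point from going to zero. Your own final paragraph essentially concedes that you do not see how to finish.

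The paper's argument is completely different and avoids rescaling altogether at this step. It proves the chain $(1)\Rightarrow(4)\Rightarrow(2)\Rightarrow(3)$ in Theorem~\ref{thm:extreme}. First, using cocompactness and Corollary~\ref{cor:asym_faces}, an $\Rc$-extreme point $e$ is shown to be the image of a singular limit $S$ of (lifts of) automorphisms $\varphi_n$, i.e.\ $\operatorname{Im}(S)=e$. Second, applying this sequence to the compact $\Aut(\Omega)$-invariant ``dual'' set $\Omega^*=\{\xi\in\Gr_p(\Rb^{2p}):Z_\xi\cap\Omega=\emptyset\}$, which has nonempty interior because $\Omega$ is bounded in an affine chart, forces $e\in\Omega^*$, i.e.\ $Z_e\cap\Omega=\emptyset$. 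The final step is purely algebraic: normalize $e=0$ in the chart $M_{p,p}(\Rb)$; then $Z_e\cap\Omega=\emptyset$ says $\det X\neq 0$ for every $X\in\Omega$, hence (by connectedness and openness) $\det>0$ on the tangent cone $\Tc\Cc_0\Omega$. If $\Tc\Cc_0\Omega$ contained a rank-one affine line $\{T+tS:t\in\Rb\}$, then $\det(T+tS)=\det(T)(1+t\operatorname{tr}(T^{-1}S))$ (since $T^{-1}S$ has rank one), which vanishes for some $t$ once one chooses $T$ in the open set $\Tc\Cc_0\Omega$ with $\operatorname{tr}(T^{-1}S)\neq 0$. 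This contradiction shows $\Tc\Cc_e\Omega$ is $\Rc$-proper. The determinant identity is the idea you are missing; it replaces all of the dynamical rescaling you attempt.
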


Now the tangent cone of $\Omega$ at $e$ is precisely the limit of the rescaled domains
\begin{align*}
n (\Omega - e) + e
\end{align*}
in the local Hausdorff topology. In particular combining Theorem~\ref{thm:rescaling_intro} and Theorem~\ref{thm:tangent_cone} implies the following:

\begin{corollary}(Corollary~\ref{cor:rescaling} below)
Suppose $\Mb \subset \Gr_p(\Rb^{2p})$ is an affine chart, $\Omega \subset \Mb$ is a $\Rc$-proper convex subset of $\Mb$, and $\Aut(\Omega)$ acts cocompactly on $\Omega$. Then $\Aut(\Omega)$ is non-discrete. 
\end{corollary}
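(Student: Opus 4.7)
The plan is to combine the rescaling theorem (Theorem~\ref{thm:rescaling_intro}) with the tangent-cone properness statement (Theorem~\ref{thm:tangent_cone}) to manufacture a non-trivial one-parameter subgroup inside $\Aut(\Omega)$.

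First I would pick an extreme point $e \in \partial\Omega$, which exists by the Krein--Milman theorem since $\overline{\Omega}$ is a compact convex subset of the affine chart $\Mb$. Identify $\Mb$ with $M_{p,p}(\Rb)$ via the parametrization displayed in the outline, and for each integer $n\geq 1$ let $A_n:\Mb\to\Mb$ be the affine rescaling $A_n(X) = e + n(X-e)$. Each $A_n$ is induced by the block matrix
\begin{equation*}
\begin{pmatrix} I_p & 0 \\ (1-n)e & nI_p \end{pmatrix} \in \GL_{2p}(\Rb),
\end{equation*}
so $A_n \in \Aff(\Mb) \cap \PGL_{2p}(\Rb)$, and the rescaled domains $A_n\Omega$ converge in the local Hausdorff topology to the tangent cone $C_e\Omega$ of $\Omega$ at $e$. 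By Theorem~\ref{thm:tangent_cone} the set $C_e\Omega$ is $\Rc$-proper (it is convex and open as the local Hausdorff limit of the convex open sets $A_n\Omega$), so Theorem~\ref{thm:rescaling_intro} produces some $\Phi \in \PGL_{2p}(\Rb)$ with $\Phi\Omega = C_e\Omega$.

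The second step is to exploit the cone structure of $C_e\Omega$. For every $t > 0$ the affine dilation $h_t: X \mapsto e + t(X-e)$ preserves $C_e\Omega$ and, by the same block-matrix computation with $n$ replaced by $t$, lies in $\PGL_{2p}(\Rb)$. Thus $\{h_t\}_{t > 0}$ is a continuous non-trivial one-parameter subgroup of $\Aut(C_e\Omega)$, and conjugating by $\Phi$ yields the continuous one-parameter subgroup $\{\Phi^{-1} h_t \Phi\}_{t > 0} \leq \Aut(\Omega)$. In particular $\Aut(\Omega)$ is non-discrete.

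The corollary itself has no genuine difficulty beyond invoking the two theorems being cited; the real work is Theorem~\ref{thm:tangent_cone}, whose conclusion ensures that the limiting tangent cone is non-degenerate enough to feed into the rescaling machinery. The only routine points to check are that $A_n$ and $h_t$ are in fact projective (not just affine) transformations of the Grassmannian, verified by the block-matrix formula above, and that the Hausdorff limit of open sets is a convex open set, which follows because $C_e\Omega$ contains the open cone over $\Omega$ based at $e$.
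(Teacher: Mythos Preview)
Your argument is correct and matches the paper's proof of Corollary~\ref{cor:rescaling} essentially line for line: pick an extreme point, write the dilation $X \mapsto e + t(X-e)$ as the block matrix $\begin{pmatrix} \Id_p & 0 \\ (1-t)e & t\Id_p \end{pmatrix}$, invoke Theorem~\ref{thm:tangent_cone} so that the limiting tangent cone is $\Rc$-proper, apply Theorem~\ref{thm:rescaling_intro}, and conjugate the dilation one-parameter group back into $\Aut(\Omega)$. The only cosmetic difference is that the paper works with its notion of $\Rc$-extreme point (supplied by Proposition~\ref{prop:ext_spanning}) rather than invoking Krein--Milman, but an ordinary extreme point is automatically $\Rc$-extreme, so your shortcut is fine.
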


\begin{remark}\label{rmk:rescaling}
 In the several complex variable setting, rescaling can also be used to find one-parameter groups of automorphisms (see~\cite[Section 6]{F1989} or~\cite{K2004}). However, in this setting one obtains these automorphisms
by rescaling at a point in the boundary with either $C^1$ or $C^2$ regularity. 
This procedure actually finds automorphisms because a complex line has two real dimensions (see the proof of~\cite[Lemma 6.8]{F1991}).
In contrast we find a one-parameter group of automorphisms by rescaling at a point where the tangent cone is $\Rc$-proper and hence 
very far from being $C^1$. 
Finally, we should observe that the rescaling method cannot be used to find one parameter groups of automorphisms in the real projective setting.
\end{remark}

\begin{remark}\label{rmk:pnotqfail}
If $p\neq q$, an explicit computation for $\Bc_{p,q}$ shows that Theorem \ref{thm:tangent_cone} fails in this setting. This is one of the main problems that prevent us from extending our methods to the general case.
\end{remark}

\noindent \textbf{Step 3: Showing the automorphism group is simple and acts transitively} In the final part of the proof we show that $\Aut_0(\Omega)$, the connected component of the identity of $\Aut(\Omega)$, is a simple Lie group which acts transitively on $\Omega$. 

Our approach for this step is based on work of Farb and Weinberger~\cite{FW2008} who prove a number of remarkable rigidity results for compact aspherical Riemannian manifolds whose universal covers have non-discrete isometry groups. In particular, we combine their approach with the representation theory of Lie groups to establish the following: 

\begin{theorem}(see Theorem~\ref{thm:reduction} below)\label{thm:reduction_intro} Suppose $p > 1$, $\Mb \subset \Gr_p(\Rb^{2p})$ is an affine chart, $\Omega \subset \Mb$ is a bounded convex open subset of $\Mb$, and there exists a discrete group $\Gamma \leq \Aut(\Omega)$ so that $\Gamma \backslash \Omega$ is compact. Then at least one of the following holds:
\begin{enumerate}
\item a finite index subgroup of $\Gamma$ has non-trivial centralizer in $\PGL_{2p}(\Rb)$, 
\item there exists a nontrivial abelian normal unipotent group $U \leq \Aut(\Omega)$ such that $\Gamma \cap U$ is a cocompact lattice in $U$, 
\item $p=2$ and there exists a finite index subgroup $G'$ of $\Aut(\Omega)$ such that $G'=\Aut_0(\Omega)\times \Lambda$ for some discrete group $\Lambda$. Further up to conjugation
\begin{align*}
\Aut_0(\Omega) = \left\{ \begin{bmatrix} A & 0 \\ 0 & A \end{bmatrix} : A \in \SL_2(\Rb) \right\}
\end{align*}
and 
\begin{align*}
\Lambda \leq \left\{ \begin{bmatrix} a \Id_2 & b \Id_2 \\ c\Id_2 & d\Id_2 \end{bmatrix} : ad-bc = 1\right\}.
\end{align*}
\item $p=2$, $\Aut_0(\Omega) \leq \Aut(\Omega)$ has finite index and acts transitively on $\Omega$, and up to conjugation
\begin{align*}
\Aut_0(\Omega)=\left\{ \begin{bmatrix} aA & bA \\ cA & dA \end{bmatrix} : A \in \SL_2(\Rb), ad-bc = 1\right\}.
\end{align*}
\item $\Aut_0(\Omega)$ is a simple Lie group with trivial center that acts transitively on $\Omega$.
\end{enumerate}
\end{theorem}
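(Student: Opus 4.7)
By Corollary \ref{cor:rescaling}, $G := \Aut_0(\Omega)$ is a nontrivial connected Lie subgroup of $\PGL_{2p}(\Rb)$; by Proposition \ref{prop:proper}, $G$ acts properly on $\Omega$, so point stabilizers are compact. Since $\Gamma$ normalizes $G$, conjugation gives a homomorphism $\rho : \Gamma \to \Aut(G)$, whose composition with the projection $\Aut(G) \to \operatorname{Out}(G)$ I denote by $\bar{\rho}$. The plan, following the Farb--Weinberger philosophy for closed aspherical manifolds whose universal cover has non-discrete isometry group, is a case analysis based on the Levi decomposition of $G$ and on the action $\bar{\rho}$.

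First, suppose the solvable radical $R \trianglelefteq G$ is nontrivial. The identity component of the center of the nilradical of $R$ is a nontrivial connected abelian subgroup $A$ that is characteristic in $G$, hence normal in $\Aut(\Omega)$. Because $A$ preserves the bounded convex set $\Omega$ in a fixed affine chart, the semisimple part of the Jordan decomposition of each element of $A$ has eigenvalues of modulus one, so these semisimple parts generate a compact subgroup of $\Aut(\Omega)$. Modding out this compact piece leaves a nontrivial connected abelian unipotent subgroup $U \trianglelefteq \Aut(\Omega)$. Since the $U$-orbits in $\Omega$ are affine subspaces, and $\Gamma$ normalizes $U \cong \Rb^k$, cocompactness of $\Gamma$ on $\Omega$ forces $\Gamma \cap U$ to be a cocompact lattice in $U$. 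This is conclusion (2).

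Assume now that $G$ is semisimple. If $\bar{\rho}(\Gamma)$ is finite, a finite index subgroup $\Gamma_0 \leq \Gamma$ acts by inner automorphisms on $G$, so each $\gamma \in \Gamma_0$ can be written as $\gamma = g_\gamma h_\gamma$ with $g_\gamma \in G$ and $h_\gamma \in C_{\Aut(\Omega)}(G)$; then the finite center $Z(G)$ (and, if $Z(G)$ is trivial, any nontrivial element of $C_{\Aut(\Omega)}(G)^0$ obtained via a further index reduction) lies in the centralizer of $\Gamma_0$ in $\PGL_{2p}(\Rb)$, giving conclusion (1). Otherwise $\bar{\rho}(\Gamma)$ is infinite, which places strong structural restrictions on $G$ and in particular rules out $G$ being simple. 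If $G$ is simple with trivial center, the remaining task is transitivity: using that $G$ acts by $K_\Omega$-isometries (Theorem \ref{thm:intrinsic_metric}) and that the double quotient $\Gamma \backslash \Omega / G$ is compact, one rules out any nontrivial $G$-invariant quotient of $\Omega$ via the rescaling mechanism of Theorem \ref{thm:rescaling_intro}, yielding conclusion (5). Finally, if $G$ is semisimple but not simple, the $G$-action decomposes, and representation theory of semisimple subgroups of $\SL_{2p}(\Rb)$ preserving a bounded convex set forces $p = 2$ and identifies $G$ with a reducible embedding of $\SL_2(\Rb)$ in $\SL_4(\Rb)$; conclusions (3) and (4) are distinguished by whether the commuting $\SL_2$-block is merely discrete inside $\Aut(\Omega)$ or already continuous inside $\Aut_0(\Omega)$.

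The main obstacle is this last step: classifying all semisimple $G \leq \PGL_{2p}(\Rb)$ that preserve a bounded convex domain and are normalized by a cocompact discrete group. The $p = 2$ reducible cases survive precisely because of the dimensional coincidence $2p = 2 + 2$, and ruling out other semisimple embeddings requires combining the isometric $K_\Omega$-action, the affine-chart constraint, and careful representation theory of $\SL_{2p}(\Rb)$. A secondary difficulty lies in the solvable step, where one must use the boundedness of $\Omega$ in the affine chart to tightly control the Jordan structure of elements of $A$ and isolate the abelian unipotent $U$ sharply enough that $\Gamma \cap U$ is cocompact in $U$.
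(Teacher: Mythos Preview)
Your outline has the right shape (solvable radical analysis, then semisimple case leading to the exceptional $p=2$ pictures), but there are two genuine gaps.

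\textbf{The solvable step is incorrect.} You assert that because $A$ preserves a bounded convex set in an affine chart, the semisimple part of each element has all eigenvalues of modulus one. This is false: already in the Klein model $\Bc \subset \Pb(\Rb^{d+1})$, hyperbolic isometries preserve the bounded ball yet have eigenvalues $\lambda, 1, \dots, 1, \lambda^{-1}$ with $\lambda>1$. The paper does not try to force everything into case (2). Instead it takes the Zariski closure $C$ of $Z$, writes $C^0 = C_{ss} C_u$ as a product of its semisimple and unipotent parts, and branches: if $C_{ss}$ is nontrivial then, since the normalizer of a torus contains its centralizer with finite index, a finite-index subgroup of $\Gamma$ centralizes $C_{ss}$ and one lands in case (1); only when $C_{ss}=1$ is $Z$ entirely unipotent and one gets the group $U$ of case (2). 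Your argument omits the case-(1) branch entirely. Separately, the assertion that cocompactness of $\Gamma$ on $\Omega$ forces $\Gamma\cap U$ to be a lattice in $U$ is not automatic; the paper invokes a structure theorem for lattices in connected Lie groups (that $\Gamma_0\cap N$ is a lattice in the nilradical $N$ when a certain compact-factor obstruction vanishes) together with a Farb--Weinberger argument ruling out compact factors of $G^0$.

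\textbf{The semisimple step is missing its main tool.} The paper's engine for the remaining assertions is virtual cohomological dimension: $\vcd(\Gamma)=\dim\Omega=p^2$ because $\Gamma\backslash\Omega$ is a closed aspherical manifold. After the Farb--Weinberger product decomposition $G'\cong G^0\times\Lambda$, one decomposes $\Rb^{2p}\cong V_\rho^n$ under $\widehat{G}^0$ and bounds $\vcd(\Gamma_0)$ and $\vcd(\Lambda)$ by the dimensions of the relevant symmetric spaces; the resulting inequality in $p,n,d=\dim V_\rho$ forces $n=1$ except when $p=2$, $n=d=2$, which is exactly case (3). The same $\vcd$ count shows transitivity (any $G^0$-orbit has dimension at least $\dim(G^0/K)=\vcd(\Gamma_0)=p^2=\dim\Omega$) and, by rerunning the decomposition for a putative factor $G_1$ of $G^0$, shows simplicity except for the $p=2$ picture of case (4). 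Your proposal replaces all of this with an $\operatorname{Out}(G)$ dichotomy and an appeal to ``the rescaling mechanism'' for transitivity; neither substitute is made precise, and in particular your case-(1) conclusion from ``$\bar\rho(\Gamma)$ finite'' does not follow: when $G^0$ has trivial center and $\Lambda=1$ one is \emph{not} in case (1), and must instead prove transitivity and simplicity directly. The $\vcd$ argument is what makes this work.
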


In Sections~\ref{sec:center},~\ref{sec:unipotent}, and~\ref{sec:p_2} we use the dynamics of the action of $\PGL_{2p}(\Rb)$ on $\Gr_p(\Rb^{2p})$ to show that the first four cases in Theorem~\ref{thm:reduction_intro} are impossible. Finally in Section~\ref{sec:final_step} we use the classification of simple Lie groups and the representation theory of simple Lie groups to complete the proof Theorem~\ref{thm:main}.

\subsection*{Acknowledgments} 

We would like to thank Benson Farb and Ralf Spatzier for many helpful conversations. The first author gratefully acknowledges support from the University of Chicago while part of this work was done. The second author was partially supported by the National Science Foundation under grant number 1045119 and grant number 1400919.

\section{Preliminaries}
\label{sec:prelim}

\subsection{Notations} Given some object $o$ we will let $[o]$ be the projective equivalence class of $o$, for instance: if $v \in \Rb^{d+1} \setminus \{0\}$ let $[v]$ denote the image of $v$ in $\Pb(\Rb^{d+1})$; if $\phi \in \GL_{d+1}(\Rb)$ let $[\phi]$ denote the image of $\phi$ in $\PGL_{d+1}(\Rb)$; if $T \in \Lin(\Rb^{d_1+1}, \Rb^{d_2+1}) \setminus\{0\}$ let $[T]$ denote the image of $T$ in $\Pb(\Lin(\Rb^{d_1+1}, \Rb^{d_2+1}))$. 

\subsection{The Hilbert metric}\label{subsec:hilbert} The Hilbert metric is classically only defined for convex domains in real projective space, but Kobayashi~\cite{K1977} gave a construction that works for any open connected domain in real projective space. In this subsection we recall Kobayashi's construction. 

Given four points $a, x,y, b \in \Pb(\Rb^{d})$ that are collinear, that is contained in a projective line, one can define the cross ratio by 
\begin{align*}
[a;x;y;b] = \log \frac{ \abs{x-b}\abs{y-a}}{\abs{x-a}\abs{y-b}}.
\end{align*}
The cross ratio is $\PGL_d(\Rb)$-invariant in the sense that 
\begin{align*}
[a;x;y;b] = [\varphi a;\varphi x;\varphi y;\varphi b]
\end{align*}
for any $\varphi \in \PGL_d(\Rb)$. 

Next let 
\begin{align*}
I:= \{ [1:t] \in \Pb(\Rb^2) : \abs{t} < 1\}
\end{align*}
be the unit interval and consider the function $H_I:I\times I \rightarrow \Rb_{\geq 0}$ given by
\begin{align*}
H_I(s,t) = \abs{ \log [-1;s;t;1] }.
\end{align*}
Then $H_I$ is a  complete $\Aut(I)$-invariant length metric on $I$. 

Now suppose that $\Omega \subset \Pb(\Rb^d)$ is an open connected set. Let 
\begin{align*}
\Proj(I, \Omega) \subset \Pb( \End(\Rb^2, \Rb^d))
\end{align*}
 be the set of projective maps $T$ so that $I \cap \ker T = \emptyset$ and $T(I) \subset \Omega$. Then define a function $\rho_\Omega : \Omega \times \Omega \rightarrow \Rb \cup \{ \infty\}$ as follows: 
\begin{align*}
\rho_\Omega(x,y) := \inf\left\{ H_I(s,t) : \text{ there exists } f \in \Proj(I, \Omega) \text{ with } f(s) = x \text{ and } f(t) = y \right\}.
\end{align*}
Finally, using $\rho_{\Omega}$, one defines the \emph{pseudo-metric} $K_{\Omega}$ as
\begin{align*}
K_\Omega(x,y) = \inf\left\{ \sum_{i=0}^{N-1} \rho_\Omega(x_i,x_{i+1}) : N>0, x_0, \dots, x_N \in \Omega, x_0 = x, x_N = y\right\}.
\end{align*}

With this definition, Kobayashi proved the following:

\begin{theorem}\cite{K1977} Suppose $\Omega \subset \Pb(\Rb^d)$ is a open connected set. Then 
\begin{enumerate}
\item $K_\Omega$ is a $\Aut(\Omega)$-invariant pseudo-metric on $\Omega$, 
\item if $\Omega$ is bounded in an affine chart then $K_{\Omega}$ is a metric, 
\item if $\Omega$ is convex then $K_{\Omega}$ coincides with the Hilbert metric, and
\item $K_{\Omega}$ is a complete metric if and only if $\Omega$ is convex. 
\end{enumerate}
\end{theorem}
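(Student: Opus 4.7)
The plan is to establish the four claims in order, using as the main technical tool the monotonicity property: whenever $\Omega_1 \subset \Omega_2$, one has $K_{\Omega_1} \geq K_{\Omega_2}$ on $\Omega_1 \times \Omega_1$, which is immediate from $\Proj(I,\Omega_1) \subset \Proj(I,\Omega_2)$. Throughout I interpret ``convex'' in parts (3)--(4) to mean a proper convex domain in some affine chart, so that the classical Hilbert metric is itself a metric. For part (1), the symmetry and non-negativity of $\rho_\Omega$ follow from those of $H_I$ after precomposing any $f \in \Proj(I,\Omega)$ with the involution of $I$ that swaps the two designated points, and the triangle inequality for $K_\Omega$ is built into the chain infimum. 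The $\Aut(\Omega)$-invariance is the observation that if $\varphi \in \Aut(\Omega) \leq \PGL_d(\Rb)$ and $f \in \Proj(I,\Omega)$, then $\varphi \circ f$ is again in $\Proj(I,\Omega)$, so postcomposition by $\varphi$ permutes $\Proj(I,\Omega)$ while preserving $H_I$. For part (2), an open $\Omega$ bounded in some affine chart can be enclosed in a bounded convex open set $C$ of the same chart; monotonicity gives $K_\Omega \geq K_C$, and part (3) identifies $K_C$ with the Hilbert metric on $C$, which separates points.

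For part (3), assume $\Omega$ is convex. Given $x \neq y$ in $\Omega$, the projective line $\ell$ through them meets $\partial \Omega$ in two points $a,b$, and the component of $\ell \setminus \{a,b\}$ containing $x$ and $y$ is projectively isomorphic to $I$. Using the resulting embedding in $\Proj(I,\Omega)$ gives $\rho_\Omega(x,y) \leq H_\Omega(x,y)$, where $H_\Omega$ denotes the Hilbert metric. Conversely, any $f \in \Proj(I,\Omega)$ sending a pair $(s,t)$ to $(x,y)$ has $f(I) \subset \ell$ (as two distinct points determine the line) and, by convexity of $\Omega$, $f(I) \subset (a,b) \cap \Omega$; standard monotonicity of the Hilbert metric under inclusion of subintervals of a line then yields $H_I(s,t) \geq H_\Omega(x,y)$. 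Hence $\rho_\Omega = H_\Omega$, and since $H_\Omega$ itself already satisfies the triangle inequality, the chain infimum in the definition of $K_\Omega$ is attained by a one-step chain, giving $K_\Omega = H_\Omega$.

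For part (4), the forward direction is immediate from part (3) and the classical completeness of the Hilbert metric on a proper convex domain. The converse is the main obstacle. Assuming $\Omega$ is not convex (and bounded in an affine chart, else there is nothing to show), the strategy is to produce a $K_\Omega$-Cauchy sequence with no limit in $\Omega$: choose $x, y \in \Omega$ whose open segment exits $\Omega$, fix a boundary point $p$ on this segment where it first leaves $\Omega$, and construct sequences $x_n, y_n \in \Omega$ accumulating on $p$, together with short chains of projective embeddings of $I$ that detour around $p$ while remaining in $\Omega$. The delicate point is to bound the $K_\Omega$-length of such detour chains uniformly; one approach is to pass to a small neighborhood of $p$ inside the convex hull of $\Omega$, excise the ``missing bite'' near $p$, and exploit monotonicity in the other direction by comparing to a slightly smaller convex set that still captures the detour. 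Making this uniform bound rigorous for arbitrary non-convex $\Omega$ is where the real technical work resides.
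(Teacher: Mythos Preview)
The paper does not prove this theorem; it is quoted from Kobayashi \cite{K1977} as background with no argument given, so there is no ``paper's own proof'' to compare against. Your treatment of parts (1)--(3) and the direction ``properly convex $\Rightarrow$ complete'' in (4) is correct and is the standard route.

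The converse in (4) is where your proposal has a genuine gap, as you yourself flag. Two specific problems. First, your parenthetical ``(and bounded in an affine chart, else there is nothing to show)'' is not justified: an open connected $\Omega$ that fails to be bounded in every affine chart need not contain a full projective line (or a line minus a point), so you cannot conclude immediately that $K_\Omega$ fails to be a metric; this case requires its own argument. Second, in your detour construction you propose to bound the $K_\Omega$-length of the detour chains by ``comparing to a slightly smaller convex set.'' Monotonicity runs the wrong way for this: passing to a smaller domain can only \emph{increase} $K$, so comparison with a convex subset gives no upper bound. What Kobayashi actually does is work directly with the cross-ratio along short projective segments near the non-convex boundary point $p$: one exhibits, for each $n$, a chain of at most two projective intervals in $\Omega$ linking points $x_n, y_n \to p$ whose total $\rho_\Omega$-length is controlled explicitly (by choosing the segments to have endpoints in $\partial\Omega$ far from $p$ on one side and close to $p$ on the other), and then shows the resulting sequence is $K_\Omega$-Cauchy but escapes every compact subset of $\Omega$. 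The estimate is elementary once set up, but it does require a careful choice of the geometry near $p$ rather than an abstract monotonicity comparison.
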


\section{The Grassmannians}

In this expository section we recall the two standard models of the Grassmannians, define affine charts, and describe the projective lines contained in the Grassmannians. 

\subsection{The matrix model} 
We can identify $\Gr_p(\Rb^{p+q})$ with the quotient 
\begin{align*}
\{ X \in M_{p+q, p}(\Rb) : \rank X = p \} / \GL_{p}(\Rb)
\end{align*}
where $X \mapsto \operatorname{Im}(X)$. 

\subsection{The projective model}\label{subsec:embedding}

We have a natural embedding $\Gr_p(\Rb^{p+q}) \rightarrow \Pb(\wedge^p \Rb^{p+q})$ defined by
\begin{align*}
\Spanset(v_1, \dots, v_p) \rightarrow [v_1 \wedge \dots \wedge v_p].
\end{align*}
This is well-defined and the image is a closed algebraic set in $\Pb(\wedge^p \Rb^{p+q})$.

\subsection{Affine charts}\label{subsec:affine_charts}

Suppose $W_0$ is a $q$-dimensional subspace of $\Rb^{p+q}$. Then consider the set 
 \begin{align*}
 \Mb := \{ U \in \Gr_p(\Rb^{p+q}) : U \cap W_0 = (0)\}.
 \end{align*}
 Note that $\Mb$ is an open dense subset of $\Gr_p(\Rb^{p+q})$. We call $\Mb$ an affine chart. 

 If we fix a subspace $U_0 \in \Mb$, we can identify $\Mb$ with the set $\Hom(U_0, W_0)$ via 
\begin{align*}
\Hom(U_0, W_0) &\longrightarrow \Mb\\
T&\mapsto \operatorname{Graph}(T): = \{ (\Id+T)u : u \in U_0\}.
\end{align*} 
Fixing bases of $U_0$ and $W_0$ gives an identification of $\Mb$ with the space of $q$-by-$p$ real matrices. Notice that a different choice of bases or of $U_0$ only changes this identification by a map of the form 
\begin{align*}
X \mapsto A X B+C
\end{align*}
where $A \in \GL_q(\Rb)$, $B \in \GL_p(\Rb)$, and $C$ is a $q$-by-$p$ matrix. This observation leads to the next definition:

\begin{definition}
For an affine chart $\Mb \subset \Gr_p(\Rb^{p+q})$ let $\Aff(\Mb)$ be the transformations of $\Mb$ that are affine maps with respect to some (hence any) identification of $\Mb$ with the space of $q$-by-$p$ real matrices.
\end{definition}

If $\Mb$ is an affine chart then there exists $g \in \PGL_{p+q}(\Rb)$ so that 
\begin{align*}
g \Mb = \left\{ \begin{bmatrix} \Id_p \\ X \end{bmatrix} : X \in M_{q,p}(\Rb) \right\} 
\end{align*}
in the matrix model. Moreover, if $e_1, \dots, e_{p+q}$ is the standard basis of $\Rb^{p+q}$ then 
\begin{align*}
g\Mb = \{ \left[ (e_1+v_1) \wedge \dots \wedge (e_p+v_p) \right]: v_1, \dots, v_p \in \Spanset\{e_{p+1}, \dots, e_{p+q} \}\}
\end{align*}
in the projective model. 

\subsection{Projective lines in the two models }

\begin{lemma}
If $\ell$ is a projective line in $\Pb(\wedge^p \Rb^{p+q})$ contained in $\Gr_p(\Rb^{p+q})$ then there exists $v_1, \dots, v_p, w \in \Rb^{p+q}$ so that 
\begin{align*}
\ell = \Big\{ [v_1 \wedge \dots \wedge v_{p-1} \wedge (v_p+t w)]  : t \in \Rb\Big\} \cup \Big\{ [v_1 \wedge \dots \wedge v_{p-1} \wedge w]\Big\}.
\end{align*}
\end{lemma}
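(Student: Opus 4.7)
The plan is to start with two distinct points on $\ell$ and translate the statement ``the whole line lies in $\Gr_p(\Rb^{p+q})$'' into a statement about the intersection of the two corresponding $p$-planes. Concretely, pick distinct $[\alpha], [\beta] \in \ell \subset \Gr_p(\Rb^{p+q})$ with $\alpha = u_1 \wedge \cdots \wedge u_p$ and $\beta = w_1 \wedge \cdots \wedge w_p$ decomposable $p$-vectors, representing $p$-planes $U, W \subset \Rb^{p+q}$. Every other point of $\ell$ is $[\alpha + t\beta]$ for some $t \in \Rb$, and the hypothesis is that $\alpha + t\beta$ is decomposable for every $t \in \Rb$.

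The main step is to show that $\dim(U \cap W) = p-1$. Let $k = \dim(U \cap W)$. Choose a basis $e_1, \dots, e_k$ of $U \cap W$ and extend it to a basis $e_1, \dots, e_k, f_1, \dots, f_{p-k}$ of $U$ and to a basis $e_1, \dots, e_k, g_1, \dots, g_{p-k}$ of $W$; by construction $e_1, \dots, e_k, f_1, \dots, f_{p-k}, g_1, \dots, g_{p-k}$ are linearly independent. After rescaling,
\begin{align*}
\alpha + t\beta = e_1 \wedge \dots \wedge e_k \wedge \bigl( f_1 \wedge \dots \wedge f_{p-k} + t\, g_1 \wedge \dots \wedge g_{p-k}\bigr).
\end{align*}
Set $m = p - k$ and $\omega_t = f_1 \wedge \dots \wedge f_m + t\, g_1 \wedge \dots \wedge g_m$. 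I will argue that if $m \geq 2$ then $\omega_t$ is not decomposable for $t \neq 0$, which forces $m = 1$. The cleanest way is via the support of an alternating tensor: for a nonzero decomposable $m$-vector $u_1 \wedge \dots \wedge u_m$ the support (i.e.\ the smallest subspace $V' \subset \Rb^{p+q}$ with the tensor lying in $\wedge^m V'$) equals $\Spanset(u_1, \dots, u_m)$ and has dimension exactly $m$. On the other hand, for $t \neq 0$ and $m \geq 2$, the support of $\omega_t$ is the full $2m$-dimensional space $\Spanset(f_1, \dots, f_m, g_1, \dots, g_m)$, since the $f_i$ and $g_j$ are independent. This contradicts decomposability, so $m = 1$, i.e., $k = p-1$. (An alternative route, in case the support calculation feels too terse, is the direct Pl\"ucker relation check: for $m = 2$ one has $\omega_t \wedge \omega_t = 2t\, f_1 \wedge f_2 \wedge g_1 \wedge g_2 \neq 0$.)

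Given $\dim(U \cap W) = p-1$, write $v_i := e_i$ for $i=1,\dots,p-1$, set $v_p := f_1$ and $w := g_1$. Then for every $t \in \Rb$,
\begin{align*}
\alpha + t\beta \;=\; v_1 \wedge \dots \wedge v_{p-1} \wedge (v_p + t w),
\end{align*}
so $[\alpha + t\beta] = [v_1 \wedge \dots \wedge v_{p-1} \wedge (v_p + t w)]$, and the remaining ``point at infinity'' of $\ell$ is $[\beta] = [v_1 \wedge \dots \wedge v_{p-1} \wedge w]$, giving the claimed parametrization. The main obstacle is the non-decomposability claim for $\omega_t$ when $m \geq 2$; everything else is a change of basis.
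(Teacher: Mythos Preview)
Your proof is correct, modulo one implicit step worth making explicit: you use that decomposability of $e_1 \wedge \dots \wedge e_k \wedge \omega_t$ (which is the hypothesis on $\alpha+t\beta$) forces decomposability of $\omega_t$ itself. This holds because $\omega_t \in \wedge^m W$ with $W$ a complement of $\Spanset(e_1,\dots,e_k)$, wedging by $e_1\wedge\dots\wedge e_k$ is injective on $\wedge^m W$, and any decomposable representative of $\alpha+t\beta$ must contain the $e_i$ and can then be adjusted modulo them to lie in $W$; but it deserves a sentence. Alternatively you can run the support argument directly on $\alpha+t\beta$: for $m\geq 2$ and $t\neq 0$, contracting with $e_1^*\wedge\dots\wedge e_k^*\wedge f_2^*\wedge\dots\wedge f_m^*$ extracts $f_1$, and similarly every $e_i,f_i,g_j$ lies in the support, giving dimension $k+2m>p$.

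The paper's proof follows the same architecture---pick two points, show the associated $p$-planes meet in dimension $p-1$, then parametrize---but uses the dual invariant. Rather than the support of $x_t=\alpha+t\beta$, it analyzes the annihilator $\ker T_{x_t}=\{v:v\wedge x_t=0\}$, which for decomposable $x_t$ coincides with the represented $p$-plane and hence has dimension $p$; the paper computes this kernel directly on $x_t$ without first factoring out the common subspace. Your support/Pl\"ucker argument is a bit more conceptual, while the paper's kernel computation is more hands-on but sidesteps the factoring lemma above.
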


\begin{proof}
Recall that for an element $x \in \wedge^p \Rb^{p+q}$, we have that $[x]$ belongs to $\Gr_p(\Rb^{p+q})$ if and only if the linear map $T_x : \Rb^{p+q} \rightarrow \wedge^{p+1} \Rb^{p+q}$ given by $T_x(v) = v \wedge x$ has rank $q$.

Now since $\ell$ is a projective line there exist $w_1, \dots, w_p, v_1, \dots, v_p \in \Rb^{p+q}$ so that 
\begin{align*}
\ell = \Big\{ [(v_1 \wedge \dots \wedge v_p) + t( w_1 \wedge \dots \wedge w_p) ]: t \in \Rb\Big\} \cup \Big\{  [w_1 \wedge \dots \wedge w_p]\Big\}.
\end{align*}
Let
\begin{align*}
V = \Spanset\{ v_1, \dots, v_p\} \cap \Spanset\{ w_1, \dots, w_p\}
\end{align*}
and $r = \dimension V$. We claim that $r=p-1$.

We can assume that $v_i = w_i$ for $1 \leq i \leq r$ and thus $v_1, \dots, v_p, w_{r+1}, \dots, w_p$ are all linearly independent.  So if 
\begin{align*}
x_t = (v_1 \wedge \dots \wedge v_p) + t( w_1 \wedge \dots \wedge w_p)
\end{align*}
and $v \wedge x_t =0$ then either $v \in V$ or 
\begin{align*}
v \wedge v_1 \wedge \dots \wedge v_p =  -t ( v \wedge w_1 \wedge \dots \wedge w_p) \neq 0.
\end{align*}
This last case is only possible when $r = p-1$ and $v= v_p -t w_p$.  Since $\dimension \ker T_{x_t} = p$ and $\dimension V = r \leq p-1$ this implies that $r=p-1$. Then 
\begin{align*}
[(v_1 \wedge \dots \wedge v_p) + t( w_1 \wedge \dots \wedge w_p) ] = [ v_1 \wedge \dots \wedge v_{p-1} \wedge (v_p + t w_p) ]
\end{align*}
for all $t \in \Rb$, which implies the lemma. 
\end{proof}

\begin{corollary}
Suppose $x,y \in \Gr_p(\Rb^{p+q})$. Then the following are equivalent:
\begin{enumerate}
\item There exists a  projective line $\ell$ in $\Pb(\wedge^p \Rb^{p+q})$ contained in $\Gr_p(\Rb^{p+q})$ so that $x,y \in \ell$, 
\item $\dimension(x \cap y) \geq p-1$.
\end{enumerate}
\end{corollary}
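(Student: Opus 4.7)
The plan is to use the previous lemma directly: it classifies projective lines in the Grassmannian, and from its normal form the common $(p-1)$-dimensional subspace $\Spanset(v_1,\dots,v_{p-1})$ is visibly present. So one direction is immediate, and the other is a short explicit construction.

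For $(1) \Rightarrow (2)$: apply the lemma to write any line $\ell \subset \Gr_p(\Rb^{p+q})$ through $x$ and $y$ in the form
\[
\ell = \Big\{ [v_1 \wedge \dots \wedge v_{p-1} \wedge (v_p+t w)]  : t \in \Rb\Big\} \cup \Big\{ [v_1 \wedge \dots \wedge v_{p-1} \wedge w]\Big\}.
\]
Every point of $\ell$, viewed as a $p$-plane in $\Rb^{p+q}$, contains the fixed $(p-1)$-plane $W := \Spanset(v_1,\dots,v_{p-1})$. In particular $W \subseteq x \cap y$, so $\dimension(x\cap y) \geq p-1$.

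For $(2) \Rightarrow (1)$: if $x = y$ any projective line in $\Gr_p(\Rb^{p+q})$ through $x$ works (for instance, take any basis $v_1,\dots,v_p$ of $x$ and any $w \notin x$ and use the formula above; at $t=0$ one recovers $x$). If $\dimension(x \cap y) = p-1$, choose a basis $v_1,\dots,v_{p-1}$ of $x \cap y$, extend by $v_p$ to a basis of $x$, and extend by $w$ to a basis of $y$. Define $\ell$ by the displayed formula above. At $t = 0$ this gives $[v_1 \wedge \dots \wedge v_p] = x$, and the extra point $[v_1 \wedge \dots \wedge v_{p-1} \wedge w]$ equals $y$; that $\ell$ is a projective line contained in the Grassmannian follows from the lemma (or directly, since $v_p + tw \notin \Spanset(v_1,\dots,v_{p-1})$ for every $t$, because $v_p \notin x\cap y$ and $w \notin x$).

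The only real care needed is the linear-independence check in the construction, which reduces to verifying that $w \notin x$ (otherwise $w \in x \cap y = \Spanset(v_1,\dots,v_{p-1})$, contradicting that $v_1,\dots,v_{p-1},w$ is a basis of $y$); this is the sole potential pitfall, and it is routine. No serious obstacle is expected beyond this bookkeeping.
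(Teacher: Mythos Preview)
Your proof is correct and is essentially the argument the paper intends: the corollary is stated without proof immediately after the lemma, so the reader is expected to extract exactly the two observations you make (the common $(p-1)$-plane $\Spanset(v_1,\dots,v_{p-1})$ for $(1)\Rightarrow(2)$, and the explicit construction for $(2)\Rightarrow(1)$). One small remark: when you write that ``$\ell$ is a projective line contained in the Grassmannian follows from the lemma,'' note that the lemma as stated only gives the forward direction; your parenthetical direct check is what actually does the work there.
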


\begin{lemma}
\label{lem:rank_one_lines}
Suppose $\Mb$ is an affine chart in $\Gr_p(\Rb^{p+q})$ and we identify $\Mb$ with the set of $q$-by-$p$ matrices. Then
\begin{enumerate}
\item if $\ell$ is a projective line in $\Pb(\wedge^p \Rb^{p+q})$ contained in $\Gr_p(\Rb^{p+q})$ and $\ell \cap \Mb \neq \emptyset$, then
\begin{align*}
\ell \cap \Mb = \{ X+tS : t \in \Rb\}
\end{align*}
for some $X,S \in \Mb$ with $\rank(S)=1$. 
\item Conversely, if $X,S \in \Mb$ and $\rank(S)=1$ then the closure of 
\begin{align*}
\{ X+tS : t \in \Rb\}
\end{align*}
in $\Pb(\wedge^p \Rb^{p+q})$ is a projective line. 
\end{enumerate}
\end{lemma}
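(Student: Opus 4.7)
The plan is to use the previous lemma, which shows that every projective line $\ell \subset \Pb(\wedge^p \Rb^{p+q})$ lying in $\Gr_p(\Rb^{p+q})$ is a \emph{pencil}: there exist a $(p-1)$-dimensional subspace $V$ and a $(p+1)$-dimensional subspace $W$ with $V \subset W$ such that $\ell = \{U \in \Gr_p(\Rb^{p+q}) : V \subset U \subset W\}$. Let $W_0 \subset \Rb^{p+q}$ be the $q$-plane defining the affine chart, so $\Mb = \{U : U \cap W_0 = 0\}$ is identified with $\Hom(U_0, W_0) \cong M_{q,p}(\Rb)$ via $T \mapsto \operatorname{Graph}(T)$ for a chosen basepoint $U_0 \in \Mb$.

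For part (1), first carry out a short dimension count to locate $\ell \cap \Mb$ inside the pencil. If $V \cap W_0 \neq 0$, then every $U \supset V$ meets $W_0$ and $\ell \cap \Mb = \emptyset$; if $\dim(W \cap W_0) \geq 2$, then $W \subset V + W_0$, which again forces $\ell \cap \Mb = \emptyset$. So the standing hypothesis $\ell \cap \Mb \neq \emptyset$ forces $V \cap W_0 = 0$ and $\dim(W \cap W_0) = 1$, and then $U_* := W \cap (V + W_0)$ is the unique element of $\ell$ outside $\Mb$. After placing the basepoint at $U_0 \in \ell \cap \Mb$ and writing $U_0 = V \oplus \Rb u_0$ with $w$ spanning $W \cap W_0$, the remaining elements of $\ell$ are $U_t = V \oplus \Rb(u_0 + tw)$. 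A direct graph computation identifies $U_t$ with the linear map $T_t \in \Hom(U_0, W_0)$ that vanishes on $V$ and sends $u_0 \mapsto tw$; this map has rank one, so $\ell \cap \Mb = \{T_0 + tS : t \in \Rb\}$ with $T_0 = 0$ and $\rank(S) = 1$. Translating the basepoint gives the statement for an arbitrary $X \in \ell \cap \Mb$.

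For part (2), run the same argument in reverse. Given $X \in \Mb$ and a rank-one matrix $S$, set $V' := \ker S \subset U_0$ (a $(p-1)$-plane) and $\Rb w := \operatorname{Im}(S) \subset W_0$. The key observation is that every $\operatorname{Graph}(X + tS)$ contains the fixed $(p-1)$-plane $V := (\Id + X)V'$ and lies in the fixed $(p+1)$-plane $W := \operatorname{Graph}(X) + \Rb w$. Thus $\{\operatorname{Graph}(X + tS) : t \in \Rb\}$ is a subset of the pencil $\{U : V \subset U \subset W\}$, which by the previous lemma is a projective line in $\Pb(\wedge^p \Rb^{p+q})$; by part (1) the missing point is exactly $U_* = V + \Rb w$, so the closure is the full line.

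The only delicate piece is the dimension bookkeeping that rules out the cases where $\ell \cap \Mb$ would either be empty or fill all of $\ell$; once those are dispatched, the explicit graph formula pins down the rank-one perturbation and both directions of the lemma follow in parallel.
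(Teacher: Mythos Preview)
Your proof is correct and follows essentially the same approach as the paper's: both use the previous lemma to describe projective lines in $\Gr_p(\Rb^{p+q})$ as pencils $\{U : V \subset U \subset W\}$ (equivalently, as $\{[v_1 \wedge \dots \wedge v_{p-1} \wedge (v_p + tw)]\}$), then identify the intersection with the affine chart as a rank-one affine line via the graph parametrization. The only difference is presentational: the paper simply asserts that the vectors can be modified so that $[v_1 \wedge \dots \wedge v_p] \in \Mb$ and $w \in W_0$, whereas you carry out the dimension count ($V \cap W_0 = 0$, $\dim(W \cap W_0) = 1$) explicitly to justify this normalization and to locate the single missing point $U_*$; this makes your argument slightly more self-contained but is not a genuinely different route.
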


\begin{proof}
First suppose that $\ell$ is a projective line contained in $\Gr_p(\Rb^{p+q})$ and $\ell \cap \Mb \neq \emptyset$. There exists some $W_0 \in \Gr_q(\Rb^{p+q})$ such that $\Mb = \{ U \in \Gr_p(V) : U \cap W_0 = (0) \}$. By the previous Lemma we can assume 
\begin{align*}
\ell = \Big\{ [v_1 \wedge \dots \wedge v_{p-1} \wedge (v_p+t w)]  : t \in \Rb\Big\} \cup \Big\{ [v_1 \wedge \dots \wedge v_{p-1} \wedge w]\Big\}.
\end{align*}
for some $w,v_1, \dots, v_p \in \Rb^{p+q}$. By modifying these vectors we can assume that $[v_1 \wedge \dots \wedge v_p] \in \Mb$ and $w \in W_0$ (in particular $[w \wedge v_2 \wedge \dots \wedge v_p] \notin \Mb$). Let $U_0 = \Spanset\{ v_1, \dots, v_p\}$ and identify $\Mb$ with $\Hom(U_0, W_0)$. Under this identification $[v_1 \wedge \dots \wedge v_{p-1} \wedge (v_p+t w)]$ corresponds to the homomorphism $tS$ where $S$ is the linear map
\begin{align*}
S\left(\sum_{i=1}^p \alpha_i v_i\right) = \alpha_1 w.
\end{align*}
Then $\ell \cap\Mb = \{ tS : t \in \Rb\}$. Then the first part of the lemma follows from the change of coordinates formula in Subsection~\ref{subsec:affine_charts}.

Next suppose that $X,S \in \Mb$ and $\rank(S)=1$. There exists a basis $v_1, \dots, v_p \in \Rb^{p}$ so that $v_1, \dots, v_{p-1} \in \ker S$ and $Sv_p \neq 0$. Then $X+tS$ corresponds to the subspace
\begin{align*}
\Spanset\{ v_1+X(v_1), \dots, v_{p-1}+X(v_{p-1}), v_p+X(v_p)+tS(v_p) \}
\end{align*}
and hence in the projective model the line
 \begin{align*}
\left[ \Big(v_1+X(v_1)\Big) \wedge \dots \wedge \Big(v_{p-1}+X(v_{p-1})\Big) \wedge \Big(v_p+X(v_p)+tS(v_p)\Big) \right].
\end{align*}
So the closure of $\{ X+tS : t \in \Rb\}$ in $\Pb(\wedge^p \Rb^{p+q})$ is a projective line. 
\end{proof}

\subsection{A Trivial Example}\label{subsec:trivial_ex}

In this subsection we observe that an entire affine chart is an example of a convex divisible domain. Using the matrix model of $\Gr_p(\Rb^{p+q})$ let
\begin{align*}
\Omega = \left\{ \begin{bmatrix} \Id_p \\ X \end{bmatrix} : X \in M_{q,p}(\Rb) \right\} .
\end{align*}
Then 
\begin{align*}
\Aut(\Omega) = \left\{ \begin{bmatrix} \Id_p & 0 \\ Y & \Id_q \end{bmatrix} : Y \in M_{q,p}(\Rb) \right\}
\end{align*}
and 
\begin{align*}
\Gamma = \left\{ \begin{bmatrix} \Id_p & 0 \\ Y & \Id_q \end{bmatrix} : Y \in M_{q,p}(\Zb) \right\}
\end{align*}
is a discrete group which acts freely, properly discontinuously, and cocompactly on $\Omega$. Notice that the quotient $\Gamma \backslash \Omega$ can be identified with the torus of dimension $pq$. 

\part{An invariant metric}

\section{The metric}
\label{sec:metric}

The purpose of this section is to extend Kobayashi's definition of the Hilbert metric to domains in $\Gr_p(\Rb^{p+q})$. 

Suppose that $\Omega \subset \Gr_p(\Rb^{p+q})$ is open and connected. Recall from Subsection~\ref{subsec:hilbert} that $I \subset \Pb(\Rb^2)$ is the open unit interval and $H_I$ is the Hilbert metric on $I$. Using the projective model of the Grassmannians view $\Omega$ as a subset of $\Pb(\wedge^p \Rb^{p+q})$ and let 
\begin{align*}
\Proj(I, \Omega) \subset \Pb( \End(\Rb^2, \wedge^p \Rb^{p+q}))
\end{align*}
be the set of projective maps so that $I \cap \ker T = \emptyset$ and $T(I) \subset \Omega$. Then define a function $\rho_\Omega : \Omega \times \Omega \rightarrow \Rb \cup \{ \infty\}$ as follows: 
\begin{align*}
\rho_\Omega(x,y) := \inf\left\{ H_I(s,t) : \text{ there exists } f \in \Proj(I, \Omega) \text{ with } f(s) = x \text{ and } f(t) = y \right\}.
\end{align*}
We then define
\begin{align*}
K_{\Omega}^{(n)}(x,y)  := \inf\left\{ \sum_{i=0}^{n-1} \rho_{\Omega}(x_i,x_{i+1}) : x= x_0, x_1, \dots, x_{n-1}, x_n=y \in \Omega \right\}.
\end{align*}
In particular $K_{\Omega}^{(n)}(x,y)$ is finite precisely when there is a path in $\Omega$ from $x$ to $y$ consisting of at most $n$ segments of projective lines. Finally we set 
\begin{align*}
K_{\Omega}(x,y) := \lim_{n \rightarrow \infty} K_{\Omega}^{(n)}(x,y).
\end{align*}

\begin{remark} For $x,y \in \Omega$ it is possible to explicitly compute $\rho_\Omega(x,y)$:
\begin{enumerate}
\item if $\dim(x \cap y) < p-1$ then $\rho_\Omega(x,y) = \infty$, 
\item if $\dim(x \cap y) \geq p-1$, let $\ell$ be the projective line in $\Gr_p(\Rb^{p+q})$ containing $x$ and $y$. If $x,y$ are in different connected components of $\ell \cap \Omega$ then $\rho_\Omega(x,y) = \infty$. Finally if $x,y$ are contained in the same connected component $\Oc$ of $\ell \cap \Omega$ then 
\begin{align*}
\rho_\Omega(x,y) = \abs{\log[a;x;y;b]}
\end{align*}
where $a,b$ are the endpoints of $\Oc$. 
\end{enumerate}
\end{remark}

\begin{proposition}\label{prop:basic_d} If $\Omega \subset \Gr_p(\Rb^{p+q})$ is an open connected set then:
\ 
\begin{enumerate}
\item if $\varphi \in \PGL_{p+q}(\Rb)$ then $K_{\Omega}(x,y) = K_{\varphi \Omega}(\varphi x, \varphi y)$ for all $x,y \in \Omega$, 
\item $K_{\Omega}(x,y) \leq K_{\Omega}(x,z) + K_{\Omega}(z,y)$ for any $x,y,z \in \Omega$, 
\item if $\Omega_1 \subset \Omega_2$ then $K_{\Omega_2}(x,y) \leq K_{\Omega_1}(x,y)$ for all $x,y \in \Omega_1$, 
\item for any compact set $K \subset \Omega$ there exists $N > 0$ such that $K_{\Omega}^{(N)}(x,y) < \infty$ for every $x, y \in K$,
\item $K_{\Omega}$ is continuous. 
\end{enumerate}
\end{proposition}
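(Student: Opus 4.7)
The plan is to handle parts (1)--(3) as formal consequences of the definition, to prove (4) by decomposing displacement vectors in an affine chart as sums of $\min(p,q)$ rank-one matrices via the singular value decomposition (SVD), and to deduce (5) from (2), (4), and the explicit formula for $\rho_\Omega$ along a rank-one segment given in the preceding remark.

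For (1), the group $\PGL_{p+q}(\Rb)$ acts on $\Pb(\wedge^p \Rb^{p+q})$ by functoriality of $\wedge^p$, and this action preserves $\Gr_p(\Rb^{p+q})$ and sends projective lines to projective lines; hence postcomposition with $\varphi$ gives a bijection $\Proj(I,\Omega) \to \Proj(I,\varphi\Omega)$ that preserves $H_I$. Thus $\rho_\Omega(x,y) = \rho_{\varphi\Omega}(\varphi x,\varphi y)$, which propagates to $K_\Omega^{(n)}$ and then to $K_\Omega$. Part (2) follows by concatenating an optimal chain from $x$ to $z$ with one from $z$ to $y$, giving $K_\Omega^{(m+n)}(x,y) \leq K_\Omega^{(m)}(x,z) + K_\Omega^{(n)}(z,y)$; since $K_\Omega^{(N)}(x,y)$ is non-increasing in $N$, its limit $K_\Omega$ satisfies the triangle inequality. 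Part (3) is because $\Proj(I,\Omega_1) \subseteq \Proj(I,\Omega_2)$ when $\Omega_1 \subseteq \Omega_2$, whence $\rho_{\Omega_2} \leq \rho_{\Omega_1}$ on $\Omega_1 \times \Omega_1$, and any $\Omega_1$-chain is an $\Omega_2$-chain.

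Part (4) is the substantial step. I would fix $x_0 \in \Omega$, choose an affine chart $\Mb \ni x_0$ identified with $M_{q,p}(\Rb)$ via Subsection~\ref{subsec:affine_charts}, and pick $\delta > 0$ with the operator-norm ball $B_\delta(x_0) \subset \Omega$. For $x,y \in B_{\delta/3}(x_0)$, the SVD gives $y - x = \sum_{i=1}^{r} \sigma_i u_i v_i^T$ with $r \leq \min(p,q)$ and $\sigma_1 \geq \dots \geq \sigma_r \geq 0$; set $Z_0 = x$ and $Z_k = x + \sum_{i=1}^k \sigma_i u_i v_i^T$. Each $Z_k$ lies in $B_{2\delta/3}(x_0)$ because the partial SVD has operator norm $\sigma_1 \leq \|y-x\|_{\mathrm{op}}$. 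The step from $Z_{k-1}$ to $Z_k$ is a rank-one affine displacement, so by Lemma~\ref{lem:rank_one_lines} its ambient line is a projective line in $\Gr_p(\Rb^{p+q})$; the connected component of $\Omega$ along this line is a bounded open interval and is therefore parametrizable by a projective map in $\Proj(I,\Omega)$, giving $\rho_\Omega(Z_{k-1},Z_k) < \infty$. Hence $K_\Omega^{(\min(p,q))}(x,y) < \infty$ on $B_{\delta/3}(x_0) \times B_{\delta/3}(x_0)$, and covering the compact set $K$ by finitely many such balls and chaining across them yields a uniform bound $N$.

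Finally, (5) follows by combining (2) with a quantitative refinement of the construction in (4). By symmetry of $K_\Omega$ and the triangle inequality $|K_\Omega(x_n,y_n) - K_\Omega(x,y)| \leq K_\Omega(x_n,x) + K_\Omega(y_n,y)$, it suffices to show $K_\Omega(x,x_n) \to 0$ whenever $x_n \to x$ in $\Omega$. Running the SVD construction for $x$ and $x_n$, each rank-one step has operator-norm length $\sigma_k \leq \|x_n-x\|_{\mathrm{op}}$, while the connected component of $\Omega$ along the corresponding rank-one line still contains an interval of length bounded below by a constant depending only on $\delta$ beyond each endpoint of the short segment. The explicit formula $\rho_\Omega(Z_{k-1},Z_k) = |\log[a;Z_{k-1};Z_k;b]|$ then forces $\rho_\Omega(Z_{k-1},Z_k) \to 0$ as $\sigma_k \to 0$, so $K_\Omega(x,x_n) \leq \sum_k \rho_\Omega(Z_{k-1},Z_k) \to 0$. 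I expect the main obstacle to be this quantitative estimate; the SVD is the key device that reduces the multi-dimensional problem to a classical one-dimensional Hilbert-metric computation along each rank-one direction.
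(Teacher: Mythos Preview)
Your proof is correct, and for parts (1)--(3) and (5) it follows the same pattern as the paper. The difference is in part (4): the paper works in the projective model and, writing $x=[v_1\wedge\dots\wedge v_p]$, takes the neighborhood $U=\{[w_1\wedge\dots\wedge w_p]:\|v_i-w_i\|<\epsilon\}\subset\Omega$ and connects any two points of $U$ by changing one wedge factor at a time (each such step is visibly a projective-line segment inside $U$), obtaining $K_\Omega^{(p)}<\infty$ on $U$. Your SVD decomposition in the matrix chart achieves the same thing via a different rank-one splitting and gives the slightly sharper local bound $N=\min(p,q)$; the paper's approach avoids choosing an affine chart and makes the projective-line structure more transparent. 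For part (5) the paper likewise reduces to continuity at a point via the triangle inequality and then bounds $K_\Omega(x_0,[w_1\wedge\dots\wedge w_p])$ by $K_U$ with the explicit estimate $\sum_i \log\frac{\epsilon+\|v_i-w_i\|}{\epsilon-\|v_i-w_i\|}$, exactly the one-dimensional Hilbert computation you describe. One small imprecision: you write that the connected component of $\Omega$ along a rank-one line is a ``bounded open interval''; for a general open connected $\Omega$ this need not hold, but your conclusion $\rho_\Omega<\infty$ is unaffected since an unbounded component only makes $\rho_\Omega$ smaller.
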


\begin{proof}
Parts (1)-(3) follow from the definition of $K_{\Omega}$ and the invariance of the cross-ratio. 

To establish part (4) it is enough to show the following: for any $x \in \Omega$ there exists an open neighborhood $U$ of $x$ and a number $n =n(p)$ such that $K_{\Omega}^{(n)}(z,y) < \infty$ for any $z,y \in U$. Suppose that $x = [v_1 \wedge v_2 \wedge \dots \wedge v_p]$. Then there exists $\epsilon >0$ such that 
\begin{align*}
U:= \{ [w_1 \wedge w_2 \wedge \dots \wedge w_p] : \norm{v_i - w_i} < \epsilon \text{ for } 1 \leq i \leq p \} \subset \Omega.
\end{align*}
But then clearly $K_{\Omega}^{(p-1)}(z,y) < \infty$ for any $z,y \in U$.

To establish part (5), first observe that 
\begin{align*}
\abs{ K_{\Omega}(x_0,y_0) - K_{\Omega}(x,y)} \leq  K_{\Omega}(x_0, x) + K_{\Omega}(x_0,x)
\end{align*}
so it is enough to show that the map $x \rightarrow K_{\Omega}(x_0,x)$ is continuous at $x_0$. But if $x_0 = [v_1 \wedge v_2 \wedge \dots \wedge v_p]$ then there exists $\epsilon >0$ such that 
\begin{align*}
U:= \{ [w_1 \wedge w_2 \wedge \dots \wedge w_p] : \norm{v_i - w_i} < \epsilon \text{ for } 1 \leq i \leq p\} \subset \Omega.
\end{align*}
But then for $[w_1 \wedge \dots \wedge w_p] \in U$ we have
\begin{align*}
K_{\Omega}(x_0,  [w_1 \wedge w_2 \wedge \dots \wedge w_p]) \leq K_U( x_0, [w_1 \wedge w_2 \wedge \dots \wedge w_p] ) \leq \sum_{i=2}^{p} \log \frac{  \epsilon + \norm{v_i-w_i}}{\epsilon - \norm{v_i-w_i}}
\end{align*}
and so 
\begin{align*}
\lim_{x \rightarrow x_0} K_{\Omega}(x_0, x_n) =0.
\end{align*}
\end{proof}

The above Proposition shows that $K_{\Omega}$ is an $\Aut(\Omega)$-invariant pseudo-metric. We will next show that $K_\Omega$ is a complete metric for certain convex subsets. 

\begin{definition}\label{defn:r_proper} \ \begin{enumerate}
\item Let $\Lc$ be the space of projective lines in $\Pb(\wedge^p \Rb^{p+q})$ which are contained in $\Gr_p(\Rb^{p+q})$, 
\item An open connected set $\Omega \subset \Gr_p(\Rb^{p+q})$ is called \emph{$\Rc$-proper} if 
\begin{align*}
\abs{\ell \setminus \ell \cap \Omega} > 1
\end{align*}
for all $\ell \in \Lc$. 
\end{enumerate}
\end{definition}
 
\begin{example} Suppose $\Mb \subset \Gr_p(\Rb^{p+q})$  is an affine chart and $\Omega$ is a bounded subset of $\Mb$ then $\Omega$ is an $\Rc$-proper subset of $\Gr_p(\Rb^{p+q})$ (see Lemma~\ref{lem:rank_one_lines} above).
\end{example}

\begin{theorem}\label{thm:completeness} 
Suppose $\Mb \subset \Gr_p(\Rb^{p+q})$ is an affine chart and $\Omega \subset \Mb$ is an open convex set. Then the following are equivalent: 
\begin{enumerate}
\item $\Omega$ is $\Rc$-proper 
\item $K_{\Omega}$ is a complete length metric on $\Omega$,
\item $K_{\Omega}$ is a metric on $\Omega$.
\end{enumerate}
\end{theorem}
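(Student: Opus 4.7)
My plan is to argue the cycle $(2) \Rightarrow (3) \Rightarrow (1) \Rightarrow (2)$. The first implication is immediate. For $(3) \Rightarrow (1)$ I prove the contrapositive: for every $\ell \in \Lc$ meeting $\Mb$, $\ell \setminus \Mb$ consists of a single point, so failure of $\Rc$-properness forces a line with $\ell \cap \Omega = \ell \cap \Mb$; equivalently, by Lemma~\ref{lem:rank_one_lines}, $\Omega$ contains a full affine rank-one line $\{X + tS : t \in \Rb\}$. For distinct $x, y$ on this line, the family $f_N([1:s]) := X + NsS$ lies in $\Proj(I, \Omega)$ for every sufficiently large $N$ and pulls back $x,y$ to points of $I$ whose $H_I$-distance is of order $1/N$; letting $N \to \infty$ forces $\rho_\Omega(x, y) = 0$ and therefore $K_\Omega(x, y) = 0$, so $K_\Omega$ is not a metric.

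The main direction is $(1) \Rightarrow (2)$, which I would base on the following technical lemma: for every compact $K \subset \Omega$, the function
\[
r(x, S) := \min\bigl(r_+(x, S), r_-(x, S)\bigr), \qquad r_\pm(x, S) := \sup\{t \geq 0 : x \pm tS \in \Omega\},
\]
defined on $K \times \{S \in M_{q,p}(\Rb) : S \text{ a unit rank-one matrix}\}$, is bounded above by some finite $M(K)$. Lower semicontinuity of $r_\pm$ is immediate from $\Omega$ being open. Upper semicontinuity uses convexity: if $(x_n, S_n) \to (x, S)$ and $r_+(x_n, S_n) > T$ along a subsequence, then $x + TS \in \overline{\Omega}$, and combined with $x \in \Omega$ the standard fact that $y \in \overline{\Omega}$ implies $[x, y) \subset \Omega$ yields $r_+(x, S) \geq T$. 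By $\Rc$-properness $r$ is finite, so a continuous finite-valued function on the compact parameter space is bounded. Inserting this into the cross-ratio identity $\rho_\Omega(x, y) = \log\frac{r_+(r_- + \delta)}{r_-(r_+ - \delta)}$ with $\delta = \|x - y\|$ produces a local lower bound $\rho_\Omega(x, y) \geq c(K) \|x - y\|$ for $x, y \in K$ on a common projective line. Positive-definiteness of $K_\Omega$ then follows: for $x \neq y$, every broken projective-line path from $x$ to $y$ must exit a compact neighborhood of $x$, so the affine triangle inequality combined with the local lower bound forces positive total $\rho_\Omega$-length. The length-metric property is built into the iterated-infimum definition of $K_\Omega$.

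For completeness I would prove a Busemann-type inequality. Fix $y \in \partial \Omega$ together with a supporting affine hyperplane $H$ of unit outer normal $v$, and set $\phi(z) := \langle y - z, v \rangle$, so $\phi > 0$ on $\Omega$. For any rank-one segment $[z, z'] \subset \Omega$ of direction $S$ with $\langle S, v \rangle > 0$: the far boundary point $z + r_+ S$ lies on the $\Omega$-side of $H$, so $\phi(z) \geq r_+ \langle S, v \rangle$; an elementary manipulation of the cross-ratio then gives $\log(\phi(z)/\phi(z')) \leq \log(r_+/(r_+ - \delta)) \leq \rho_\Omega(z, z')$. The case $\langle S, v \rangle < 0$ is symmetric. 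Summing over any broken projective-line path from $x$ to $y_n$ yields
\[
K_\Omega(x, y_n) \geq \bigl|\log \phi(x) - \log \phi(y_n)\bigr| \longrightarrow \infty \quad \text{as } y_n \to y,
\]
so $K_\Omega$-bounded sequences remain in subsets of $\Omega$ uniformly away from $\partial \Omega$. Combined with the topological equivalence furnished by the lemma, this gives completeness (in the unbounded case, escape to infinity is ruled out by an analogous Busemann argument applied to a second supporting hyperplane transverse to the directions of unboundedness, or directly from the rank-one cross-ratio estimate along any ray). The main technical obstacle is verifying the upper-semicontinuity input of the lemma; once in hand, the local cross-ratio lower bound and the Busemann-type inequality both follow transparently from the same cross-ratio identity.
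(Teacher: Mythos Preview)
Your cycle $(2)\Rightarrow(3)\Rightarrow(1)$ and the positive-definiteness argument via a uniform bound on $r(x,S)$ over compacts match the paper's proof essentially verbatim (the paper writes $\delta_\Omega(x;v)$ for your $r$). Your Busemann-type inequality $K_\Omega(x,y)\ge|\log\phi(x)-\log\phi(y)|$ for a supporting-hyperplane functional $\phi$ is correct and is a clean hands-on alternative to the paper's device of comparing $K_\Omega$ with the Hilbert metric $H_\Omega$; the two are essentially the same estimate, since $H_\Omega$ is built from exactly such functionals.

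The gap is in the unbounded case. When $\Omega$ contains a full affine line in some (necessarily non--rank-one) direction $v$, \emph{every} supporting hyperplane of $\Omega$ is parallel to $v$, so there is no ``supporting hyperplane transverse to the directions of unboundedness''; all your Busemann functionals are constant along $v$ and detect nothing. A concrete example in $M_{2,2}(\Rb)$: the tube $\Omega=\{X:\operatorname{dist}(X,\Rb\cdot\Id_2)<\epsilon\}$ is $\Rc$-proper, yet every supporting hyperplane contains the direction $\Id_2$, and along the sequence $n\,\Id_2$ all Busemann functions are constant. Your alternative, the ``rank-one cross-ratio estimate along any ray'', does not apply directly either: that estimate is $\rho_\Omega\ge c(K)\|\cdot\|$ on a \emph{compact} $K$, and a path escaping to infinity leaves every compact, so you lose control of $c(K)$.

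What is actually needed---and what the paper supplies---is the observation that $r(x,S)$ is invariant under translation by the maximal linear subspace $V$ with $\Omega+V=\Omega$. Your Busemann inequality (equivalently, the Hilbert-metric comparison) shows that along a $K_\Omega$-bounded set the projection to $\Omega/V$ stays in a compact subset; translation invariance then upgrades the compact-set bound on $r$ to a \emph{uniform} bound on the whole $K_\Omega$-ball, after which the Euclidean length estimate finishes. So your two ingredients are exactly the right ones, but they must be combined rather than used in the alternative.
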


\begin{remark} The above theorem should be compared to two well known results in real projective geometry and several complex variables:
\begin{enumerate}
\item For a convex set $\Omega \subset \Rb^{d+1}$ the Hilbert metric is complete if and only if $\Omega$ does not contain any real affine lines.
\item For a convex set $\Omega \subset \Cb^{d+1}$ the Kobayashi metric is complete if and only if $\Omega$ does not contain any complex affine lines (Barth ~\cite{B1980}).
\end{enumerate}
\end{remark}

\begin{proof}
Clearly (2) implies (3). Moreover, if there exists a projective line $\ell \in \Lc$ so that 
\begin{align*}
\abs{\ell \setminus \ell \cap \Omega} \leq 1
\end{align*}
then $\rho_\Omega(x,y) = 0$ for all $x,y \in \ell \cap \Omega$. Thus if $\Omega$ is not $\Rc$-proper then $K_\Omega$ is not a metric. Thus (3) implies (1). The proof that (1) implies (2) can be found in Appendix~\ref{sec:proof_complete}.
\end{proof}

The existence of an invariant metric implies that the action of $\Aut(\Omega)$ on $\Omega$ is proper:

\begin{proposition}\label{prop:proper}
Suppose $\Mb \subset \Gr_p(\Rb^{p+q})$ is an affine chart and $\Omega \subset \Mb$ is an open convex set. If $\Omega$ is $\Rc$-proper then 
\begin{enumerate}
\item $\Aut(\Omega)$ is a closed subgroup of $\PGL_{p+q}(\Rb)$,
\item $\Aut(\Omega)$ is a closed subgroup of $\Isom(\Omega, K_{\Omega})$, and
\item  $\Aut(\Omega)$ acts properly on $\Omega$. 
\end{enumerate}
\end{proposition}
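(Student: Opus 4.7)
I will establish (1), (2), (3) in that order. The foundation is that by Theorem~\ref{thm:completeness}, $K_\Omega$ is a complete length metric on $\Omega$, and combining Proposition~\ref{prop:basic_d}(5) with the local compactness of $\Omega$ shows that the $K_\Omega$-topology agrees with the subspace topology from $\Gr_p(\Rb^{p+q})$ and that $(\Omega, K_\Omega)$ is locally compact, hence proper by Hopf--Rinow. In particular, $K_\Omega(z_n, y_0) \to \infty$ whenever $z_n \in \Omega$ leaves every compact subset of $\Omega$, for any fixed $y_0 \in \Omega$.

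For (1), take $\varphi_n \in \Aut(\Omega)$ with $\varphi_n \to \varphi$ in $\PGL_{p+q}(\Rb)$. Then $\varphi(\Omega) \subseteq \overline{\Omega}$ (closure in the Grassmannian), and $\varphi(\Omega)$ is open since $\varphi$ is a homeomorphism. If $\varphi x_0 \notin \Omega$ for some $x_0 \in \Omega$, then $\varphi_n x_0 \to \varphi x_0$ escapes every compact subset of $\Omega$, so $K_\Omega(\varphi_n x_0, y_0) \to \infty$ for any fixed $y_0 \in \Omega$; by $\Aut(\Omega)$-invariance of $K_\Omega$ and the triangle inequality,
\[
K_\Omega(\varphi_n y, y_0) \geq K_\Omega(\varphi_n x_0, y_0) - K_\Omega(x_0, y) \to \infty
\]
for every $y \in \Omega$, which forces $\varphi y \notin \Omega$ as well. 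Thus $\varphi(\Omega) \subseteq \overline{\Omega} \setminus \Omega$, a closed set with empty interior in $\Gr_p(\Rb^{p+q})$ --- impossible since $\varphi(\Omega)$ is nonempty and open. Applying this to $\varphi^{-1}$ as well yields $\varphi(\Omega) = \Omega$.

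For (2), continuity of the inclusion $\Aut(\Omega) \hookrightarrow \Isom(\Omega, K_\Omega)$ is routine. Suppose $\varphi_n \in \Aut(\Omega)$ converges to $\psi \in \Isom(\Omega, K_\Omega)$ uniformly on compacts. I work in the Plücker embedding, lifting each $\varphi_n$ to $B_n = \wedge^p A_n \in \End(\wedge^p \Rb^{p+q})$ with $\norm{B_n} = 1$, and pass to a subsequence with $B_n \to B$, $\norm{B} = 1$. If $B$ is not invertible, then on the open dense subset of $\Omega$ consisting of those $V$ whose Plücker vectors avoid $\ker B$ one has $\psi V = [B V]$, so by continuity $\psi(\Omega) \subseteq \Pb(\operatorname{Im} B) \cap \Gr_p(\Rb^{p+q})$. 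An Arzela--Ascoli argument applied to $\varphi_n^{-1}$ (whose orbits at any fixed $y \in \Omega$ are bounded since each $\varphi_n^{-1}$ is an isometry) shows $\psi$ is a bijection, so $\Omega$ itself lies in this subvariety --- proper since $\Gr_p(\Rb^{p+q})$ spans $\wedge^p \Rb^{p+q}$ --- contradicting the openness of $\Omega$ in $\Gr_p(\Rb^{p+q})$. Hence $B$ is invertible. Since the image of $A \mapsto \wedge^p A$ is a closed Lie subgroup of $\GL(\wedge^p \Rb^{p+q})$, one recovers $B = \wedge^p A$ for some $A \in \GL_{p+q}(\Rb)$, so $\varphi_n \to [A]$ in $\PGL_{p+q}(\Rb)$; by (1), $[A] \in \Aut(\Omega)$ extends $\psi$. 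Running this on any subsequence shows the whole sequence converges in $\PGL$, so $\Aut(\Omega)$ is closed in $\Isom(\Omega, K_\Omega)$.

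For (3), on the proper metric space $(\Omega, K_\Omega)$ the group $\Isom(\Omega, K_\Omega)$ with the compact-open topology acts properly by the standard Arzela--Ascoli argument for isometries (equicontinuity plus the bounded-orbit criterion). Being a closed subgroup of $\Isom(\Omega, K_\Omega)$ by (2), $\Aut(\Omega)$ inherits this proper action. The main obstacle across the proof is the non-invertibility case in (2); it is decisive precisely that $\Omega$ is open in $\Gr_p(\Rb^{p+q})$ and hence cannot lie in the proper subvariety $\Pb(\operatorname{Im} B) \cap \Gr_p(\Rb^{p+q})$ that would appear if the limit $B$ were singular.
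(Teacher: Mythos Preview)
Your proof is correct and follows essentially the same strategy as the paper's: pass to the Pl\"ucker embedding, use Arzel\`a--Ascoli together with completeness of $K_\Omega$ to extract limits, and rule out degenerate limits by showing the limiting endomorphism of $\wedge^p\Rb^{p+q}$ must be invertible. The differences are cosmetic. For (1) the paper uses the one-line convexity fact $\operatorname{int}(\overline{\Omega})=\Omega$ in place of your metric escape argument; for (2) and (3) the paper does the Pl\"ucker limit argument once (in the course of proving properness) and then remarks that the same argument gives closedness in $\Isom(\Omega,K_\Omega)$, whereas you prove (2) first and deduce (3) from the general fact that closed subgroups of $\Isom$ of a proper metric space act properly. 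Your explicit surjectivity argument for $\psi$ via Arzel\`a--Ascoli on $\varphi_n^{-1}$ makes transparent a step the paper leaves somewhat implicit.
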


\begin{proof}
We first observe that $\Aut(\Omega)$ is closed in $ \PGL_{p+q}(\Rb)$. Suppose that $\varphi_n \in \Aut(\Omega)$ and $\varphi_n \rightarrow \varphi$ in $\PGL_{p+q}(\Rb)$. Then $\varphi(\Omega) \subset \overline{\Omega}$. Since $\Omega$ is convex in an affine chart $\operatorname{int}(\overline{\Omega}) = \Omega$. Then since $\varphi$ induces a homeomorphisms $\Gr_p(\Rb^{p+q}) \rightarrow \Gr_p(\Rb^{p+q})$ we must have 
\begin{align*}
\varphi(\Omega) \subset \operatorname{int}(\overline{\Omega}) = \Omega.
\end{align*}
But the same argument implies that $\varphi^{-1}(\Omega) \subset \Omega$. So $\varphi(\Omega) = \Omega$ and $\varphi \in \Aut(\Omega)$. 

We next show that the action of Aut$(\Omega)$ on $\Omega$ is proper. Suppose that $\varphi_n \in \Aut(\Omega)$ is a sequence of automorphisms so that 
\begin{align*}
\varphi_n x_0 \in \{ y \in \Omega : K_{\Omega}(x_0, y) \leq R \}
\end{align*}
for some $x_0 \in \Omega$ and $R \geq 0$. We need to show that a subsequence of $\varphi_n$ converges in $\PGL_{p+q}(\Rb)$. 

Since $\Aut(\Omega)$ acts by isometries on the metric space $(\Omega, K_{\Omega})$, by the Arzel{\`a}-Ascoli theorem there exists an isometry $f:(\Omega, K_{\Omega}) \rightarrow (\Omega, K_{\Omega})$ and a subsequence $n_k \rightarrow \infty$ so that 
\begin{align*}
f(x) = \lim_{k \rightarrow \infty} \varphi_{n_k} (x)
\end{align*}
for all $x \in \Omega$. Since $f$ is an isometry it is injective. 

Now let $T_k \in \GL(\wedge^p \Rb^{p+q})$ be representatives of $\wedge^p \varphi_{n_k} \in \PGL(\wedge^p \Rb^{p+q})$ with $\norm{T_k}=1$. By passing to another subsequence we can suppose that $T_k \rightarrow T \in \End(\wedge^p \Rb^{p+q})$. Now for $x \in \Omega \setminus \ker T$ we have 
\begin{align*}
T(x) = \lim_{k\rightarrow \infty} \varphi_{n_k}(x) = f(x)
\end{align*}
and so $T$ is injective on $\Omega \setminus \ker T$. But this implies that $T \in \GL(\wedge^p \Rb^{p+q})$. And hence $\varphi_{n_k} \rightarrow \varphi$ in $\PGL_{p+q}(\Rb)$ for some $\varphi$ with $\wedge^p \varphi = [T]$. So $\Aut(\Omega)$ acts properly. 

Notice that the above argument also implies that $\Aut(\Omega)$ is a closed subgroup of $\Isom(\Omega, K_{\Omega})$. 
\end{proof}

\section{Limits in the local Hausdorff topology and rescaling}

Given a set $A \subset \Rb^d$, let $\Nc_{\epsilon}(A)$ denote the \emph{$\epsilon$-neighborhood of $A$} with respect to the Euclidean distance. The \emph{Hausdorff distance} between two bounded sets $A,B$ is given by
\begin{align*}
d_{H}(A,B) = \inf \left\{ \epsilon >0 : A \subset \Nc_{\epsilon}(B) \text{ and } B \subset \Nc_{\epsilon}(A) \right\}.
\end{align*}
Equivalently, 
\begin{align*}
\displaystyle d_H(A,B) = \max\left\{\sup_{a \in A} \inf_{b \in B} \norm{a-b}, \sup_{b \in B} \inf_{a \in A} \norm{a-b} \right\}.
\end{align*}
The Hausdorff distance is a complete metric on the space of compact sets in $\Rb^d$.

The space of closed sets in $\Rb^d$ can be given a topology from the local Hausdorff seminorms. For $R >0$ and a set $A \subset \Rb^d$ let $A^{(R)} := A \cap B_R(0)$. Then define the \emph{local Hausdorff seminorms} by
\begin{align*}
d_H^{(R)}(A,B) := d_H(A^{(R)}, B^{(R)}).
\end{align*}
Finally we say that a sequence of open convex sets $A_n$ converges in the local Hausdorff topology to an open convex set $A$ if $d_H^{(R)}(\overline{A}_n,\overline{A}) \rightarrow 0$ for all $R>0$. 

\begin{theorem}\label{thm:haus_conv}
Let $\Mb$ be an affine chart of $\Gr_p(\Rb^{p+q})$ and suppose $\Omega_n \subset \Mb$ is a sequence of $\Rc$-proper convex open sets converging to a $\Rc$-proper convex open set $\Omega \subset \Mb$ in the local Hausdorff topology. Then 
\begin{align*}
K_{\Omega}(x,y) = \lim_{n \rightarrow \infty} K_{\Omega_n}(x,y)
\end{align*}
for all $x,y \in \Omega$ uniformly on compact sets of $\Omega \times \Omega$.
\end{theorem}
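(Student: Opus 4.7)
The plan is to establish pointwise convergence $K_{\Omega_n}(x,y) \to K_\Omega(x,y)$ on $\Omega \times \Omega$ by proving upper and lower semicontinuity separately, and then upgrade to uniform convergence on compact subsets by a standard equicontinuity argument.

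\textbf{Upper semicontinuity} ($\limsup_n K_{\Omega_n}(x,y) \leq K_\Omega(x,y)$). Given $\epsilon > 0$, I would pick an almost-optimal chain $x = x_0, x_1, \ldots, x_k = y$ in $\Omega$ with $\sum_i \rho_\Omega(x_i, x_{i+1}) < K_\Omega(x,y) + \epsilon$. By the remark following the definition of $\rho_\Omega$, each consecutive pair lies on a projective line $\ell_i \in \Lc$ with $\rho_\Omega(x_i, x_{i+1}) = |\log[a_i; x_i; x_{i+1}; b_i]|$, where $a_i, b_i$ are the endpoints of the connected component of $\ell_i \cap \Omega$ containing $x_i, x_{i+1}$. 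Since $\{x_0, \ldots, x_k\}$ is a finite compact subset of $\Omega$ and $\Omega_n \to \Omega$ locally Hausdorff, we have $\{x_0, \ldots, x_k\} \subset \Omega_n$ for $n$ large; restricting convergence to the affine line $\ell_i \cap \Mb$ (an interval by Lemma \ref{lem:rank_one_lines}) shows that the corresponding endpoints $a_i^{(n)}, b_i^{(n)}$ of $\ell_i \cap \Omega_n$ converge to $a_i, b_i$. Continuity of the cross-ratio then gives $\rho_{\Omega_n}(x_i, x_{i+1}) \to \rho_\Omega(x_i, x_{i+1})$, and summing yields the bound.

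\textbf{Lower semicontinuity} ($\liminf_n K_{\Omega_n}(x,y) \geq K_\Omega(x,y)$). My strategy is a thickening argument: construct a family $\{\Omega^{(\delta)}\}_{\delta > 0}$ of $\Rc$-proper convex open subsets of $\Mb$ with $\Omega \subset \Omega^{(\delta)}$ and $\Omega^{(\delta)} \to \Omega$ in the local Hausdorff topology as $\delta \to 0$. For each fixed $\delta$, local Hausdorff convergence gives $\Omega_n \subset \Omega^{(\delta)}$ for $n$ large, and by Proposition \ref{prop:basic_d}(3) we obtain $K_{\Omega^{(\delta)}}(x,y) \leq K_{\Omega_n}(x,y)$, hence $K_{\Omega^{(\delta)}}(x,y) \leq \liminf_n K_{\Omega_n}(x,y)$. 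It then remains to verify $K_{\Omega^{(\delta)}}(x,y) \to K_\Omega(x,y)$ as $\delta \to 0$. A natural concrete candidate (when $\Omega$ is bounded) is the dilation $\Omega^{(\delta)} = p + (1+\delta)(\Omega - p)$ about a fixed $p \in \Omega$; convexity of $\Omega$ guarantees $\Omega \subset \Omega^{(\delta)}$, and the radial retraction $\pi_\delta(z) = p + (1+\delta)^{-1}(z - p)$ is an affine contraction from $\Omega^{(\delta)}$ onto $\Omega$ that distorts each cross-ratio segment by a factor tending to $1$, giving the desired convergence.

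\textbf{Main obstacle and uniformity.} The principal subtlety is guaranteeing that the thickenings $\Omega^{(\delta)}$ are genuinely $\Rc$-proper, since the property forbids any rank-one affine line from being fully contained in the domain. For bounded $\Omega$ a small affine dilation cannot introduce a newly-contained rank-one line, so the dilation construction suffices. For $\Omega$ unbounded in $\Mb$, one must first approximate $\Omega$ by an exhaustion of bounded $\Rc$-proper convex subdomains and argue by a diagonal procedure, verifying at each step that truncation preserves $\Rc$-properness by checking rank-one lines near infinity. Once pointwise convergence is established, uniform convergence on compact subsets $K_1 \times K_2 \subset \Omega \times \Omega$ follows from equicontinuity: the triangle inequality
\begin{align*}
\left|K_{\Omega_n}(x,y) - K_{\Omega_n}(x',y')\right| \leq K_{\Omega_n}(x,x') + K_{\Omega_n}(y,y'),
\end{align*}
combined with the already-proved upper semicontinuity (which supplies a uniform-in-$n$ upper bound for $K_{\Omega_n}$ on $K_1 \cup K_2$ in terms of the finite continuous metric $K_\Omega$), provides a uniform modulus of continuity.
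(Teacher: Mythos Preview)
Your upper-semicontinuity step is essentially the paper's first lemma (convergence of $\rho_{\Omega_n}$ to $\rho_\Omega$ on compact sets via convergence of the endpoints of rank-one segments), and your equicontinuity upgrade to uniform convergence is fine once multiplicative control of $\rho$ on compacta is in hand.

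The genuine gap is in the lower-semicontinuity step when $\Omega$ is unbounded. Your argument needs an $\Rc$-proper convex thickening $\Omega^{(\delta)}\supset\Omega$ that eventually contains $\Omega_n$, and the dilation candidate does not do this. Already for $p=1,\ q=2$ (so $\Mb=\Rb^2$ and every line is rank one), take $\Omega=\{(x,y):y>|x|\}$ and $\Omega_n=\{(x,y):y>(1-\tfrac{1}{n})|x|\}$. Each $\Omega_n$ is a proper convex open cone and $\Omega_n\to\Omega$ in the local Hausdorff topology, but the dilation of $\Omega$ about $(0,1)$ is $\Omega^{(\delta)}=\{(x,y):y>|x|-\delta\}$, and the point $\big(t,(1-\tfrac{1}{n})t+\epsilon\big)\in\Omega_n$ lies outside $\Omega^{(\delta)}$ whenever $t>n(\delta+\epsilon)$. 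So $\Omega_n\not\subset\Omega^{(\delta)}$ for any $n$, and monotonicity gives nothing. Your proposed fix via an exhaustion by bounded subdomains produces sets \emph{inside} $\Omega$, which again goes the wrong way for the inequality you need; a diagonal argument cannot repair this, because the obstruction is that there is no common convex superdomain close to $\Omega$ at infinity.

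The paper avoids thickening altogether. It first proves the two-sided multiplicative bound
\[
(1-\epsilon)\,\rho_{\Omega_n}(x,y)\ \le\ \rho_\Omega(x,y)\ \le\ (1+\epsilon)\,\rho_{\Omega_n}(x,y)
\]
for all $x,y$ in a fixed compact $K\subset\Omega$ and all large $n$, by a direct compactness argument on the endpoints of the rank-one intervals. For the lower bound on $K_\Omega$, it then takes a near-optimal $K_{\Omega_n}$-chain from $x$ to $y$ and shows that it cannot leave a larger fixed compact set $K'\subset\Omega$: if some vertex $a_\ell$ reached $\partial K'$, the $\rho_\Omega$-length of the initial subchain, controlled by the multiplicative bound on $K'$, would already exceed the available budget because $K_\Omega$ is proper (Theorem~\ref{thm:completeness}). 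Once the chain is trapped in $K'$, the multiplicative bound applied termwise gives $K_\Omega(x,y)\le(1+\epsilon)K_{\Omega_n}(x,y)$. This argument uses only properness of the limit metric and works uniformly for unbounded $\Omega$.
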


We provide the proof of Theorem~\ref{thm:haus_conv} in Appendix~\ref{sec:proof_of_haus}. 

\begin{theorem}\label{thm:rescaling}
Let $\Mb$ be an affine chart of $\Gr_p(\Rb^{p+q})$ and suppose $\Omega \subset \Mb$ is an $\Rc$-proper open convex subset. Assume in addition that there exists a subgroup $H \leq \Aut(\Omega)$ and a compact set $K \subset \Omega$ such that $H \cdot K = \Omega$. 

If there exists a sequence $A_n \in \Aff(\Mb) \cap \PGL_{p+q}(\Rb)$ such that that $A_n \Omega$ converges in the local Hausdorff topology to an $\Rc$-proper open convex set $\wh{\Omega} \subset \Mb$ then there exists $n_k \rightarrow \infty$ and $h_k \in H$ so that 
\begin{align*}
\phi = \lim_{k \rightarrow \infty} A_{n_k} h_k 
\end{align*}
exists in $\PGL_{p+q}(\Rb)$ and $\wh{\Omega} = \phi(\Omega)$. 
\end{theorem}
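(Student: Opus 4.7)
The plan is to execute the three-step proof sketch outlined after Theorem~\ref{thm:rescaling_intro} in a careful way: normalize the $A_n$ using the quasi-transitive action of $H$, extract a metric isometry in the limit via Arzelà--Ascoli, and finally upgrade that isometry to an element of $\PGL_{p+q}(\Rb)$.

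First I normalize. Pick any sequence $y_n \in A_n\Omega$ converging to a point $\wh{x}_0 \in \wh{\Omega}$; write $y_n = A_n z_n$ with $z_n \in \Omega$. By the assumption $H \cdot K = \Omega$, there are $h_n \in H$ and $k_n \in K$ with $z_n = h_n k_n$. Passing to a subsequence, $k_n \to k_\infty \in K$. Replacing the basepoint $x_0$ by $k_\infty$ and setting $f_n := A_n h_n$, we then have $f_n \in \PGL_{p+q}(\Rb)$ with $f_n(x_0) \to \wh{x}_0$, while $f_n(\Omega) = A_n(\Omega) \to \wh{\Omega}$ in the local Hausdorff topology.

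Next I extract a metric limit. By Theorem~\ref{thm:intrinsic_metric}(2), each $f_n$ is an isometry from $(\Omega, K_\Omega)$ onto $(f_n\Omega, K_{f_n\Omega})$. By Theorem~\ref{thm:haus_conv}, $K_{f_n\Omega} \to K_{\wh\Omega}$ uniformly on compact subsets of $\wh\Omega$. Combining this with the completeness of $K_{\wh\Omega}$ (Theorem~\ref{thm:completeness}), the basepoint control $f_n(x_0) \to \wh{x}_0$, and the isometry property, the usual Arzelà--Ascoli argument produces a subsequence $f_{n_k}$ converging uniformly on compact subsets of $\Omega$ to a $K_\Omega$-to-$K_{\wh\Omega}$ isometry $f: \Omega \to \wh\Omega$. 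Running the identical argument for the inverse maps $f_{n_k}^{-1}$ (whose domains $f_{n_k}\Omega$ converge to $\wh\Omega$ and whose range metric $K_\Omega$ is fixed) extracts a further subsequence with limit $g: \wh\Omega \to \Omega$; uniqueness of limits forces $f \circ g = \Id$ and $g \circ f = \Id$, so $f$ is a bijection.

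Finally I promote $f$ to a projective transformation, which is the main technical hurdle. Following the argument in the second half of the proof of Proposition~\ref{prop:proper}, work in the projective model of $\Gr_p(\Rb^{p+q}) \subset \Pb(\wedge^p \Rb^{p+q})$ via the Plücker embedding. Lift the $f_{n_k}$ to operators $\wedge^p f_{n_k}$ and choose representatives $T_k \in \End(\wedge^p \Rb^{p+q})$ with $\norm{T_k} = 1$. Pass to a subsequential limit $T \in \End(\wedge^p \Rb^{p+q})$, which is nonzero. For any $x \in \Omega \setminus \ker T$ the projective action $[T](x)$ equals $f(x)$, so $[T]$ agrees with the injective map $f$ on the open subset $\Omega \setminus \ker T$ of $\Omega$. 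This forces $T$ to be injective on the linear span of the open affine set $\Omega$, which is all of $\wedge^p \Rb^{p+q}$, so $T \in \GL(\wedge^p \Rb^{p+q})$. Since the Plücker image is $\PGL_{p+q}(\Rb)$-invariant and closed, and the $\wedge^p f_{n_k}$ already lie in the image of $\wedge^p$, the limit $[T]$ is $\wedge^p \phi$ for some $\phi \in \PGL_{p+q}(\Rb)$. Then $\phi|_\Omega = f$, giving $\phi(\Omega) \subset \wh\Omega$; applying the symmetric argument to $f_{n_k}^{-1}$ provides the reverse inclusion, so $\phi(\Omega) = \wh\Omega$ as required. The subtle step is ruling out that the limit $T$ might drop rank, but the injectivity of the metric isometry $f$ on a full-dimensional open set of $\Gr_p(\Rb^{p+q})$ is exactly the ingredient that prevents this.
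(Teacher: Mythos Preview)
Your proof is correct and follows essentially the same three-step approach as the paper: normalize via $H$, extract an isometric limit using Theorem~\ref{thm:haus_conv} and Arzel\`a--Ascoli, then pass to $\wedge^p$ and take a subsequential endomorphism limit to recover an element of $\PGL_{p+q}(\Rb)$. The only minor differences are that the paper obtains surjectivity of the limit isometry directly from properness of closed metric balls (rather than by running Arzel\`a--Ascoli on the inverses), and that the paper first identifies the limit as $\wedge^p\phi$ for some $\phi\in\End(\Rb^{p+q})$ using closedness of $\wedge^p\End(\Rb^{p+q})\subset\End(\wedge^p\Rb^{p+q})$ and only then argues invertibility, whereas you do these in the opposite order; both orderings work.
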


\begin{proof}
Fix $y_0 \in \wh{\Omega}$. Then we have $y_0 \in A_n \Omega$ for $n$ sufficiently large. Pick $h_n \in H$ and $k_n \in K$ so that $y_0 = A_n \varphi_n k_n$. Let $T_n: = A_n \varphi_n \in \PGL_{p+q}(\Rb)$. Then 
\begin{align*}
\Omega_n := T_n(\Omega) = A_n(\Omega)
\end{align*} \
is an $\Rc$-proper open convex subset and $T_n$ is an isometry $(\Omega, K_{\Omega}) \rightarrow (\Omega_n, K_{\Omega_n})$. By Theorem~\ref{thm:haus_conv} 
\begin{align*}
K_{\Omega_n} \rightarrow K_{\wh{\Omega}}
\end{align*} 
uniformly on compact sets on $\wh{\Omega}$, so we can pass to a subsequence so that $T_n$ converges uniformly on compact sets to an isometry $T:(\Omega, K_{\Omega}) \rightarrow (\wh{\Omega}, K_{\wh{\Omega}})$.  Since $T$ is an isometry it is injective. On the other hand since the metrics converge and closed metric balls are compact we also see that $T$ is onto. 

Now we can pick a representative $\Phi_n \in \GL(\wedge^p \Rb^{p+q})$ of $\wedge^p T_n \in \PGL(\wedge^p \Rb^{p+q})$ such that $\norm{\Phi_n} =1$. By passing to a subsequence we can assume that $\Phi_n \rightarrow \Phi$ in $\End(\wedge^p \Rb^{p+q})$. The set $\wedge^p \End(\Rb^{p+q}) \subset \End(\wedge^p \Rb^{p+q})$ is closed and so $\Phi = \wedge^p \phi$ for some $\phi \in \End(\Rb^{p+q})$. Moreover $\Phi(x) = T(x)$ for any $x \notin \ker \Phi$. Since $\Gr_p(\Rb^{p+q}) \setminus \ker \Phi$ is an open dense set and $\Omega$ is open, this implies that $\Phi$ is injective on $\Gr_p(\Rb^{p+q}) \setminus \ker \Phi$. It follows that $\Phi \in \GL(\wedge^p \Rb^{p+q})$ and hence $\phi \in \GL_{p+q}(\Rb)$. Finally, we have that $\phi = T$ on $\Omega$, so that $\wh{\Omega} = \phi(\Omega)$. 
\end{proof}

\section{The geometry near the boundary}

For the classical Hilbert metric on a convex divisible domain in real projective space, there are many connections between the shape of the boundary and the behavior of the metric (see for instance~\cite{B2004, B2003a, KN2002}). In a similar spirit, we will prove some basic results connecting the geometry of $K_\Omega$ with the geometry of $\partial \Omega$.

As before, let $\Lc$ be the set of projective lines $\ell \subset \Pb(\wedge^{p} \Rb^{p+q})$ which are contained in $\Gr_{p}(\Rb^{p+q})$. 

\begin{definition}
Suppose $\Omega \subset \Gr_p(\Rb^{p+q})$ is an open connected set. 
\begin{enumerate}
\item  Two points $x,y \in \partial \Omega$ are \emph{adjacent}, denoted $x \sim y$, if either $x=y$ or there exists a projective line $\ell \in \Lc$ so that $x,y$ are contained in a connected component of the interior of $\ell \cap \partial\Omega$ in $\ell$. 
\item The \emph{$\Rc$-face of $x \in \partial \Omega$}, denoted $\Rc F(x)$, is the set of points $y \in \partial \Omega$ where there exists a sequence $x=y_0, y_1, \dots, y_k = y$ with $y_i \sim y_{i+1}$. 
\item A point $x \in \partial \Omega$ is called an \emph{$\Rc$-extreme point} if $\Rc F(x) = \{ x\} $. 
\item Let $\Ext(\Omega) \subset \partial \Omega$ denote the set of $\Rc$-extreme points of $\Omega$.
\end{enumerate}
\end{definition}

As the next two results show this relation on the boundary is connected with the asymptotic geometry of the intrinsic metric. 

\begin{proposition}
Suppose $\Mb \subset \Gr_p(\Rb^{p+q})$ is an affine chart and $\Omega \subset \Mb$ is an $\Rc$-proper open convex set. If $x_n,y_n \in \Omega$ are sequences so that $x_n \rightarrow x\in \partial \Omega$, $y_n \rightarrow y \in \partial \Omega$, and there exists $N \geq 0$ so that 
\begin{align*}
\liminf_{n \rightarrow \infty} K_{\Omega}^{(N)}(x_n, y_n) < \infty
\end{align*}
then $\Rc F(x) = \Rc F(y)$. 
\end{proposition}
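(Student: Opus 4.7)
The plan is to do the main geometric work in the case $N = 1$ and then reduce the general case to it.

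\emph{Case $N = 1$.} After passing to a subsequence, $\rho_\Omega(x_n, y_n) \leq M$ for some constant $M$. By the explicit formula for $\rho_\Omega$ (see the remark following its definition in Section~\ref{sec:metric}), for each $n$ there is a projective line $\ell_n \in \Lc$ so that $x_n, y_n$ lie in a common connected component of $\ell_n \cap \Omega$ with endpoints $a_n, b_n \in \partial\Omega$, and $\rho_\Omega(x_n, y_n) = \abs{\log[a_n; x_n; y_n; b_n]}$. Because $\Omega$ is bounded in the affine chart $\Mb$, the closure $\overline\Omega$ is compact, and $\Lc$ is compact in the Grassmannian of lines in $\Pb(\wedge^p \Rb^{p+q})$. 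After a further subsequence we may therefore assume $\ell_n \to \ell \in \Lc$ and $a_n \to a$, $b_n \to b$ in $\partial\Omega$. If $a = b$, then the segments $[a_n,b_n]$ collapse, so $x = y$ and $\Rc F(x) = \Rc F(y)$ trivially. Otherwise $a \neq b$, and the uniform bound on the cross-ratio forces both $x$ and $y$ to lie in the relative interior of $[a,b] \subset \ell$. Now $[a,b] \subset \overline\Omega$; if some relative interior point of $[a,b]$ lay in $\Omega$ then, since $\Omega$ is open and convex with $a, b \in \overline\Omega$, the entire open segment $(a,b)$ would lie in $\Omega$, contradicting $x \in (a,b) \cap \partial\Omega$. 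Hence $[a,b] \subset \partial\Omega$ and so $x \sim y$ along $\ell$.

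\emph{General $N$.} After passing to a subsequence realizing the liminf and choosing near-minimizing intermediate points, we obtain $z_0^n = x_n,\, z_1^n, \dots, z_N^n = y_n$ in $\Omega$ with $\sum_{i=0}^{N-1} \rho_\Omega(z_i^n, z_{i+1}^n) \leq M + 1$. Using compactness of $\overline\Omega$, pass to a further subsequence so that $z_i^n \to z_i \in \overline\Omega$ for every $i$. The key claim is that every $z_i$ lies in $\partial\Omega$: otherwise, since $z_0, z_N \in \partial\Omega$, there is some consecutive pair with $z_{i^\ast} \in \partial\Omega$ and $z_{i^\ast + 1} \in \Omega$ (or vice versa). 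Running the case-$N=1$ compactness argument on this bounded pair produces a nondegenerate segment $[a,b] \subset \overline\Omega$ whose relative interior contains both $z_{i^\ast}$ and $z_{i^\ast + 1}$ (the case $a = b$ is excluded because it would force $z_{i^\ast} = z_{i^\ast + 1}$). But then $z_{i^\ast + 1} \in \Omega$ lies in the relative interior of $[a,b]$, so by convexity $(a,b) \subset \Omega$, whence $z_{i^\ast} \in (a,b) \subset \Omega$, contradicting $z_{i^\ast} \in \partial\Omega$. With all $z_i \in \partial\Omega$, the base case applied to each consecutive pair $(z_i^n, z_{i+1}^n)$ gives $z_i \sim z_{i+1}$, and chaining yields $\Rc F(x) = \Rc F(z_0) = \Rc F(z_1) = \cdots = \Rc F(z_N) = \Rc F(y)$.

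The main technical hurdle is the compactness and convexity control on the limiting rank-one segment: ensuring that its endpoints stay in $\partial \Omega$ and that a nondegenerate limiting segment lies entirely in $\partial \Omega$ rather than having some relative interior points in $\Omega$. Both rest on $\Omega$ being bounded in $\Mb$ (so that $\overline\Omega$ is compact and the endpoints $a_n, b_n$ remain in $\partial \Omega$) and on the standard convexity argument promoting any relative interior point of $\Omega$ on a closed segment in $\overline\Omega$ to the entire open segment.
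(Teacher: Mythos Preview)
Your proof follows essentially the same approach as the paper's: reduce to $N=1$ via intermediate points, and for $N=1$ extract subsequential limits of the segment endpoints and use the cross-ratio bound to force $x,y$ into the open limit segment, hence $x\sim y$. Your treatment of the general $N$ is in fact more explicit than the paper's terse ``by induction and passing to a subsequence'': you spell out why each intermediate limit $z_i$ must land in $\partial\Omega$ rather than in $\Omega$, which is exactly what the induction needs.

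There is one inaccuracy worth flagging. You assert that ``$\Omega$ is bounded in the affine chart $\Mb$'', but $\Rc$-properness does not imply this (for instance the cone $\{X\in M_{p,p}(\Rb): X^t+X>0\}$ is $\Rc$-proper and unbounded), so you cannot invoke compactness of $\overline\Omega$ in $\Mb$ to extract limits of $a_n,b_n$ or of the intermediate $z_i^n$. The repair is routine: take subsequential limits in the compact ambient space $\Gr_p(\Rb^{p+q})$ (or on the projective lines $\ell_n$ for $a_n,b_n$), and then observe that the cross-ratio bound together with the ordering $a_n,x_n,y_n,b_n$ forces the limits to avoid the point $\ell\setminus\Mb$, so they in fact lie in $\Mb$ and hence in $\overline\Omega\cap\Mb$. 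The paper's own proof is equally casual on this point (writing $\{a_n,b_n\}=\ell_n\cap\partial\Omega$ as though the endpoints automatically sit in $\Mb$), so this is a shared technical gloss rather than a difference in strategy.
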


\begin{proof}
By induction and passing to a subsequence, it is enough to consider the case in which 
\begin{align*}
\lim_{n \rightarrow \infty} K_{\Omega}^{(1)}(x_n, y_n) = \lim_{n \rightarrow \infty} \rho_\Omega(x_n, y_n)< \infty
\end{align*}
and $x \neq y$. For each $n$ let $\ell_n$ be the projective line containing $x_n$ and $y_n$. Also let $\{a_n, b_n\} = \ell_n \cap \partial \Omega$ ordered $a_n,x_n, y_n, b_n$ along $\ell_n$. Then 
\begin{align*}
\rho_\Omega(x_n, y_n) = \log \frac{ \abs{x_n-b_n} \abs{y_n-a_n}}{\abs{x_n-a_n}\abs{y_n-b_n}}.
\end{align*}
By passing to a subsequence we can suppose that $a_n \rightarrow a$ and $b_n \rightarrow b$. Then by the hypothesis we must have that $a \neq x$ and $b \neq y$. So $x \sim y$. 
\end{proof}

\begin{corollary}\label{cor:asym_faces}
Suppose $\Mb \subset \Gr_p(\Rb^{p+q})$ is an affine chart, $\Omega \subset \Mb$ is an $\Rc$-proper open convex set, and $\Aut(\Omega)$ acts cocompactly on $\Omega$. If $x_n,y_n \in \Omega$ are sequences so that $x_n \rightarrow x \in \partial \Omega$, $y_n \rightarrow y \in \partial \Omega$, and 
\begin{align*}
\liminf_{n \rightarrow \infty} K_{\Omega}(x_n, y_n) < \infty
\end{align*}
then $\Rc F(x) = \Rc F(y)$. 
\end{corollary}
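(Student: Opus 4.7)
The plan is to reduce to the preceding proposition by showing that, after extracting a subsequence, there exists a single integer $N$ such that $K_\Omega^{(N)}(x_n, y_n)$ is bounded. Set $C := \liminf_n K_\Omega(x_n, y_n) < \infty$ and pass to a subsequence with $K_\Omega(x_n, y_n) \leq C$.

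The first step is to use cocompactness to replace the pair $(x_n, y_n)$ by a pair sitting inside a compact subset of $\Omega$. Fix a compact set $D \subset \Omega$ with $\Aut(\Omega)\cdot D = \Omega$, and choose $\phi_n \in \Aut(\Omega)$ so that $\phi_n x_n \in D$; by the $\Aut(\Omega)$-invariance from Proposition~\ref{prop:basic_d}(1), $K_\Omega(\phi_n x_n, \phi_n y_n) \leq C$, so $\phi_n y_n$ lies in $B := \{z\in\Omega : K_\Omega(z,D) \leq C\}$. Since $K_\Omega$ is continuous in the manifold topology on $\Omega$ (Proposition~\ref{prop:basic_d}(5)), $(\Omega, K_\Omega)$ is a locally compact length metric space, which is complete by Theorem~\ref{thm:completeness}, so Hopf-Rinow for length spaces makes it proper. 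Hence $B$ is compact, and after passing to a further subsequence $\phi_n x_n \to \widetilde x \in D$ and $\phi_n y_n \to \widetilde y \in B$, both interior to $\Omega$. From $K_\Omega(\widetilde x, \widetilde y) < \infty$ and the definition of $K_\Omega$ as $\lim_N K_\Omega^{(N)}$, we obtain an integer $N_0$ with $K_\Omega^{(N_0)}(\widetilde x, \widetilde y) < \infty$.

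The key step is then a local upper semicontinuity estimate for $K_\Omega^{(N)}$ at interior points. The argument for Proposition~\ref{prop:basic_d}(4) produces, near any $z\in\Omega$, a quantitative bound showing $K_\Omega^{(p-1)}(z, z') \to 0$ as $z'\to z$. Applying this at both $\widetilde x$ and $\widetilde y$ and concatenating the short chains with an almost-optimal $N_0$-chain from $\widetilde x$ to $\widetilde y$ gives
\[
K_\Omega^{(N_0 + 2(p-1))}(u,v) \leq K_\Omega^{(p-1)}(u,\widetilde x) + K_\Omega^{(N_0)}(\widetilde x, \widetilde y) + K_\Omega^{(p-1)}(\widetilde y, v)
\]
for $u$ near $\widetilde x$ and $v$ near $\widetilde y$. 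Setting $u = \phi_n x_n$, $v = \phi_n y_n$ and taking $n$ large, the right-hand side is at most $K_\Omega^{(N_0)}(\widetilde x, \widetilde y) + 1 < \infty$. Since $\rho_\Omega$, and hence every $K_\Omega^{(N)}$, is invariant under $\Aut(\Omega)$, this yields $K_\Omega^{(N_0+2(p-1))}(x_n,y_n) \leq K_\Omega^{(N_0)}(\widetilde x, \widetilde y) + 1$ along the subsequence, and the preceding proposition, applied with $N = N_0 + 2(p-1)$, gives $\Rc F(x) = \Rc F(y)$.

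The main technical hurdle is producing such a uniform $N$: cocompactness alone does not bound the number of projective-line segments needed, so one must combine properness of $(\Omega, K_\Omega)$ with the local upper semicontinuity of $K_\Omega^{(N)}$ in order to transfer a finite-segment bound from a convergent interior subsequence back to the original boundary-bound pair.
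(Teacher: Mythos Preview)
Your proof is correct and follows the same overall strategy as the paper: pass to a subsequence with bounded $K_\Omega$-distance, use cocompactness to translate both points into a fixed compact subset of $\Omega$, produce a uniform $N$ with $K_\Omega^{(N)}(x_n,y_n)$ bounded, and then invoke the preceding proposition. The only difference is in how the uniform $N$ is obtained: you extract a convergent interior subsequence $(\phi_n x_n,\phi_n y_n)\to(\widetilde x,\widetilde y)$, pick $N_0$ with $K_\Omega^{(N_0)}(\widetilde x,\widetilde y)<\infty$, and then pad by $2(p-1)$ using the local estimate from the proof of Proposition~\ref{prop:basic_d}(4); the paper instead applies Proposition~\ref{prop:basic_d}(4) directly to the compact ball $B=B_{R+M}(x_0)$, which already yields a single $N$ with $\sup_{x,y\in B}K_\Omega^{(N)}(x,y)<\infty$, avoiding the subsequence and concatenation step entirely.
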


\begin{proof}
By passing to a subsequence we can suppose that 
\begin{align*}
M = \sup_{n \in \Nb} K_{\Omega}(x_n, y_n)  < \infty. 
\end{align*}
For $R\geq 0$ and $x\in \Omega$, let $B_R(x)$ denote the ball of radius $R$ and center $x$ with respect to the metric $K_\Omega$. Since $\Aut(\Omega)$ acts cocompactly on $\Omega$ there exists $R \geq 0$ so that 
\begin{align*}
\Aut(\Omega) \cdot B_R(x_0) = \Omega.
\end{align*}
Let $B:=B_{R+M}(x_0)$ be the ball with center $x_0$ and radius $R+M$. By compactness of $B$ and Proposition~\ref{prop:basic_d}, we know there exists $N >0$ so that 
\begin{align*}
\sup_{x,y \in B} K_{\Omega}^{(N)}(x,y) < \infty
\end{align*} 
for all $x,y \in B$. But this implies that 
\begin{align*}
 \sup_{n \in \Nb} K_{\Omega}^{(N)}(x_n, y_n) < \infty
\end{align*}
because for any $n \in \Nb$ there exists some $\varphi \in \Aut(\Omega)$ so that $\varphi x_n, \varphi y_n \in B$. 
\end{proof}

\part{The automorphism group is non-discrete}

\section{Extreme points and symmetry} 

\subsection{The geometry of extreme points}  

In this subsection we provide a number of characterizations of $\Rc$-extreme points for domains $\Omega \subset \Gr_p(\Rb^{2p})$ where $\Aut(\Omega)$ acts cocompactly. But first a few definitions. 

Suppose $\Omega$ is a convex set in a vector space and $x \in \partial \Omega$, then the \emph{tangent cone of $\Omega$ at $x$} is the set 
\begin{align*}
\mathcal{T}\mathcal{C}_x \Omega :=x+ \bigcup_{t > 0} t(\Omega - x).
\end{align*}
Notice that the sets $x+t(\Omega - x)$ converge to $\mathcal{T}\mathcal{C}_x \Omega$ in the local Hausdorff topology as $t \rightarrow \infty$. 

We will also define natural hypersurfaces in $\Gr_p(\Rb^{p+q})$. 

\begin{definition}
Given $\xi \in \Gr_q(\Rb^{p+q})$ define the hypersurface 
\begin{align*}
Z_{\xi} := \{ x \in \Gr_p(\Rb^{p+q}) : x \cap \xi \neq (0) \}.
\end{align*}
\end{definition}

\begin{remark}
In the case in which $p=1$, then $Z_\xi \subset \Pb(\Rb^{1+q}) = \Gr_1(\Rb^{1+q})$ is the image of $\xi$ in $\Pb(\Rb^{q+1})$. In particular, if a set $\Omega \subset \Pb(\Rb^d)$ is convex and bounded in an affine chart then for any $x \in \partial \Omega$ there exists $\xi \in \Gr_{d-1}(\Rb^d)$ so that $x \in Z_\xi$ and $Z_{\xi} \cap \Omega = \emptyset$. 
\end{remark}

These hypersurfaces were used in~\cite{Z2015} to show that symmetry implies a type of convexity:

\begin{theorem}\cite[Theorem 1.7]{Z2015}
If $\Omega \subset \Gr_p(\Rb^{p+q})$ is a bounded connected open subset of some affine chart and $\Aut(\Omega)$ acts cocompactly on $\Omega$ then for all $x \in \partial \Omega$ there exists $\xi \in \Gr_q(\Rb^{p+q})$ so that $x \in Z_\xi$ and $Z_{\xi} \cap \Omega = \emptyset$. 
\end{theorem}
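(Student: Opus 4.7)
The plan is to use the cocompactness hypothesis to attach a divergent sequence in $\PGL_{p+q}(\Rb)$ to any boundary point $x$, and then to read off $\xi$ from the Cartan (KAK) decomposition of that sequence.

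First I would fix $x \in \partial \Omega$, choose $x_n \in \Omega$ with $x_n \to x$, and use cocompactness to find $\varphi_n \in \Aut(\Omega)$ so that $\varphi_n x_n \to y$ in some fixed compact $K \subset \Omega$. Since $\Aut(\Omega)$ is closed in $\PGL_{p+q}(\Rb)$ and preserves $\partial \Omega$ (by the same argument as Proposition~\ref{prop:proper}), the sequence $\varphi_n$ must leave every compact subset of $\PGL_{p+q}(\Rb)$: otherwise an accumulation point would carry the boundary point $x$ to the interior point $y$, which is impossible.

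Next I would apply the Cartan decomposition to write $\varphi_n = k_n a_n l_n$ with $k_n, l_n \in O(p+q)$ and $a_n = \operatorname{diag}(\lambda_1^n \geq \cdots \geq \lambda_{p+q}^n > 0)$, then pass to a subsequence arranging $k_n \to k$, $l_n \to l$, each consecutive ratio $\lambda_i^n / \lambda_{i+1}^n$ has a limit in $[1,\infty]$, and (after a block-refinement argument, if needed) $\lambda_p^n / \lambda_{p+1}^n \to \infty$. The latter gap furnishes an attracting $p$-plane $P^+ := k(\Spanset(e_1, \ldots, e_p)) \in \Gr_p(\Rb^{p+q})$ and a repelling $q$-plane $P^- := l^{-1}(\Spanset(e_{p+1}, \ldots, e_{p+q})) \in \Gr_q(\Rb^{p+q})$, with the property that $\varphi_n z \to P^+$ for every $z \in \Gr_p(\Rb^{p+q})$ transverse to $P^-$. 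Set $\xi := P^-$: if $x \cap \xi = (0)$ then $\varphi_n x \to P^+$ in the closed set $\partial \Omega$, forcing $P^+ \in \partial \Omega$; but simultaneously $\varphi_n x_n \to P^+$, giving $y = P^+ \in \Omega$, a contradiction. Therefore $x \cap \xi \neq (0)$, i.e., $x \in Z_\xi$.

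The remaining step---and the main obstacle---is to show $Z_\xi \cap \Omega = \emptyset$. If $z \in Z_\xi \cap \Omega$ with $d := \dim(z \cap \xi) \geq 1$, the dynamics of $\varphi_n$ on $z$ degenerates and $\varphi_n z$ accumulates on a $p$-plane $P^\sharp$ containing a $d$-dimensional piece of $k(\Spanset(e_{p+1}, \ldots, e_{p+q}))$, distinct from $P^+$. Letting $z$ range over a small open piece of $Z_\xi \cap \Omega$ produces a positive-dimensional family of such $P^\sharp$ inside $\overline{\Omega}$ yet confined to a proper Schubert subvariety; I would leverage the convexity of $\Omega$ in the affine chart $\Mb$ to argue that this is incompatible with $\Omega$ being bounded and open in $\Mb$. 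A cleaner route, which I would try if the direct argument becomes tangled, is to run the analysis simultaneously for $\varphi_n^{-1}$: the paired limit endomorphisms $(\Phi, \Psi)$ of $\Phi_n, \Phi_n^{-1}$ (suitably normalized) satisfy $\Phi \Psi = \Psi \Phi = 0$ since the product of top singular values of $\Phi_n$ and $\Phi_n^{-1}$ blows up, and duality in the Pl\"ucker embedding between Schubert hypersurfaces $Z_\eta$ and decomposable hyperplanes in $\Pb(\wedge^p \Rb^{p+q})$ should force $\xi$ into the $\Aut(\Omega)$-invariant set $\{\eta \in \Gr_q(\Rb^{p+q}) : Z_\eta \cap \Omega = \emptyset\}$.
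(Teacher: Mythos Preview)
The paper does not prove this theorem; it is quoted from \cite{Z2015} and used as input, so there is no in-paper argument to compare against.

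Your main line has a genuine gap earlier than you think: the claim that one can arrange $\lambda_p^{n}/\lambda_{p+1}^{n}\to\infty$ is false in general, and without it there is no well-defined repelling $q$-plane $P^{-}$. Take $p=q=2$ and $\Omega=\Bc_{2,2}$, realized via the form $\operatorname{diag}(1,1,-1,-1)$. The one-parameter group $g_s\in\SO(2,2)$ given by the hyperbolic rotation in the $(e_1,e_3)$-plane has singular values $e^{|s|},1,1,e^{-|s|}$, and in the matrix chart $g_s\cdot 0=\operatorname{diag}(\tanh s,0)$. Taking $x=\operatorname{diag}(1,0)\in\partial\Bc_{2,2}$, $x_n=g_n\cdot 0\to x$, and $\varphi_n=g_{-n}$ gives $\varphi_n x_n=0\in\Omega$ while $\lambda_2(\varphi_n)/\lambda_3(\varphi_n)=1$ for every $n$. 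Any other choice $\varphi_n'$ with $\varphi_n' x_n$ in a fixed compact set differs from $\varphi_n$ by a bounded sequence in $\Aut(\Omega)$ (properness of the action, Proposition~\ref{prop:proper}), so the ratio $\lambda_2/\lambda_3$ remains bounded for every admissible choice. Your ``block-refinement'' cannot repair this: the actual gaps in the singular-value spectrum sit at positions $1$ and $3$, yielding repelling subspaces of dimensions $3$ and $1$, neither of which lies in $\Gr_2(\Rb^4)$. Consequently neither the argument for $x\in Z_\xi$ nor the one for $Z_\xi\cap\Omega=\emptyset$ goes through as written.

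Your final ``cleaner route'' is much closer to a workable argument, and is in the spirit of what the paper does in the $(4)\Rightarrow(2)$ step of Theorem~\ref{thm:extreme}: one works directly with the closed $\Aut(\Omega)$-invariant set $\Omega^*=\{\eta\in\Gr_q(\Rb^{p+q}):Z_\eta\cap\Omega=\emptyset\}$, which has nonempty interior because $\Omega$ is bounded in an affine chart, lets normalized representatives of $\varphi_n^{-1}$ converge to some $T\in\End(\Rb^{p+q})$ of deficient rank, and extracts $\xi$ from $\operatorname{Im} T$, $\ker T$, and the invariance of $\Omega^*$, rather than from a gap at a prescribed index.
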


With these notations we will prove the following:

\begin{theorem}\label{thm:extreme} Suppose $p > 1$, $\Mb \subset \Gr_p(\Rb^{2p})$ is an affine chart, $\Omega$ is a bounded open convex subset of $\Mb$, and $\Aut(\Omega)$ acts cocompactly on $\Omega$. If $e \in \partial \Omega$ then the following are equivalent:
\begin{enumerate}
\item $e \in \partial \Omega$ is an $\Rc$-extreme point, 
\item $Z_e \cap \Omega = \emptyset$, 
\item $\Tc \Cc_e \Omega$ is an $\Rc$-proper cone, 
\item there exist $\varphi_n \in \Aut(\Omega)$ and representatives $\wh{\varphi}_n \in \GL(\wedge^p \Rb^{2p})$ so that $\wh{\varphi}_n \rightarrow S$ in $\End(\wedge^p \Rb^{2p})$ and $\operatorname{Im}(S) = e$.
\end{enumerate}
\end{theorem}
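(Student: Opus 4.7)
The plan is to establish the four equivalences through the cycle $(3) \Rightarrow (4) \Rightarrow (1) \Rightarrow (2) \Rightarrow (3)$, combining Theorem~\ref{thm:rescaling} with the determinant function on the affine chart $\Mb \cong M_{p,p}(\Rb)$ and the supporting hypersurface theorem of~\cite[Theorem~1.7]{Z2015}.

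For $(3) \Rightarrow (4)$ we apply Theorem~\ref{thm:rescaling} to the affine dilations $A_n(X) = n(X-e) + e \in \Aff(\Mb) \cap \PGL_{2p}(\Rb)$; since $A_n\Omega$ converges in the local Hausdorff topology to $\Tc\Cc_e\Omega$ and this limit is $\Rc$-proper by (3), the theorem produces $\varphi_n \in \Aut(\Omega)$ and $\phi \in \PGL_{2p}(\Rb)$ with $A_n\varphi_n \to \phi$. Writing $\varphi_n = A_n^{-1}(A_n\varphi_n)$ and computing in the matrix model that $A_n^{-1}$ converges in $\End(\Rb^{2p})$ to the rank-$p$ endomorphism $\left[\begin{smallmatrix} I & 0 \\ e & 0 \end{smallmatrix}\right]$ whose image is exactly the $p$-plane $e$, we conclude that $\wedge^p A_n^{-1}$ converges to a rank-one operator on $\wedge^p \Rb^{2p}$ with image the Plücker line $[e]$. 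Composition with the invertible $\wedge^p\phi$ preserves this image, and after normalization we obtain $\wh\varphi_n \to S$ with $\Imaginary(S) = [e]$.

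For $(4) \Rightarrow (1)$, suppose $\wh\varphi_n \to S$ with $\Imaginary(S) = [e]$ and for contradiction suppose $e$ is not $\Rc$-extreme, so there is $y \neq e$ in $\partial \Omega$ with $e \sim y$ along a rank-one projective line $\ell$ and a segment $\Oc \subset \ell \cap \partial \Omega$ containing both $e, y$ in its interior. Since $\varphi_n$ maps $\ell$ to a rank-one line $\varphi_n(\ell) \in \Lc$ and $\varphi_n(\Oc) \subset \varphi_n(\ell) \cap \partial \Omega$ is a boundary segment whose endpoints collapse to $e$, rescaling by the dilations $A_n$ centered at $e$ produces (along a subsequence) a nondegenerate rank-one boundary segment of $\Tc\Cc_e\Omega$ passing through $e$; together with the $K_\Omega$-isometry property of $\varphi_n$ and Corollary~\ref{cor:asym_faces}, cocompactness of $\Aut(\Omega)$ promotes this boundary segment to a full rank-one affine line inside $\Tc\Cc_e\Omega$, contradicting the $\Rc$-properness that $\Tc\Cc_e\Omega$ would have to enjoy by the implication $(2) \Rightarrow (3)$ applied to any neighbor of $e$ on $\Oc$.

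For $(1) \Rightarrow (2)$ and $(2) \Rightarrow (3)$: The implication $(1) \Rightarrow (2)$ combines $\Rc$-extremality of $e$ with~\cite[Theorem~1.7]{Z2015}, which supplies $\xi \in \Gr_p(\Rb^{2p})$ satisfying $e \in Z_\xi$ and $Z_\xi \cap \Omega = \emptyset$; we argue that $\Rc$-extremality together with cocompactness of $\Aut(\Omega)$ forces $\xi = e$, since any $\xi \neq e$ with $\dim(e \cap \xi) \geq 1$ would furnish a rank-one projective line through $e$ along which a nontrivial $\Rc$-adjacency at $e$ could be produced using $\Aut(\Omega)$-conjugation. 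For $(2) \Rightarrow (3)$: the homogeneity $\det(t(X-e)) = t^p \det(X-e)$ gives $Z_e \cap \Tc\Cc_e\Omega = \emptyset$ from (2); if $\Tc\Cc_e\Omega$ contained a rank-one affine line $L = a + \Rb S$, the cone structure at $e$ would force the tangent cone to contain the open half-plane $H$ in the affine $2$-plane spanned by $e$ and $L$, and the Matrix Determinant Lemma gives $\det(r(a-e) + uS - e) = r^{p-1}(r\det(a-e) + u \cdot v^T\operatorname{adj}(a-e)u)$ for $S = uv^T$, which generically vanishes on a line inside $H$, contradicting $Z_e \cap \Tc\Cc_e\Omega = \emptyset$.

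The main technical obstacle is the degenerate case of $(2) \Rightarrow (3)$ in which $v^T\operatorname{adj}(a-e)u = 0$, so that $\det(\cdot-e)$ is a nonzero constant along rank-one lines parallel to $S$ in $H$ and no vanishing line appears inside $H$. We will overcome this by perturbing the rank-one line $L$ within $\Tc\Cc_e\Omega$: the family of rank-one affine lines contained in the open convex cone is positive-dimensional, the degeneracy locus $\{v^T\operatorname{adj}(a-e)u = 0\}$ is cut out by a single polynomial equation and hence has positive codimension, and an arbitrarily small perturbation of $L$ within $\Tc\Cc_e\Omega$ reduces to the nondegenerate case where the Matrix Determinant Lemma argument goes through.
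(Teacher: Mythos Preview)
Your implications $(3)\Rightarrow(4)$ and $(2)\Rightarrow(3)$ are essentially correct and close in spirit to the paper (the paper proves the cycle $(1)\Rightarrow(4)\Rightarrow(2)\Rightarrow(3)\Rightarrow(1)$, and your $(2)\Rightarrow(3)$ via the determinant formula $\det(T+tS)=\det(T)(1+t\operatorname{tr}(T^{-1}S))$ is exactly what the paper does, including the openness/perturbation step). Your $(3)\Rightarrow(4)$ via the rescaling Theorem~\ref{thm:rescaling} is a nice alternative to the paper's route, which instead proves $(1)\Rightarrow(4)$ directly from Corollary~\ref{cor:asym_faces}.

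However, the two remaining links $(4)\Rightarrow(1)$ and $(1)\Rightarrow(2)$ both have real gaps. In $(4)\Rightarrow(1)$ your argument is circular: you eventually appeal to ``the $\Rc$-properness that $\Tc\Cc_e\Omega$ would have to enjoy by the implication $(2)\Rightarrow(3)$,'' but in your cycle $(3)\Rightarrow(4)\Rightarrow(1)\Rightarrow(2)\Rightarrow(3)$ you have not yet established $(2)$ (nor $(3)$) for $e$, and applying $(2)\Rightarrow(3)$ to a \emph{neighbor} $y$ of $e$ tells you about $\Tc\Cc_y\Omega$, not $\Tc\Cc_e\Omega$. The intermediate claims (that the endpoints of $\varphi_n(\Oc)$ collapse to $e$, and that rescaling plus Corollary~\ref{cor:asym_faces} ``promotes'' a boundary segment to a full affine rank-one line in $\Tc\Cc_e\Omega$) are also not justified: the endpoints of $\Oc$ may lie in $\ker S$, and there is no mechanism offered for producing a full line in the tangent cone. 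In $(1)\Rightarrow(2)$, \cite[Theorem~1.7]{Z2015} only gives \emph{some} $\xi$ with $e\in Z_\xi$ and $Z_\xi\cap\Omega=\emptyset$; your assertion that $\Rc$-extremality forces $\xi=e$ is not substantiated. The condition $e\in Z_\xi$ only says $\dim(e\cap\xi)\geq 1$, which for $p>2$ does not even put $e$ and $\xi$ on a common rank-one line, and in any case there is no argument given for why $\xi\neq e$ would produce a nontrivial $\Rc$-adjacency at $e$.

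The paper avoids both difficulties by routing through the dual set $\Omega^*=\{\xi\in\Gr_p(\Rb^{2p}):Z_\xi\cap\Omega=\emptyset\}$ for $(4)\Rightarrow(2)$: since $\Omega^*$ is $\Aut(\Omega)$-invariant, compact, and has nonempty interior (because $\Omega$ is bounded in an affine chart), one can pick $\eta\in\Omega^*$ outside $\ker S$ and conclude $e=S(\eta)=\lim\varphi_n(\eta)\in\Omega^*$. The paper's $(3)\Rightarrow(1)$ is then the trivial observation that a non-$\Rc$-extreme boundary point has a rank-one line in its tangent cone. I would recommend replacing your $(4)\Rightarrow(1)$ and $(1)\Rightarrow(2)$ by the single step $(4)\Rightarrow(2)$ via $\Omega^*$, together with the easy $(3)\Rightarrow(1)$; this closes your cycle cleanly.
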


\begin{remark} Part (4) fails for convex divisible domains in real projective space. In particular by a result of Benoist~\cite{B2006}: if $\Omega \subset \Pb(\Rb^4)$ is a convex divisible domain and $x \in \partial \Omega$ then there exists $\varphi_n \in \Aut(\Omega)$ and representatives $\wh{\varphi}_n \in \GL_4(\Rb)$ so that $\wh{\varphi}_n \rightarrow S$ in $\End(\Rb^{4})$ and $\operatorname{Im}(S) = x$. However, there are examples of convex divisible domains in $\Pb(\Rb^4)$ whose boundary contains non-extreme points (see~\cite{B2006} and~\cite{BDL2015}).\end{remark}

\begin{proof}
We first show that $(1) \Rightarrow (4)$. Suppose that $e \in \partial \Omega$ is an $\Rc$-extreme point. Pick a sequence $x_n \in \Omega$ so that $x_n \rightarrow e$. Since $\Aut(\Omega)$ acts cocompactly on $\Omega$ we can find $R \geq 0$ and $\varphi_n \in \Aut(\Omega)$ so that 
\begin{align*}
K_{\Omega}(x_n, \varphi_n x_0) \leq R
\end{align*}
for all $n \geq 0$. Now for any $x \in \Omega$ we have 
\begin{align*}
K_{\Omega}(\varphi_n x,x_n) \leq K_{\Omega}(\varphi_n x,\varphi_n x_0) + R = K_{\Omega}(x,x_0)+R
\end{align*}
and so by Corollary~\ref{cor:asym_faces} we see that $\varphi_n x \rightarrow e$. Next pick representatives $\wh{\varphi}_n \in \GL(\wedge^p \Rb^{2p})$ of $\wedge^p \varphi_n$ so that $\norm{\wh{\varphi}_n}=1$. By passing to a subsequence we can suppose that $\wh{\varphi}_n \rightarrow S$ in $\End(\wedge^p \Rb^{2p})$. Now if $x \in \Oc:=\Gr_{p}(\Rb^{2p}) \setminus \ker S$ then $S(x) = \lim_{n \rightarrow \infty} \varphi_n x$. Since $\Oc$ is open and dense, we see that $\Omega \cap \Oc$ is dense in $\Omega$. In particular $\Omega \cap \Oc$ contains a basis of $\wedge^p \Rb^{2p}$. However for every $x \in \Omega \cap \Oc$ we have $S(x) = e$. So $\operatorname{Im}(S) = e$. So $(1) \Rightarrow (4)$. 

We next show that $(4) \Rightarrow (2)$. So suppose there exists $\varphi_n \in \Aut(\Omega)$ and representatives $\wh{\varphi}_n \in \GL(\wedge^p \Rb^{2p})$ so that $\wh{\varphi}_n \rightarrow S$ in $\End(\wedge^p \Rb^{2p})$ and $\operatorname{Im}(S) = e$. Notice that if $x \in \Oc:=\Gr_p(\Rb^{2p}) \setminus \ker S$ then $S(x) = \lim_{n \rightarrow \infty} \varphi_n(x)$. Now define the set 
\begin{align*}
\Omega^* := \{ \xi \in \Gr_p(\Rb^{2p}) : Z_{\xi} \cap \Omega = \emptyset\}.
\end{align*}
Since $\Omega$ is open, $\Omega^*$ is compact. Moreover since $\Omega$ is bounded in an affine chart $\Omega^*$ has non-empty interior: $\Mb = \Gr_p(\Rb^{2p}) \setminus Z_{\xi}$ for some $\xi$ and since $\Omega$ is bounded in $\Mb$ we see that $\Omega^*$ contains an open neighborhood of $\xi$. In particular, $\Omega^* \cap \Oc$ is non-empty. But then for $\eta \in \Omega^* \cap \Oc$ we have $e=S(\eta) = \lim_{n \rightarrow \infty} \varphi_n(\eta)$. Since $\Omega^*$ is $\Aut(\Omega)$-invariant we then see that $e \in \Omega^*$.  So $(4) \Rightarrow (2)$.

We next show that $(2) \Rightarrow (3)$. So suppose that $e \in \partial \Omega$ and $Z_e \cap \Omega = \emptyset$. We can assume that 
\begin{align*}
\Omega \subset \Mb:= \left\{ \begin{bmatrix} \Id_p \\ X \end{bmatrix} : X \in M_{p,p}(\Rb) \right\}
\end{align*}
and $e=0$ in $\Mb$. Then since $Z_e \cap \Omega =  \emptyset$ we see that 
\begin{align*}
\Omega \subset \left\{ \begin{bmatrix} \Id_p \\ X \end{bmatrix} : \det(X) \neq 0\right\}.
\end{align*}
Since $\Omega$ is connected, by making an affine transformation, we may assume that 
\begin{align*}
\Omega \subset \left\{ \begin{bmatrix} \Id_p \\ X \end{bmatrix} :  \det(X) >0  \right\}.
\end{align*}
Then, since $\mathcal{T}\mathcal{C}_0 \Omega$ is open, we see that 
\begin{align*}
\mathcal{T}\mathcal{C}_0 \Omega \subset \left\{ \begin{bmatrix} \Id_p \\ X \end{bmatrix} :  \det(X) >0  \right\}.
\end{align*}
Now suppose for a contradiction that $\mathcal{T}\mathcal{C}_0 \Omega$ is not $\Rc$-proper. Then by Lemma~\ref{lem:rank_one_lines} and convexity there exists a rank one endomorphism $S$ so that 
\begin{align*}
\{ \begin{bmatrix} \Id_p & T + tS\end{bmatrix}^t : t \in \Rb\} \subset \mathcal{T}\mathcal{C}_0 \Omega 
\end{align*}
whenever $ \begin{bmatrix} \Id_p & T\end{bmatrix}^t  \in \mathcal{T}\mathcal{C}_0 \Omega$. So
\begin{align*}
\det(T+tS) >0
\end{align*}
for any $ \begin{bmatrix} \Id_p & T \end{bmatrix}^t  \in \mathcal{T}\mathcal{C}_0 \Omega$ and $t \in \Rb$. Now 
\begin{align*}
\det(T+tS) = \det(T) \det(\Id_p + t T^{-1} S) = \det(T)(1+t\tr(T^{-1} S))
\end{align*}
since $T^{-1}S$ has rank one. But since $\mathcal{T}\mathcal{C}_0 \Omega$ is open there exists some $ \begin{bmatrix} \Id_p & T_0\end{bmatrix}^t  \in \mathcal{T}\mathcal{C}_0 \Omega$ so that $\tr T_0^{-1}S$ is non-zero. But then 
\begin{align*}
\det(T_0+tS)=0
\end{align*}
when $t=-(\tr T_0^{-1}S)^{-1}$. So we have a  contradiction and so $(2) \Rightarrow (3)$. 

Finally we show that $(3) \Rightarrow (1)$. So suppose that $e \in \partial \Omega$ and $\mathcal{T}\mathcal{C}_e \Omega$ is $\Rc$-proper. If $e \in \partial \Omega$ is not an $\Rc$-extreme point then $\overline{\Tc \Cc_e \Omega}$ contains an entire rank one line. Since $\Tc \Cc_e \Omega$ is convex and open this implies that $\Tc \Cc_e \Omega$ contains an entire rank one line and so $\Tc \Cc_e \Omega$ is not $\Rc$-proper.  
\end{proof}

\begin{corollary}\label{cor:ext_closed} Suppose $p > 1$, $\Mb \subset \Gr_p(\Rb^{2p})$ is an affine chart, $\Omega$ is a bounded open convex subset of $\Mb$, and $\Aut(\Omega)$ acts cocompactly on $\Omega$. Then $\Ext(\Omega) \subset \partial \Omega$ is closed. \end{corollary}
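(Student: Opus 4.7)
The plan is to combine the characterization of $\Rc$-extreme points given by $(1) \Leftrightarrow (2)$ in Theorem~\ref{thm:extreme} with the closedness of the auxiliary set
\[
\Omega^* := \{ \xi \in \Gr_p(\Rb^{2p}) : Z_\xi \cap \Omega = \emptyset \}
\]
that already appeared (as a compact set) in the proof of that theorem.

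Concretely, I would take a sequence $e_n \in \Ext(\Omega)$ converging to some $e \in \partial \Omega$ and aim to conclude $e \in \Ext(\Omega)$. By Theorem~\ref{thm:extreme}, $(1) \Rightarrow (2)$, each $e_n$ lies in $\Omega^*$. Once $\Omega^*$ is known to be closed, the limit satisfies $e \in \Omega^*$, i.e.\ $Z_e \cap \Omega = \emptyset$, and applying $(2) \Rightarrow (1)$ of Theorem~\ref{thm:extreme} produces $e \in \Ext(\Omega)$, as required.

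Thus the only ingredient not already recorded in the paper is the closedness of $\Omega^*$, and this is where I would spend the small amount of effort. To verify it, I would use that the incidence set
\[
I := \{ (x, \xi) \in \Gr_p(\Rb^{2p}) \times \Gr_p(\Rb^{2p}) : x \cap \xi \neq (0) \}
\]
is a closed algebraic subvariety, and that $\PGL_{2p}(\Rb)$-equivariance $g \cdot Z_\xi = Z_{g\xi}$ together with transitivity of the $\PGL_{2p}(\Rb)$-action on $\Gr_p(\Rb^{2p})$ makes $\xi \mapsto Z_\xi$ continuous in the Hausdorff topology. Hence if $\xi_n \to \xi$ and $x \in Z_\xi \cap \Omega$, then there exist $x_n \in Z_{\xi_n}$ with $x_n \to x$, and openness of $\Omega$ forces $x_n \in \Omega$ for $n$ large; this gives $\xi_n \notin \Omega^*$ eventually, proving $\Omega^*$ closed. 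The main (mild) obstacle is just this continuity statement for $Z_\xi$, but it is a direct consequence of the homogeneity of the Grassmannian; all the substantial content is already contained in Theorem~\ref{thm:extreme}.
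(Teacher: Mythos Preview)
Your proposal is correct and follows exactly the paper's approach: the paper's proof simply invokes the equivalence $(1)\Leftrightarrow(2)$ of Theorem~\ref{thm:extreme} to identify $\Ext(\Omega)$ with $\{e\in\partial\Omega : Z_e\cap\Omega=\emptyset\}$ and declares this set ``obviously closed.'' Your additional justification of the closedness of $\Omega^*$ is fine but not strictly new---the paper already recorded (in the proof of $(4)\Rightarrow(2)$) that $\Omega^*$ is compact, since $\Omega$ is open, which is all you need.
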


\begin{remark} This corollary fails for convex divisible domains in real projective space. In particular by a result of Benoist~\cite{B2006}: if $\Omega \subset \Pb(\Rb^4)$ is a convex divisible domain then the extreme points of $\Omega$ are dense in $\partial \Omega$.  However, there are examples of convex divisible domains in $\Pb(\Rb^4)$ whose boundary contains non-extreme points (see~\cite{B2006} and~\cite{BDL2015}).\end{remark}

\begin{proof}
By the above proposition, the set of extreme points coincides with 
\begin{align*}
\{ e \in \partial \Omega : Z_e \cap \Omega = \emptyset\}
\end{align*}
which is obviously closed. 
\end{proof}

\subsection{Constructing extreme points}

\begin{proposition}\label{prop:ext_spanning}
Suppose $\Mb \subset \Gr_p(\Rb^{p+q})$ is an affine chart and $\Omega \subset \Mb$ is an open bounded convex set. Then $\Ext(\Omega)$ spans $\wedge^p \Rb^{p+q}$. 
\end{proposition}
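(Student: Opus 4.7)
The plan is to argue by contradiction: suppose there is a nonzero linear functional $L \in (\wedge^p\Rb^{p+q})^*$ vanishing on $\tilde\iota(\Ext(\Omega))$, where $\tilde\iota : \Mb \to \wedge^p\Rb^{p+q}$ is the affine lift of the Pl\"ucker embedding, sending $X \in M_{q,p}(\Rb) \cong \Mb$ to $(e_1 + v_1(X))\wedge\cdots\wedge(e_p + v_p(X))$ with $v_j(X) \in \Spanset\{e_{p+1},\ldots,e_{p+q}\}$ the $j$th column of $X$. Set $f := L \circ \tilde\iota$. This is a polynomial in the entries of $X$ whose coefficients in the expansion as a linear combination of size-$p$ minors of $\bigl(\begin{smallmatrix}\Id_p\\ X\end{smallmatrix}\bigr)$ are the Pl\"ucker coordinates of $L$, so if $f$ vanishes identically on the open set $\Omega$ then $f$ is identically zero as a polynomial on $\Mb$, which forces $L = 0$. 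Thus it suffices to show $f \equiv 0$ on $\overline{\Omega}$.

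The essential input is that $f$ is affine linear on every rank-one line $\{X + tS : t \in \Rb\}$ with $\rank(S) \leq 1$. Indeed, each size-$p$ minor of $\bigl(\begin{smallmatrix}\Id_p \\ X + tS\end{smallmatrix}\bigr)$ equals (up to sign) a $k$-by-$k$ minor of $X + tS$ for some $k \leq p$, and $\det(A + tB)$ is affine linear in $t$ whenever $\rank(B) \leq 1$: the coefficient of $t^j$ for $j \geq 2$ is a sum of $j$-by-$j$ minors of $B$, all of which vanish. Consequently, on any rank-one segment $[a,b] \subset \overline{\Omega}$, the function $f$ is affine linear, and in particular if $f$ attains an extremum on $\overline{\Omega}$ at a point of $(a,b)$, then $f$ is constant on $[a,b]$.

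I claim $M := \max_{\overline{\Omega}} f = 0$; the minimum case is symmetric, and then $f \equiv 0$. Classical extreme points of $\overline{\Omega}$ are automatically $\Rc$-extreme (they cannot be interior to any line segment in $\overline{\Omega}$), so $f$ vanishes on them and $M \geq 0$. Suppose $M > 0$ and put $K := \{y \in \overline{\Omega} : f(y) = M\}$, a nonempty closed set disjoint from $\Ext(\Omega)$. By affine linearity on rank-one segments, $K$ is closed under the following rank-one propagation: if $y \in K$ is interior to a rank-one segment $[a,b] \subset \overline{\Omega}$, then $f \equiv M$ on $[a,b]$, so $a,b \in K$. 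If $K$ meets $\operatorname{int}(\overline{\Omega})$, any rank-one line through an interior point $y \in K$ has $y$ in the interior of its segment in $\overline{\Omega}$, so the whole segment lies in $K$; iterating finitely many such small rank-one displacements (using that the rank-one matrices $E_{ij}$ span $M_{q,p}(\Rb)$, and that small displacements from an interior point stay inside $\operatorname{int}(\overline{\Omega})$) shows $K$ contains a neighborhood of $y$. Hence $K \cap \operatorname{int}(\overline{\Omega})$ is clopen in the connected set $\operatorname{int}(\overline{\Omega})$, so equals all of it; then $K = \overline{\Omega}$ by continuity, contradicting $f = 0$ on classical extreme points.

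So $K \subset \partial\Omega$, and no point of $K$ is $\Rc$-extreme. For $y \in K$, let $F(y)$ be the smallest extreme face of $\overline{\Omega}$ containing $y$, so $y \in \operatorname{relint}(F(y))$. A rank-one segment in $\partial\Omega$ through $y$ with $y$ in its interior must lie entirely in $F(y)$, because $F(y)$ is extreme in $\overline{\Omega}$ and hence closed under extrapolation of line segments through points of its relative interior; a maximal such segment has its endpoints on the relative boundary of $F(y)$, hence in strictly lower-dimensional extreme faces of $\overline{\Omega}$, and these endpoints lie in $K$ by rank-one propagation. Inducting on the minimal face dimension attained by a point of $K$, we reach $y \in K$ with $F(y) = \{y\}$, i.e., an $\Rc$-extreme point inside $K$, contradicting $K \cap \Ext(\Omega) = \emptyset$. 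The most delicate ingredient is the extreme-face descent, but it follows cleanly from the standard fact that any line segment in $\overline{\Omega}$ passing through a point of $\operatorname{relint}(F(y))$ is contained in $F(y)$ together with maximality of the rank-one segment.
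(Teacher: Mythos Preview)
Your proof is correct and takes a genuinely different route from the paper's. The paper argues constructively: for $x\in\partial\Omega$ it introduces the affine subspace $V_x = x + \Spanset\{v : x+v \sim x\}$ (so that $x$ is $\Rc$-extreme iff $\dim V_x = 0$), and shows that through any $x$ with $\dim V_x > 0$ one can run a rank-one segment in $\partial\Omega$ whose endpoints $x+tv,\,x+sv$ satisfy $\dim V_{x+tv},\,\dim V_{x+sv} < \dim V_x$. Since three points on a rank-one line have collinear Pl\"ucker images, induction on $\dim V_x$ puts every boundary point in the span of $\Ext(\Omega)$. Your approach is dual: you assume a functional $L$ kills $\Ext(\Omega)$, exploit that $f = L\circ\tilde\iota$ is affine on rank-one lines, and combine a maximum-principle argument on $\overline{\Omega}$ with a descent on the face dimension $\dim F(y)$ to force $f\equiv 0$. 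Both proofs hinge on a dimension descent along rank-one segments in $\partial\Omega$, but they index it differently (adjacency directions $V_x$ versus minimal extreme face $F(y)$); your affine-linearity observation is exactly the dual reading of the paper's ``collinear Pl\"ucker images'' remark. The paper's version is slightly more hands-on about the structure of $\partial\Omega$, while yours packages the same geometry into a clean maximum principle and avoids having to separately note that $\partial\Omega$ itself spans $\wedge^p\Rb^{p+q}$.
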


\begin{proof} Identify $\Mb$ with $M_{q,p}(\Rb)$. For $x \in \partial \Omega$ let 
\begin{align*}
V_x = x+ \Spanset \{ v \in M_{q,p}(\Rb):  v +x \text{ is adjacent to } x \} \subset M_{q,p}(\Rb).
\end{align*}
Notice that $x \in \partial \Omega$ is an $\Rc$-extreme point if and only if $\dimension V_x =0$. 

Now, since rank one lines in $M_{q,p}(\Rb)$ are mapped to projective lines in $\Pb(\wedge^p \Rb^{p+q})$, we have the following: if $v$ is a rank one matrix, $t < 0 < s$, and $a,b,c \in\Pb( \wedge^p \Rb^{p+q})$ are the images of $x+tv, x, x+sv \in M_{q,p}(\Rb)$ respectively then 
\begin{align*}
b \subset a+b.
\end{align*} 
Thus is enough to show: for any $x \in \partial\Omega$ with $\dim V_x > 0$ there exists a rank one matrix $v \in M_{q,p}(\Rb)$ and $t < 0 <s$ so that $x+tv, x+sv \in \partial \Omega$ and 
\begin{align*}
\dimension V_{x+tv}, V_{x+sv} < \dimension V_x.
\end{align*}

Let $F_x = \partial \Omega \cap V_x$. This is a convex set which is open in $V_x$. We claim that $V_y \subset V_x$ for $y \in F_x \cap \partial \Omega$. To see this suppose that $v+y$ is adjacent to $y$. Then there exists $\epsilon > 0$ so that $t v + y \in \partial \Omega$ for $t \in (-\epsilon, 1+\epsilon)$. Moreover, since $y \in \partial \Omega \cap V_x$ there exists $\delta > 0$ so that $\lambda x + (1-\lambda)y \in \partial\Omega$ for $[0, 1+\delta]$. Then by convexity, there exists $\epsilon_1>0$ so that $x+tv \in \partial \Omega$ for $t \in (-\epsilon_1, \epsilon_1)$. Thus $V_y \subset V_x$. This implies that if $y \in \partial F_x$ (viewing $F_x$ as an open set in $V_x$) then $\dimension V_y < \dimension V_x$. 

So for $x \in \partial \Omega$ and $\dim V_x > 0$, pick a rank one matrix $v$ so that $x + \Rb v \subset V_x$. Then if
\begin{align*}
\{x+sv, x+tv \} = \partial F_x \cap (x + \Rb v)
\end{align*}
we have 
\begin{align*}
\dimension V_{x+tv}, V_{x+sv} < \dimension V_x
\end{align*}
and the proof is complete by the remarks above.
\end{proof}

Let $V$ be real vector space of dimension $d<\infty$, $\varphi \in \PGL(V)$, and $\overline{\varphi} \in \GL(V)$ be a representative of $\varphi$ with $\det(\overline{\varphi})=\pm 1$. Next let
\begin{align*}
\sigma_1(\varphi) \leq \sigma_2(\varphi) \leq \dots \leq \sigma_{d}(\varphi)
\end{align*}
be the absolute values of the eigenvalues (counted with multiplicity) of $\overline{\varphi}$ (notice that this does to depend on the choice of $\overline{\varphi}$). Let $m^+(\varphi)$ be the size of the largest Jordan block of $\overline{\varphi}$ whose corresponding eigenvalue has absolute value $\sigma_{d+1}(\varphi)$. Next let $E^+(\varphi)$ be the span of the eigenvectors of $\overline{\varphi}$ whose eigenvalue have absolute value $\sigma_{d+1}(\varphi)$ and are part of a Jordan block with size $m^+(\varphi)$. Also define $E^-(\varphi) = E^+(\varphi^{-1})$.

Given $y \in \Pb(V)$ let $L(\varphi,y) \subset \Pb(V)$ denote the limit points of the sequence $\{\varphi^n y\}_{n \in \Nb}$. With this notation we have the following observation:

\begin{proposition}
\label{prop:attracting}
Suppose $\varphi \in \PGL(V)$ and $\{ \varphi^n\}_{ n \in \Nb} \subset \PGL(V)$ is unbounded, then there exists a proper projective subspace $H \subsetneq \Pb(V)$ such that $L(\varphi,y) \subset [E^+(\varphi)]$ for all $y \in \Pb(V) \setminus H$.
\end{proposition}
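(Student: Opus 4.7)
The plan is to lift $\varphi$ to a representative $\overline{\varphi} \in \GL(V)$ with $\det(\overline{\varphi}) = \pm 1$ and analyze the asymptotics of $\overline{\varphi}^n$ via the Jordan decomposition. First I would pass to the complexification $V_{\Cb} = V \otimes_{\Rb} \Cb$, decompose into Jordan blocks for $\overline{\varphi}$, and in a block of size $m$ with eigenvalue $\mu$ choose a Jordan basis $(e_0, \dots, e_{m-1})$ satisfying $(\overline{\varphi}-\mu)e_k = e_{k-1}$ (with $e_{-1}=0$). A binomial expansion then gives, for $v = \sum_k c_k e_k$ in that block,
\[
\overline{\varphi}^n v \;=\; \sum_{j=0}^{m-1} \left( \sum_{k \geq j} c_k \binom{n}{k-j} \mu^{n-k+j} \right) e_j,
\]
so the fastest growing contribution across all blocks is of order $\binom{n}{m^+-1} \sigma_d(\varphi)^n$, and is attained precisely on the \emph{dominant} blocks: those of size $m^+(\varphi)$ whose eigenvalue has absolute value $\sigma_d(\varphi)$.

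Next I would let $E_\Cb^+$ be the span of the ``bottom'' eigenvectors $e_0^{(B)}$ as $B$ ranges over dominant blocks, so that $E_\Cb^+$ is the complexification of $E^+(\varphi)$. I would define the bad subspace
\[
H_\Cb \;:=\; \big\{ v \in V_\Cb : \text{the $e_{m^+-1}^{(B)}$-coordinate of $v$ vanishes for every dominant block } B \big\},
\]
which is a proper subspace of $V_\Cb$ of codimension equal to the number of dominant blocks. Because complex conjugation permutes Jordan blocks (pairing conjugate eigenvalues) and hence permutes dominant blocks, both $E_\Cb^+$ and $H_\Cb$ are Galois-stable; consequently $H := H_\Cb \cap V$ is a proper $\Rb$-subspace of $V$ of the expected codimension, and $E^+(\varphi) = E_\Cb^+ \cap V$.

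Finally, for $v \in V \setminus H$ I would renormalize by dividing $\overline{\varphi}^n v$ by $\binom{n}{m^+-1} \sigma_d(\varphi)^n$. The contribution from non-dominant blocks tends to zero (either $|\mu|<\sigma_d(\varphi)$ or the block has size $<m^+$), while each dominant block $B$ with eigenvalue $\mu_B$ contributes $c_{m^+-1}^{(B)}(v) (\mu_B/\sigma_d(\varphi))^n e_0^{(B)} + o(1)$. Since $|\mu_B/\sigma_d(\varphi)| = 1$, the renormalized sequence stays in a compact subset of $E_\Cb^+$ and, because $v \notin H$ forces at least one coefficient $c_{m^+-1}^{(B)}(v)$ to be nonzero, it is bounded away from $0$ for $n$ large. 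Hence every limit point of $[\overline{\varphi}^n v]$ in $\Pb(V_\Cb)$ lies in $\Pb(E_\Cb^+)$; intersecting with $\Pb(V)$ yields $L(\varphi, [v]) \subset [E^+(\varphi)]$.

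The main technical care is required in the Galois-invariance step: one must check that complex conjugation swaps Jordan blocks with conjugate eigenvalues in a way compatible with the chosen Jordan bases, so that both the top-coordinate condition cutting out $H_\Cb$ and the span $E_\Cb^+$ descend to honest real subspaces of $V$ of the expected dimensions. Everything else reduces to the binomial expansion displayed above together with a compactness/bounded-away-from-zero argument.
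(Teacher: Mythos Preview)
Your proposal is correct and follows exactly the approach the paper indicates: the paper's entire proof reads ``This follows easily once $\overline{\varphi}$ is written in Jordan normal form,'' and you have carefully carried out that computation, including the complexification, the binomial growth analysis, the identification of the bad hyperplane $H$, and the Galois-descent back to real subspaces. The only minor inaccuracy is that the leading renormalized coefficient in a dominant block is $c_{m^+-1}^{(B)}\,\mu_B^{-(m^+-1)}(\mu_B/\sigma_d)^n$ rather than $c_{m^+-1}^{(B)}(\mu_B/\sigma_d)^n$, but this constant factor is nonzero and does not affect the argument.
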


\begin{proof} This follows easily once $\overline{\varphi}$ is written in Jordan normal form.
\end{proof}

\begin{corollary}\label{cor:ext_attracting}
Suppose $\Omega$ is an open connected set of $\Gr_p(\Rb^{2p})$, there exists an affine chart which contains $\Omega$ as an bounded convex set, and $\Aut(\Omega)$ acts cocompactly on $\Omega$. If $\varphi \in \Aut(\Omega)$ and $ \varphi^n\rightarrow \infty$ in $\PGL_{2p}(\Rb)$ then $E^+(\wedge^p \varphi) \cap \partial \Omega$ is non-empty and contains an $\Rc$-extreme point. 
\end{corollary}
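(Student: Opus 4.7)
My plan is to combine the spanning property of the $\Rc$-extreme set (Proposition~\ref{prop:ext_spanning}), the Jordan-form dynamics of Proposition~\ref{prop:attracting}, and the closedness of $\Ext(\Omega)$ (Corollary~\ref{cor:ext_closed}). The strategy is to pick an $\Rc$-extreme point that avoids the exceptional projective subspace coming from the Jordan form of $\wedge^p\varphi$, iterate to push it toward $[E^+(\wedge^p\varphi)]$, and then use closedness to conclude that the limit point is itself $\Rc$-extreme.

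In detail, I would first check that $\{\wedge^p\varphi^n\}_{n\in\Nb}$ is unbounded in $\PGL(\wedge^p\Rb^{2p})$: the exterior-power map $\PGL_{2p}(\Rb)\to \PGL(\wedge^p\Rb^{2p})$ is a closed topological embedding (an injective polynomial map with Zariski-closed image), so unboundedness is preserved. Proposition~\ref{prop:attracting} then supplies a proper projective subspace $H\subsetneq \Pb(\wedge^p\Rb^{2p})$ with the property that every orbit $\{\wedge^p\varphi^n(y)\}$ starting at $y\notin H$ has all subsequential limits in $[E^+(\wedge^p\varphi)]$. By Proposition~\ref{prop:ext_spanning}, $\Ext(\Omega)$ spans $\wedge^p\Rb^{2p}$ as a vector space; since $H$ corresponds to a proper linear subspace, there must exist $e\in\Ext(\Omega)$ with $e\notin H$. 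Because $\Aut(\Omega)\subset \PGL_{2p}(\Rb)$ acts on $\Gr_p(\Rb^{2p})$ by projective transformations, it preserves the collection $\Lc$ of projective lines contained in the Grassmannian and hence preserves both the adjacency relation and the notion of $\Rc$-extreme point. Consequently $\varphi^n e\in\Ext(\Omega)\subset \partial\Omega$ for every $n$, and after passing to a subsequence $\{n_k\}$ I obtain a limit
\begin{align*}
z := \lim_{k\to\infty} \varphi^{n_k} e \in [E^+(\wedge^p\varphi)].
\end{align*}
Since $\Gr_p(\Rb^{2p})$ is closed in $\Pb(\wedge^p\Rb^{2p})$ and $\partial\Omega$ is closed in $\Gr_p(\Rb^{2p})$, this gives $z\in\partial\Omega$. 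Corollary~\ref{cor:ext_closed} then yields $z\in\Ext(\Omega)\cap [E^+(\wedge^p\varphi)]$, which is what was to be shown.

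I do not foresee any serious obstacle once these three ingredients are in hand. The main conceptual point is that Proposition~\ref{prop:ext_spanning} supplies exactly what is needed to initiate the dynamics (an $\Rc$-extreme point outside the obstructing subspace $H$), while Corollary~\ref{cor:ext_closed} supplies exactly what is needed to conclude (that $\Rc$-extremality passes to limits along the orbit).
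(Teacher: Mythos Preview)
Your proposal is correct and follows essentially the same approach as the paper: use Proposition~\ref{prop:ext_spanning} to pick an $\Rc$-extreme point $e\notin H$, apply Proposition~\ref{prop:attracting} so that limit points of $\varphi^n e$ lie in $[E^+(\wedge^p\varphi)]$, and invoke Corollary~\ref{cor:ext_closed} to see such a limit is again $\Rc$-extreme. Your write-up is somewhat more explicit (you justify that $\wedge^p\varphi^n$ is unbounded and that automorphisms preserve $\Rc$-extremality), but the argument is the same.
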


\begin{proof} Let $H \subset \Pb(\wedge^p \Rb^{2p})$ be as in the above proposition for $\wedge^p \varphi$. Since the set of $\Rc$-extreme points of $\partial \Omega$ span $\Pb(\wedge^p \Rb^{2p})$ there exists some $\Rc$-extreme point $e \in \partial \Omega$ so that $e \notin H$. Then any limit point of $\varphi^n e$ is in $E^+(\wedge^p \varphi)$ and is also an $\Rc$-extreme point by Corollary~\ref{cor:ext_closed}.

 \end{proof}
 
 \subsection{Finding symmetry}

\begin{corollary}\label{cor:rescaling}
Suppose $\Omega \subset \Gr_p(\Rb^{2p})$ is an $\Rc$-proper open convex set in the affine chart 
\begin{align*}
\Mb = \left\{ \begin{bmatrix} \Id_p \\ X \end{bmatrix} : X \in M_{p,p}(\Rb) \right\}
\end{align*}
and $H \leq \Aut(\Omega)$ acts cocompactly on $\Omega$. If $e=\begin{bmatrix} \Id_p \\ X_0\end{bmatrix} \in \partial \Omega$ is an $\Rc$-extreme point then there exists $h_n \in H$ and $t_n \rightarrow \infty$ so that 
\begin{align*}
\varphi = \lim_{n \rightarrow \infty} \begin{bmatrix} \Id_p & 0 \\ (1-e^{t_n})X_0 & e^{t_n} \Id_p \end{bmatrix} h_n 
\end{align*}
exists in $\PGL_{2p}(\Rb)$ and $\varphi(\Omega) = \mathcal{T}\mathcal{C}_e \Omega$. In particular, $\Omega$ is invariant under the one-parameter group 
\begin{align*}
\varphi^{-1} \left\{ \begin{bmatrix} \Id_p & 0 \\ (1-e^t)X_0 & e^t \Id_p \end{bmatrix} : t \in \Rb \right\}\varphi.
\end{align*}
\end{corollary}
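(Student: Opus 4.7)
The plan is to recognize the matrices in the statement as rescaling maps about $e$ and then invoke the rescaling theorem (Theorem~\ref{thm:rescaling}). A direct block-matrix computation in the affine chart $\Mb$ shows that
\begin{align*}
A_t := \begin{bmatrix} \Id_p & 0 \\ (1-e^t)X_0 & e^t \Id_p \end{bmatrix}
\end{align*}
lies in $\Aff(\Mb) \cap \PGL_{2p}(\Rb)$ and acts on $\Mb$ by $A_t \cdot X = X_0 + e^t(X-X_0)$. In particular $A_t$ fixes $e = X_0$ and dilates about it by the factor $e^t$, and $\{A_t\}_{t \in \Rb}$ is a genuine one-parameter subgroup of $\PGL_{2p}(\Rb)$. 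Because $\Omega$ is convex and $e \in \overline{\Omega}$, the family $A_t(\Omega) = e + e^t(\Omega - e)$ is increasing in $t$, so for any $t_n \to \infty$ the sets $A_{t_n}(\Omega)$ converge in the local Hausdorff topology to the open convex set $\Tc\Cc_e\Omega$.

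Next I would apply Theorem~\ref{thm:extreme}: since $e$ is an $\Rc$-extreme point, the implication $(1) \Rightarrow (3)$ yields that $\Tc\Cc_e\Omega$ is $\Rc$-proper. All hypotheses of Theorem~\ref{thm:rescaling} are now in place for the sequence $A_{t_n}$ and the cocompact subgroup $H \leq \Aut(\Omega)$, so I obtain a subsequence $n_k \to \infty$ and elements $h_k \in H$ such that
\begin{align*}
\varphi := \lim_{k \to \infty} A_{t_{n_k}} h_k
\end{align*}
exists in $\PGL_{2p}(\Rb)$ and satisfies $\varphi(\Omega) = \Tc\Cc_e\Omega$, which proves the first half of the statement.

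For the one-parameter group invariance, note that $\Tc\Cc_e\Omega$ is by definition a cone with apex $e$, so it is preserved by every dilation $X \mapsto e + s(X - e)$ with $s > 0$; equivalently $A_t(\Tc\Cc_e\Omega) = \Tc\Cc_e\Omega$ for all $t \in \Rb$. Transporting this invariance back through $\varphi$ gives that $\Omega$ is preserved by the one-parameter subgroup $\varphi^{-1} \{A_t : t \in \Rb\} \varphi$ of $\PGL_{2p}(\Rb)$. No real obstacle arises here: the substantive input has already been done in the $\Rc$-properness of the tangent cone (Theorem~\ref{thm:extreme}) and in the rescaling theorem itself, and the remaining task is only to identify $A_t$ as the dilation fixing $e$ and to match pieces together.
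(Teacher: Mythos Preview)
Your proposal is correct and follows essentially the same approach as the paper: identify $A_t$ as the affine dilation about $e$, observe that $A_t\Omega \to \Tc\Cc_e\Omega$, invoke Theorem~\ref{thm:extreme} for $\Rc$-properness of the limit, and then apply Theorem~\ref{thm:rescaling}. You are in fact slightly more explicit than the paper about why the one-parameter invariance follows from the cone structure of $\Tc\Cc_e\Omega$.
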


\begin{proof}
Let 
\begin{align*}
A_t = \begin{bmatrix} \Id_p & 0 \\ (1-e^{t})X_0 & e^{t} \Id_p \end{bmatrix} 
\end{align*}
then 
\begin{align*}
A_t \cdot \begin{bmatrix} \Id_p \\ X \end{bmatrix}  = \begin{bmatrix} \Id_p \\ e^t(X-X_0)+X_0 \end{bmatrix}.
\end{align*}
So $A_t \in \Aff(\Mb) \cap \PGL_{2p}(\Rb)$ and $A_t\Omega$ converges in the local Hausdorff topology to $\Tc \Cc_e \Omega$ as $t \rightarrow \infty$. So the corollary follows  from Theorem~\ref{thm:rescaling} and Theorem~\ref{thm:extreme}.
\end{proof}

\part{The automorphism group is simple}

\section{Initial reduction}
\label{sec:initial}

For the rest of this section suppose $p > 1$, $\Mb \subset \Gr_p(\Rb^{2p})$ is an affine chart, $\Omega \subset \Mb$ is a bounded convex open subset of $\Mb$, and there exists a discrete group $\Gamma \leq \Aut(\Omega)$ so that $\Gamma$ acts cocompactly on $\Omega$.

Set $G:=\Aut(\Omega)$ and let $G^0$ be the connected component of the identity of $G$. By Corollary~\ref{cor:rescaling}, we know that $G^0\neq 1$. The goal of this section is to use the fact that $G^0\neq 1$ to obtain that either $G^0$ is simple and acts transitively on $\Omega$, or we are in one of four very constrained situations (Cases (1)-(4) in Theorem \ref{thm:reduction} below). In Sections \ref{sec:center}, \ref{sec:unipotent}, and \ref{sec:p_2}, we will prove that Cases (1)-(4) cannot occur. 

Since the statement of Theorem \ref{thm:reduction} may seem unmotivated at first, let us sketch the argument. Let $G^{sol} \leq G^0$ be the solvable radical of $G$ (that is, the maximal connected, closed, normal, solvable subgroup of $G$), let $N$ be the nilpotent radical of $G^{sol}$ (that is, the maximal connected normal closed nilpotent subgroup of $G^{sol}$), and $Z$ the center of $N$. 

First suppose that $G^0$ is not semisimple.  Then $Z \neq 1$ and is normalized by $\Gamma$. There are two cases:
	\begin{enumerate}
	\item If $Z$ only consists of unipotent elements, we will show that $\Gamma$ intersects some normal unipotent subgroup in a lattice. This corresponds to Case (2) in Theorem~\ref{thm:reduction}.
		\item Otherwise, we show that a finite index subgroup of $\Gamma$ centralizes some semisimple torus in the Zariski closure of $Z$. This corresponds to Case (1) in Theorem~\ref{thm:reduction}.
	\end{enumerate}
	
Suppose now that $G^0$ is semisimple. We want to show $G^0$ actually has to be simple and acts transitively on $\Omega$. We do this by using the virtual cohomological dimension vcd($\Gamma$) of $\Gamma$ (see below for more information). We know that vcd$(\Gamma)=\dim(\Omega)=p^2$. Then we relate vcd($\Gamma$) to the structure of $G^0$ to show that $G$ has to have finitely many components, and $G^0$ is simple. This latter argument only fails if $p=2$, in which case we obtain very specific information on the structure of $G^0$ and its action on $\Omega$ (Cases (3) and (4) in Theorem~\ref{thm:reduction} below). 

We start with the following lemma.
\begin{lemma}
With the notation above, $\Gamma$ is a cocompact lattice in $G$ and $\Gamma_0 : = \Gamma \cap G^0$ is a cocompact lattice in $G^0$. 
\end{lemma}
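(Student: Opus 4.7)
The plan splits into two parts, both fairly routine consequences of Proposition~\ref{prop:proper}, which guarantees that $G = \Aut(\Omega)$ acts properly on $\Omega$.

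For the first assertion, that $\Gamma$ is a cocompact lattice in $G$: I would fix a basepoint $x_0 \in \Omega$ and a compact set $K \subset \Omega$ containing $x_0$ with $\Gamma K = \Omega$ (available by the cocompactness hypothesis). Properness of the $G$-action on $\Omega$ makes the set $\{g \in G : gK \cap K \neq \emptyset\}$ compact, and hence its closed subset $C := \{g \in G : g x_0 \in K\}$ is also compact. For any $g \in G$, writing $g x_0 = \gamma k$ with $\gamma \in \Gamma$ and $k \in K$ gives $\gamma^{-1} g \in C$, so $G = \Gamma \cdot C$. Thus $\Gamma \backslash G$ is the continuous image of the compact set $C$ and is therefore compact. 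Since $\Gamma$ is discrete, it is a cocompact lattice in $G$.

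For the second assertion, that $\Gamma_0 := \Gamma \cap G^0$ is a cocompact lattice in $G^0$: since $G$ is a Lie group, $G^0$ is clopen and normal, so $\Gamma G^0$ is a clopen subgroup of $G$, and the coset space $(\Gamma G^0) \backslash G$ inherits the discrete topology. The natural surjection $\Gamma \backslash G \twoheadrightarrow (\Gamma G^0) \backslash G$ shows this discrete space is compact, hence finite, so $[G : \Gamma G^0] < \infty$. The natural map $G^0 \to \Gamma \backslash \Gamma G^0$ is surjective with fibers the $\Gamma_0$-cosets, yielding a homeomorphism $\Gamma_0 \backslash G^0 \cong \Gamma \backslash \Gamma G^0$. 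Since $\Gamma G^0$ is open and of finite index in $G$, $\Gamma \backslash \Gamma G^0$ is a clopen subset of the compact space $\Gamma \backslash G$, hence compact. Therefore $\Gamma_0 \backslash G^0$ is compact, and $\Gamma_0$ is discrete in $G^0$ as a subgroup of the discrete group $\Gamma$.

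There is no real obstacle — once properness of the $\Aut(\Omega)$-action is in hand, both claims reduce to standard topological-group arguments (the first is the usual ``\v{S}varc--Milnor-style'' passage from a cocompact discrete group to a cocompact lattice, and the second is the elementary fact that a cocompact lattice intersects any open normal subgroup in a cocompact lattice).
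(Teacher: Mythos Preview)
Your proof is correct and follows essentially the same route as the paper. The paper is terser: for the first part it simply invokes properness (Proposition~\ref{prop:proper}) and cocompactness on $\Omega$ without spelling out the $G = \Gamma C$ argument you give; for the second part it observes that $\Gamma \cdot G^0$ is closed (as a union of connected components), so $\Gamma_0 \backslash G^0$ embeds as a closed subset of the compact space $\Gamma \backslash G$ --- the same embedding you use, though the paper does not bother to record the finite-index fact $[G:\Gamma G^0]<\infty$.
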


\begin{proof} Since $\Gamma$ acts cocompactly on $\Omega$ and $G$ acts properly on $\Omega$ (see Proposition~\ref{prop:proper}) we see that $\Gamma \leq G$ is a cocompact lattice. Since $G^0 \leq G$ is a connected component the set $\Gamma \cdot G^0$ is closed in $G$. So $\Gamma_0 \backslash G^0$ is closed in $\Gamma \backslash G$. Then since $\Gamma \backslash G$ is compact so is $\Gamma_0 \backslash G^0$.
\end{proof}

\begin{theorem}\label{thm:reduction} With the notation above, at least one of the following holds:
\begin{enumerate}
\item a finite index subgroup of $\Gamma$ has non-trivial centralizer in $\PGL_{2p}(\Rb)$, \label{item:centralizer}
\item there exists a nontrivial abelian normal unipotent group $U \leq G$ such that $\Gamma \cap U$ is a cocompact lattice in $U$, 
\item $p=2$ and there exists a finite index subgroup $G'$ of $G$ such that $G'=G^0\times \Lambda$ for some discrete group $\Lambda$. Further up to conjugation
\begin{align*}
G^0 = \left\{ \begin{bmatrix} A & 0 \\ 0 & A \end{bmatrix} : A \in \SL_2(\Rb) \right\}
\end{align*}
and 
\begin{align*}
\Lambda \leq \left\{ \begin{bmatrix} a \Id_2 & b \Id_2 \\ c\Id_2 & d\Id_2 \end{bmatrix} : ad-bc = 1\right\}.
\end{align*}
\item $p=2$, $G^0 \leq G$ has finite index and acts transitively on $\Omega$, and up to conjugation
\begin{align*}
G^0=\left\{ \begin{bmatrix} aA & bA \\ cA & dA \end{bmatrix} : A \in \SL_2(\Rb), ad-bc = 1\right\}.
\end{align*}
\item $G^0$ is a simple Lie group with trivial center that acts transitively on $\Omega$. \label{item:simple}
\end{enumerate}
\label{thm:initial}
\end{theorem}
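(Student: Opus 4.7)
The plan is to split into two main cases according to whether the identity component $G^0$ is semisimple, following the outline given after the statement. Suppose first that $G^0$ is not semisimple. Let $G^{sol}$ denote the solvable radical of $G^0$, $N$ its nilradical, and $Z$ the center of $N$. Each of these is characteristic in $G^0$, hence normal in $G$, and in particular preserved under conjugation by $\Gamma$. Non-semisimplicity of $G^0$ forces $Z\neq1$. Let $\overline{Z}$ be the Zariski closure of $Z$ in $\PGL_{2p}(\Rb)$; since $Z$ is abelian, the simultaneous Jordan decomposition writes $\overline{Z}$ as an almost direct product of a semisimple part $T_s$ and a unipotent part $T_u$. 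If $T_s=1$, then $Z$ itself is a nontrivial abelian unipotent normal subgroup of $G$, and it remains to show that $\Gamma\cap Z$ is cocompact in $Z$; this is where I would adapt a flat-torus / Farb--Weinberger style argument, exploiting that $Z$ acts freely and properly on $\Omega$ while $\Gamma$ normalizes $Z$ and acts cocompactly. This yields Case~(2). If $T_s\neq1$, then $\Gamma$ acts on the real algebraic torus $T_s$ by algebraic automorphisms; since $\Aut(T_s)$ is arithmetic and hence discrete, a finite-index subgroup of $\Gamma$ can be arranged to centralize a nontrivial subtorus of $T_s$, producing a nontrivial element of $\PGL_{2p}(\Rb)$ commuting with a finite-index subgroup of $\Gamma$. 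This is Case~(1).

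Assume now $G^0$ is semisimple, with simple factors $H_1,\dots,H_k$. A compact $H_i$ is a normal compact subgroup of $G$, and by properness of the $G$-action on $\Omega$ such a factor must act trivially and may be discarded; so each $H_i$ is noncompact. Since $\Gamma\backslash\Omega$ is a closed aspherical manifold of dimension $p^2$, we have $\vcd(\Gamma)=p^2$. The lemma preceding the statement shows that $\Gamma_0:=\Gamma\cap G^0$ is a cocompact lattice in $G^0$, so $\vcd(\Gamma_0)=\dim(G^0/K)$ for $K$ a maximal compact subgroup of $G^0$. Since $\Gamma_0$ has finite index in $\Gamma$ these virtual cohomological dimensions agree, forcing $\dim(G^0/K)=p^2$. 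For any $x_0\in\Omega$ the stabilizer $G^0_{x_0}$ is a compact subgroup, so $\dim G^0_{x_0}\le\dim K$, and hence the $G^0$-orbit of $x_0$ has dimension at least $p^2=\dim\Omega$. The orbit is therefore open, and by connectedness of $\Omega$ the group $G^0$ acts transitively on $\Omega$.

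With $G^0$ semisimple and transitive with compact isotropy, the orbit map identifies $\Omega$ diffeomorphically with the Riemannian symmetric space $G^0/K$, which splits as a product indexed by the simple factors $H_i$. If $k\ge 2$, this product decomposition must be compatible with the bounded convex embedding of $\Omega$ in the affine chart $\Mb\subset\Gr_p(\Rb^{2p})$; the dimension constraint $\dim\Omega=p^2$ together with the classification of irreducible symmetric spaces that can be realized as such bounded convex domains forces, in every case but $p=2$, a contradiction, while for $p=2$ one recovers precisely the two embeddings described in Cases~(3) and (4): the diagonal embedding $\SL_2(\Rb)\hookrightarrow\SL_4(\Rb)$, together with a discrete factor $\Lambda$ coming from components of $G$ outside $G^0$ whose form is pinned down by requiring $\Lambda$ to commute with $G^0$ inside $\PGL_4(\Rb)$; and the tensor action of $\SL_2(\Rb)\times\SL_2(\Rb)$ on $\Rb^2\otimes\Rb^2\cong\Rb^4$. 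Ruling these two possibilities out forces $k=1$, i.e.~$G^0$ simple. A nontrivial center of $G^0$ is a nontrivial abelian normal subgroup and so lies in the $Z$ analyzed in the first paragraph, returning us to Case~(1) or (2); otherwise we land in Case~(5).

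The main obstacle will be the third paragraph: pinning down the $p=2$ exceptions requires combining the Lie-theoretic product decomposition of $G^0/K$ with the real-projective geometry of equivariant embeddings into $\Gr_p(\Rb^{2p})$, and in particular verifying the explicit matrix form of the discrete factor $\Lambda$ in Case~(3). A secondary but nontrivial technical point, in the non-semisimple case, is establishing the cocompactness of $\Gamma\cap Z$ in $Z$ when $Z$ is unipotent; this is where a substantial portion of the Farb--Weinberger machinery has to be imported into the projective setting.
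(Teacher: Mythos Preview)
Your non-semisimple analysis tracks the paper closely, and you correctly flag the cocompactness of $\Gamma\cap Z$ in $Z$ as a real difficulty; the paper handles this via a Levi--Malcev decomposition and a Farb--Weinberger argument ruling out compact factors in the kernel of the conjugation action.

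In the semisimple case, however, there is a genuine gap. You write ``Since $\Gamma_0$ has finite index in $\Gamma$ these virtual cohomological dimensions agree.'' But nothing so far gives you that $G$ has finitely many components, and the preceding lemma only says $\Gamma_0=\Gamma\cap G^0$ is cocompact in $G^0$, not that $[\Gamma:\Gamma_0]<\infty$. Without this, you cannot conclude $\vcd(\Gamma_0)=p^2$, and your transitivity argument collapses. The paper addresses exactly this point by invoking the Farb--Weinberger splitting: a finite-index subgroup $G'\le G$ decomposes as $G^0\times\Lambda$ with $\Lambda$ discrete, and correspondingly $\Gamma'\cong\Gamma_0\times\Lambda$. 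Only after showing the $\widehat{G}^0$-representation on $\Rb^{2p}$ is irreducible does Schur's lemma force $\Lambda=1$, whence $G^0$ has finite index in $G$ and the transitivity argument goes through.

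Your approach to the $p=2$ exceptions is also quite different from the paper's, and as you anticipate, vague as written. The paper never invokes a classification of symmetric spaces embeddable as bounded convex domains. Instead it decomposes $\Rb^{2p}$ into $\widehat{G}^0$-isotypic pieces $V_\rho^{n_\rho}$: multiple isotypic components yield a centralizing one-parameter group (Case~(1)); a single $V_\rho^n$ with $n>1$ is attacked by the vcd inequality $p^2=\vcd(\Gamma)\le\vcd(\Gamma_0)+\vcd(\Lambda)$, bounding each summand by the dimension of a symmetric space of the appropriate $\PGL$, and the resulting numerics force $p=2$, $n=d=2$, hence Case~(3). Only when $n=1$ do you get transitivity; then non-simplicity is handled by running the same representation-theoretic and vcd argument on a factor $\widehat{G}_1$, again yielding $p=2$ and the explicit tensor form of Case~(4). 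Finally, your treatment of the center of $G^0$ in the semisimple case is not quite right: you cannot recycle the $Z$ from the non-semisimple paragraph. The paper instead argues that the center is finite (as $\widehat{G}^0$ is linear semisimple), so a finite-index subgroup of $G$ centralizes it, landing in Case~(1).
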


The rest of this section will be devoted to the proof of Theorem \ref{thm:reduction}. We will assume that case (1), (2), (3), and (4) do not hold and show that case (5) occurs.

\begin{lemma}
With the notation above, $\Gamma^0 \cap Z$ is a cocompact lattice in $Z$.
\end{lemma}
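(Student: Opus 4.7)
The plan is to descend from the cocompact lattice $\Gamma^0 \leq G^0$ first into the solvable radical $G^{sol}$, then into the nilradical $N$, and finally into its center $Z$, at each stage invoking a classical theorem on lattices in Lie groups. Note that $N$ is characteristic in $G^{sol}$, which is characteristic in $G^0$, so $N$ is normal in $G^0$; since $Z$ is characteristic in $N$, it too is a closed normal subgroup of $G^0$, so intersecting with $\Gamma^0$ makes sense.

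The first step is to show that $\Gamma^0 \cap G^{sol}$ is a cocompact lattice in $G^{sol}$. This follows from a theorem of Mostow (see Raghunathan, \emph{Discrete subgroups of Lie groups}, Ch.~VIII), which asserts that for any cocompact lattice $\Lambda$ in a connected Lie group $H$, the intersection $\Lambda \cap \mathrm{Rad}(H)$ is a cocompact lattice in $\mathrm{Rad}(H)$; equivalently, the projection of $\Lambda$ to the semisimple quotient $H/\mathrm{Rad}(H)$ is discrete. Applying this to $\Gamma^0 \leq G^0$ gives the claim.

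The second step is to intersect further with $N$. Since $\Gamma^0 \cap G^{sol}$ is a cocompact lattice in the connected solvable Lie group $G^{sol}$, another theorem of Mostow on lattices in solvable Lie groups implies that $(\Gamma^0 \cap G^{sol}) \cap N = \Gamma^0 \cap N$ is a cocompact lattice in the nilradical $N$ of $G^{sol}$.

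The third step descends into the center $Z$ of $N$. Now $N$ is a connected nilpotent Lie group and $\Gamma^0 \cap N$ is a cocompact lattice in $N$. By Malcev's theorem, the image of a lattice in a connected nilpotent Lie group under the quotient by a closed connected normal subgroup is again a lattice in the quotient. Applying this to the quotient $N \to N/Z$ (which is again a connected nilpotent Lie group), we find that the kernel of the induced map $\Gamma^0 \cap N \to N/Z$, which is exactly $(\Gamma^0 \cap N) \cap Z = \Gamma^0 \cap Z$, is a cocompact lattice in $Z$.

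The only delicate point is Step 1: for non-uniform lattices the projection to the semisimple quotient need not be discrete without further hypotheses, but the cocompactness of $\Gamma$ (and hence of $\Gamma^0$) is exactly what makes Mostow's theorem applicable. The remaining steps are standard consequences of the well-developed structure theory of lattices in solvable and nilpotent Lie groups.
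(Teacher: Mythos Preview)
Your Step 1 contains a genuine gap. The statement you attribute to Mostow --- that for \emph{any} cocompact lattice $\Lambda$ in a connected Lie group $H$ one has $\Lambda \cap \mathrm{Rad}(H)$ a lattice in $\mathrm{Rad}(H)$ --- is false without further hypotheses. A standard counterexample: take $H = \SO(3) \times \Rb$ and $\Lambda = \langle (r,1)\rangle$ where $r \in \SO(3)$ has infinite order. Then $\Lambda$ is a cocompact lattice in $H$, but $\mathrm{Rad}(H) = \{e\}\times\Rb$ and $\Lambda \cap \mathrm{Rad}(H) = \{e\}$. The correct theorem (Raghunathan, Corollary 8.28) requires that the semisimple quotient $H/\mathrm{Rad}(H)$ have no compact factors, and cocompactness of $\Lambda$ alone does not substitute for this.

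This is precisely what the paper addresses. Its proof uses a Levi decomposition $G^0 = G^{ss} G^{sol}$ and the refined condition that the kernel of the conjugation action $\sigma : G^{ss} \to \Aut(G^{sol})$ has no compact factors; under this hypothesis one can conclude $\Gamma^0 \cap N$ is a lattice in $N$ (after which your Step 3, which coincides with the paper's, applies). To verify the hypothesis the paper invokes a topological argument of Farb--Weinberger: a maximal compact factor $K$ of $G^0$ would give a quotient $\Omega \to \Omega/K$ with a quasi-isometric section, and comparing the induced maps on locally finite homology with the fundamental class of the aspherical quotient $\Gamma\backslash\Omega$ forces $\dim K = 0$. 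This geometric input is essential and is entirely absent from your argument. Your Steps 2 and 3 are correct, but without a replacement for this compact-factor argument the proof is incomplete.
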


\begin{proof}
Let $G^{ss} \leq G$ be a semisimple subgroup so that $G^0 = G^{ss} G^{sol}$ is a Levi-Malcev decomposition of $G^0$. Then let $\sigma: G^{ss} \rightarrow \Aut(G^{sol})$ be the action of $G^{ss}$ by conjugation on $G^{sol}$. If $\ker \sigma$ has no compact factors in its identity component then $\Gamma^0 \cap N$ is a cocompact lattice in $N$ (see~\cite[Theorem 1.3.(i)]{G2015}). In this case,  $\Gamma^0 \cap Z \leq Z$ is a cocompact lattice by~\cite[Proposition 2.17]{R1972}.

Therefore it suffices to show $\ker \sigma$ contains no compact factors. Since $\ker \sigma \leq G^{ss}$ we see that $\ker \sigma$ is semisimple. So there is a unique maximal compact factor $K_0$ in $\ker \sigma$. Assume for a contradiction that $\dim K_0 > 0$. Then $K_0$ is also a factor of $G^{ss}$ and hence $G^0$ which is impossible by the following argument of Farb and Weinberger~\cite[Claim II]{FW2008}. Let us sketch this proof for completeness. 

Let $K$ be a maximal compact factor of $G^0$. Since $\dim K_0 > 0$ we see that $\dim K >0$. Consider the natural quotient map $\Omega\rightarrow\Omega\slash K$. Since $\Gamma$ permutes the maximal compact factors of $G^0$, we see that a finite index subgroup of $\Gamma$ normalizes $K$. Then it is not hard to see that there is a continuous quasi-isometric inverse $\Omega\slash K\rightarrow \Omega$ to this quotient map. Consider the maps induced by the composition
	$$\Omega\rightarrow \Omega\slash K\rightarrow \Omega$$
on locally finite simplicial homology. On the one hand, since this composition is a bounded distance from the identity map, the induced map on locally finite simplicial homology is the identity map. On the other hand, since $\Omega$ is the universal cover of a closed aspherical manifold,  there is a fundamental class in top degree. But since $\dim K>0$, the image of this fundamental class in $H_\ast(\Omega\slash K)$ vanishes. This is a contradiction. For full details, see the proof of Claim II in~\cite{FW2008}.
\end{proof}

\begin{lemma} $G^0$ is semisimple. \end{lemma}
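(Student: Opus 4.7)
The plan is to argue by contradiction: assume $G^0$ is not semisimple and deduce that one of the forbidden cases (1) or (2) of Theorem~\ref{thm:reduction} must occur. If $G^0$ is not semisimple, then $G^{sol}$ is nontrivial, so the nilpotent radical $N \leq G^{sol}$ is nontrivial, and hence so is its center $Z := Z(N)$. Since $Z$ is characteristic in $G^0$ and $G^0 \trianglelefteq G$, we have $Z \trianglelefteq G$; by the preceding lemma, $\Lambda := \Gamma_0 \cap Z$ is a cocompact lattice in $Z$.

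I would then consider the Zariski closure $\overline Z$ of $Z$ in $\PGL_{2p}(\Rb)$. Since $Z$ is abelian so is $\overline Z$, and its identity component admits a Jordan decomposition $\overline Z^0 = T \cdot V$ with $T$ a real algebraic torus and $V$ the unipotent radical. If $T$ is trivial, then $Z \subseteq \overline Z^0 = V$ is unipotent, so $Z$ itself is a nontrivial abelian normal unipotent subgroup of $G$, and $\Lambda \subseteq \Gamma \cap Z$ is a cocompact lattice in it. This realizes case (2) of Theorem~\ref{thm:reduction}, a contradiction.

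If $T$ is nontrivial I aim for case (1). Both $T$ and $V$ are characteristic in $\overline Z$, so $G$ normalizes $T$, and conjugation yields a homomorphism $\rho \colon G \to \Aut_{\mathrm{alg}}(T)$. Since the algebraic automorphism group of a torus is discrete --- any such automorphism is determined by its action on the finitely generated character lattice, embedding $\Aut_{\mathrm{alg}}(T)$ into $\GL_{\dim T}(\Zb)$ --- the connected subgroup $G^0$ must lie in $\ker \rho$. Thus $T$ centralizes $G^0$ in $\PGL_{2p}(\Rb)$, and in particular centralizes $\Gamma_0 = \Gamma \cap G^0$. Provided $\Gamma_0$ has finite index in $\Gamma$, any nontrivial $t \in T$ then realizes case (1) of Theorem~\ref{thm:reduction}, yielding the desired contradiction.

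The main obstacle is verifying this finite-index claim. The restriction $\rho|_\Gamma$ factors through $\Gamma/\Gamma_0$ and takes values in the discrete group $\Aut_{\mathrm{alg}}(T)$; it therefore suffices to show this image is finite, so that the kernel (a finite-index subgroup of $\Gamma$) centralizes $T$ entirely. My approach would be to exploit the $\Gamma$-invariance of the cocompact lattice $\Lambda$ in $Z$, together with the rigidity forced on the action on $T$ by the Jordan decompositions of elements of $\Lambda$: the semisimple parts of $\Lambda$ generate a finitely generated subgroup whose Zariski closure is a $\Gamma$-stable subtorus of $T$, and the induced $\Gamma$-action on the associated character lattice should be forced to factor through a finite group, providing the sought finite-index subgroup.
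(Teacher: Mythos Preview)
Your overall structure matches the paper's proof exactly: assume $N \neq 1$, pass to the Zariski closure of $Z$, split into the unipotent case (giving case~(2)) and the case where a nontrivial torus appears (giving case~(1)). The unipotent branch is fine.

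The difficulty you flag in the last paragraph is not a genuine obstacle, and your proposed workaround via semisimple parts of lattice elements is both vague and unnecessary. You already have all the ingredients: $G$ normalizes $T$ inside the ambient algebraic group $\PGL_{2p}$, so your homomorphism $\rho$ actually factors through $N_{\PGL_{2p}}(T)/Z_{\PGL_{2p}}(T)$. For any torus in a linear algebraic group this quotient is \emph{finite} (see e.g.\ Borel, \emph{Linear Algebraic Groups}, Corollary~8.10.2); this is precisely the fact underlying your observation that $\Aut_{\mathrm{alg}}(T)$ is discrete, but it gives more. Hence $\rho(G)$, and in particular $\rho(\Gamma)$, is finite, so $\ker\rho \cap \Gamma$ is the desired finite-index subgroup of $\Gamma$ centralized by $T$. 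This is exactly how the paper argues: it simply cites that the normalizer of a semisimple torus contains the centralizer with finite index, and concludes immediately. You do not need to know that $\Gamma_0$ has finite index in $\Gamma$, nor anything about the lattice $\Lambda$ in this branch.
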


\begin{proof} 
As above let $N$ be the nilpotent radical of $G^{sol}$ and $Z$ the center of $N$. If $N =1$ then $G^0$ is semisimple. So suppose for a contradiction that $N \neq 1$. Then $Z \neq 1$. Next let $C$ be the Zariski closure of $ Z$ in $\PSL_{2p}(\Rb)$ and let $C^0$ be the connected component of the identity in $C$. Since $G$ normalizes $Z$ it also normalizes $C$ and $C^0$. 


Since $Z$ is abelian so is $C^0$. Then since $C^0$ is an abelian real algebraic group, we can write
\begin{align*}
C^0 =C_{ss} C_{u}
\end{align*}
where $C_{ss}$ is the subset of semisimple elements in $C^0$ and $C_u$ is the subset of unipotent elements of $C^0$ (see for instance~\cite[Theorem 4.7]{B1991}). By~\cite[Corollary 4.4.]{B1991} both $C_{ss}$ and $C_u$ are actually groups. Since $G$ normalizes $C^0$ is also normalizes $C_{ss}$ and $C_u$.

If $C_{ss}=1$ then each element of $C^0$ is unipotent and thus each element of $Z$ is unipotent. Thus we are in case (2), which is a contradiction. Therefore we have $C_{ss} \neq 1$. But the normalizer of any semisimple torus $T$ in $PGL_{2p}(\Rb)$ contains the centralizer of $T$ with finite index~\cite[Corollary 8.10.2]{B1991}, so we know that a finite index subgroup of $G$ centralizes $C_{ss}$. Hence we are in case (1) which contradicts our initial assumption. Thus $G^0$ is semisimple.
\end{proof}

Now let 
\begin{align*}
\SL^{\pm}_{2p}(\Rb) = \{ g \in \GL_{2p}(\Rb) : \det g = \pm 1\}.
\end{align*}
Then let $\wh{G}$ be the inverse image of $G$ under the map $\pi: \SL^{\pm}_{2p}(\Rb)\rightarrow \PGL_{2p}(\Rb)$ and let $\wh{G}^0$ be the connected component of the identity of $\wh{G}$.

\begin{lemma}
$G^0$ has trivial center. 
\end{lemma}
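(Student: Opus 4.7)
The plan is to argue by contradiction: assume $Z := Z(G^0) \neq 1$ and produce a finite-index subgroup of $\Gamma$ with nontrivial centralizer in $\PGL_{2p}(\Rb)$, contradicting the standing assumption that case (1) of Theorem~\ref{thm:reduction} fails.

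The first step is to observe that $Z$ is finite. Since $G^0$ is a connected semisimple subgroup of the linear Lie group $\PGL_{2p}(\Rb)$, it admits a faithful finite-dimensional representation (e.g.\ via the adjoint action of $\PGL_{2p}(\Rb)$ on $\Lsl_{2p}(\Rb)$). A standard fact then asserts that any connected semisimple Lie group with a faithful finite-dimensional representation has finite center, so $\abs{Z} < \infty$.

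Next, since $Z$ is characteristic in $G^0$ and $G^0$ is normal in $G$ (being the identity component), $Z$ is normal in $G$. The conjugation action of $G$ on $Z$ therefore gives a homomorphism $G \to \Aut(Z)$ to a finite group, so its kernel $C := C_G(Z)$ is an open subgroup of finite index in $G$. Because $\Gamma$ is a cocompact (in particular finite-covolume) lattice in $G$, the intersection $\Gamma' := \Gamma \cap C$ has finite index in $\Gamma$, and by construction every element of $\Gamma'$ commutes with every element of $Z$. Hence the centralizer of $\Gamma'$ in $\PGL_{2p}(\Rb)$ contains $Z \neq 1$, putting us in case (1), which contradicts our assumption. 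We conclude $Z(G^0) = 1$.

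There is no real obstacle here: the only nontrivial input is the finiteness of the center of a linear connected semisimple Lie group, which is classical. The introduction of the lift $\wh{G} \leq \SL^{\pm}_{2p}(\Rb)$ in the surrounding text is presumably for later arguments (where one needs a genuine linear representation of $G$) rather than for this particular lemma.
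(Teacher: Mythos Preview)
Your proof is correct and follows the same overall strategy as the paper: show $Z(G^0)$ is finite, then use that $G$ normalizes $Z$ to find a finite-index subgroup of $\Gamma$ centralizing $Z$, landing in case~(1). The only difference is in how finiteness of $Z$ is established. You invoke the general fact that a connected semisimple Lie group with a faithful finite-dimensional representation has finite center, applied directly to $G^0 \leq \PGL_{2p}(\Rb)$ via the adjoint representation of $\PGL_{2p}(\Rb)$; this is clean and avoids auxiliary constructions. The paper instead passes to the lift $\wh{G}^0 \leq \SL_{2p}(\Rb)$, which is manifestly linear and hence has finite center, and then checks by a connectedness argument that $\pi^{-1}(Z)$ lies in $Z(\wh{G}^0)$. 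So, contrary to your closing remark, the lift $\wh{G}$ \emph{is} used precisely in this lemma in the paper's version --- your route simply makes it unnecessary here.
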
 

\begin{proof} Let $Z$ be the center of $G^0$. We first claim that $Z$ is finite. Since $\wh{G}^0 \leq \SL_{2p}(\Rb)$ and $\pi:  \SL_{2p}(\Rb)\rightarrow \PSL_{2p}(\Rb)$ is a double cover we see that $\wh{G}^0$ is locally isomorphic to $G^0$. So $\wh{G}^0$ is a connected semisimple linear group and hence has finite center. Therefore to show that $Z$ is finite, it suffices to show that $\pi^{-1}(Z)$ is contained in the center of $\wh{G}^0$. But now if $z \in \pi^{-1}(Z)$ and $g \in \wh{G}^0$ then 
\begin{align*}
gzg^{-1}z^{-1} \in \{ \Id_{2p}, -\Id_{2p} \}.
\end{align*}
Since $\wh{G}^0$ is connected we then see that 
\begin{align*}
gzg^{-1}z^{-1} = \Id_{2p}
\end{align*}
for all $z \in \pi^{-1}(Z)$ and $g \in \wh{G}^0$. Thus $\pi^{-1}(Z)$ is contained in the center of $\wh{G}^0$, as desired.

Since $G$ normalizes $G^0$, $G$ also normalizes $Z$. Thus, since $Z$ is finite, a finite index subgroup of $G$ centralizes $Z$. Thus if $Z \neq 1$ we are in case (1).
\end{proof}

Next we use an argument of Farb and Weinberger to deduce: 

\begin{lemma}\cite[Proposition 3.1]{FW2008} $G$ has a finite index subgroup $G'$ such that $G'\cong G^0\times\Lambda$ for some discrete group $\Lambda$ and $\Gamma$ has a finite index subgroup $\Gamma'$ such that $\Gamma'\cong \Gamma_0\times \Lambda$. Moreover, by possibly passing to a finite index subgroup of $G^\prime$ we may assume that $\Lambda$ is either trivial or infinite. 
\end{lemma}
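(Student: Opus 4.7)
The plan is to follow Farb and Weinberger's argument~\cite{FW2008} closely, since the result is purely algebraic and the hypotheses on $G^0$ established in the preceding lemmas exactly match their setup.

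The first step is to set $L := C_G(G^0)$ and show that $L$ is discrete and $G^0 \cdot L$ has finite index in $G$. Since $G^0$ is centerless, $L \cap G^0 = 1$. The adjoint map $G \to \Aut(G^0)$ has kernel $L$ and image containing $\Inn(G^0) \cong G^0$; because $G^0$ is semisimple with trivial center, the quotient $\Aut(G^0)/\Inn(G^0)$ is finite, so the preimage $G_1$ of $\Inn(G^0)$ has finite index in $G$ and equals $G^0 \cdot L$. The internal direct product structure $G_1 = G^0 \times L$ follows from $G^0 \cap L = 1$, and $L$ is discrete because $G^0$ is the identity component of the open subgroup $G_1$.

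The second step, which is the crux, is to locate the right discrete complement inside $\Gamma$. Let $\Gamma_1 := \Gamma \cap G_1$ (finite index in $\Gamma$) and take $\Lambda := \Gamma \cap L$ as the candidate; one then wants $\Lambda$ to have finite index in $L$, from which $G' := G^0 \cdot \Lambda = G^0 \times \Lambda$ and $\Gamma' := \Gamma_0 \cdot \Lambda = \Gamma_0 \times \Lambda$ will be the desired subgroups. The hard part is the index control. The plan is to analyze the projection $\pi_0 : G_1 = G^0 \times L \to G^0$ restricted to $\Gamma_1$, noting $\ker(\pi_0|_{\Gamma_1}) = \Lambda$. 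Since $G^0$ is characteristic in $G$, $\Gamma_0$ is normal in $\Gamma$, so for $\gamma = (g,\lambda) \in \Gamma_1$ and $\gamma_0 \in \Gamma_0$ one computes $\gamma \gamma_0 \gamma^{-1} = (g \gamma_0 g^{-1}, 1) \in \Gamma_0$, which shows $\pi_0(\Gamma_1) \subseteq N_{G^0}(\Gamma_0)$.

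The main technical input I expect to invoke is the classical rigidity fact that $N_{G^0}(\Gamma_0)$ is itself a discrete subgroup of $G^0$ containing $\Gamma_0$ with finite index: its identity component would centralize the Zariski dense $\Gamma_0$ (Borel density), but $G^0$ has trivial center, forcing that identity component to be trivial; cocompactness of $\Gamma_0$ then bounds the index. This would make $\pi_0(\Gamma_1)$ a discrete subgroup of $G^0$ squeezed between $\Gamma_0$ and $N_{G^0}(\Gamma_0)$, hence of finite index over $\Gamma_0$, whence $[\Gamma_1 : \Gamma_0 \cdot \Lambda] = [\pi_0(\Gamma_1) : \Gamma_0] < \infty$; a parallel count handles $G$. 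Finally, for the dichotomy: if $\Lambda$ is finite, I would pass to the further finite-index subgroup $G^0 \leq G'$ (with $\Gamma_0 \leq \Gamma'$), making the new $\Lambda$ trivial; otherwise $\Lambda$ is infinite. The only nontrivial input throughout is the discreteness of $N_{G^0}(\Gamma_0)$, which is standard once we know $G^0$ is centerless semisimple.
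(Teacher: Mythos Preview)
Your proposal is correct and is precisely the Farb--Weinberger argument the paper is citing; the paper itself gives no independent proof, only the remark that the ``triviality of the extension'' part of \cite[Proposition 3.1]{FW2008} is purely group-theoretic and adapts verbatim. One small point to make explicit: your use of Borel density to conclude that $N_{G^0}(\Gamma_0)$ is discrete requires $G^0$ to have no compact factors, which is not stated as a separate lemma in the paper but follows from the same Claim~II argument of \cite{FW2008} invoked earlier (a nontrivial compact factor of $G^0$ would contradict the existence of the fundamental class of $\Omega$ in locally finite homology).
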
 

\begin{remark}
The above Lemma follows from the ``triviality of the extension'' part of the proof of Proposition 3.1 in~\cite{FW2008}. This part of their proof only involves the groups and not the Riemannian metric in the statement of Proposition 3.1. In particular, this part of the argument adapts to our situation verbatim. 
\end{remark}

Now decompose $\wh{G}^0\curvearrowright \Rb^{2p}$ as a direct sum of irreducible representations of the semisimple group $\wh{G}^0$: 
	\begin{equation}
	\Rb^{2p}\cong \bigoplus_{\rho} V_{\rho}^{n_\rho}.
	\label{eq:decomp}
	\end{equation}
Here the direct sum is over nonisomorphic irreducible representations $\rho$ of $\wh{G}^0$ and $n_\rho \geq 0$ is the multiplicity of $\rho$. Now since $\wh{G}$ normalizes $\wh{G}^0$ we see that $\wh{G}$ preserves each $V_{\rho}^{n_\rho}$. 

First let us consider the situation that multiple irreducible representations contribute, say $\rho_1,\dots,\rho_k$ where $k>1$. Consider the 1-parameter group $\{b_t : t \in \Rb\}$ where $b_t$ acts by $e^t$ on the $V_{\rho_1}^{n_{\rho_1}}$ factor and by the identity on all other factors. Then $b_t$ is not a scalar matrix, and centralizes $G$, so we are in case (1).

Therefore there is only one irreducible representation and $\Rb^{2p} \cong V_\rho^n$ for some irreducible representation $\rho$ and some $n$.

\begin{lemma}\label{lem:vcd} $n=1$. \end{lemma}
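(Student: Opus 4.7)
The plan is to rule out $n \geq 2$ by comparing two estimates on $\dim G^0$: an upper bound from the representation theory of $\wh{G}^0$ on $\Rb^{2p}$, and a lower bound from the virtual cohomological dimension of $\Gamma$.

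For the upper bound, the isomorphism $\Rb^{2p} \cong V_\rho^n$ realizes the action of $\wh{G}^0$ as the diagonal inclusion of its action on a single copy of $V_\rho$. The restriction $\wh{G}^0 \to \GL(V_\rho)$ to one summand is injective, since the diagonal embedding $\GL(V_\rho) \hookrightarrow \GL(V_\rho^n)$ is injective and $\wh{G}^0 \hookrightarrow \GL_{2p}(\Rb) = \GL(V_\rho^n)$. Therefore
\[
\dim G^0 = \dim \wh{G}^0 \leq \dim \GL(V_\rho) = \left(\frac{2p}{n}\right)^2.
\]

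For the lower bound, $\Gamma_0$ is a cocompact lattice in the connected semisimple Lie group $G^0$ (which has trivial center and, by the Farb--Weinberger argument used above, no compact factors). The standard identification for cocompact lattices in semisimple Lie groups gives $\vcd(\Gamma_0) = \dim G^0 - \dim K^0$, where $K^0 \leq G^0$ is a maximal compact subgroup; moreover $\dim K^0 \geq 1$ by the Cartan decomposition of the semisimple Lie algebra (a connected semisimple Lie group with trivial maximal compact is forced to be abelian, and hence trivial). Under the standing hypothesis for the proof of Theorem~\ref{thm:reduction} that cases (1)--(4) do not occur, one verifies that $\Lambda$ must be trivial, so that $\Gamma_0$ has finite index in $\Gamma$; then (after passing to a torsion-free finite-index subgroup) $\Gamma_0$ acts freely, properly, and cocompactly on the contractible manifold $\Omega$ of dimension $p^2$, which gives $\vcd(\Gamma_0) = p^2$. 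Hence $\dim G^0 = p^2 + \dim K^0 \geq p^2 + 1$.

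Combining the two bounds yields
\[
p^2 + 1 \leq \dim G^0 \leq \frac{4p^2}{n^2},
\]
which forces $n^2 < 4$ and hence $n = 1$. The main technical obstacle is showing $\Lambda = 1$ under the standing assumption: this requires ruling out an infinite discrete $\Lambda$ that centralizes $G^0$ and preserves $\Omega$, and is handled by a dimensional analysis of the centralizer of $G^0$ in $\PGL_{2p}(\Rb)$ (which is $\PGL_n(D)$ for the commutant algebra $D \in \{\Rb,\Cb,\Hb\}$ of the irreducible representation $\rho$), showing that a positive-dimensional central subgroup of this centralizer forces case (1), while a nontrivial $\Lambda$ in the centralizer together with the constraint $\vcd(\Gamma) = \vcd(\Gamma_0) + \vcd(\Lambda) = p^2$ forces the block structure of case (3).
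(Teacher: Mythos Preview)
Your argument has a circularity problem. The lower bound $\dim G^0 \geq p^2 + 1$ rests on $\vcd(\Gamma_0) = p^2$, which in turn requires $\Gamma_0$ to have finite index in $\Gamma$, i.e.\ $\Lambda$ to be finite. But in the paper's logical order this is deduced \emph{from} $n=1$ (once $n=1$ the representation is irreducible, so Schur's lemma forces any lift of $\Lambda$ to be scalar and hence $\Lambda = 1$). At the moment Lemma~\ref{lem:vcd} is being proved, $\Lambda$ may well be infinite, so $\vcd(\Gamma_0)$ could be strictly smaller than $p^2$ and your inequality $p^2 + 1 \leq \dim G^0$ is not available.

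Your attempted patch at the end does not close this gap. When the commutant is $D = \Rb$, the centralizer of $G^0$ in $\PGL_{2p}(\Rb)$ is $\PGL_n(\Rb)$, whose center is trivial; so there is no ``positive-dimensional central subgroup'' to invoke case~(1). And the claim that a nontrivial $\Lambda$ together with $\vcd(\Gamma_0)+\vcd(\Lambda)=p^2$ ``forces case~(3)'' is exactly the nontrivial computation the paper carries out --- and note that in general only the inequality $\vcd(\Gamma')\leq\vcd(\Gamma_0)+\vcd(\Lambda)$ is available, not equality. The paper's proof avoids the circularity by keeping $\Lambda$ in play: it bounds $\vcd(\Gamma_0) \leq \tfrac{d(d+1)}{2}-1$ via $\Gamma_0 \hookrightarrow \PGL(V_\rho)$ and $\vcd(\Lambda)\leq \tfrac{n(n+1)}{2}-1$ via $\Lambda \hookrightarrow \PGL_n(\Rb)$, combines these with $p^2 = \vcd(\Gamma) \leq \vcd(\Gamma_0)+\vcd(\Lambda)$ and the constraint $2p = nd$, and shows the resulting inequality forces $p=2$ and then $(n,d)=(2,2)$, which is exactly case~(3). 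Your upper bound $\dim G^0 \leq (2p/n)^2$ is correct and is morally the same as the paper's bound on $\vcd(\Gamma_0)$; what is missing is the companion bound on $\vcd(\Lambda)$ and the joint analysis.
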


\begin{proof} Suppose for a contradiction that $n>1$. We first claim that $p=2$. Let us now consider the \emph{virtual cohomological dimension} $\vcd(\Gamma)$ of $\Gamma$. Recall that the \emph{cohomological dimension} $\cd(\Gamma)$ of $\Gamma$ is the supremum of all numbers $m$ such that $H^m(\Gamma,M)\neq 0$ for some $\Gamma$-module $M$ (see for instance~\cite[Chapter VIII]{B1994} for more information). We will only need the following properties of cd($\Gamma$):
	\begin{enumerate}
		\item $\cd(\Gamma)>0$ if $\Gamma\neq 1$. \label{item:nontriv}
		\item If $\Gamma$ acts freely and properly discontinuously on a contractible $CW$-complex $X$, then $\textrm{cd}(\Gamma)\leq \dim(X)$, with equality if and only if $X\slash\Gamma$ is compact. \label{item:geomdim}
		\item If $\Delta\subseteq \Gamma$, then $\cd(\Delta)\leq \cd(\Gamma)$. \label{item:mono}
		\item If $\Gamma=\Gamma_0\times\Gamma_1$ then $\cd(\Gamma)\leq \cd(\Gamma_0)+\cd(\Gamma_1)$. \label{item:subadd}
	\end{enumerate}
The virtual cohomological dimension of $\Gamma$ is then the infimum of $\cd(\Delta)$ as $\Delta$ ranges over finite index subgroups of $\Gamma$. 

Now write $\dim V_\rho = d$. Since $\Gamma_0$ can be identified with a discrete subgroup of $\PGL(V_\rho)$, we have by Property \ref{item:geomdim} above
	\begin{equation}
	\textrm{vcd}(\Gamma_0)\leq  \dim \SL_d(\Rb)\slash \SO(d) = \frac{d(d+1)}{2}-1.
	\label{eq:cd1}
	\end{equation}
Further, since $\Lambda$ commutes with $G^0$ and $\rho$ is an irreducible representation of $\wh{G}^0$, we can identify $\Lambda$ with a discrete subgroup of $\PGL_n(\Rb)$. Therefore 
	\begin{equation}
	\textrm{vcd}(\Lambda)\leq  \dim \SL_n(\Rb)\slash \SO(n) = \frac{n(n+1)}{2}-1.
	\label{eq:cd2}
	\end{equation}
On the other hand $\textrm{vcd}(\Gamma)=\dim\Omega=p^2$ by Property (2) above. Combining this with Property \ref{item:subadd} and Equations \ref{eq:cd1} and \ref{eq:cd2}, we have
	\begin{align*}
		2p^2 = 2\textrm{vcd}(\Gamma)&\leq 2\left(\textrm{vcd}(\Gamma_0)+\textrm{vcd}(\Lambda)\right)\\
							&\leq d(d+1)-2+n(n+1)-2\\
							&=d^2+d+n^2+n-4.
		\end{align*}
Using that $2p=dn$ (from the dimension count in $\Rb^{2p}\cong V_\rho^n$), we find that
	$$2p^2 \leq \frac{4p^2}{n^2}+\frac{2p}{n}+n^2+n-4.$$
The right-hand side is a convex function of $n$, so that on the interval $[2,p]$, it is maximal at one of the endpoints. At either endpoint the inequality reduces to 
	$$p^2-p-2\leq 0,$$
which is only possible if $p=2$. 

Then $(n,d) \in \{(2,2), (1,4), (4,1)\}$. We assumed that $n>1$ and since the representation $\wh{G}^0 \hookrightarrow \SL(V_\rho)$ is injective we must have $d >1$. So $n=d=2$. 

Thus $\wh{G}^0$ is a semisimple Lie group which has a faithful irreducible representation into $\SL_2(\Rb)$. Thus $\wh{G}^0$  has to be isomorphic to $\SL_2(\Rb)$ and $\rho=\Id$. With respect to the decomposition $\Rb^4 = V \oplus V$ we have 
\begin{align*}
\wh{G}^0 = \{ (\varphi, \varphi) \in \SL(V) \times \SL(V) \}.
\end{align*}

and hence we are in case (3) which is a contradiction. \end{proof}

Since $n=1$, $\wh{G}^0\curvearrowright \Rb^{2p}$ is an irreducible representation. Note that $\Lambda$ centralizes $G^0$ in $\PGL_{2p}(\Rb)$, and hence any element of $\GL_{2p}(\Rb)$ lying over $\Lambda$ has to be scalar by Schur's Lemma. It follows that $\Lambda$ is trivial, so that $G^\prime=G^0$ and thus $G^0$ has finite index in $G$. Then $\Gamma_0$ has finite index in $\Gamma$ and hence acts cocompactly on $\Omega$. Thus $\vcd(\Gamma_0) = \dim(\Omega)=p^2$.

\begin{lemma} $G^0$ acts transitively on $\Omega$. \end{lemma}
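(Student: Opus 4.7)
The plan is to fix a basepoint $x_0 \in \Omega$ and prove that the orbit $G^0 \cdot x_0$ is both open and closed in $\Omega$; since $\Omega$ is connected, this forces $G^0 \cdot x_0 = \Omega$. Closedness is immediate from the fact that $\Aut(\Omega)$, and hence $G^0$, acts properly on $\Omega$ by Proposition~\ref{prop:proper}: orbits of proper actions are closed, and the stabilizer $H$ of $x_0$ in $G^0$ is a compact subgroup. Thus the real content is to establish openness of the orbit, which reduces to showing $\dim(G^0/H) = \dim \Omega = p^2$.

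To bound $\dim(G^0/H)$ from below, I note that since $H$ is compact it lies in some maximal compact subgroup $K \leq G^0$, and so
\[
\dim(G^0 \cdot x_0) = \dim(G^0/H) \geq \dim(G^0/K).
\]
The main step is to identify $\dim(G^0/K) = p^2$ by computing $\vcd(\Gamma_0)$ in two different ways. On the one hand, since $\Gamma_0$ has finite index in $\Gamma$ and $\Gamma$ (after passing to a torsion-free finite-index subgroup via Selberg's lemma) acts freely, properly discontinuously, and cocompactly on the contractible manifold $\Omega$ of dimension $p^2$, we obtain $\vcd(\Gamma_0) = \vcd(\Gamma) = p^2$. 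On the other hand, $\Gamma_0$ is a cocompact lattice in the semisimple Lie group $G^0$, and since $G^0$ has trivial center the quotient $G^0/K$ is a contractible symmetric space of dimension $\dim(G^0/K)$; any torsion-free finite-index subgroup of $\Gamma_0$ acts freely, properly discontinuously, and cocompactly on $G^0/K$, yielding $\vcd(\Gamma_0) = \dim(G^0/K)$.

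Combining these equalities gives $\dim(G^0 \cdot x_0) \geq p^2 = \dim \Omega$, so the orbit is open in $\Omega$. Since it is simultaneously open, closed, and nonempty in the connected manifold $\Omega$, the orbit coincides with $\Omega$, which is the desired transitivity. No genuinely new obstacle arises at this stage: the argument rests on the structural results for $G^0$ proved earlier in the section (that $G^0$ is semisimple with trivial center and that $\Gamma_0$ is a cocompact lattice in $G^0$) together with the standard dictionary between cohomological dimension and cocompact lattices in semisimple Lie groups, in the spirit of the Farb--Weinberger approach cited above.
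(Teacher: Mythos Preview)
Your proof is correct and follows essentially the same approach as the paper: both arguments use properness of the $\Aut(\Omega)$-action to get compactness of the stabilizer and closedness of the orbit, then compute $\vcd(\Gamma_0)$ in two ways (via $\Omega$ and via $G^0/K$) to force the orbit to have full dimension, and conclude by connectedness of $\Omega$. The only cosmetic difference is that the paper packages the dimension count as a single chain of inequalities that collapses to equalities, whereas you separate the open/closed reasoning explicitly.
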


\begin{proof} Let $x\in\Omega$ be any point and let $K_x$ denote its stabilizer in $G^0$.Then $K_x$ is a compact subgroup of $G^0$ by Proposition~\ref{prop:proper} and the $G^0$-orbit $X$ of $x$ is is diffeomorphic to $G^0 \slash K_x$. Now let $K$ be a maximal compact subgroup of $G^0$ containing $K_x$. Then $\Gamma_0\backslash G^0\slash K$ is a closed aspherical manifold with fundamental group $\Gamma_0$ so by Property \ref{item:geomdim} of cohomological dimension we have $\textrm{vcd}(\Gamma_0)=\dim(G^0\slash K)$. On the other hand since $K_x \leq K$ and $G^0\slash K_x \cong X \subset \Omega$
	\begin{align*}
	\textrm{vcd}(\Gamma_0)&=\dim(G^0\slash K)\leq \dim(G^0\slash K_x)\\
						& = \dim(X) \leq \dim(\Omega)\\
						&=\textrm{vcd}(\Gamma_0).
	\end{align*}
We conclude that any $\dim(X)=\dim(\Omega)$, so that $X$ is a codimension 0 closed submanifold of $\Omega$. Connectedness of $\Omega$ then implies that $X=\Omega$, as desired. 
\end{proof}

\begin{remark}\label{rmk:max_cpct} The above proof shows that the stabilizer of any point $x \in \Omega$ has finite index in a maximal compact subgroup of $\Aut(\Omega)$. 
\end{remark}

\begin{lemma} $G^0$ is simple. \end{lemma}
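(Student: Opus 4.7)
The plan is to assume for contradiction that $G^0$ is not simple, write $G^0=G_1\times G_2$ as a product of two nontrivial normal semisimple subgroups, and derive a contradiction from the identity $\dim\Omega=p^2$. First I would observe that any maximal compact of $G^0$ splits as $K_1\times K_2$, so that the transitive $G^0$-action on $\Omega$ with stabilizer $K=K_1\times K_2$ gives $\Omega\cong G_1/K_1\times G_2/K_2$, yielding $\dim(G_1/K_1)+\dim(G_2/K_2)=p^2$.

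Next I would exploit the irreducibility of $\wh{G}^0\curvearrowright \Rb^{2p}$ to deduce that $\Rb^{2p}$ is isotypic as a $G_i$-module: $\Rb^{2p}\cong W_i^{n_i}$ for some real irreducible $G_i$-representation $W_i$ of dimension $d_i\ge 2$, with $n_id_i=2p$. The kernel of $G_i\to \GL(W_i)$ is finite (it is contained in the central, hence finite, kernel of $\wh{G}^0\to \SL^{\pm}_{2p}(\Rb)$), so modulo finite center $G_i\hookrightarrow \SL_{d_i}(\Rb)$ with $K_i$ mapping into $\SO(d_i)$. Comparing totally geodesic subspaces of $\SL_{d_i}(\Rb)/\SO(d_i)$ then gives
\[
\dim(G_i/K_i)\le \tfrac{1}{2}(d_i^2+d_i-2).
\]
I would complement this with a bound coming from the centralizer: letting $D:=\End_{G_1}(W_1)$, which is a real division algebra, one has $G_2\subseteq \End_{G_1}(\Rb^{2p})\cong M_{n_1}(D)$, and analyzing $\Rb^{2p}\cong W_1\otimes_D D^{n_1}$ as a $G_2$-module forces $d_2\le n_1\dim_\Rb D$, whence $d_1d_2\le 2p\dim_\Rb D$.

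The case analysis then proceeds on $D$. For $D=\Rb$: with $d_i\ge 2$ and $d_1d_2\le 2p$, extremal analysis gives $d_1^2+d_2^2\le p^2+4$ and $d_1+d_2\le p+2$, whence $2p^2\le p^2+p+2$, i.e.\ $(p-2)(p+1)\le 0$, so $p\le 2$. For $D=\Cb$ or $D=\Hb$ the sharper bounds $\dim(G_i/K_i)\le e_i^2-1$ or $2e_i^2-e_i-1$ with $e_i:=d_i/\dim_\Rb D$, combined with $e_1e_2\le 2p/\dim_\Rb D$, give outright contradictions for any $p\ge 2$ and rule out these types entirely. Thus if $G^0$ were not simple then $D=\Rb$ and $p\le 2$: for $p>2$ this already contradicts our hypotheses, while for $p=2$ equality must hold throughout, forcing $d_1=d_2=2$, $G_1=G_2=\SL_2(\Rb)$, and the representation to be the tensor product $W_1\otimes W_2$. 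Up to conjugation this places $G^0$ in the explicit form of case (4) of Theorem~\ref{thm:reduction}, contradicting the standing assumption that we are not in case (4). The most delicate step will be establishing the centralizer bound $d_1d_2\le 2p\dim_\Rb D$ and ruling out the $D\in\{\Cb,\Hb\}$ subcases.
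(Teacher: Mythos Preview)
Your overall strategy is the paper's: assume $G^0=G_1\times G_2$, use that $\wh{G}^0\curvearrowright\Rb^{2p}$ is irreducible to make $\Rb^{2p}$ isotypic over each factor, embed each factor in a small general linear group, and compare the resulting dimension bounds with $\dim\Omega=p^2$ to force $p=2$ and land in case~(4). Your version is in fact a bit cleaner than the paper's, since you use $\dim\Omega=\dim(G_1/K_1)+\dim(G_2/K_2)$ directly rather than routing through $\vcd$ as in the appeal to Lemma~\ref{lem:vcd}; the $D=\Rb$ computation and the identification of case~(4) when $p=2$ are correct.

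There is, however, a genuine gap in the $D\in\{\Cb,\Hb\}$ analysis. You set $D:=\End_{G_1}(W_1)$ and then assert $\dim(G_2/K_2)\le e_2^2-1$ (resp.\ $2e_2^2-e_2-1$) with $e_2=d_2/\dim_\Rb D$. That bound would require $D\subseteq\End_{G_2}(W_2)$, i.e.\ that $W_2$ carries a compatible $D$-structure, and this need not hold: for instance $\SL_2(\Rb)\hookrightarrow\SL_2(\Cb)$ has $\Cb^2|_\Rb\cong\Rb^2\oplus\Rb^2$, so $W_2=\Rb^2$ with $\End_{G_2}(W_2)=\Rb$, and your bound would read $\dim(G_2/K_2)\le 0$, which is false. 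The fix is simple and is already implicit in your centralizer discussion: bound $G_2$ through its embedding $\wh G_2\hookrightarrow\GL_{n_1}(D)$ rather than through $W_2$. Then for $D=\Cb$ one has $\dim(G_1/K_1)\le e_1^2-1$ and $\dim(G_2/K_2)\le n_1^2-1$ with $e_1 n_1=p$ and $e_1,n_1\ge 2$, giving $p^2\le e_1^2+n_1^2-2\le 4+(p/2)^2-2$, impossible for $p\ge 2$; the quaternionic case is handled similarly with $e_1 n_1=p/2$. With this correction your argument goes through.
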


\begin{proof}
Since $G^0$ has trivial center either $G^0$ is simple or $G^0 \cong G_1 \times G_2$ for some semisimple nontrivial Lie groups $G_1$ and $G_2$. 

So suppose that $G^0 \cong G_1\times G_2$. Let $\wh{G}_i$ be the inverse image of $G_i \times \{\Id\}$ under the map $\SL_{2p}(\Rb)\rightarrow \PSL_{2p}(\Rb)$. Next decompose the representation $\wh{G}_1 \curvearrowright \Rb^{2p}$ as a direct sum of irreducible representations of the semisimple group $\wh{G}_1$: 
	\begin{equation*}
	\Rb^{2p}\cong \bigoplus_{\tau} V_{\tau}^{n_\tau}.
	\end{equation*}
Here the direct sum is over nonisomorphic irreducible representations $\tau$ of $\wh{G}_1$, and $n_\tau \geq 0$ is the multiplicity of $\tau$. Using the fact that $\wh{G}_2$ centralizes $\wh{G}_1$ and arguing as in Lemma~\ref{lem:vcd} we see that $p=2$ and $\Rb^4 = V_{\tau}^2$ for some irreducible representation $\tau$ of $\wh{G}_1$. So $\dimension V_\tau = 2$ and thus $\wh{G}_1$ is isomorphic to $\SL_2(\Rb)$. Applying the same argument to $\wh{G}_2$ shows that $\wh{G}_2$ is also isomorphic to $\SL_2(\Rb)$. If we conjugate so that 
\begin{align*}
\wh{G}_1 = \left\{ \begin{pmatrix} A & 0 \\ 0 & A \end{pmatrix} : A \in \SL_2(\Rb) \right\}
\end{align*}
then an easy computation shows that the centralizer of $\wh{G}_1$ is exactly
\begin{align*}
\left\{ \begin{pmatrix} a\Id_2 & b\Id_2 \\ c\Id_2 & d\Id_2 \end{pmatrix} : ad-bc =1 \right\} \cong SL_2(\Rb).
\end{align*}
Since $\wh{G}_2$ centralizes $\wh{G}_1$ and is isomorphic to $SL_2(\Rb)$, we must have that 
\begin{align*}
\wh{G}_2=\left\{ \begin{pmatrix} a\Id_2 & b\Id_2 \\ c\Id_2 & d\Id_2 \end{pmatrix} : ad-bc =1 \right\}. 
\end{align*}
Hence we are in case (4), which is a contradiction.
\end{proof}

\section{The centralizer}\label{sec:center}

In this section we prove that case (1) in Theorem~\ref{thm:reduction} is impossible. For a subgroup $H \leq \PGL_{p+q}(\Rb)$ let
\begin{enumerate}
\item $\wh{H} = \{ h \in \GL_{p+q}(\Rb) : [h] \in H\}$,
\item $C_{H} = \{ c \in \End(\Rb^{p+q}) : c h = h c \text{ for all } h \in \wh{H} \}$, and
\item let $C_{H}^0$ be the connected component of $\Id_{p+q}$ in $C_{H} \cap \GL_{p+q}(\Rb)$. 
\end{enumerate}

With this notation we will prove the following:

\begin{theorem}\label{thm:no_center}
Suppose $\Omega \subset \Gr_p(\Rb^{2p})$ is an open set which is convex and bounded in some affine chart. If $\Gamma \leq \Aut(\Omega)$ is a discrete group that acts cocompactly on $\Omega$ then $C_\Gamma^0 = \Rb_{>0} \Id_{2p}$. 
\end{theorem}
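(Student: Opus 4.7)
The plan is to show that any $c\in C_\Gamma^0$ fixes every $\Rc$-extreme point of $\Omega$ as a $p$-plane in $\Rb^{2p}$, and then use Proposition~\ref{prop:ext_spanning} to conclude that $c$ must be a positive real scalar multiple of the identity.

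The first step is to generalize the implication $(1)\Rightarrow(4)$ of Theorem~\ref{thm:extreme} to $\Gamma$: since $\Gamma$ also acts cocompactly on $\Omega$, the proof given there applies verbatim, yielding for each $e\in\Ext(\Omega)$ a sequence $\gamma_n\in\Gamma$ and lifts $\wh\gamma_n\in\GL(\wedge^p\Rb^{2p})$ of unit norm with $\wh\gamma_n\to S_e$ in $\End(\wedge^p\Rb^{2p})$ and $\operatorname{Im}(S_e)=L_e$, where $L_e\subset\wedge^p\Rb^{2p}$ is the line representing $e$. Choosing lifts $\wh c$ of $c$ and $\wh\gamma$ of $\gamma\in\Gamma$ (after passing to a finite index subgroup of $\Gamma$ to avoid sign ambiguities and suitably normalizing by determinant) so that $\wh c\wh\gamma=\wh\gamma\wh c$ holds exactly, taking $\wedge^p$ and passing to the limit yields $\wedge^p\wh c\cdot S_e = S_e\cdot\wedge^p\wh c$. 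Since $S_e$ has rank one with image $L_e$, this forces $\wedge^p\wh c(L_e)=L_e$, so $c$ fixes $e$ as a $p$-plane.

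Next I define $\lambda:\Ext(\Omega)\to\Rb^*$ by $\wedge^p\wh c\cdot v=\lambda(e)v$ for $v\in L_e$; by Corollary~\ref{cor:ext_closed} $\lambda$ is continuous, and the commutation $\wh c\wh\gamma=\wh\gamma\wh c$ combined with $\wh\gamma L_e=L_{\gamma e}$ shows $\lambda$ is $\Gamma$-invariant. If $\lambda$ is constant, say $\lambda\equiv\lambda_0$, then $\wedge^p\wh c$ acts as multiplication by $\lambda_0$ on the span of $\{L_e\}_{e\in\Ext(\Omega)}$, which is all of $\wedge^p\Rb^{2p}$ by Proposition~\ref{prop:ext_spanning}; hence $\wedge^p\wh c=\lambda_0\Id$, forcing $\wh c$ to be a real scalar matrix, and $c\in\Rb_{>0}\Id_{2p}$ by connectedness of $C_\Gamma^0$.

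The main obstacle is showing $\lambda$ is constant. Suppose for contradiction that $\lambda$ takes distinct values $\mu_1\ne\mu_2$ on closed disjoint non-empty $\Gamma$-invariant subsets $A_i=\lambda^{-1}(\mu_i)$. Each $A_i$ lies in $\Pb(V_i)$ for the $\wh c$-eigenspace $V_i=\ker(\wedge^p\wh c-\mu_i\Id)$, and each $V_i$ is $\wh\Gamma$-invariant since $\wedge^p\wh c$ commutes with $\wedge^p\wh\Gamma$. The naive approach of showing $\gamma_n e'\to e$ for $e\in A_1$ and $e'\in A_2$ (which would contradict closedness of $A_2$) fails, because $L_{e'}\subseteq V_2\subseteq\ker S_e$ forces the limit $\wh\gamma_n v_{e'}\to S_e v_{e'}=0$ to degenerate; instead one must rescale $\wh\gamma_n|_{V_2}$ by $\beta_n:=\|\wh\gamma_n|_{V_2}\|$ (which tends to $0$) to produce a nontrivial rank-one limit $T_e\in\End(V_2)$, and then analyze $\operatorname{Im}(T_e)$ using the rescaling theorem (Theorem~\ref{thm:rescaling}) and the tangent cone description at extreme points (Theorem~\ref{thm:extreme}(3)) to show that $\operatorname{Im}(T_e)$ must correspond to a single line $L_{e^*}\subset V_2$ independent of the choice of $e\in A_1$. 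This then forces $e^*$ to be a global $\Gamma$-fixed point on $\partial\Omega$, contradicting the cocompactness of the $\Gamma$-action. The symmetric example $\Bc_{p,p}$ (where $\wedge^p\Rb^{2p}$ splits into the $\pm 1$ eigenspaces of the Hodge star as an $\SO(p,p)$-module) shows that one cannot rule out the decomposition $V_1\oplus V_2$ purely on representation-theoretic grounds, so the geometric input from the extreme-point structure is essential to the argument.
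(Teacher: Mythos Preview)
Your opening moves are sound and parallel the paper: that each $c\in C_\Gamma^0$ fixes every $\Rc$-extreme point is Lemma~\ref{lem:centralfixes} (proved there via the finite-translation-length argument rather than your rank-one-limit argument, but either works), and defining the eigenvalue function $\lambda$ on $\Ext(\Omega)$ is natural. The paper then appeals to Vey's structure theorem to deduce directly that $C_\Gamma^0$ decomposes $\Rb^{2p}$ (not $\wedge^p\Rb^{2p}$) as $\bigoplus V_i$ with $C_\Gamma^0=\bigoplus\Rb\cdot\Id_{V_i}$; from there it normalizes $\Omega$ to a cone via Corollary~\ref{cor:rescaling}, and finishes with two separate case analyses (one via Vey's theory of irreducible cones and a cohomological-dimension count, the other via a singular-value argument). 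So your route and the paper's diverge after the ``$c$ fixes $\Ext(\Omega)$'' step.

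The genuine gap is the paragraph beginning ``The main obstacle''. You correctly identify that the naive limiting argument fails because $L_{e'}\subset\ker S_e$, but the proposed fix is not a proof: you assert without justification that rescaling $\wh\gamma_n|_{V_2}$ produces a \emph{rank-one} limit $T_e$, that $\operatorname{Im}(T_e)$ is a single line in $\Gr_p(\Rb^{2p})$, and that this line is \emph{independent of $e\in A_1$}. None of these follow from the tools you cite. Theorem~\ref{thm:rescaling} concerns affine rescalings of $\Omega$ in an affine chart, not restrictions of $\wedge^p\gamma_n$ to an abstract $\Gamma$-invariant subspace of $\wedge^p\Rb^{2p}$; there is no mechanism in your sketch connecting the two. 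Even granting all that, the concluding step is false as stated: a global $\Gamma$-fixed point on $\partial\Omega$ does \emph{not} contradict cocompactness of $\Gamma$ on $\Omega$ --- after the paper's own normalization $\Omega$ becomes a cone and the cone point is fixed by all of $\Aut(\Omega)$. You would need a much sharper contradiction, and your sketch does not supply one. Your closing remark about the Hodge-star splitting is exactly why this step is hard: the $\Gamma$-module $\wedge^p\Rb^{2p}$ genuinely can be reducible, so the eigenspace decomposition $V_1\oplus V_2$ cannot be ruled out abstractly, and the geometric argument you gesture at has not been carried out.
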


\subsection{The centralizer in the general case}

We begin by proving the following (which holds for any Grassmannian):

\begin{theorem}\label{thm:cen_str}
Suppose $\Omega \subset \Gr_p(\Rb^{p+q})$ is an open $\Rc$-proper set that is convex in some affine chart. If $H \leq \Aut(\Omega)$ acts cocompactly on $\Omega$ then $C_{H}^0 \leq \Aut(\Omega)$ and there is a decomposition $\Rb^{p+q} = \oplus_{i=1}^m V_i$ so that 
\begin{align*}
C_H = \bigoplus_{i=1}^m \Rb \cdot \Id_{V_i}.
\end{align*}
\end{theorem}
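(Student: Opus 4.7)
The plan is to prove the theorem in two stages. First, I will use the cocompactness of the $H$-action together with the fact that elements of $C_H^0$ commute with $\wh H$ to show $C_H^0 \leq \Aut(\Omega)$. Second, I will analyze the associative algebra structure of $C_H$, using the resulting action of $C_H^0$ on $(\Omega, K_\Omega)$, to obtain the claimed decomposition.

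For the first step, I will fix a compact set $K \subset \Omega$ with $H \cdot K = \Omega$. Since $\Omega$ is open and $K$ is compact, there is a neighborhood $\mathcal{V}$ of the identity in $C_H^0$ with $v \cdot K \subset \Omega$ for every $v \in \mathcal{V}$. For any $x \in \Omega$ I write $x = h \cdot y$ with $h \in H$ and $y \in K$; the commutation $v \wh h = \wh h v$ then gives $v \cdot x = h \cdot (v \cdot y) \in h \cdot \Omega = \Omega$, and applying the same argument to $v^{-1}$ shows $v \cdot \Omega = \Omega$. Thus $\mathcal{V}$ descends into $\Aut(\Omega)$, and since $\{c \in C_H^0 : c \cdot \Omega = \Omega\}$ is a subgroup of the connected Lie group $C_H^0$ containing an open neighborhood of the identity, it equals $C_H^0$.

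For the second step, $C_H$ is a finite-dimensional unital associative $\Rb$-subalgebra of $\End(\Rb^{p+q})$, and the identity component of its unit group is $C_H^0$, which by Step~1 now lies in $\Aut(\Omega)$ and acts properly on $\Omega$ by Proposition~\ref{prop:proper}. The target conclusion $C_H = \bigoplus_{i=1}^m \Rb \cdot \Id_{V_i}$ is equivalent to $C_H$ being isomorphic as an $\Rb$-algebra to a product of copies of $\Rb$; concretely, this requires that (a) $C_H$ has no nonzero nilpotent elements, (b) $C_H$ is commutative, and (c) no element of $C_H$ has non-real eigenvalues. For any $c \in C_H$, its semisimple part $c_s$ and nilpotent part $c_n$ are polynomials in $c$ and therefore lie in $C_H$, as does the ``compact'' part of $c_s$ corresponding to complex-conjugate eigenvalues. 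Thus each obstruction in (a)--(c), if present, produces inside $C_H^0 \leq \Aut(\Omega)$ either a nontrivial unipotent one-parameter subgroup (from~(a)) or a nontrivial compact subgroup --- a circle from a complex eigenvalue in~(c), or a copy of $\SO(n)$ or $\SU(2)$ from a non-abelian simple Wedderburn factor in~(b).

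The main obstacle is to derive a contradiction from each such subgroup of $\Aut(\Omega)$, using that it commutes with the cocompact $H$-action on the $\Rc$-proper bounded convex $\Omega$. For the compact-group cases I plan to produce a common fixed point in $\Omega$ via an averaging argument (using properness of the action and contractibility of $\Omega$), and then exploit the commutation with the $H$-action to propagate this fixed set through the cocompact $H$-orbit, forcing the compact subgroup to act trivially on a Zariski-dense subset of $\Gr_p(\Rb^{p+q})$ and hence to act by scalars on $\Rb^{p+q}$. For the unipotent case I plan to use the escape-to-the-boundary behavior of unipotent one-parameter flows on an $\Rc$-proper convex domain, in the spirit of Corollary~\ref{cor:asym_faces} and the extreme-point analysis of Section~\ref{sec:initial}, to show that a nontrivial unipotent element commuting with the cocompact $H$-action contradicts the proper isometric action of $\Aut(\Omega)$ on $(\Omega, K_\Omega)$. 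Implementing these contradictions cleanly --- in particular, handling the delicate interaction between the algebraic structure of $C_H$ and the boundary geometry of $\Omega$ --- is the heart of the proof.
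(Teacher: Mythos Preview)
Your Step~1 is correct and essentially identical to the paper's argument (Lemma~\ref{lem:C1}).

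Your Step~2, however, diverges from the paper and has a real gap in the compact-subgroup part. You propose to find a $K$-fixed point inside $\Omega$ by ``averaging,'' then propagate it along the $H$-orbit to force $K$ to be trivial. Neither half is justified. First, there is no obvious way to average: $\Omega$ is convex in an affine chart, but $K \leq \PGL_{p+q}(\Rb)$ does not act affinely on that chart, and averaging lifts in $\wedge^p\Rb^{p+q}$ need not produce a decomposable vector, so you cannot simply take a barycenter of a $K$-orbit. General fixed-point theorems for compact groups on contractible manifolds are delicate and do not give what you want for free. Second, even granting a fixed point $x_0\in\Omega$, the $H$-orbit $H\cdot x_0$ need not be Zariski-dense in $\Gr_p(\Rb^{p+q})$ when $H$ is discrete; cocompactness in $\Omega$ gives nothing of the sort without already knowing something like transitivity of $\Aut(\Omega)$, which is exactly what the main theorem is working toward.

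The paper avoids all of this by working on the \emph{boundary} rather than the interior, with a single argument that handles (a), (b), and (c) simultaneously. The commutation with the cocompact $H$-action gives, for every $c\in C_H^0$, a uniform bound $\sup_{x\in\Omega} K_\Omega(cx,x)<\infty$ (Lemma~\ref{lem:finitetranslation}); then Corollary~\ref{cor:asym_faces} forces every such $c$ to fix every $\Rc$-extreme point of $\partial\Omega$. Since the $\Rc$-extreme points span $\wedge^p\Rb^{p+q}$ (Proposition~\ref{prop:ext_spanning}), all of $\wedge^p C_H^0$ is simultaneously diagonalized in a single real basis. This yields at once that $C_H^0$ is abelian, that each $c$ is semisimple (via the Jordan decomposition under $\wedge^p$), and that eigenvalues are real; the decomposition $C_H=\bigoplus_i \Rb\cdot\Id_{V_i}$ then follows from Vey's argument. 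You already invoke Corollary~\ref{cor:asym_faces} for your unipotent case; the missing idea is to apply the bounded-translation/extreme-point argument uniformly to \emph{every} element of $C_H^0$, not just the unipotent ones, and to look for fixed points on $\partial\Omega$ rather than in $\Omega$.
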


\begin{remark} In the special case where $p=1$ the above Theorem is due to Vey~\cite[Theorem 5]{V1970}. \end{remark}

For the rest of this subsection assume that $\Omega \subset \Gr_p(\Rb^{p+q})$ and $H \leq \Aut(\Omega)$ satisfy the hypothesis of Theorem~\ref{thm:cen_str}.

\begin{lemma}\label{lem:C1} With the notation above, $C_{H}^0 \leq \Aut(\Omega)$
\end{lemma}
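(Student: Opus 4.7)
The plan is to show that the set
\begin{align*}
U := \{ c \in C_{H}^{0} : [c]\,\Omega = \Omega \}
\end{align*}
coincides with all of $C_{H}^{0}$, by first showing that $U$ contains an open neighborhood of the identity and then using connectedness of $C_{H}^{0}$. The key observation driving everything is the commutation relation: for $c \in C_{H}$ and any representative $\widehat{h} \in \widehat{H}$ of $h \in H$, we have $c\,\widehat{h} = \widehat{h}\,c$ in $\GL_{p+q}(\Rb)$, and hence $[c]\,h = h\,[c]$ in $\PGL_{p+q}(\Rb)$. Consequently, for any subset $S \subset \Gr_p(\Rb^{p+q})$ and any $c \in C_{H}$ with $c$ invertible, the image $[c](H\cdot S) = H \cdot [c](S)$ is automatically $H$-invariant.

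First, I would use cocompactness to fix a compact subset $K \subset \Omega$ with $H \cdot K = \Omega$. Since $K$ is compact and $\Omega$ is open in $\Gr_p(\Rb^{p+q})$, and since the action $\GL_{p+q}(\Rb) \times \Gr_p(\Rb^{p+q}) \to \Gr_p(\Rb^{p+q})$ is continuous, there is an open neighborhood $V$ of $\Id_{p+q}$ in $\GL_{p+q}(\Rb)$ such that $[c](K) \subset \Omega$ for every $c \in V$; by shrinking $V$ we may also assume $V$ is symmetric (i.e.\ closed under inversion).

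Next, I would check that $V \cap C_{H}^{0} \subset U$. Given $c \in V \cap C_{H}^{0}$, the commutation observation above yields
\begin{align*}
[c]\,\Omega = [c](H\cdot K) = H \cdot [c](K) \subset H \cdot \Omega = \Omega.
\end{align*}
Applying the same argument with $c^{-1} \in V \cap C_{H}^{0}$ gives $[c]^{-1}\,\Omega \subset \Omega$, hence $[c]\,\Omega = \Omega$; that is, $c \in U$.

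Finally, I would conclude by connectedness. The set $U$ is a subgroup of $C_{H}^{0}$ (closure under products and inverses is immediate from $[c_1 c_2]\,\Omega = [c_1][c_2]\,\Omega$ and $[c^{-1}] = [c]^{-1}$), and it contains the open neighborhood $V \cap C_{H}^{0}$ of $\Id_{p+q}$; hence $U$ is open in $C_{H}^{0}$. Any open subgroup of a topological group is also closed, so $U$ is a nonempty clopen subset of the connected space $C_{H}^{0}$, giving $U = C_{H}^{0}$. There is no serious obstacle here; the only point to handle with care is the symmetry of $V$, which is needed because knowing $[c]\Omega \subset \Omega$ is not by itself enough to conclude $[c] \in \Aut(\Omega)$.
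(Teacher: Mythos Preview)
Your proof is correct and follows essentially the same approach as the paper: both fix a compact $K$ with $H\cdot K=\Omega$, find a symmetric neighborhood of the identity mapping $K$ into $\Omega$, use the commutation $[c]H=H[c]$ to get $[c]\Omega\subset\Omega$ and $[c]^{-1}\Omega\subset\Omega$, and then pass from a neighborhood of the identity to all of $C_H^0$ by connectedness. The only cosmetic difference is that the paper phrases the last step as ``the neighborhood generates $C_H^0$'' while you phrase it as ``$U$ is an open subgroup, hence clopen, hence everything.''
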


\begin{proof}
Fix a compact set $K \subset \Omega$ so that $H \cdot K = \Omega$. Then there exists a symmetric neighborhood $\Oc$ of $\Id_{p+q}$ in $C_{H}^0$ so that $\Oc$ generates $C_{H}^0$ and $u \cdot K \subset \Omega$ for all $u \in \Oc$. Then for $u \in \Oc$
\begin{align*}
u \cdot \Omega = u \cdot H \cdot K = H \cdot u \cdot K \subset H \cdot \Omega = \Omega.
\end{align*}
Since $\Oc$ is symmetric we also see that $u^{-1} \cdot \Omega \subset \Omega$. Thus $u$ restricts to a diffeomorphism $\Omega \rightarrow \Omega$ and $u \in \Aut(\Omega)$. Since $\Oc$ generates $C_{H}^0$ we then see that $C_{H}^0 \leq \Aut(\Omega)$. 
\end{proof}

\begin{lemma}\label{lem:finitetranslation}With the notation above, if $c \in C_{H}^0$ then 
\begin{align*}
\sup_{x \in \Omega} K_{\Omega}(c x, x) < \infty.
\end{align*}
\end{lemma}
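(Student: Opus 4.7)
The plan is to exploit the commutation relation together with cocompactness of the $H$-action to reduce the supremum over $\Omega$ to a supremum over a compact fundamental set.

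First I would fix a compact set $K \subset \Omega$ with $H \cdot K = \Omega$, which exists by the cocompactness hypothesis. By Lemma~\ref{lem:C1}, $c \in \Aut(\Omega)$, so $c$ restricts to a homeomorphism of $\Omega$ and hence $K_\Omega(cy, y)$ is a continuous function of $y \in \Omega$. In particular, $M := \sup_{y \in K} K_\Omega(cy, y) < \infty$.

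Next I would handle an arbitrary $x \in \Omega$. Write $x = h \cdot y$ for some $h \in H$ and $y \in K$. Since $c$ commutes with $h$ (as $c \in C_H^0 \leq C_H$) and since $h$ acts as an isometry of $(\Omega, K_\Omega)$ by the $\Aut(\Omega)$-invariance of $K_\Omega$, we get
\begin{align*}
K_\Omega(cx, x) = K_\Omega(chy, hy) = K_\Omega(hcy, hy) = K_\Omega(cy, y) \leq M.
\end{align*}
Taking the supremum over $x \in \Omega$ yields the claim.

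The argument involves no real obstacle; the only slightly subtle points are that one must invoke Lemma~\ref{lem:C1} to know that $c$ is an automorphism (hence $K_\Omega$-isometry candidates can use $c$ sensibly and $cK$ is a compact subset of $\Omega$ so that $K_\Omega(cy,y)$ is finite on $K$), and that commutation in $C_H$ is literal commutation of the matrix representatives, which is needed to write $chy = hcy$ inside $\Omega$.
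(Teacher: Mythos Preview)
Your proof is correct and follows essentially the same approach as the paper's: both arguments use that $c$ commutes with $H$ and that $H$ acts by $K_\Omega$-isometries to transport the translation distance back to a fixed compact set. The paper phrases this with a basepoint $x_0$ and covering radius $R$ (obtaining the bound $2R + K_\Omega(cx_0,x_0)$ via the triangle inequality), whereas you use a compact fundamental set $K$ directly and get the cleaner bound $\sup_{y\in K} K_\Omega(cy,y)$; the underlying idea is identical.
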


\begin{proof}
Fix some $x_0 \in \Omega$. Then there exists $R>0$ so that 
\begin{align*}
\bigcup_{h \in H} B_R(h x_0) = \Omega.
\end{align*}
If $x \in \Omega$ pick $h \in H$ so that $K_{\Omega}(x, h x_0) \leq R$. Then 
\begin{align*}
K_{\Omega}(c x, x) 
&\leq K_{\Omega}(c x, c h x_0) + K_{\Omega}(c h x_0, h x_0) + K_{\Omega}(hx_0, x)\\
&\leq K_{\Omega}(x, h x_0) + K_{\Omega}(c x_0, x_0) + R\\
& \leq 2R + K_{\Omega}( c x_0, x_0).
\end{align*}
\end{proof}

\begin{lemma} With the notation above, if $c \in C_{H}^0$ then $c$ fixes every $\Rc$-extreme point of $\Omega$. 
\label{lem:centralfixes}
\end{lemma}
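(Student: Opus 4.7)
My plan is to exhibit, for each $\Rc$-extreme point $e \in \partial \Omega$, a sequence $\varphi_n \in H$ such that $\varphi_n(y) \to e$ in $\Gr_p(\Rb^{p+q})$ for every $y \in \Omega$, and then exploit the commutativity of $c$ with each $\varphi_n$ to deduce $c(e) = e$.

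For the first step, I would fix a basepoint $x_0 \in \Omega$, choose $x_n \in \Omega$ with $x_n \to e$, and use the cocompact action of $H$ to produce $R > 0$ and $\varphi_n \in H$ with $K_\Omega(x_n, \varphi_n x_0) \leq R$. For any $y \in \Omega$, the $H$-invariance of $K_\Omega$ bounds
\begin{align*}
K_\Omega(\varphi_n y, x_n) \leq K_\Omega(\varphi_n y, \varphi_n x_0) + R = K_\Omega(y, x_0) + R.
\end{align*}
Extracting a subsequential limit $\varphi_n y \to y'$ in the compact space $\Gr_p(\Rb^{p+q})$, the completeness of $K_\Omega$ (Theorem~\ref{thm:completeness}) forces $y' \in \partial \Omega$, since any $y' \in \Omega$ would make $K_\Omega(x_n, y') \to \infty$, contradicting the displayed bound via the triangle inequality. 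Then Corollary~\ref{cor:asym_faces} applied to $x_n$ and $\varphi_n y$ gives $\Rc F(y') = \Rc F(e) = \{e\}$, whence $y' = e$; as this holds along every subsequence, $\varphi_n y \to e$.

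For the second step, Lemma~\ref{lem:C1} places $c \in C_H^0$ inside $\Aut(\Omega)$, so $cx_0 \in \Omega$ and the first step applied to $y = cx_0$ yields $\varphi_n(cx_0) \to e$. Since $c$ centralizes every lift in $\GL_{p+q}(\Rb)$ of every element of $H$, the diffeomorphism of $\Gr_p(\Rb^{p+q})$ induced by $c$ commutes with each $\varphi_n$, so $\varphi_n(cx_0) = c(\varphi_n x_0)$. Continuity of $c$ on $\Gr_p(\Rb^{p+q})$ together with $\varphi_n x_0 \to e$ then gives $c(\varphi_n x_0) \to c(e)$, and comparing the two limits yields $c(e) = e$.

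The genuinely subtle point is upgrading the convergence $\varphi_n x_0 \to e$ to $\varphi_n y \to e$ for \emph{arbitrary} $y \in \Omega$. This is where completeness of $K_\Omega$ is essential: without it one could not rule out subsequential limits lying in $\Omega$, and the $\Rc$-face argument via Corollary~\ref{cor:asym_faces} would not apply. Everything else is bookkeeping with the triangle inequality and continuity of the projective action on $\Gr_p(\Rb^{p+q})$.
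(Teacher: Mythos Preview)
Your proof is correct, but it takes a longer route than the paper's. The paper instead invokes the immediately preceding Lemma~\ref{lem:finitetranslation}, which says that any $c \in C_H^0$ has uniformly bounded displacement: $\sup_{x \in \Omega} K_\Omega(cx,x) < \infty$. Given this, one simply takes any sequence $p_n \in \Omega$ with $p_n \to e$; the bounded-displacement lemma gives $\limsup_n K_\Omega(cp_n, p_n) < \infty$, and Corollary~\ref{cor:asym_faces} applied to the pair $(p_n, cp_n)$ forces $cp_n \to e$ since $e$ is $\Rc$-extreme. Continuity of $c$ on $\Gr_p(\Rb^{p+q})$ then gives $c(e)=e$ immediately.

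Your argument sidesteps Lemma~\ref{lem:finitetranslation} by instead building an attracting sequence $\varphi_n \in H$ for $e$ (essentially the $(1)\Rightarrow(4)$ step of Theorem~\ref{thm:extreme}, transplanted to the general $\Gr_p(\Rb^{p+q})$ setting) and then exploiting that $c$ commutes with the $\varphi_n$. This works, and has the minor virtue of being self-contained, but it is more elaborate: you must argue separately that subsequential limits of $\varphi_n y$ land in $\partial\Omega$ before invoking Corollary~\ref{cor:asym_faces}, and you use commutativity where the paper does not need it. Both arguments ultimately rest on the same key input, Corollary~\ref{cor:asym_faces}; the paper's version just packages the ``bounded distance along a sequence escaping to $\partial\Omega$'' idea more economically via the finite-translation lemma.
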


\begin{proof}
For an $\Rc$-extreme point $x \in \partial \Omega$, choose points $p_n\in\Omega$ with $p_n \rightarrow x$. By Lemma~\ref{lem:finitetranslation}, we have
\begin{align*}
\limsup_{n \rightarrow \infty} d_{\Omega} ( c p_n, p_n) < \infty.
\end{align*}
Then by Corollary~\ref{cor:asym_faces}, we have $c p_n \rightarrow x$. Since $c$ acts continuously on $\Gr_p(\Rb^{2p})$ and $p_n \rightarrow x$, we must have that $cx=x$.
\end{proof}

\begin{lemma}
With the notation above, every $c \in C_{H}^0$ is semisimple and $C_H^0$ is abelian. 
\end{lemma}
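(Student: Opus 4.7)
The plan is to establish semisimplicity via the multiplicative Jordan decomposition, and then deduce abelianness from the fact that every element of $C_H^0$ fixes all $\Rc$-extreme points of $\Omega$ combined with Proposition~\ref{prop:ext_spanning}.

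For the semisimplicity of a fixed $c \in C_H^0$, consider the Jordan decomposition $c = c_s c_u$ in $\GL_{p+q}(\Rb)$. Since both $c_s$ and $c_u$ are polynomials in $c$, they lie in $C_H \cap \GL_{p+q}(\Rb)$. Moreover, the one-parameter family $c_u^t = \exp(t \log c_u)$, which is well-defined because $\log c_u$ is a polynomial in the nilpotent operator $c_u - \Id$, is a continuous path inside $C_H \cap \GL_{p+q}(\Rb)$ connecting $\Id$ to $c_u$; hence $c_u \in C_H^0$. Lemma~\ref{lem:centralfixes} then gives that $c_u$ fixes every $\Rc$-extreme point of $\Omega$. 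Since $\wedge^p c_u$ is unipotent on $\wedge^p \Rb^{p+q}$, its fixed locus in $\Pb(\wedge^p \Rb^{p+q})$ is the proper projective subspace $\Pb(\ker(\wedge^p c_u - \Id))$, unless $\wedge^p c_u = \Id$. By Proposition~\ref{prop:ext_spanning} the $\Rc$-extreme points of $\Omega$ span $\wedge^p \Rb^{p+q}$, so we must have $\wedge^p c_u = \Id$. By faithfulness of the $\PGL_{p+q}(\Rb)$-action on $\Gr_p(\Rb^{p+q})$ this forces $c_u$ to be scalar, and being unipotent, $c_u = \Id$. Thus $c = c_s$ is semisimple.

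For the abelianness, use Proposition~\ref{prop:ext_spanning} to extract a basis $\tilde e_1, \dots, \tilde e_N$ of $\wedge^p \Rb^{p+q}$ from lifts of $\Rc$-extreme points of $\Omega$. For any $c \in C_H^0$, the operator $\wedge^p c$ is semisimple (by the first part) and, by Lemma~\ref{lem:centralfixes}, fixes each projective point $[\tilde e_i]$; consequently each $\tilde e_i$ is an eigenvector of $\wedge^p c$, so $\wedge^p c$ is diagonal in the fixed basis $\{\tilde e_i\}$ for every $c \in C_H^0$. Diagonal matrices commute, so for any $c_1, c_2 \in C_H^0$ we get $\wedge^p [c_1, c_2] = \Id$. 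Faithfulness of the $\PGL_{p+q}(\Rb)$-action on $\Gr_p(\Rb^{p+q})$ then yields that $[c_1, c_2]$ is scalar, hence trivial in $\PGL_{p+q}(\Rb)$.

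The main technical concern is that Proposition~\ref{prop:ext_spanning} as stated requires $\Omega$ to be bounded in an affine chart, whereas Theorem~\ref{thm:cen_str} only assumes that $\Omega$ is $\Rc$-proper and convex in an affine chart. The cocompact action of $H$ should make the same argument (or a straightforward adaptation) go through, since rank-one segments through a boundary point can still be shortened inside a cocompact family. Apart from this issue, the proof is formal once the Jordan factors $c_s, c_u$ are shown to lie in $C_H^0$.
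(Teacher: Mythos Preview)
Your proof is correct and uses the same two ingredients as the paper (Lemma~\ref{lem:centralfixes} and Proposition~\ref{prop:ext_spanning}), but in reversed order. The paper observes directly that since every $c\in C_H^0$ fixes each $\Rc$-extreme point projectively, an extreme-point basis $v_1,\dots,v_D$ of $\wedge^p\Rb^{p+q}$ consists of eigenvectors of $\wedge^p c$; thus $\wedge^p c$ is already diagonal in this basis, giving abelianness of $\wedge^p C_H^0$ immediately and semisimplicity of $c$ by uniqueness of the Jordan decomposition of $\wedge^p c$. This avoids your detour of first placing $c_u$ inside $C_H^0$ via the one-parameter path $c_u^t$ (which is correct but unnecessary), and it makes the reference to semisimplicity in your abelianness paragraph superfluous: fixing $[\tilde e_i]$ already makes $\tilde e_i$ an eigenvector regardless.

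One small point: your abelianness argument concludes that $[c_1,c_2]$ is scalar, i.e.\ trivial in $\PGL_{p+q}(\Rb)$, but $C_H^0$ lives in $\GL_{p+q}(\Rb)$. The kernel of $\wedge^p$ on $\GL_{p+q}(\Rb)$ is contained in $\{\pm\Id\}$, so a connectedness argument (the commutator map is continuous on $C_H^0\times C_H^0$ and equals $\Id$ at the identity) finishes the job. The paper glosses over this by asserting that $\wedge^p$ is injective.

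Your concern about the boundedness hypothesis in Proposition~\ref{prop:ext_spanning} versus the $\Rc$-properness assumed in Theorem~\ref{thm:cen_str} is well spotted; the paper's own proof invokes Proposition~\ref{prop:ext_spanning} in exactly the same way, so this is a gap in the statement of Theorem~\ref{thm:cen_str} rather than in your argument.
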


\begin{proof}
Fix a basis $v_1, \dots, v_D$ of $\wedge^p \Rb^{p+q}$ so that each $[v_i]$ is an $\Rc$-extreme point of $\Omega$ (this is possible by Proposition~\ref{prop:ext_spanning}). Then for any $c \in C_H^0$, each $v_i$ is an eigenvector of $\wedge^p c$ so $\wedge^p c$ is diagonalizable with respect to the basis $v_1, \dots, v_D$ of $\wedge^p \Rb^{p+q}$. Hence $\wedge^p C_H^0$ is an abelian group. 

Now $\wedge^p : \GL_{p+q}(\Rb) \rightarrow \GL(\wedge^p \Rb^{p+q})$ is an injective homomorphism, maps unipotents to unipotents, and maps semisimple elements to semisimple elements. Since $\wedge^p : \GL_{p+q}(\Rb) \rightarrow \GL(\wedge^p \Rb^{p+q})$ is injective we see that $C_H^0$ is abelian. 

We next claim that any $c \in C_H^0$ is semisimple. If $c = s u$ is the Jordan decomposition of $c$ then $\wedge^p c = (\wedge^p s )(\wedge^p u)$ and by uniqueness this is the Jordan decomposition of $\wedge^p c$. It follows that $\wedge^p u = 1$, and hence $u=1$. We conclude that $c=s$ is semisimple.
\end{proof}

\begin{lemma}
With the notation above, there is a decomposition $\Rb^{p+q} = \oplus_{i=1}^m V_i$ so that 
\begin{align*}
C_H = \bigoplus_{i=1}^m \Rb \cdot \Id_{V_i}.
\end{align*}
\end{lemma}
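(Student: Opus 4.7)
The plan is to identify the $V_i$'s with the isotypic components of $\Rb^{p+q}$ viewed as an $\wh{H}$-module. Writing $\Rb^{p+q} = \bigoplus_{i} W_i^{n_i}$ for pairwise non-isomorphic irreducible $\wh{H}$-subrepresentations $W_i$, Schur's lemma gives
\[ C_H = \bigoplus_{i} M_{n_i}(D_i), \qquad D_i := \End_{\wh{H}}(W_i) \in \{\Rb, \Cb, \Hb\}. \]
The conclusion of the lemma is then equivalent to the statement that $n_i = 1$ and $D_i = \Rb$ for every $i$, in which case one takes $V_i := W_i$. I would argue by contradiction: if some $n_i > 1$ or some $D_i \neq \Rb$, I will exhibit an $X \in C_H$ whose derivation $d\wedge^p(X)$ on $\wedge^p \Rb^{p+q}$ has a nonzero purely imaginary eigenvalue, and show this contradicts a real-diagonalizability constraint that every element of $C_H$ must satisfy.

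The constraint comes from combining Proposition~\ref{prop:ext_spanning} with Lemma~\ref{lem:centralfixes}: there is a basis $v_1, \dots, v_D$ of $\wedge^p \Rb^{p+q}$ of $\Rc$-extreme points which simultaneously diagonalizes $\wedge^p c$ with real eigenvalues for every $c \in C_H^0$. Since $C_H$, as a vector subspace of $\End(\Rb^{p+q})$, is exactly the Lie algebra of the Lie subgroup $C_H^0 \leq \GL_{p+q}(\Rb)$, differentiating the identity $\wedge^p \exp(tX) v_j = \lambda_j(t) v_j$ at $t=0$ shows that for every $X \in C_H$ the operator $d\wedge^p(X)$ is itself diagonal in the basis $\{v_j\}$ with real eigenvalues. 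In particular, no $X \in C_H$ can have $d\wedge^p(X)$ admitting a non-real eigenvalue.

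To produce the contradictory $X$, I take $X_i$ to be multiplication by an imaginary unit of $D_i$ on one copy of $W_i$ when $D_i \in \{\Cb, \Hb\}$, or an infinitesimal $\SO(2)$-rotation between two copies of $W_i$ when $D_i = \Rb$ and $n_i \geq 2$; all other components of $X$ are set to zero. Then $X$ is semisimple with spectrum contained in $\{0, \pm \sqrt{-1}\}$, with $\pm \sqrt{-1}$ each appearing with some multiplicity $m \geq 1$ and $0$ with multiplicity $p+q-2m$. The eigenvalues of $d\wedge^p(X)$ are then of the form $(k-l)\sqrt{-1}$, where $(k,l,r)$ ranges over admissible triples with $k, l \in [0,m]$, $r \in [0, p+q-2m]$, and $k+l+r = p$. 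The remaining step, and the main technical (though elementary) obstacle, is to check that under these constraints one can always arrange $k \neq l$, provided $m \geq 1$ and $q \geq 1$; an explicit choice along the lines of $(k,l) = (\min(m,p), \max(0, m-q))$ can be shown to work in every case. This produces a nonzero imaginary eigenvalue of $d\wedge^p(X)$, contradicting the real-diagonalizability above and forcing $n_i = 1$, $D_i = \Rb$ for all $i$; setting $V_i := W_i$ then completes the proof.
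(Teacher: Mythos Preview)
Your approach is genuinely different from the paper's, which simply cites parts (2)--(4) of the proof of Theorem~5 in Vey~\cite{V1970} without reproducing any argument. Your self-contained route via Schur's lemma and the eigenvalue analysis of $d\wedge^p(X)$ is a nice alternative, and the combinatorial verification that the triple $(k,l,r) = (\min(m,p), \max(0,m-q), p-k-l)$ is always admissible with $k\neq l$ (using $2m \leq p+q$ and $q\geq 1$) checks out.

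There is, however, one genuine gap. You open by writing $\Rb^{p+q} = \bigoplus_i W_i^{n_i}$ as a sum of irreducible $\wh{H}$-subrepresentations, which presumes the $\wh{H}$-module is semisimple. Nothing in the hypotheses guarantees this: $H$ is an arbitrary subgroup of $\Aut(\Omega)$ acting cocompactly, and in general a non-semisimple module can have a semisimple (even commutative) endomorphism algebra. So the Schur decomposition of $C_H$ is not yet justified.

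The fix is already implicit in what you wrote. Your key constraint---that $d\wedge^p(X)$ is diagonal with real eigenvalues in the fixed basis $\{v_j\}$ for every $X\in C_H$---does more work than you use. Since $d\wedge^p:\End(\Rb^{p+q})\to\End(\wedge^p\Rb^{p+q})$ is an injective linear map (its kernel is an ideal of $\mathfrak{gl}_{p+q}$ meeting neither $\Rb\cdot\Id$ nor $\mathfrak{sl}_{p+q}$ nontrivially), simultaneous diagonalizability of the image forces $C_H$ itself to be commutative, and any nilpotent $X\in C_H$ would have $d\wedge^p(X)$ both nilpotent and diagonalizable, hence zero, hence $X=0$. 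Thus $C_H$ is a commutative reduced finite-dimensional $\Rb$-algebra, so $C_H\cong \Rb^a\times\Cb^b$ directly, and the idempotents already give the decomposition $\Rb^{p+q}=\bigoplus V_i$. This bypasses module semisimplicity entirely and also makes the cases $n_i>1$ and $D_i=\Hb$ vacuous; only the $\Cb$-factor case survives, and your imaginary-eigenvalue argument disposes of it exactly as written.
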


\begin{proof} This follows from parts (2)-(4) of the proof of Theorem 5 in~\cite{V1970}.
\end{proof}

\subsection{The centralizer in $\Gr_p(\Rb^{2p})$}

We now specialize to the case in which $p=q$ and prove Theorem~\ref{thm:no_center}. We begin by showing that we can assume that $\Omega$ is a cone in some affine chart. 

\begin{proposition}\label{prop:norm_center}
Suppose $\Omega \subset \Gr_p(\Rb^{p+q})$ is an open set which is convex and bounded in some affine chart. If $H \leq \Aut(\Omega)$ acts cocompactly on $\Omega$ and $C_{H}^0 \neq \Rb_{>0} \Id_{2p}$, then there exists $\varphi \in \GL_{2p}(\Rb)$ so that 
\begin{align*}
\varphi \Omega \subset  \Mb=\left\{\begin{bmatrix} \Id_p \\ X \end{bmatrix}: X \in M_{p,p}(\Rb)\right\}
\end{align*}
and  $\varphi\Omega$ is a convex cone in $\Mb$ based at 0. Moreover, either 
\begin{align*}
C_{\varphi H \varphi^{-1}}^0 = \left\{ \begin{pmatrix} e^t\Id_{p} & 0 \\ 0 & e^{s} \Id_{p} \end{pmatrix} : s,t \in \Rb\right\}.
\end{align*}
or $C_{\varphi H \varphi^{-1}}^0$ contains the subgroup
\begin{align*}
\left\{ \begin{pmatrix} e^t\Id_{p+\ell} & 0 \\ 0 & e^{s} \Id_{p-\ell} \end{pmatrix} : s,t \in \Rb\right\} 
\end{align*}
for some $0 < \ell < p$.
\end{proposition}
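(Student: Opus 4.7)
By Theorem \ref{thm:cen_str}, there is a decomposition $\Rb^{2p} = V_1 \oplus \cdots \oplus V_m$ with $C_H^0 = \prod_i \Rb_{>0}\Id_{V_i}$, and the hypothesis $C_H^0 \neq \Rb_{>0}\Id_{2p}$ forces $m \geq 2$. Writing $d_i = \dim V_i$, I would fix a partition $\{1,\dots,m\} = A \sqcup B$ with $A, B$ both nonempty and $d_A := \sum_{i \in A} d_i \geq p$, and let $W = \bigoplus_{i \in A} V_i$, $W' = \bigoplus_{j \in B} V_j$, and $\ell = d_A - p \in [0,p)$. The partition is chosen so that $\ell = 0$ precisely when $m=2$ and $d_1=d_2=p$ (the first alternative of the conclusion); otherwise $\ell > 0$, which is always possible since $m \geq 3$ admits such a partition and $m = 2$ with $d_1 \neq d_2$ gives $\ell = |d_1-d_2|/2 > 0$. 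The element $c_t := (e^t\Id_W, e^{-t}\Id_{W'}) \in C_H^0$ has the property that $\wedge^p c_t$ has top eigenspace $\wedge^p W$, so applying Corollary \ref{cor:ext_attracting} to $c_t$ and to $c_t^{-1}$ yields $\Rc$-extreme points $e_0 \in \Gr_p(W) \cap \partial \Omega$ and $e_0' \in \partial \Omega$ with $W' \subseteq e_0'$ and $e_0' \cap W \in \Gr_\ell(W)$.

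The critical technical step is to verify that $e_0$ and $e_0'$ can be chosen so that $e_0 \cap e_0' = 0$. When $m=2$ and $d_1=d_2=p$ this is automatic, since the attracting sets are single points and $e_0 = V_1$, $e_0' = V_2$ are forced. In the remaining cases, Lemma \ref{lem:centralfixes} ensures that both extreme points decompose as direct sums along the $V_i$-grading ($e_0 = \bigoplus_{i \in A}(e_0 \cap V_i)$ and $e_0' = W' \oplus \bigoplus_{i \in A}(e_0' \cap V_i)$), and one arranges transversality of $e_0$ and $e_0' \cap W$ inside $W$ by combining the openness of the transversality locus in $\Gr_\ell(W)$ with the abundance of attracting/repelling extreme points (using, if needed, the $\Aut(\Omega)$-action on the set of $\Rc$-extreme points). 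Granted transversality, I would choose a basis $v_1,\dots,v_{2p}$ of $\Rb^{2p}$ in which $v_1, \dots, v_p$ form a basis of $e_0$ respecting $e_0 = \bigoplus_{i \in A}(e_0 \cap V_i)$, $v_{p+1}, \dots, v_{p+\ell}$ form a basis of $e_0' \cap W$ respecting the $V_i$'s, and $v_{p+\ell+1}, \dots, v_{2p}$ form a basis of $W'$ respecting the $V_j$'s. In this basis, $e_0 = U_0$ is the origin of the standard chart $\Mb$; Theorem \ref{thm:extreme} applied to $e_0'$ gives $Z_{e_0'} \cap \Omega = \emptyset$, so $\Omega$ lies in $\Mb$; and every element of $C_H^0$ is diagonal, the group manifestly containing $\{\mathrm{diag}(e^t\Id_{p+\ell}, e^s\Id_{p-\ell}) : s,t \in \Rb\}$ with equality exactly in the case $m=2$, $d_1=d_2=p$.

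Finally, I would apply Corollary \ref{cor:rescaling} to $\Omega$ at the extreme point $e_0 = U_0$, producing $\varphi' = \lim_n A_{t_n}h_n \in \PGL_{2p}(\Rb)$ with $\varphi'(\Omega) = \Tc\Cc_{e_0}\Omega$, a convex cone in $\Mb$ based at the origin. Because each $h_n \in H$ commutes with $C_H^0$ by definition, and the rescaling matrix $A_t = \mathrm{diag}(\Id_p, e^t\Id_p)$ is diagonal in the chosen basis (hence commutes with the diagonal group $C_H^0$), the conjugate $(A_{t_n}h_n)\,C_H^0\,(A_{t_n}h_n)^{-1}$ equals $C_H^0$ for every $n$; passing to the limit gives $\varphi' C_H^0 (\varphi')^{-1} = C_H^0$. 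Composing $\varphi'$ with the preliminary change of basis and lifting the result to $\GL_{2p}(\Rb)$ produces the $\varphi$ claimed in the proposition. The main obstacle is the transversality step described above, since a priori the attracting and repelling extreme points of $c_t$ need not span $\Rb^{2p}$, and one must exploit both the $C_H^0$-invariance from Lemma \ref{lem:centralfixes} and the cocompactness of $H$ to arrange a complementary pair.
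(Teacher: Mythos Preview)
Your approach differs substantially from the paper's, and there is a genuine gap.

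The paper never changes the affine chart. It starts with $\Omega$ convex and bounded in $\Mb$ and then only applies transformations of the form $\begin{bmatrix} A & 0 \\ B & C\end{bmatrix}$, which act affinely on $\Mb$ and hence preserve convexity. After an affine translation sending the single extreme point $e \in \Gr_p(W)$ to the origin of $\Mb$, one has $\operatorname{Span}\{e_1,\dots,e_p\}\subset W$, which forces every $a\in A$ to be block upper-triangular, $a=\begin{bmatrix} e^t\Id_p & B \\ 0 & C\end{bmatrix}$. The rescaling limit $\varphi=\lim A_{t_n}h_n$ is then computed directly: since $h_n\in H$ commutes with $a$, one gets $\varphi a\varphi^{-1}=\lim A_{t_n}aA_{t_n}^{-1}=\begin{bmatrix} e^t\Id_p & 0 \\ 0 & C\end{bmatrix}$, and a final affine conjugation by $\begin{bmatrix}\Id_p & 0\\ 0 & D\end{bmatrix}$ diagonalises $C$. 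No second extreme point, and hence no transversality argument, is ever needed.

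Your route requires picking a second extreme point $e_0'$ transverse to $e_0$ and performing a full change of basis so that $e_0'$ defines the hyperplane at infinity of $\Mb$. The problem is that convexity in $\Gr_p(\Rb^{2p})$ is \emph{not} invariant under $\PGL_{2p}(\Rb)$ when $p>1$ (the paper explicitly flags this in the introduction). After your change of basis you know only that $\Omega\subset\Mb$ (from $Z_{e_0'}\cap\Omega=\emptyset$), not that $\Omega$ is convex in $\Mb$. Without convexity in $\Mb$ you cannot invoke Corollary~\ref{cor:rescaling}, you cannot guarantee that $\Tc\Cc_{e_0}\Omega$ is convex, and the conclusion that $\varphi\Omega$ is a convex cone fails. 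This is the essential gap; the vagueness of the transversality step is a secondary issue, but is likewise avoided entirely by the paper's one-extreme-point argument.
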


\begin{proof}
We can assume that $\Omega$ is a convex bounded subset of $\Mb$. Through out the argument we will replace $\Omega$ by translates of the form 
\begin{align*}
\begin{bmatrix} A & 0 \\ B & C \end{bmatrix} \Omega
\end{align*}
this transformation preserves the affine chart $\Mb$ and acts by affine transformations. 

By Theorem~\ref{thm:cen_str}, there exists $g_0 \in \GL_{2p}(\Rb)$ and $0 \leq \ell < p$ so that 
\begin{align*}
A:= \left\{ g_0 \begin{pmatrix} e^t \Id_{p+\ell} & 0 \\ 0 & e^s \Id_{p-\ell} \end{pmatrix}g_0^{-1} : s,t \in \Rb \right\} \leq C_H^0.
\end{align*}
Notice that we can choose $\ell > 0$ except when 
\begin{align*}
C_H^0 = \left\{ g_0 \begin{pmatrix} e^t \Id_{p} & 0 \\ 0 & e^s \Id_{p} \end{pmatrix}g_0^{-1} : s,t \in \Rb \right\}.
\end{align*}
Now let $W:= g_0 \Spanset \{ e_1, \dots, e_{p+\ell}\}$. Notice that $hW = W$ for all $h \in H$. We claim that there exists an $\Rc$-extreme point $e$ of $\Omega$ in $\Gr_p(W)$. Consider some
\begin{align*}
c = g_0 \begin{pmatrix} e^t \Id_{p+\ell} & 0 \\ 0 & e^s \Id_{p-\ell} \end{pmatrix}g_0^{-1} \in A
\end{align*}
with $e^t > e^s$. Then $E^+(\wedge^p c) \cap Gr_p(\Rb^{2p}) \subset \Gr_p(W)$ and by Corollary~\ref{cor:ext_attracting} there is an $\Rc$-extreme point $e$ of $\Omega$ in $E^+(\wedge^p c) \cap \partial \Omega$. 

Now by replacing $\Omega$ with an affine translate we can assume that 
\begin{align*}
e = \begin{bmatrix} \Id_p \\ 0 \end{bmatrix}
\end{align*}
which implies that $\Spanset\{ e_1, \dots, e_p\} \subset W$. Notice that this implies that any $a \in A$ can be written as 
\begin{align*}
a = \begin{pmatrix} e^t \Id_p & B \\ 0 & C \end{pmatrix}
\end{align*}
for some $t \in \Rb$ and $B, C \in \GL_p(\Rb)$.

Since $e$ is an extreme point, by Corollary~\ref{cor:rescaling}, there exists $t_n \rightarrow \infty$ and $h_n \in H$ so that 
\begin{align*}
\varphi = \lim_{n \rightarrow \infty} \begin{bmatrix} \Id_p & 0 \\ 0 & e^{t_n} \Id_p \end{bmatrix} h_n 
\end{align*}
in $\PGL_{2p}(\Rb)$ and $\varphi(\Omega) = \Tc \Cc_0 \Omega$. Let $\wh{\varphi} \in \GL_{2p}(\Rb)$ be a representative of $\varphi$ and for each $n \in \Nb$ pick $\wh{h}_n \in \GL_{2p}(\Rb)$ a representative of $h_n$ so that 
\begin{align*}
\wh{\varphi} =  \lim_{n \rightarrow \infty} \begin{pmatrix} \Id_p & 0 \\ 0 & e^{t_n} \Id_p \end{pmatrix} \wh{h}_n
\end{align*}
in $\GL_{2p}(\Rb)$. 

Then if 
\begin{align*}
a = \begin{pmatrix} e^t \Id_p & B \\ 0 & C \end{pmatrix} \in A
\end{align*}
we have
\begin{align*}
\wh{\varphi} a \wh{\varphi}^{-1} 
&= \lim_{n \rightarrow \infty} \begin{pmatrix} \Id_p & 0 \\ 0 & e^{t_n} \Id_p \end{pmatrix} \wh{g}_n  \begin{pmatrix} e^t \Id_p & B \\ 0 & C \end{pmatrix}  \wh{g}_n ^{-1} \begin{pmatrix} \Id_p & 0 \\ 0 & e^{-t_n} \Id_p \end{pmatrix} \\
& = \lim_{n \rightarrow \infty} \begin{pmatrix} \Id_p & 0 \\ 0 & e^{t_n} \Id_p \end{pmatrix}\begin{pmatrix} e^t \Id_p & B \\ 0 & C \end{pmatrix}  
  \begin{pmatrix} \Id_p & 0 \\ 0 & e^{-t_n} \Id_p \end{pmatrix} \\
  & =  \begin{pmatrix} e^t \Id_p & 0 \\ 0 & C \end{pmatrix}.
\end{align*}

So replacing $\Omega$ with a affine translate we can assume 
\begin{align*}
\wh{\varphi} A \wh{\varphi}^{-1}=
\left\{ \begin{pmatrix} e^t\Id_{p+\ell} & 0 \\ 0 & e^{s} \Id_{p-\ell} \end{pmatrix} : s,t \in \Rb\right\}.
\end{align*}
This completes the proof.
\end{proof}

\begin{proof}[Proof of Theorem~\ref{thm:no_center}]
By Proposition \ref{prop:norm_center}, we can assume that 
\begin{align*}
 \Omega \subset  \Mb=\left\{\begin{bmatrix} \Id_p \\ X \end{bmatrix}: X \in M_{p,p}(\Rb)\right\}
\end{align*}
is a convex cone in $\Mb$ based at 0, and that $C_{\Gamma}^0$ contains the subgroup 
\begin{align*}
\left\{ \begin{pmatrix} e^t\Id_{p+\ell} & 0 \\ 0 & e^{s} \Id_{p-\ell} \end{pmatrix} : s,t \in \Rb\right\} 
\end{align*}
for some $0 \leq \ell < p$.

Then 
\begin{align*}
\Gamma \leq \left\{ \begin{bmatrix} A & 0 \\ 0 & B \end{bmatrix} : A \in \GL_{p+\ell}(\Rb), B \in \GL_{p-\ell}(\Rb) \right\}.
\end{align*}

Throughout the argument we will write a matrix $X \in M_{p,p}(\Rb)$ as 
\begin{align*}
X = \begin{pmatrix} X_1 \\ X_2 \end{pmatrix}
\end{align*}
where $X_1 \in M_{\ell,p}(\Rb)$ and $X_2 \in M_{p-\ell,p}(\Rb)$. Let 
\begin{align*}
\Omega_2 = \left\{ \begin{bmatrix} \Id_p \\ 0  \\ X_2 \end{bmatrix} : \text{ there exists } X_1 \text{ so that } \begin{bmatrix} \Id_p \\ X_1 \\ X_2 \end{bmatrix}  \in \Omega \right\}.
\end{align*}

\begin{lemma} $\Omega_2$ is a proper convex cone in $\Mb$, that is $\Omega_2$ does not contain any affine lines. \end{lemma}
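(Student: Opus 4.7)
The lemma asserts three things about $\Omega_2$: convexity, being a cone with vertex $0$, and properness (absence of affine line). The first two are immediate: $\Omega_2$ is the image of the convex set $\Omega$ under the affine map $[\Id_p;X_1;X_2]\mapsto[\Id_p;0;X_2]$, hence convex, and it inherits the cone structure at $0$ because this projection commutes with the scaling $[\Id_p;X]\mapsto[\Id_p;\lambda X]$, $\lambda>0$, which preserves $\Omega$ by hypothesis.

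\textbf{Reducing to a line in $\Omega$.} For properness I argue by contradiction: suppose $\Omega_2$ contains an affine line. Since $\Omega_2$ is an open convex cone based at $0$, its lineality space $L_2\subseteq M_{p-\ell,p}(\Rb)$ is nonzero; pick $V\in L_2\setminus\{0\}$, so that $\Omega_2+\Rb V=\Omega_2$. Fix $X_2^0\in\Omega_2$; for each $\tau\in\Rb$ there exists $X_1(\tau)\in M_{\ell,p}(\Rb)$ with $[\Id_p;X_1(\tau);X_2^0+\tau V]\in\Omega$. The cone structure of $\Omega$ together with the centralizer element $(X_1,X_2)\mapsto(X_1,rX_2)$, $r>0$, supplies the two-parameter subgroup $(X_1,X_2)\mapsto(aX_1,bX_2)$, $a,b>0$, inside $\Aut(\Omega)$. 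Applying the member $(a,b)=(\epsilon,1)$ and letting $\epsilon\to 0^+$ gives $[\Id_p;0;X_2^0+\tau V]\in\overline{\Omega}$ for every $\tau\in\Rb$. Thus $\overline{\Omega}$ contains an affine line of direction $W:=\begin{pmatrix}0_{\ell,p}\\V\end{pmatrix}\in M_{p,p}(\Rb)$; since $\Omega$ is open and convex, an affine line in $\overline{\Omega}$ forces a parallel line in $\Omega$ itself, i.e.\ $\Omega+\Rb W\subseteq\Omega$.

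\textbf{Contradicting $\Rc$-properness via positivity of $\det$.} Because $0\in\partial\Omega$ is an extreme point, the argument of Theorem~\ref{thm:extreme}(2)$\Rightarrow$(3) together with the normalization used in the proof of Proposition~\ref{prop:norm_center} allows us to assume $\Omega\subseteq\{[\Id_p;X]:\det X>0\}$. The translation invariance then gives $\det(X+tW)>0$ for every $X\in\Omega$ and $t\in\Rb$. Writing $V=AB^T$ with $A\in M_{p-\ell,k}(\Rb)$ and $B\in M_{p,k}(\Rb)$ of full column rank $k=\rank V$, and $\tilde A:=\begin{pmatrix}0_{\ell,k}\\A\end{pmatrix}\in M_{p,k}(\Rb)$, the Weinstein-Aronszajn (Sylvester) determinant identity gives
\begin{align*}
\det(X+tW)=\det(X)\cdot\det\bigl(\Id_k+t\,B^TX^{-1}\tilde A\bigr),
\end{align*}
so $M(X):=B^TX^{-1}\tilde A\in M_{k,k}(\Rb)$ has no real eigenvalue for any $X\in\Omega$. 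But $X\mapsto X^{-1}$ is a diffeomorphism of $\Omega$ onto an open subset of $\GL_p(\Rb)$, and the linear map $N\mapsto B^TN\tilde A\colon M_{p,p}(\Rb)\to M_{k,k}(\Rb)$ is surjective (both $\tilde A$ and $B$ have full column rank $k$), so $\{M(X):X\in\Omega\}$ is an open subset of $M_{k,k}(\Rb)$. Since real matrices of odd order always have a real eigenvalue, $k$ must be even; in particular $k\neq 1$, and when $p-\ell=1$ we immediately conclude $k=0$, hence $V=0$, a contradiction.

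\textbf{Main obstacle.} For $p-\ell\geq 2$ the possibility $k\in\{2,4,\dots\}$ is not yet excluded by the polynomial argument alone. To close this I would exploit that $\Gamma$ fixes the unique cone vertex $0$, so $\Gamma\leq\{\operatorname{diag}(A,B)\in\PGL_{2p}(\Rb):A_{21}=0\}$, and acts linearly on $L_2$ via the linearization at $0$ as $V\mapsto BVA_{11}^{-1}$. Combining this $\Gamma$-equivariance of $L_2$ with the constraint that the open set $\{M(X):X\in\Omega\}\subset M_{k,k}(\Rb)$ avoid the closed locus of matrices with a real eigenvalue should force the existence of a rank-$1$ element of $L_2$; such an element would produce a rank-$1$ affine line in $\Omega$ whose closure in $\Pb(\wedge^p\Rb^{2p})$ is a projective line in $\Gr_p(\Rb^{2p})$ (by Lemma~\ref{lem:rank_one_lines}) entirely in $\overline{\Omega}$ except for one point at infinity of $\Mb$, contradicting $\Rc$-properness. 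This rank-reduction from even $k$ to rank $1$ is the main technical hurdle.
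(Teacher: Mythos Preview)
Your reduction is correct up to the point where you show that a line in $\Omega_2$ forces the translation
\[
g=\begin{bmatrix} \Id_{p+\ell} & 0 \\ Y & \Id_{p-\ell} \end{bmatrix}\in\Aut(\Omega),\qquad Y=(V,0)\in M_{p-\ell,p+\ell}(\Rb),
\]
to lie in $\Aut(\Omega)$. This is exactly the paper's first step. The divergence, and the genuine gap, is in what follows.

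You try to contradict $\Rc$-properness (equivalently, positivity of $\det$ on $\Omega$) directly. This cannot work: the cone $\{X\in M_{p,p}(\Rb):X+X^t>0\}$ is $\Rc$-proper, open, convex, satisfies $\det>0$, and yet contains every affine line in a skew-symmetric direction. Such directions have even rank, consistent with your parity obstruction, and show that no purely algebraic argument on $\Omega$ alone will close the case $k\geq 2$. The ``rank reduction'' you propose as the remaining hurdle is therefore not a technicality but a real obstruction; the hypothesis on $\Gamma$ must enter.

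The paper uses it as follows. Since $\Gamma$ is a cocompact lattice in $\Aut(\Omega)$, the sequence $g^n$ can be brought back to a compact set by elements $\gamma_n\in\Gamma$. But $\Gamma$ commutes with the torus $\operatorname{diag}(e^t\Id_{p+\ell},e^s\Id_{p-\ell})$, so each $\gamma_n$ is block-diagonal, $\gamma_n=\operatorname{diag}(A_n,B_n)$. Then
\[
\gamma_n g^n=\begin{pmatrix} A_n & 0 \\ nB_nY & B_n\end{pmatrix}
\]
bounded in $\GL_{2p}(\Rb)$ forces $B_n$ bounded and $nB_nY$ bounded, hence $Y=0$. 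This two-line computation replaces your entire determinant analysis and works for any rank of $V$.
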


\begin{proof} 
Since
\begin{align*}
\{ x+ tv : t \in \Rb\} \subset \Omega_2 & \Leftrightarrow \{ x^\prime + tv : t \in \Rb\} \subset \Omega_2 \text{ for all } x^\prime \in \Omega_2 \\
& \Leftrightarrow  \begin{bmatrix} \Id_p & 0 & 0 \\ 0 & \Id_\ell & 0 \\ v & 0 & \Id_{p-\ell} \end{bmatrix} \in \Aut(\Omega). 
\end{align*}
it suffices to show that 
\begin{align*}
\{ \Id_{2p}\} = \left\{ \begin{bmatrix} \Id_{p+\ell} & 0 \\ Y & \Id_{p-\ell} \end{bmatrix} : Y \in M_{p-\ell, p+\ell}(\Rb) \right\} \cap \Aut(\Omega).
\end{align*}
So suppose that
\begin{align*}
g:=\begin{bmatrix} \Id_{p+\ell} & 0 \\ Y & \Id_{p-\ell} \end{bmatrix} \in \Aut(\Omega)
\end{align*}
for some $Y\in M_{p-\ell,p+\ell}(\Rb)$. Since $\Gamma$ is a cocompact lattice in $\Aut(\Omega)$, there exist
\begin{align*}
\gamma_n := \begin{bmatrix} A_n & 0 \\ 0 & B_n \end{bmatrix} \in \Gamma
\end{align*}
such that $\{\gamma_n\ g_n\}_n$ is bounded in $\PGL_{2p}(\Rb)$. By picking representatives of $\gamma_n$ and $g_n$ in $\GL_{2p}(\Rb)$ correctly we can assume that 
 \begin{align*}
\begin{pmatrix} A_n & 0 \\ 0 & B_n \end{pmatrix}\begin{pmatrix} \Id_{p+\ell} & 0 \\ n Y & \Id_{p-\ell} \end{pmatrix} = \begin{pmatrix} A_n & 0 \\ n B_nY & B_n \end{pmatrix}
\end{align*}
is a bounded sequence in $\GL_{2p}(\Rb)$. This implies $\{B_n\}_n$ and $\{n B_n Y\}_n$ are bounded sequences in $\GL_{p-\ell}(\Rb)$ and $M_{p-\ell,p+\ell}(\Rb)$ respectively. Therefore we must have $Y=0$, as desired.
\end{proof}

Since Proposition~\ref{prop:norm_center} yields different conclusions as to whether $\ell=0$ or $\ell>0$, we will consider these two situations separately below. \newline

\noindent \textbf{Case 1:} First suppose that $\ell=0$. Then $\Omega = \Omega_2$ is a proper convex cone and by Proposition~\ref{prop:norm_center} we may assume that
\begin{align*}
C_{\Gamma}^0 = \left\{ \begin{pmatrix} e^t\Id_{p} & 0 \\ 0 & e^{s} \Id_{p} \end{pmatrix} : s,t \in \Rb\right\}.
\end{align*}
Then 
\begin{align*}
\Gamma \leq \left\{ \begin{bmatrix} A & 0 \\ 0 & B \end{bmatrix} : A, B \in \GL_p(\Rb)\right\}.
\end{align*}
So $\Gamma$ acts by linear transformations on $\Omega$. We will now use theory of linear automorphisms of a proper convex cone to establish a contradiction. 

Define a homomorphism 
\begin{align*}
\Phi: \left\{ \begin{bmatrix} A & 0 \\ 0 & B \end{bmatrix} \in \PGL_{2p}(\Rb) : A, B \in \GL_p(\Rb) \right\} \rightarrow \GL(\Mb)
\end{align*}
by
\begin{align*} 
\Phi \left( \begin{bmatrix} A & 0 \\ 0 & B \end{bmatrix}\right) \cdot X = BXA^{-1}.
\end{align*}
Notice that $\Phi$ is injective and well defined.

Then $\Lambda:=\Phi(\Gamma)$ acts cocompactly on $\Omega \subset \Mb$. Let $ \overline{\Gamma}^Z$ be the Zariski closure of $\Gamma$ in $\PGL_{2p}(\Rb)$ and $ \overline{\Lambda}^{Z}$ the Zariski closure of $\Lambda$ in $\GL(\Mb)$. Then
\begin{align*}
\Phi \left( \overline{\Gamma}^Z\right) = \overline{\Lambda}^{Z}.
\end{align*}
By possibly passing to a finite index subgroup we can assume that $\overline{\Gamma}^Z$ is connected in the Zariski topology. 

Let $C_\Lambda \leq \GL(\Mb)$ denote the centralizer of $\Lambda$ in $\GL(\Mb)$. By a result of Vey~\cite[Theorem 5]{V1970} either $\Omega$ is an irreducible cone and $C_\Lambda =\Rb^* \cdot \Id_{\Mb}$ or $\dim C_{\Lambda} > 1$. By~\cite[Theorem 1.1]{B2003}, we see that $C_\Lambda \leq \overline{\Lambda}^Z$. Now if $[C_\Gamma^0]$ is the image of $C_\Gamma^0$ in $\PGL_{2p}(\Rb)$ we see that
\begin{align*}
\Phi^{-1} ( C_{\Lambda}) \subset [C_{\Gamma}^0].
\end{align*}
Since $\dim [C_{\Gamma}^0] = 1$, so we see that $\dim C_{\Lambda} =1$. Thus $\Omega$ is an irreducible cone. Then by~\cite[Theorem 3]{V1970}  (see also~\cite{B2003}) there exists a simple group $H \leq \GL(\Mb)$ so that 
\begin{align*}
\overline{\Lambda}^{Z}  = (\Rb^* \Id) \cdot H.
\end{align*}
 So $\overline{\Gamma}^{Z} \cong \Rb^* \times H$.
 
Now consider the projections
\begin{align*}
\pi_1, \pi_2 : \overline{\Gamma}^Z \rightarrow \PGL_p(\Rb)
\end{align*}
given by 
\begin{align*}
\pi_1 \left( \begin{bmatrix} A & 0 \\ 0 & B \end{bmatrix}\right) = A \text{ and } \pi_2 \left( \begin{bmatrix} A & 0 \\ 0 & B \end{bmatrix}\right) = B.
\end{align*}
Since $H$ is simple, we see that $\ker \pi_i = \overline{\Gamma}^Z$ or 
\begin{align*}
\ker \pi_i =  \left\{ \begin{bmatrix} e^t\Id_{p} & 0 \\ 0 & e^{s} \Id_{p} \end{bmatrix} \in \PGL_{2p}(\Rb) : s,t \in \Rb\right\}.
\end{align*}
Since 
\begin{align*}
\ker (\pi_1 \times \pi_2) =  \left\{ \begin{bmatrix} e^t\Id_{p} & 0 \\ 0 & e^{s} \Id_{p} \end{bmatrix}  \in \PGL_{2p}(\Rb) : s,t \in \Rb\right\}
\end{align*}
we must have that $\ker \pi_i \neq \overline{\Gamma}^Z$ for some $i \in \{1,2\}$ . Then we see that 
\begin{align*}
\pi_i \circ \Phi^{-1} : H \rightarrow \PGL_p(\Rb)
\end{align*}
 is an injection and thus we obtain an injective homomorphism
\begin{align*}
\overline{\Gamma}^Z \hookrightarrow \Rb \times \PGL_p(\Rb).
\end{align*}
But then 
\begin{align*}
p^2 = \dim(\Omega) = \vcd(\Gamma) \leq 1 + \dim ( \SL_p(\Rb) / \SO(p) ) =  \frac{p(p+1)}{2} = \frac{1}{2} p^2 + \frac{1}{2} p < p^2
\end{align*}
which is a contradiction.  \newline

\noindent \textbf{Case 2:}  Suppose that $C_{\Gamma}^0$ contains the subgroup 
\begin{align*}
\left\{ \begin{pmatrix} e^t\Id_{p+\ell} & 0 \\ 0 & e^{s} \Id_{p-\ell} \end{pmatrix} : s,t \in \Rb\right\} 
\end{align*}
for some $0 < \ell < p$. 

Let
\begin{align*}
\Omega_1 = \left\{ \begin{bmatrix} \Id_p \\ X_1 \\ 0 \end{bmatrix} : \text{ there exists } X_2 \text{ so that } \begin{bmatrix} \Id_p \\ X_1 \\ X_2 \end{bmatrix}  \in \Omega \right\}.
\end{align*}

\begin{lemma} $\Omega = \Omega_1 + \Omega_2$. \end{lemma}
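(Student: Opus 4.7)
The plan is to prove both inclusions separately. The forward inclusion $\Omega \subseteq \Omega_1 + \Omega_2$ is essentially immediate from the definitions: if $\begin{bmatrix} \Id_p \\ X_1 \\ X_2 \end{bmatrix} \in \Omega$, then the summands $\begin{bmatrix} \Id_p \\ X_1 \\ 0 \end{bmatrix} \in \Omega_1$ and $\begin{bmatrix} \Id_p \\ 0 \\ X_2 \end{bmatrix} \in \Omega_2$ are provided by the very definitions of $\Omega_1$ and $\Omega_2$, and their Minkowski sum in $\Mb$ recovers the original point.

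For the harder reverse inclusion, I would first upgrade the known invariances of $\Omega$ to a full two-parameter scaling action in affine coordinates. By Lemma~\ref{lem:C1}, the one-parameter subgroup of $C_\Gamma^0$ displayed in Proposition~\ref{prop:norm_center} lies in $\Aut(\Omega)$, and after projectively normalizing the top block back to $\Id_p$, this subgroup acts on $\Mb$ by $(X_1, X_2) \mapsto (X_1, e^{s-t} X_2)$. Combining this with the cone property $\mu \Omega = \Omega$ for all $\mu > 0$, one concludes that $\Omega$ is invariant under the full two-parameter positive scaling $(X_1, X_2) \mapsto (\alpha X_1, \beta X_2)$ for all $\alpha, \beta > 0$.

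Next, I would fix $(X_1, 0) \in \Omega_1$ and $(0, X_2) \in \Omega_2$ and pick witnesses $Y_1, Y_2$ with $(X_1, Y_2), (Y_1, X_2) \in \Omega$. The two-parameter scaling yields $(\alpha X_1, \beta Y_2)$ and $(\gamma Y_1, \delta X_2)$ in $\Omega$ for all positive parameters, and the midpoint convex combination with $\alpha = \delta = 2$ gives
\[
\bigl(X_1 + \tfrac{\gamma}{2} Y_1,\; X_2 + \tfrac{\beta}{2} Y_2\bigr) \in \Omega
\]
for all $\gamma, \beta > 0$. Sending $\gamma, \beta \to 0^+$ gives $(X_1, X_2) \in \overline{\Omega}$, and varying $X_1, X_2$ establishes $\Omega_1 + \Omega_2 \subseteq \overline{\Omega}$.

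The only subtle step, and the main obstacle, is upgrading this closure inclusion to $\Omega_1 + \Omega_2 \subseteq \Omega$. Since $\Omega_1$ and $\Omega_2$ are images of the open set $\Omega$ under the projections onto the two matrix blocks, they are open in their respective linear subspaces, and hence $\Omega_1 + \Omega_2$ is open in $\Mb$. Because $\Omega$ is a convex open set we have $\interior(\overline{\Omega}) = \Omega$, so any open subset of $\overline{\Omega}$ lies in $\Omega$. Applied to the open set $\Omega_1 + \Omega_2$, this completes the proof.
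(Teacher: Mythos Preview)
Your proof is correct and follows essentially the same route as the paper: both use the one-parameter subgroup in $C_\Gamma^0$ together with the cone property to obtain the full two-parameter positive scaling $(X_1,X_2)\mapsto(\alpha X_1,\beta X_2)$, then combine convexity with a limit to get $\Omega_1+\Omega_2\subset\overline{\Omega}$, and finally pass to interiors. The paper organizes the middle step slightly differently---it first shows $\overline{\Omega_1},\overline{\Omega_2}\subset\overline{\Omega}$ by sending one scale parameter to $0$, and then uses $X_1+X_2=\tfrac12(2X_1)+\tfrac12(2X_2)$---but this is just a reshuffling of your midpoint-plus-limit argument; your explicit justification of the final upgrade via $\interior(\overline{\Omega})=\Omega$ is in fact clearer than the paper's terse ``by convexity.''
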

\begin{proof}
By construction
\begin{align*}
\overline{\Omega} \subset \overline{\Omega_1} + \overline{\Omega_2}.
\end{align*}

Now 
\begin{align*}
\begin{pmatrix} \Id_{p+\ell} & 0 \\ 0 & e^{s}\Id_{p-\ell} \end{pmatrix} \cdot \begin{bmatrix} \Id_p \\ X_1 \\ X_2 \end{bmatrix} = \begin{bmatrix} \Id_p \\ X_1 \\ e^s X_2 \end{bmatrix}. 
\end{align*}
So by sending $s\rightarrow -\infty$ we see that 
\begin{align*}
\overline{\Omega} \supset \overline{\Omega_1}.
\end{align*}
On the other hand, 
\begin{align*}
\begin{pmatrix} \Id_{p} & 0 \\ 0 & e^{-s}\Id_{p} \end{pmatrix} \begin{pmatrix} \Id_{p+\ell} & 0 \\ 0 & e^{s}\Id_{p-\ell} \end{pmatrix} \cdot \begin{bmatrix} \Id_p \\ X_1 \\ X_2 \end{bmatrix} = \begin{bmatrix} \Id_p \\ e^{-s}X_1 \\  X_2 \end{bmatrix}.
\end{align*}
So sending $s \rightarrow \infty$ we see that 
\begin{align*}
\overline{\Omega} \supset \overline{\Omega_2}.
\end{align*}
Then if $X_1 \in \overline{\Omega_1}$ and $X_2 \in \overline{\Omega_2}$ we have 
\begin{align*}
X_1 + X_2 = \frac{1}{2}(2X_1) + \frac{1}{2}(2X_2) \in \overline{\Omega}.
\end{align*}
Thus $\overline{\Omega} = \overline{\Omega_1} + \overline{\Omega_2}$ which by convexity implies that 
\begin{align*}
\Omega = \Omega_1 + \Omega_2.
\end{align*} 
\end{proof}

Now if $\gamma \in \Gamma$ then we can write
\begin{align*}
\gamma = \begin{bmatrix} A_1 & A_2 & 0 \\ A_3 & A_4 & 0 \\ 0 & 0 & B \end{bmatrix}
\end{align*}
for some $A_1 \in M_{p,p}(\Rb)$, $A_2 \in M_{p, \ell}(\Rb)$, $A_3 \in M_{\ell, p}(\Rb)$, $A_4 \in M_{\ell, \ell}(\Rb)$, and $B \in \GL_{p-\ell}(\Rb)$. With this decomposition 
\begin{align*}
\begin{bmatrix} A_1 & A_2 & 0 \\ A_3 & A_4 & 0 \\ 0 & 0 & B \end{bmatrix} \cdot \begin{bmatrix} \Id_p \\ X_1 \\ X_2 \end{bmatrix} = \begin{bmatrix} \Id_p \\ (A_3+A_4X_1)(A_1+A_2X_1)^{-1} \\ BX_2(A_1+A_2X_1)^{-1} \end{bmatrix}.
\end{align*}

Now by identifying $M_{p-\ell, p}(\Rb)$ with $\Rb^{(p-\ell)p}$ we can view $\Omega_2$ as a convex subset of $\Pb(\Rb^{(p-\ell)p+1})$. Let $e$ be an extreme point of $\Omega_2$ in $\Pb(\Rb^{(p-\ell)p+1}) \setminus \Rb^{(p-\ell)p+1}$. Fix a sequence of points $y_n \in \Omega_2$ which converges to $e$ in $\Pb(\Rb^{(p-\ell)p+1})$

Next fix some $x_0 \in \Omega_1$ and consider the sequence
\begin{align*}
z_n = \begin{bmatrix} \Id_p \\ x_0 \\ y_n \end{bmatrix} \in \Omega.
\end{align*}
Then there exists $\gamma_n \in \Gamma$ and a compact subset $K$ of $\Omega$ so that 
\begin{align*}
\gamma_n^{-1} z_n \in K.
\end{align*}
Suppose  
\begin{align*}
\gamma_n = \left[\begin{pmatrix} A_1^{(n)} & A_2^{(n)} & 0 \\ A_3^{(n)} & A_4^{(n)} & 0 \\ 0 & 0 & B^{(n)} \end{pmatrix}\right].
\end{align*}

Now let 
\begin{align*}
\GL(\Omega_2) = \{ T \in \GL(M_{p-\ell,p}(\Rb)) : T(\Omega_2) = \Omega_2 \}.
\end{align*}
Since $\Omega_2 \subset M_{p-\ell,p}(\Rb)$ is a proper convex cone, the Hilbert metric $H_{\Omega_2}$ is a complete $\GL(\Omega_2)$-invariant metric on $\Omega_2$. Moreover, since $\Omega = \Omega_1 + \Omega_2$ we see that the linear map 
\begin{align*}
T_n(X)= B^{(n)} X ( A_1^{(n)} + A_2^{(n)} x_0)^{-1}
\end{align*}
is in $\GL(\Omega_2)$ for all $n \geq 0$. So there exists $R \geq 0$ so that 
\begin{align*}
H_{\Omega_2} ( y_n, B^{(n)} y_0 ( A_1^{(n)} + A_2^{(n)} x_0)^{-1}) \leq R
\end{align*}
for all $n \geq 0$. Since $y_n$ converges to an extreme point of $\Omega_2$ we see that $[T_n] \in \Pb(\End( M_{p-\ell,p}(\Rb)))$ converges to some $T_\infty \in \Pb(\End( M_{p-\ell,p}(\Rb)))$ and $\rank T_\infty =1$ (see either Vey~\cite[Lemma 4]{V1970} or Theorem~\ref{thm:extreme} above).

Now if $\sigma_1^{(n)} \geq \dots \geq \sigma_{p-\ell}^{(n)}$ are the singular values of $B^{(n)}$ and $\mu_1^{(n)} \geq \dots \geq \mu_{p}^{(n)}$ are the singular values of $( A_1^{(n)} + A_2^{(n)} x_0)^{-1}$ then $T_n$ has singular values
\begin{align*}
\{ \sigma_i^{(n)} \mu_j^{(n)} : 1 \leq i \leq p-\ell, 1 \leq j \leq p\}.
\end{align*}
Then since $[T_n] \rightarrow T_\infty$ and $\rank T_\infty = 1$ we must have
\begin{align*}
\lim_{n \rightarrow \infty} \frac{ \sigma_1^{(n)} \mu_1^{(n)}}{ \sigma_i^{(n)} \mu_j^{(n)}} = \infty
\end{align*}
for all $1 \leq i \leq p-\ell, 1 \leq j \leq p$ with $(i,j) \neq (1,1)$. 

In particular, 
\begin{align*}
\lim_{n \rightarrow \infty}  \mu_1^{(n)}/  \mu_2^{(n)} = \infty.
\end{align*}

So we will finish the proof by establishing the following:

\begin{lemma}
\begin{align}
\label{eq:limit}
\limsup_{n \rightarrow \infty}  \mu_1^{(n)}/  \mu_2^{(n)} < \infty
\end{align}
\end{lemma}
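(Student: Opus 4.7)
The lemma is the crux of the contradiction: combined with the assertion $\mu_1^{(n)}/\mu_2^{(n)} \to \infty$ derived from the rank-one limit of $[T_n]$ above, it completes the proof of Case~2 (and hence of Theorem~\ref{thm:no_center}). My plan is to extract an additional constraint on $S_n = (A_1^{(n)} + A_2^{(n)} x_0)^{-1}$ by exploiting the fact that varying the base point $x_0' \in \Omega_1$ produces an entire family of linear automorphisms of $\Omega_2$.

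Indeed, the same argument showing $T_n = T_n^{x_0}$ preserves $\Omega_2$ applies verbatim to
\begin{align*}
T_n^{x_0'}(y) := B^{(n)} y (A_1^{(n)} + A_2^{(n)} x_0')^{-1}
\end{align*}
for any $x_0' \in \Omega_1$: if $(x_0', y) \in \Omega$ then $\gamma_n\cdot(x_0', y) \in \Omega$, whose $\Omega_2$-component is $T_n^{x_0'}(y)$. Composing with $(T_n^{x_0})^{-1}$ gives that right multiplication by $(\Id + S_n A_2^{(n)}(x_0' - x_0))^{-1}$ preserves $\Omega_2$. Let
\begin{align*}
G_R := \{R \in \GL_p(\Rb) : \Omega_2 R = \Omega_2\}
\end{align*}
be the closed subgroup of right-multiplication automorphisms of $\Omega_2$, with Lie algebra $\mathfrak{g}_R$. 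Since $\Omega_2$ is a proper convex cone in $M_{p-\ell,p}(\Rb)$, $G_R$ is a proper subgroup of $\GL_p(\Rb)$, so $\mathfrak{g}_R \subsetneq \mathfrak{gl}_p(\Rb)$. Differentiating at $x_0' = x_0$ and using that $\Omega_1$ is open in $M_{\ell,p}(\Rb)$ (as a linear projection of the open set $\Omega$) gives the algebraic constraint
\begin{align*}
S_n A_2^{(n)} \cdot M_{\ell,p}(\Rb) \subseteq \mathfrak{g}_R \quad \text{for every } n.
\end{align*}

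The contradiction is then obtained by combining this constraint with the hypothesized asymptotic behaviour $\mu_1^{(n)}/\mu_2^{(n)} \to \infty$. Under this hypothesis, after passing to a subsequence and normalizing, $S_n/\mu_1^{(n)}$ converges to a rank-one matrix $\bar w \bar v^T$, so the column space of $Q_n := S_n A_2^{(n)}$ asymptotically collapses into the single line $\Rb \bar w \subseteq \Rb^p$. On the other hand, the image of the linear map $\Delta x \mapsto Q_n \Delta x$ is $\operatorname{Col}(Q_n) \otimes (\Rb^p)^* \subseteq M_{p,p}(\Rb)$, and this must remain inside the fixed proper subspace $\mathfrak{g}_R$ for all $n$. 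One then derives a contradiction by feeding this degenerate asymptotic into a rescaling argument (Theorem~\ref{thm:rescaling}) at the extreme point $e$: the limit produces a one-parameter family of automorphisms of the tangent cone $\Tc\Cc_e\Omega$ whose induced action on $\Omega_2$ cannot be consistent with the rank-one concentration of $\operatorname{Col}(Q_n)$ inside a proper $\mathfrak{g}_R$.

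The main obstacle is this final contradiction step. One must precisely couple the asymptotic rank-one degeneration of $S_n$ with the algebraic constraint in $\mathfrak{g}_R$, while tracking the possibility that $A_2^{(n)}$ degenerates in a direction that compensates. In particular, careful bookkeeping of the relative scalings between $\sigma_j^{(n)}$, $\mu_j^{(n)}$, and $\|A_2^{(n)}\|$ is required, and one cannot directly appeal to properness of the $\Aut(\Omega_1)$-action since $\Omega_1$ is not a priori $\Rc$-proper after the conjugation to a cone. The combination of the Lie-algebra inclusion with the rescaling machinery is where the essential work lies.
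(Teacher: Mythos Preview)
Your proposal is incomplete: you yourself flag the final step as ``the main obstacle'' and never carry it out. The Lie-algebra constraint $S_n A_2^{(n)} M_{\ell,p}(\Rb) \subseteq \mathfrak{g}_R$ is an interesting observation, but you have not shown how it, together with rank-one degeneration of $S_n/\mu_1^{(n)}$, actually forces a contradiction; there is no control on $A_2^{(n)}$, and the appeal to Theorem~\ref{thm:rescaling} at the end is not made precise.

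More importantly, the argument you dismiss is exactly the one the paper uses, and your reason for dismissing it is wrong. You write that ``$\Omega_1$ is not a priori $\Rc$-proper after the conjugation to a cone,'' but in fact it is. The conjugated $\Omega$ remains $\Rc$-proper (this is a projective notion, invariant under $\PGL_{2p}(\Rb)$), and since $\Omega = \Omega_1 + \Omega_2$, any rank-one line $\{X_1 + tS : t \in \Rb\} \subset \Omega_1$ with $S \in M_{\ell,p}(\Rb)$ of rank one lifts, for any fixed $X_2 \in \Omega_2$, to the rank-one line $\{(X_1+tS, X_2) : t \in \Rb\} \subset \Omega$, contradicting $\Rc$-properness of $\Omega$. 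Hence $\Omega_1$ is $\Rc$-proper in $\Gr_p(V)$, $V = \Spanset\{e_1,\dots,e_{p+\ell}\}$.

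With this in hand the paper's proof is short and direct. The metric $K_{\Omega_1}$ is complete and proper, and the block $\gamma_n|_V = \begin{bmatrix} A_1^{(n)} & A_2^{(n)} \\ A_3^{(n)} & A_4^{(n)} \end{bmatrix}$ lies in $\Aut(\Omega_1)$. Since $\gamma_n^{-1} z_n \in K$ and the $\Omega_1$-component of $z_n$ is the fixed point $x_0$, the orbit $\gamma_n|_V \cdot x_0$ stays in a $K_{\Omega_1}$-bounded set. By properness of the $\Aut(\Omega_1)$-action (Proposition~\ref{prop:proper}), the sequence $\gamma_n|_V$ is relatively compact in $\PGL(V)$; pass to a convergent subsequence in $\GL(V)$. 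A brief argument then shows the limit of $A_1^{(n)} + A_2^{(n)} x_0$ is invertible (else some unit vector would be killed by the limit in $\GL(V)$), so all the singular values $\mu_j^{(n)}$ lie in a fixed interval $[1/C, C]$, and in particular $\mu_1^{(n)}/\mu_2^{(n)}$ is bounded. No rescaling or Lie-algebra bookkeeping is needed.
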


\begin{proof}
Now view $\Omega_1$ as an open subset of $\Gr_p( V)$ where $V = \Spanset\{ e_1, \dots, e_{p+\ell}\}$. By construction $\Omega_1$ is an $\Rc$-proper convex open subset of some affine chart of $\Gr_p(V)$. Thus $K_{\Omega_1}$ is a proper metric and there exists $R_1 \geq 0$ so that 
\begin{align*}
K_{\Omega_1}(x_0, (A_3^{(n)} + A_4^{(n)}x_0)(A_1^{(n)}+A_2^{(n)}x_0)^{-1} ) \leq R_1.
\end{align*}
So the set 
\begin{align*}
\left\{ \begin{bmatrix} A_1^{(n)} & A_2^{(n)}  \\ A_3^{(n)} & A_4^{(n)} \end{bmatrix} : n \in \Nb\right\} \subset \PGL(V)
\end{align*}
is relatively compact in $\PGL(V)$. So we can pass to a subsequence and pick representatives so that 
\begin{align*}
\begin{pmatrix} A_1^{(n)} & A_2^{(n)}  \\ A_3^{(n)} & A_4^{(n)} \end{pmatrix} \rightarrow \begin{pmatrix} A_1 & A_2  \\ A_3 & A_4 \end{pmatrix}
\end{align*}
in $\GL(V)$. Now we claim that $(A_1 + A_2x_0)$ is an invertible matrix. Suppose not then for each $n$ we can find an unit eigenvector $v_n \in \Cb^p$ so that 
\begin{align*}
(A_1^{(n)}+A_2^{(n)}x_0)v_n \rightarrow 0
\end{align*} 
Since $(A_3^{(n)} + A_4^{(n)}x_0)(A_1^{(n)}+A_2^{(n)}x_0)^{-1}$ stays within a compact subset of $\Omega_2$ we then must have that $(A_3^{(n)} + A_4^{(n)}x_0)v_n \rightarrow 0$. But then we can pass to a subsequence so that $v_n \rightarrow v$ and then 
\begin{align*}
0 = \lim_{n \rightarrow \infty} \begin{pmatrix} A_1^{(n)} & A_2^{(n)}  \\ A_3^{(n)} & A_4^{(n)} \end{pmatrix} \begin{pmatrix} v_n \\ x_0 v_n\end{pmatrix} = \begin{pmatrix} A_1 & A_2  \\ A_3 & A_4 \end{pmatrix} \begin{pmatrix} v \\ q_0 v\end{pmatrix}
\end{align*}
which contradicts the fact that 
\begin{align*}
\begin{pmatrix} A_1 & A_2  \\ A_3 & A_4 \end{pmatrix} \in \GL_{p+\ell}(\Rb).
\end{align*}
So $(A_1 + A_2q_0)$  is an invertible matrix. But this implies that there exists an $C >0$ so that 
\begin{align*}
\{ \mu^{(n)}_i : 1 \leq i \leq p\} \subset [1/C, C]
\end{align*}
which contradicts Equation~\ref{eq:limit}. 
\end{proof}
\end{proof}

\section{Unipotent subgroups}\label{sec:unipotent}

In this section we show that case (2) of Theorem~\ref{thm:reduction} is impossible. 

\begin{theorem}\label{thm:no_unipotent}
Suppose $\Omega \subset \Gr_p(\Rb^{2p})$ is an open set which is bounded and convex in some affine chart. If $\Gamma \leq \Aut(\Omega)$ is a discrete group which acts cocompactly on $\Omega$ then there does not exists a nontrivial abelian normal unipotent group $U \leq \Aut(\Omega)$ such that $\Gamma \cap U$ is a cocompact lattice in $U$.
\end{theorem}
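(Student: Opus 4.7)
Assume for contradiction that $U \leq \Aut(\Omega)$ is a nontrivial abelian normal unipotent subgroup such that $\Gamma \cap U$ is a cocompact lattice in $U$. Then $U \cong \Rb^k$ and $\Gamma \cap U \cong \Zb^k$ for some $k \geq 1$.

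The overall plan is to reduce the problem to Theorem~\ref{thm:no_center}. Since $U$ is normal in $\Aut(\Omega)$ and $\Gamma \leq \Aut(\Omega)$, the conjugation action defines a homomorphism $\rho : \Gamma \to \Aut(U) \cong \GL_k(\Rb)$, and since $\rho(\Gamma)$ preserves the lattice $\Gamma \cap U$ we have $\rho(\Gamma) \leq \GL_k(\Zb)$. The key claim is that $\rho(\Gamma)$ is finite. Once this is granted, set $\Gamma' := \ker \rho$: this is a finite-index subgroup of $\Gamma$ (hence a discrete cocompact subgroup of $\Aut(\Omega)$) which centralizes the connected group $U$. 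Thus $U \leq C_{\Gamma'}^0$, and Theorem~\ref{thm:no_center} forces $U \leq \Rb_{>0}\Id_{2p}$. Since the only unipotent element of $\Rb_{>0}\Id_{2p}$ is $\Id_{2p}$, we obtain $U = \{\Id\}$, contradicting our assumption.

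It remains to prove $\rho(\Gamma)$ is finite. My approach is via the rescaling method. First I would produce an $\Rc$-extreme point $e \in \partial \Omega$ fixed by all of $U$: since abelian unipotent subgroups of $\GL_{2p}(\Rb)$ are simultaneously upper-triangularizable over $\Rb$, $U$ admits a $U$-invariant $p$-dimensional subspace, yielding a $U$-fixed point in $\Gr_p(\Rb^{2p})$; combining this with Corollary~\ref{cor:ext_attracting} (applied to a nontrivial $u_0 \in U$, which gives an $\Rc$-extreme point in $E^+(\wedge^p u_0) \cap \partial \Omega$), the closedness and $\Aut(\Omega)$-invariance of $\Ext(\Omega)$ (Corollary~\ref{cor:ext_closed}), and a Borel-fixed-point-type argument on the compact $U$-invariant set $E^+(\wedge^p u_0) \cap \partial\Omega \cap \Ext(\Omega)$, one extracts the desired $e$. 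Applying Corollary~\ref{cor:rescaling} at $e$ yields a limit $\varphi = \lim_{n\to\infty} A_{t_n} h_n \in \PGL_{2p}(\Rb)$ with $\varphi(\Omega) = \Tc\Cc_e \Omega$ and $h_n \in \Aut(\Omega)$. Since $U$ is normal, $\varphi_n U \varphi_n^{-1} = A_{t_n} U A_{t_n}^{-1}$, and every $u \in U$ fixing $e$ admits a block decomposition $u = \begin{pmatrix} A & B \\ 0 & D \end{pmatrix}$ with $A, D$ unipotent. Tracking the conjugation by $A_{t_n}$ sends such $u$ to $\begin{pmatrix} A & e^{-t_n} B \\ 0 & D \end{pmatrix}$, which converges to a block-diagonal unipotent automorphism of $\Tc \Cc_e \Omega$. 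Since $\Tc \Cc_e \Omega$ is a proper convex cone (Theorem~\ref{thm:extreme}), its automorphism group is reductive, and any such limit of unipotents must be trivial. If $\rho(\Gamma)$ were unbounded, the conjugates $h_n u h_n^{-1}$ would exhibit arbitrarily large stretching incompatible with having a well-defined limit in $\Aut(\Tc\Cc_e\Omega)$, forcing $\rho(\Gamma)$ to be bounded in $\GL_k(\Rb)$; since $\rho(\Gamma) \leq \GL_k(\Zb)$ is discrete, this implies $\rho(\Gamma)$ is finite.

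The main obstacle is the last step: rigorously promoting ``unbounded $\rho(\Gamma)$'' to a concrete contradiction via the tangent cone at $e$. The subtle point is that $A_{t_n}$-conjugation contracts the strictly off-diagonal part of $U$ to zero, so one must carefully select $e$ (or several such $e$'s, corresponding to different flag positions) so that the block-diagonal part of $U$ remains visible after rescaling, and then exploit reductivity of the automorphism group of the proper cone $\Tc\Cc_e\Omega$ to force $\rho(\Gamma)$ to have bounded image. Matching the growth of $\rho(h_n)$ against the scaling parameter $t_n$, and iterating the argument if necessary, is the technical heart of the proof.
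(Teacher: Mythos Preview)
Your overall strategy---reduce to Theorem~\ref{thm:no_center} by showing that a finite-index subgroup of $\Gamma$ centralizes $U$---is genuinely different from the paper's and would be elegant if it worked. The reduction itself is sound: if $\rho(\Gamma)$ is finite then $\Gamma' = \ker\rho$ centralizes $U$ in $\PGL_{2p}(\Rb)$, the unique unipotent lift of $U$ to $\GL_{2p}(\Rb)$ then lies in $C_{\Gamma'}^0$, and Theorem~\ref{thm:no_center} forces $U$ to be trivial.

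The gap is in your argument that $\rho(\Gamma)$ is finite. The assertion that $\Aut(\Tc\Cc_e\Omega)$ is reductive is simply false in this situation: by Corollary~\ref{cor:rescaling} we have $\Tc\Cc_e\Omega = \varphi(\Omega)$ for some $\varphi \in \PGL_{2p}(\Rb)$, so $\Aut(\Tc\Cc_e\Omega) = \varphi\,\Aut(\Omega)\,\varphi^{-1}$ already contains the nontrivial unipotent group $\varphi U\varphi^{-1}$. Hence you cannot conclude that block-diagonal unipotent limits are trivial, and the argument collapses at exactly the point where it was supposed to bite. The remainder of your sketch (``unbounded $\rho(\Gamma)$ gives stretching incompatible with a well-defined limit'') is, as you admit, not a proof; I do not see how to complete it without carrying out an analysis at least as detailed as the paper's.

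For comparison, the paper does not attempt to show $\Gamma$ virtually centralizes $U$. Instead it argues directly: after passing to a torsion-free $\Gamma$, cocompactness gives a uniform lower bound
\[
\inf_{\gamma\in\Gamma\setminus\{1\},\ x\in\Omega} K_\Omega(\gamma x,x) > 0.
\]
One finds an $\Rc$-extreme point $e$ fixed by $U$, rescales there to obtain $\varphi\Omega = \Tc\Cc_e\Omega$ together with its one-parameter dilation group $a_t$, and uses the displacement lower bound combined with the fact that $\Gamma\cap U$ is a lattice in $U$ to show first that $\varphi U\varphi^{-1}$ contains no nontrivial strictly upper-triangular element, and then that it is block-diagonal. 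A short computation then shows $E^+(\wedge^p u)\cap\Gr_p(\Rb^{2p})$ lies outside the affine chart $\Mb$ for every nontrivial $u\in\varphi U\varphi^{-1}$, contradicting $e\in\Mb\cap\overline{\bigcup_u E^+(\wedge^p u)}$. The displacement bound is the mechanism that does the work your reductivity claim was meant to do.
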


For the rest of the section suppose $\Omega \subset \Gr_p(\Rb^{2p})$ and $\Gamma \leq \Aut(\Omega)$ satisfy the hypothesis of Theorem~\ref{thm:no_unipotent}. Assume for a contradiction that there exists a nontrivial abelian normal unipotent group $U \leq \Aut(\Omega)$ such that $\Gamma \cap U$ is a cocompact lattice in $U$.

Since $\Gamma$ is finitely generated, by passing to a finite index subgroup we can assume that $\Gamma$ is torsion free. Then since $\Gamma$ acts properly on $\Omega$ we see that $\Gamma$ acts freely on $\Omega$. Then, using the fact that $\Gamma \backslash \Omega$ is compact, we see that 
\begin{align}
\label{eq:trans_length}
\inf_{\gamma \in \Gamma, x \in \Omega} K_{\Omega}(\gamma x, x) > 0.
\end{align}
The basic idea of the following argument is that if $u \in U \cap\Gamma$ then the translation distance 
\begin{align*}
\inf_{x \in \Omega} K_{\Omega}(u x, x)
\end{align*}
should be zero which contradicts the fact that $U \cap \Gamma \neq \emptyset$. 

The group $\wedge^p U \leq \PGL(\wedge^p \Rb^{2p})$ is also unipotent so the set
\begin{align*}
E_1 = \{ v \in \Pb(\wedge^p \Rb^{2p}) : (\wedge^p u) v = v \text{ for all } u \in U\}
\end{align*}
is non-empty. Moreover, there exists some $u_0 \in U \cap \Gamma$ so that
\begin{align*}
E_1 = \{ v \in \Pb(\wedge^p \Rb^{2p}) : (\wedge^p u_0) v = v\}.
\end{align*}
Then with the notation of Proposition~\ref{prop:attracting} 
\begin{align*}
E^+(\wedge^p u_0) \subset E_1
\end{align*}
and by Corollary~\ref{cor:ext_attracting} there exists an $\Rc$-extreme point $e \in E^+(\wedge^p u_0) \cap \partial \Omega$. 

Now suppose that $\Omega$ is a bounded convex open set in the affine chart 
\begin{align*}
\Mb = \left\{ \begin{bmatrix} \Id_p \\ X \end{bmatrix} : X \in M_{p,p}(\Rb) \right\}.
\end{align*}
Without loss of generality we can assume $e = 0$ in this affine chart. Then by Corollary~\ref{cor:rescaling}, there exists $\gamma_n \in \Gamma$ and $t_n \rightarrow \infty$ so that 
\begin{align*}
\varphi = \lim_{n \rightarrow \infty} \begin{bmatrix} \Id_p & 0 \\ 0 & e^{t_n} \Id_p \end{bmatrix} \gamma_n \in \PGL_{2p}(\Rb)
\end{align*}
and $\varphi\Omega \subset \Mb$ is a $\Rc$-proper convex open cone based at $0$. In particular, $\Aut(\varphi \Omega)$ contains the one parameter subgroup 
\begin{align*}
a_t := \begin{bmatrix} \Id_p & 0 \\ 0 & e^{t} \Id_p \end{bmatrix}.
\end{align*}

Now if 
\begin{align*}
\varphi_n:= \begin{bmatrix} \Id_p & 0 \\ 0 & e^{t_n} \Id_p \end{bmatrix} \gamma_n
\end{align*}
then 
\begin{align*}
\varphi_n^{-1}(e) = \gamma_n^{-1}(e) \in \gamma_n^{-1} E_1 \cap \gamma_n^{-1}E^+(\wedge^p u_0) = E_1 \cap E^+( \wedge^p \gamma_n^{-1} u_0 \gamma_n)
\end{align*}
so 
\begin{align*}
\varphi_n^{-1}(e) \in E_1 \cap \Big(\cup_{u \in U } E^+(\wedge^p u)\Big)
\end{align*}
so sending $n \rightarrow \infty$ we see that 
\begin{align*}
\varphi^{-1}(e) \in E_1 \cap \overline{\cup_{u \in U } E^+(\wedge^p u)}.
\end{align*}
And thus
\begin{align*}
e \in \varphi(E_1) \cap \overline{ \cup_{u \in \varphi U \varphi^{-1} } E^+(\wedge^p u) }.
\end{align*}
In particular, since $e = \Spanset\{e_1, \dots, e_p\} \subset \varphi(E_1)$, we have
\begin{align*}
\varphi U \varphi^{-1} \leq \left\{ \begin{bmatrix} A & B \\ 0 & C \end{bmatrix} : A,B,C \in M_{p,p}(\Rb) \right\}.
\end{align*}

\begin{lemma} If 
\begin{align*}
\begin{bmatrix} \Id_p & X \\ 0 & \Id_p \end{bmatrix} \in \varphi U \varphi^{-1} 
\end{align*}
then $X=0$. 
\end{lemma}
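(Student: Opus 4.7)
The plan is to contradict the uniform lower bound on translation distance in (\ref{eq:trans_length}) by exhibiting, for every $\epsilon > 0$, a nontrivial $\gamma^\ast \in U \cap \Gamma$ whose $K_\Omega$-translation length is at most $C\epsilon$. All the work takes place in the cone $\varphi\Omega$, where conjugation by $a_t = \begin{bmatrix} \Id_p & 0 \\ 0 & e^t \Id_p \end{bmatrix} \in \Aut(\varphi\Omega)$ provides the rescaling dynamics that shrinks upper-right unipotents. Suppose for contradiction that $X \neq 0$, and set $V := \varphi U \varphi^{-1}$. Since $(u-\Id)^2 = 0$, the matrix logarithm $\log u = u-\Id$ lies in the Lie algebra of $V$, so the one-parameter subgroup $u_s := \begin{bmatrix} \Id_p & sX \\ 0 & \Id_p \end{bmatrix}$, $s \in \Rb$, is contained in $V$. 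A direct block computation gives $a_{-t} u_s a_t = u_{s e^t}$, so using that $a_t$ is a $K_{\varphi\Omega}$-isometry and continuity of $K_{\varphi\Omega}$ (Proposition~\ref{prop:basic_d}),
\begin{align*}
K_{\varphi\Omega}(u_s \cdot a_t z_0, a_t z_0) = K_{\varphi\Omega}(u_{s e^t} z_0, z_0) \longrightarrow 0 \quad \text{as } t \to -\infty
\end{align*}
for any fixed $z_0 \in \varphi\Omega$, so $\tau_{\varphi\Omega}(u_s) = 0$ for every $s \in \Rb$.

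Next I invoke the cocompact lattice structure. Since $V \cap \varphi\Gamma\varphi^{-1} = \varphi(U\cap\Gamma)\varphi^{-1}$ is a cocompact lattice in the connected abelian Lie group $V$, the quotient $T = V/(V \cap \varphi\Gamma\varphi^{-1})$ is a compact torus. Applying pigeonhole to the images of $\{u^n\}_{n \geq 1}$ in $T$, for any $\epsilon > 0$ I obtain $m \geq 1$ and $\gamma \in V \cap \varphi\Gamma\varphi^{-1}$ with $f := u^m \gamma^{-1}$ satisfying $\|f - \Id\| < \epsilon$; taking $\epsilon < \|X\|$ forces $\gamma \neq \Id$, since otherwise $\|u^m - \Id\| = m\|X\| \geq \|X\|$. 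Writing $f = \begin{bmatrix} A & B \\ 0 & C \end{bmatrix}$ (as every element of $V$ has this block form), commutativity of $V$ yields
\begin{align*}
a_{-t}\gamma a_t = u_{m e^t}\cdot a_{-t} f^{-1} a_t = u_{m e^t} \cdot \begin{bmatrix} A^{-1} & -e^t A^{-1} B C^{-1} \\ 0 & C^{-1} \end{bmatrix},
\end{align*}
which converges as $t \to -\infty$ to the block-diagonal matrix $\begin{bmatrix} A^{-1} & 0 \\ 0 & C^{-1} \end{bmatrix}$, itself $O(\epsilon)$-close to $\Id$. Continuity of $K_{\varphi\Omega}$ at $(z_0, z_0)$ then gives $\tau_{\varphi\Omega}(\gamma) \leq C(z_0)\epsilon$.

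Since $\varphi$ induces an isometry $(\Omega, K_\Omega) \to (\varphi\Omega, K_{\varphi\Omega})$, the element $\gamma^\ast := \varphi^{-1}\gamma\varphi \in U \cap \Gamma$ is nontrivial with $\tau_\Omega(\gamma^\ast) \leq C(z_0)\epsilon$; letting $\epsilon \to 0$ contradicts (\ref{eq:trans_length}). The delicate step is the final translation-length estimate: the matrix-norm bound on $f$ must transfer into a bound on $K_{\varphi\Omega}(\gamma a_t z_0, a_t z_0)$ that holds uniformly as $t \to -\infty$ and the orbit $a_t z_0$ escapes to the cone vertex $0 \in \partial(\varphi\Omega)$. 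The trick is that after the change of variables $a_{-t}$, one only needs continuity of $K_{\varphi\Omega}$ at the fixed basepoint $z_0$, bypassing boundary behavior entirely.
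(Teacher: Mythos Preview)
Your proof is correct and follows essentially the same approach as the paper: approximate a power $u^m$ by a lattice element $\gamma \in \varphi(\Gamma\cap U)\varphi^{-1}$, observe that the diagonal blocks of $\gamma$ are then close to $\Id_p$, conjugate by $a_t$ to kill the off-diagonal block, and conclude that $\gamma$ has small translation length, contradicting (\ref{eq:trans_length}). The paper packages this as a sequence $\gamma_k$ with $a_{t_k}\gamma_k a_{-t_k}\to\Id_{2p}$ directly, whereas you fix $\epsilon$, produce a single $\gamma$ with $a_{-t}\gamma a_t \to \mathrm{diag}(A^{-1},C^{-1})$ as $t\to -\infty$, and then let $\epsilon\to 0$; these are cosmetic reorderings of the same idea.

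Two minor points. First, your opening paragraph establishing $\tau_{\varphi\Omega}(u_s)=0$ for all $s\in\Rb$ is never used afterward, and the claim $u_s\in V$ for non-integer $s$ requires $U$ to be connected, which is not part of the stated hypotheses of Theorem~\ref{thm:no_unipotent} (though it does hold in the application, since $U$ arises as the center of a nilpotent radical). You can simply delete that paragraph. Second, writing $\tau_{\varphi\Omega}(\gamma)\le C(z_0)\epsilon$ overstates what continuity gives; all you need (and all you use) is that $K_{\varphi\Omega}(Dz_0,z_0)\to 0$ as $\epsilon\to 0$, which follows from continuity of $K_{\varphi\Omega}$ together with the fact that $D=\lim_{t\to-\infty}a_{-t}\gamma a_t$ lies in $\Aut(\varphi\Omega)$ by Proposition~\ref{prop:proper}.
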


\begin{proof}
Suppose for a contradiction that there exists $u = \begin{bmatrix} \Id_p & X \\ 0 & \Id_p \end{bmatrix} \in  \varphi U \varphi^{-1} $ and $X \neq 0$. Then since $\Gamma \cap U \leq U$ is a lattice and $U$ is a abelian there exists $n_k \rightarrow \infty$ and $\gamma_k \in \varphi(\Gamma \cap U) \varphi^{-1}$ such that $\gamma_k^{-1} u^{n_k} \rightarrow \Id_{2p}$. By picking representatives correctly we can assume that 
\begin{align*}
\gamma_k = \begin{bmatrix} A_k & B_k \\ 0 & C_k \end{bmatrix}
\end{align*}
and 
\begin{align*}
\begin{pmatrix}
A_k^{-1} & -A_k^{-1} B_k \\  0 & C_k^{-1}
\end{pmatrix}
\begin{pmatrix}
\Id_p & n_k X \\
0 & \Id_p 
\end{pmatrix} 
=
\begin{pmatrix}
A_k^{-1} & n_kA_k^{-1}X - A_k^{-1}B_k \\
0 & C_k^{-1}
\end{pmatrix}
\rightarrow 
\begin{pmatrix}
\Id_p & 0 \\
0 & \Id_p
\end{pmatrix}
\end{align*}
in $\GL_{2p}(\Rb)$. So $A_k \rightarrow \Id_p$ and $C_k \rightarrow \Id_p$. But then there exists $t_k \rightarrow \infty$ so that $a_{t_k} \gamma_k a_{-t_k} \rightarrow \Id_{2p}$. But then for any $p \in \varphi\Omega$
\begin{align*}
\lim_{k \rightarrow \infty} K_{\varphi \Omega}( \gamma_k a_{-t_k} p, a_{-t_k} p) = \lim_{k \rightarrow \infty} K_{\varphi \Omega}( a_{t_k} \gamma_k a_{-t_k} p, p) =0
\end{align*}
which contradicts Equation~\ref{eq:trans_length}.
\end{proof}

\begin{lemma}
\begin{align*}
\varphi U \varphi^{-1} \leq \left\{ \begin{bmatrix} A & 0 \\ 0 & B \end{bmatrix} : A,B \in \GL_p(\Rb) \right\}.
\end{align*}
\end{lemma}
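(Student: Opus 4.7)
The plan is to exploit three structural facts simultaneously: that $U$ is \emph{normal} in $\Aut(\Omega)$, that $U$ is \emph{abelian}, and the previous lemma ruling out nonzero strict upper triangular unipotents in $\varphi U \varphi^{-1}$. Since $\varphi U \varphi^{-1} \leq \Aut(\varphi\Omega)$ is normal and $a_t \in \Aut(\varphi\Omega)$, conjugation by $a_t$ preserves $\varphi U \varphi^{-1}$.

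First I would take an arbitrary element $u \in \varphi U \varphi^{-1}$, which by the containment established just above has the form
\begin{align*}
u = \begin{bmatrix} A & B \\ 0 & C \end{bmatrix}
\end{align*}
with $A, C \in \GL_p(\Rb)$ (both unipotent, since $u$ is). A direct computation gives
\begin{align*}
a_t u a_{-t} = \begin{bmatrix} A & e^{-t} B \\ 0 & C \end{bmatrix},
\end{align*}
and by normality this element lies in $\varphi U \varphi^{-1}$. Next, using that $\varphi U \varphi^{-1}$ is abelian, the product $u \cdot (a_t u a_{-t})^{-1}$ also lies in $\varphi U \varphi^{-1}$. Multiplying out yields
\begin{align*}
u \cdot (a_t u a_{-t})^{-1} = \begin{bmatrix} \Id_p & (1-e^{-t}) B C^{-1} \\ 0 & \Id_p \end{bmatrix}.
\end{align*}
This is an element of $\varphi U \varphi^{-1}$ of precisely the strict upper triangular form handled by the preceding lemma, which forces $(1-e^{-t}) B C^{-1} = 0$. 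Choosing any $t \neq 0$ and using that $C$ is invertible gives $B = 0$, as desired.

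I do not expect any serious obstacle: the argument is essentially a two-line computation once the three ingredients (normality of $U$, commutativity of $U$, and the previous lemma) are put together. The only thing to verify carefully is that the conjugation and multiplication are taking place in $\GL_{2p}(\Rb)$ with consistent representatives, but this is routine since $a_t$ and the upper triangular blocks all have well-defined lifts. The conceptual content is that the one-parameter group $\{a_t\}$ coming from the rescaling at the extreme point $e$ contracts the off-diagonal block, and normality then lets us convert that contraction into an element of $\varphi U \varphi^{-1}$ to which the previous lemma applies.
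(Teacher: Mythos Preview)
Your proof is correct and follows essentially the same approach as the paper: both use normality to conjugate $u$ by $a_t$, produce a strict upper triangular unipotent element of $\varphi U \varphi^{-1}$, and invoke the previous lemma. The paper instead passes to the limit $t\to\infty$ to get $u'=\begin{bmatrix} A & 0 \\ 0 & C\end{bmatrix}$ and computes $(u')^{-1}u$, whereas your finite-$t$ version avoids the limit; note also that your appeal to abelianness is unnecessary, since $u\cdot(a_t u a_{-t})^{-1}\in\varphi U\varphi^{-1}$ follows simply from closure under the group operations.
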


\begin{proof}
Suppose for a contradiction that there exists 
\begin{align*}
u = \begin{bmatrix} A & B \\ 0 & C \end{bmatrix} \in \varphi U \varphi^{-1} 
\end{align*}
and $B \neq 0$. 

Then 
\begin{align*}
u^\prime = \begin{bmatrix} A & 0 \\ 0 & C \end{bmatrix} = \lim_{t \rightarrow \infty} a_t u a_{-t} \in  \varphi U \varphi^{-1} 
\end{align*}
and so 
\begin{align*}
\begin{bmatrix} \Id_p & A^{-1}B \\ 0 & \Id_p \end{bmatrix} = (u^\prime)^{-1} u \in \varphi U \varphi^{-1} 
\end{align*}
which we just showed is impossible. 
\end{proof}

\begin{lemma}
If $u \in \varphi U \varphi^{-1}$ is non-trivial then 
\begin{align*}
E^+(\wedge^p u) \cap \Gr_p(\Rb^{2p}) \subset \Gr_{p}(\Rb^{2p}) \setminus \Mb.
\end{align*}
\end{lemma}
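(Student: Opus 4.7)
The plan is to exploit the natural decomposition of $\wedge^p \Rb^{2p}$ coming from the block-diagonal structure of $u$. By the previous lemma, any nontrivial $u \in \varphi U \varphi^{-1}$ has the form $u = \begin{bmatrix} A & 0 \\ 0 & B\end{bmatrix}$ with $A, B \in \GL_p(\Rb)$ unipotent (since $U$ is unipotent) and $(A, B) \neq (\Id_p, \Id_p)$. Writing $V_1 = \Spanset\{e_1, \dots, e_p\}$ and $V_2 = \Spanset\{e_{p+1}, \dots, e_{2p}\}$, the operator $u$ preserves the splitting $\Rb^{2p} = V_1 \oplus V_2$, so $\wedge^p u$ preserves the decomposition
\[
\wedge^p \Rb^{2p} = \bigoplus_{k=0}^p W_k, \qquad W_k := \wedge^{p-k} V_1 \otimes \wedge^k V_2,
\]
acting on $W_k$ as $\wedge^{p-k}A \otimes \wedge^k B$.

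The key step will be to analyze $\wedge^p u$ summand by summand and show that $W_0$ does not contribute to $E^+(\wedge^p u)$. First, $W_0 = \Rb \cdot (e_1 \wedge \dots \wedge e_p)$ is one-dimensional and $\wedge^p u|_{W_0}$ acts as $\det A = 1$, so $W_0$ gives only a single Jordan block of size $1$. Next, I would show that for any $k$ with $1 \leq k \leq p-1$ (such $k$ exists since $p \geq 2$), the operator $\wedge^p u|_{W_k} = \wedge^{p-k}A \otimes \wedge^k B$ is a \emph{nontrivial} unipotent, and therefore has a Jordan block of size $\geq 2$. This reduces to two routine facts: (i) a tensor product of two unipotent operators equals $\Id$ only if both factors are, by expanding $(\Id + N_X)\otimes(\Id + N_Y) - \Id$ and testing on $v \otimes w$ with $N_X v \neq 0$ and $w \in \ker N_Y \setminus \{0\}$; and (ii) for $1 \leq j \leq p-1$ the kernel of the representation $\wedge^j:\GL_p(\Rb) \to \GL(\wedge^j \Rb^p)$ consists of scalar matrices, which among unipotents means just $\{\Id_p\}$. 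Together these yield $\wedge^{p-k}A \otimes \wedge^k B \neq \Id$ whenever $(A,B) \neq (\Id_p,\Id_p)$.

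Once this is in hand, the largest Jordan block of $\wedge^p u$ has size $m^+(\wedge^p u) \geq 2$, while the unique (size-$1$) eigenvector in $W_0$ is not part of a block of size $m^+$ and hence is excluded from $E^+(\wedge^p u)$. Thus
\[
E^+(\wedge^p u) \subset \bigoplus_{k=1}^p W_k,
\]
so every $\omega \in E^+(\wedge^p u)$ has vanishing coefficient on $e_1 \wedge \dots \wedge e_p$. Finally, a $p$-plane $L \in \Gr_p(\Rb^{2p})$ lies in $\Mb$ precisely when $L \cap V_2 = 0$, equivalently when its Pl\"ucker coordinate on $e_1 \wedge \dots \wedge e_p$ is nonzero; so no point of $E^+(\wedge^p u) \cap \Gr_p(\Rb^{2p})$ lies in $\Mb$, which is the desired conclusion.

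The main (and relatively minor) obstacle is the subclaim that $\wedge^{p-k}A \otimes \wedge^k B$ is nontrivial, which hinges on the faithfulness of $\wedge^j$ on unipotents for $1 \leq j \leq p-1$ (fact (ii)). The hypothesis $p \geq 2$ is essential here: for $p=1$ no such $k$ exists, and for $k=p$ one has $\wedge^p A = \det A = 1$ for every unipotent $A$, so the argument fundamentally requires access to the range $j \in \{1, \dots, p-1\}$.
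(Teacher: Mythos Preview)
Your proof is correct and takes a genuinely different route from the paper's.

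The paper argues dynamically: writing $u = \begin{bmatrix} A & 0 \\ 0 & B\end{bmatrix}$ with $A,B$ unipotent, it observes that $u^m\begin{bmatrix}\Id_p \\ X\end{bmatrix} = \begin{bmatrix}\Id_p \\ B^m X A^{-m}\end{bmatrix}$ and that for generic $X$ one has $\|B^m X A^{-m}\| \to \infty$, then asserts that this forces $E^+(\wedge^p u)\cap\Gr_p(\Rb^{2p}) \subset \Gr_p(\Rb^{2p})\setminus\Mb$. That last implication is stated tersely; making it precise essentially requires knowing that $E^+ = \operatorname{Im}(\wedge^p u - \Id)^{m^+-1}$ has trivial $W_0$-component, which is exactly what your decomposition supplies.

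Your approach via $\wedge^p\Rb^{2p} = \bigoplus_{k=0}^p W_k$ is more algebraic and self-contained: you show directly that $\wedge^p u|_{W_0}$ is the identity (size-$1$ block) while $\wedge^p u|_{W_k}$ is a nontrivial unipotent for $1\le k\le p-1$, so $m^+(\wedge^p u)\ge 2$ and $E^+\subset\bigoplus_{k\ge 1}W_k$. Since membership in $\Mb$ is equivalent to nonvanishing of the $W_0$-Pl\"ucker coordinate, the conclusion follows. This argument makes the role of the hypothesis $p\ge 2$ explicit (you need some $k$ with $1\le k\le p-1$), and your facts (i) and (ii) cleanly isolate the faithfulness of $\wedge^j$ on unipotents that underlies both approaches. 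The paper's dynamical observation that generic orbits escape $\Mb$ is morally equivalent to $\wedge^p u \ne \Id$, but your version makes the passage from this to the structure of $E^+$ transparent.
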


\begin{proof}
Suppose $u =  \begin{bmatrix} A & 0 \\ 0 & B \end{bmatrix}$. Then both $A,B$ are unipotent and 
\begin{align*}
u^m \begin{bmatrix} \Id_p \\ X \end{bmatrix} = \begin{bmatrix} \Id_p \\ B^m XA^{-m} \end{bmatrix}.
\end{align*}
Since both $B$ and $A$ are unipotent, for a generic $X \in M_{p,p}(\Rb)$ we have 
\begin{align*}
\lim_{m \rightarrow \infty} \norm{B^m X A^{-m} } = \infty.
\end{align*}
Which implies that $E^+(\wedge^p u) \cap \Gr_p(\Rb^{2p}) \subset \Gr_{p}(\Rb^{2p}) \setminus \Mb$. 
\end{proof}

Now we have a contradiction because 
\begin{align*}
e \in \Gr_p(\Rb^{2p}) \cap \overline{ \cup_{u \in \varphi U \varphi^{-1} } E^+(\wedge^p u) } \subset \Gr_{p}(\Rb^{2p}) \setminus \Mb
\end{align*}
and $e \in \Mb$.

\section{When $p=2$}\label{sec:p_2}

In this section we show that Cases (3) and (4) of Theorem~\ref{thm:reduction} are impossible.

\begin{theorem}\label{thm:p_2}
Suppose $\Omega \subset \Gr_2(\Rb^4)$ is a bounded convex open subset of some affine chart of $\Gr_2(\Rb^4)$ and there exists a discrete group $\Gamma \leq \Aut(\Omega)$ so that $\Gamma \backslash \Omega$ is compact. Then the connected component of the identity in $\Aut(\Omega)$ is a simple Lie group with trivial center that acts transitively on $\Omega$.
\end{theorem}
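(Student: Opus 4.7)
The plan is to apply Theorem~\ref{thm:reduction} with $p=2$, which places $\Aut(\Omega)$ into one of five structural cases, and then to rule out cases~(1)--(4). Cases~(1) and~(2) are immediately excluded by the preceding two sections: Theorem~\ref{thm:no_center} contradicts case~(1) (no finite-index subgroup of $\Gamma$ can have nontrivial centralizer in $\PGL_4(\Rb)$), and Theorem~\ref{thm:no_unipotent} contradicts case~(2) (no nontrivial abelian normal unipotent subgroup of $\Aut(\Omega)$ can contain a cocompact lattice from $\Gamma$). So the main work is in ruling out the two remaining cases, which are genuinely specific to $p=2$.

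For case~(3), where $\Aut_0(\Omega) = G^0 \cong \SL_2(\Rb)$ sits in $\PGL_4(\Rb)$ as the block-diagonal subgroup $\{\mathrm{diag}(A,A)\}$ and $\Lambda$ is discrete in its 3-dimensional centralizer, the approach combines extreme-point dynamics with the rescaling machinery. Write $\Rb^4 = V_1 \oplus V_2$ for the $G^0$-invariant splitting. Applying Corollary~\ref{cor:ext_attracting} to hyperbolic one-parameter subgroups of $G^0$ identifies a $\Pb(V_1)$-family of $\Rc$-extreme points on $\partial\Omega$ of the ``diagonal'' form $\ell \oplus \ell \subset V_1 \oplus V_2$. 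These points are all isotropic for the natural signature-$(2,2)$ form $Q$ on $\Rb^4$ determined by the splitting, which forces the 2-plane $W$ defining the affine chart $\Mb_W$ in which $\Omega$ is bounded to be $Q$-definite. Applying Corollary~\ref{cor:rescaling} at one such extreme point then yields a one-parameter subgroup of $\Aut_0(\Omega) = G^0$. The conjugating element $\varphi$ from the rescaling construction would have to simultaneously send the $Q$-definite 2-plane $W$ to the $Q$-isotropic 2-plane $\ell \oplus \ell$ and conjugate the rescaling one-parameter group $\{\mathrm{diag}(\Id_2, e^t \Id_2)\}$ into $G^0$; a careful eigenspace analysis shows these two requirements are incompatible with the reducible matrix form of $G^0$, giving the required contradiction.

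For case~(4), where $G^0 \cong \PSL_2(\Rb) \times \PSL_2(\Rb)$ is embedded via $\{A \otimes B\}$ and acts transitively on $\Omega$, the stabilizer of a point is compact of dimension~$2$ (by Proposition~\ref{prop:proper} and Remark~\ref{rmk:max_cpct}), hence isomorphic to $\SO(2) \times \SO(2)$. This realizes $\Omega$ as the homogeneous space $\Hb^2 \times \Hb^2$, and a direct comparison with the explicit $\PSO(2,2)$-action on $\Bc_{2,2}$ identifies $\Omega$ projectively with $\Bc_{2,2}$. The automorphism group is then computed as $\Aut_0(\Omega) \cong \PSO(2,2)^0$ and acts transitively with trivial center, in accordance with the conclusion of the theorem.

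The main obstacle I anticipate is the eigenspace analysis in case~(3): because $G^0$ is only 3-dimensional, the rescaling one-parameter subgroup is just barely precluded from fitting inside $G^0$, so extracting the contradiction requires delicate control over how $\varphi$ interacts simultaneously with the form $Q$, the reducible $V_1 \oplus V_2$ decomposition, and the diagonal matrix form of $G^0$. Once cases~(1)--(4) are eliminated, case~(5) of Theorem~\ref{thm:reduction} applies and the theorem is proved.
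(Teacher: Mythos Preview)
Your overall strategy (apply Theorem~\ref{thm:reduction} and eliminate cases) matches the paper, and your treatment of cases~(1) and~(2) is exactly right. However, your handling of cases~(3) and~(4) diverges from the paper in ways that leave genuine gaps.

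\textbf{Case~(3).} The paper's argument is the reverse of yours: it chooses an $\Rc$-extreme point $e$ coming from the \emph{discrete} factor $\Lambda$ rather than from $G^0=G_1$. A direct computation shows that
\[
\Ext(G_1)\subset\{[(\alpha e_1+\beta e_2)\wedge(\alpha e_3+\beta e_4)]\}
\quad\text{and}\quad
\Ext(\Lambda)\subset\{[(\alpha e_1+\beta e_3)\wedge(\alpha e_2+\beta e_4)]\}
\]
are disjoint $\Gamma$-invariant closed sets. Rescaling at $e\in\Ext(\Lambda)$ produces a one-parameter subgroup which, being connected, must lie in $G^0=G_1$; hence $\varphi^{-1}(e)\in\Ext(G_1)$. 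But tracking $\varphi^{-1}(e)$ through the limit $\varphi=\lim a_{t_n}\gamma_n$ also gives $\varphi^{-1}(e)\in\overline{\Ext(\Lambda)}$, contradicting disjointness. Your version rescales at an extreme point already attached to $G_1$; the resulting one-parameter group then lands in $G_1$ \emph{as expected}, and there is no evident contradiction. The ``careful eigenspace analysis'' you allude to would have to rule out the existence of $\varphi\in\PGL_4(\Rb)$ conjugating $\mathrm{diag}(\Id_2,e^t\Id_2)$ into $G_1$, but such $\varphi$ plainly exist (both groups have the same eigenvalue multiset), so any obstruction must come from finer constraints on $\varphi$ that you have not identified.

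\textbf{Case~(4).} Your conclusion is internally inconsistent: $\PSO(2,2)^0$ is locally isomorphic to $\PSL_2(\Rb)\times\PSL_2(\Rb)$ and is \emph{not} simple, so identifying $\Omega$ with $\Bc_{2,2}$ here does not verify the theorem's conclusion that $G^0$ is simple. The paper instead argues that case~(4) cannot occur at all: it observes that the stabilizer $K_x$ of any $x\in\Omega$ has $\dim K_x\geq \dim G^0-\dim\Omega=2$, hence is conjugate in $G^0$ to the full maximal compact $K_1\times K_2\cong\SO(2)\times\SO(2)$, and then asserts that $K_1\times K_2$ has no fixed point on $\Gr_2(\Rb^4)$, yielding a contradiction. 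You should follow that line rather than attempting to absorb case~(4) into the conclusion.
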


For the rest of the section let $\Omega \subset \Gr_2(\Rb^4)$ and $\Gamma \leq \Aut(\Omega)$ be as in the hypothesis of Theorem~\ref{thm:p_2}. As in Section~\ref{sec:initial}, let $G:=\Aut(\Omega)$ and let $G^0$ be the connected component of the identity of $G$. 

Define the subgroups 
\begin{align*}
G_1 :=  \left\{ \begin{bmatrix} A & 0 \\ 0 & A \end{bmatrix} : A \in \SL_2(\Rb) \right\}
\end{align*}
and 
\begin{align*}
G_2: = \left\{ \begin{bmatrix} a \Id_2 & b \Id_2 \\ c\Id_2 & d\Id_2 \end{bmatrix} : ad-bc = 1\right\}.
\end{align*}
By Theorem~\ref{thm:reduction} we may assume that either 
\begin{enumerate}
\item $G^0$ is a simple Lie group with trivial center that acts transitively on $\Omega$,
\item there exists a cocompact lattice $\Lambda \leq G_2$ so that $G_1 \times \Lambda$ has finite index in $\Aut(\Omega)$, 
\item $G_1 \times G_2$ has finite index in $\Aut(\Omega)$ and acts transitively on $\Omega$. 
\end{enumerate}

We rule out case (2) above by proving the following:

\begin{lemma}
With the notation above, $G^0$ has finite index in $\Aut(\Omega)$ and acts transitively on $\Omega$. 
\end{lemma}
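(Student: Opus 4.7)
The plan is to argue by contradiction, ruling out case (2) (cases (1) and (3) both give the conclusion immediately, since in each $G^0$ already acts transitively on $\Omega$, and a short stabilizer argument using Proposition~\ref{prop:proper} then forces $G^0$ to have finite index in $\Aut(\Omega)$). So assume we are in case (2). After passing to a finite index subgroup we may write $\Aut(\Omega) = G_1 \times \Lambda$ with $\Lambda$ a cocompact lattice in $G_2 \cong \SL_2(\Rb)$ and $\Gamma$ a cocompact lattice in $G_1 \times \Lambda$. Conjugating by an element of the normalizer $N(G_1) \subset \PGL_4(\Rb)$, which contains $G_1 G_2$, I would arrange that $\Omega$ lies in the standard affine chart $\Mb = \{[\Id_2, X]^{\top} : X \in M_{2,2}(\Rb)\}$, which is preserved by both $G_1$ and $G_2$. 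In these coordinates $G_1$ acts on $X$ by conjugation $X \mapsto AXA^{-1}$ and $G_2$ acts by $X \mapsto (c\Id_2+dX)(a\Id_2+bX)^{-1}$.

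By Proposition~\ref{prop:proper}, $G_1 \cong \SL_2(\Rb)$ acts properly on $\Omega$, so the $\SL_2(\Rb)$-centralizer of each $X \in \Omega$ is compact. For non-scalar $X$ this centralizer is one-dimensional, and it is compact (conjugate to $\SO(2)$) precisely when $X$ has a pair of non-real complex conjugate eigenvalues. Hence
\begin{align*}
\Omega \subset U := \{ X \in M_{2,2}(\Rb) : (\tr X)^2 - 4\det X < 0\},
\end{align*}
every $G_1$-orbit in $\Omega$ is two-dimensional, and the map $\pi \colon U \to \Hb^2$ sending $X$ to its eigenvalue of positive imaginary part realizes $G_1 \backslash U$ as the upper half-plane, with fibers the $G_1$-orbits. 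A direct computation on eigenvalues shows that the induced $G_2$-action on $\Hb^2$ is the standard action of $\PSL_2(\Rb) = \Isom_0(\Hb^2)$ by M\"obius transformations.

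Since $\Omega$ is $G_1$-invariant and open, $\Omega = \pi^{-1}(\pi(\Omega))$ with $\pi(\Omega) \subset \Hb^2$ open and $\Lambda$-invariant. Cocompactness of $\Gamma$ on $\Omega$ then forces the cocompact lattice $\Lambda$ to act cocompactly on $\pi(\Omega)$; but any $\Lambda$-invariant open subset of $\Hb^2$ whose $\Lambda$-quotient is compact must equal $\Hb^2$, since otherwise it would give a strict open submanifold of the closed surface $\Lambda\backslash\Hb^2$. Therefore $\pi(\Omega) = \Hb^2$ and $\Omega = U$. This contradicts convexity of $\Omega$: the matrices $\pm\left(\begin{smallmatrix} 0 & 1 \\ -1 & 0 \end{smallmatrix}\right)$ both have discriminant $-4 < 0$ and hence lie in $U$, but their midpoint is the zero matrix, which has discriminant $0$.

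The main obstacle is the opening coordinate normalization, i.e., the requirement that $\Omega$ lie in a $G_1$-invariant affine chart. If this cannot be arranged by $N(G_1)$-conjugation alone, the workaround will be intrinsic: under the Pl\"ucker embedding $\Gr_2(\Rb^4) \hookrightarrow \Pb(\wedge^2 \Rb^4)$, the complex-eigenvalue condition $(\tr X)^2 - 4\det X < 0$ corresponds to the sign of a $G_1$-invariant indefinite quadratic form on the three-dimensional $G_1$-fixed subspace of $\wedge^2 \Rb^4$, and the entire quotient and M\"obius-equivariance argument can be carried out in this chart-free language.
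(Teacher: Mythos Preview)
Your approach is quite different from the paper's, which uses the rescaling machinery (Corollary~\ref{cor:rescaling}) together with the structure of $\Rc$-extreme points to derive a contradiction from disjointness of certain limit sets $\Ext(G_1)$ and $\Ext(\Lambda)$ in $\partial\Omega$. Your direct analysis of the $G_1$-action on $\Gr_2(\Rb^4)$ is more elementary and has real merit, but the proof as written has a genuine gap at the end.

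Two remarks on the setup first. Your worry about the coordinate normalization is actually unnecessary: once $G_1$ is in standard form, properness of the $G_1$-action on $\Omega$ already forces $\Omega \subset \Mb$. Indeed, a direct case check shows that every $2$-plane $x$ with $x \cap \Spanset\{e_3,e_4\} \neq 0$ has non-compact $G_1$-stabilizer (a Borel or a split torus). Second, the set $U = \{X : (\tr X)^2 - 4\det X < 0\}$ has \emph{two} connected components: writing $X = tI + Y$ with $Y \in \Lsl_2$, one has $X \in U$ iff $\det Y > 0$, and this is the interior of a Lorentzian cone in $\Lsl_2 \cong \Rb^{1,2}$, hence two nappes $C_\pm$ giving $U_\pm = \Rb I + C_\pm$. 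Each $\pi$-fibre over $\Hb^2$ meets $U$ in two $\SL_2$-conjugacy classes, one in each $U_\pm$ (for instance $\left(\begin{smallmatrix} 0 & -1 \\ 1 & 0 \end{smallmatrix}\right)$ and $\left(\begin{smallmatrix} 0 & 1 \\ -1 & 0 \end{smallmatrix}\right)$ are not $\SL_2$-conjugate). Since $\Omega$ is connected it lies in one component, say $U_+$, and your quotient argument then correctly gives $\Omega = U_+$.

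The gap is the final contradiction. Your two matrices $\pm\left(\begin{smallmatrix} 0 & 1 \\ -1 & 0 \end{smallmatrix}\right)$ lie in \emph{different} components $U_+$ and $U_-$, so their midpoint lying outside $U$ says nothing about convexity of $U_+$. In fact each $U_\pm = \Rb I + C_\pm$ \emph{is} convex in $\Mb$, since each nappe of a Lorentzian cone is convex. So no contradiction with convexity of $\Omega$ can be extracted this way (and in any case you only know $\Omega$ is convex in \emph{some} chart, not necessarily in $\Mb$).

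The argument can be rescued with a different endgame. Having established $\Omega = U_+$, observe that $G_2$ preserves $U_+$: the induced M\"obius action $\lambda \mapsto (c+d\lambda)/(a+b\lambda)$ on eigenvalues, with $ad-bc=1$, preserves the upper half-plane, and one checks the action preserves the component $U_+$. Hence $G_2 \leq \Aut(\Omega)$, so $G^0 \supseteq G_1 G_2 \supsetneq G_1$, contradicting the standing assumption $G^0 = G_1$ of case~(2). With this correction you obtain a valid proof, genuinely different from and more elementary than the paper's.
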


\begin{proof} Suppose not, then by the remarks above there exists a cocompact lattice $\Lambda \leq G_2$ so that $G_1 \times \Lambda$ has finite index in $\Aut(\Omega)$. By possibly changing our cocompact lattice we may also assume that $\Gamma = \Gamma_1 \times \Lambda$ for some cocompact lattice $\Gamma_1 \leq G_1$. 

For a subgroup $H \leq \Aut(\Omega)$ let $\Lc(H)$ denote the set of points $x \in \partial \Omega$ where there exists some $y \in \Omega$ and sequence $h_n \in H$ so that $h_n y \rightarrow x$. Recall that $\Ext(\Omega) \subset \partial \Omega$ is the set of $\Rc$-extreme points of $\Omega$. Then define 
\begin{align*}
\Ext(H) : = \Lc(H) \cap \Ext(\Omega).
\end{align*} 

Let $e_1, \dots e_4$ be the standard basis of $\Rb^4$. Then a direct computation (using Part (4) of Theorem~\ref{thm:extreme}) shows that 
\begin{align*}
\Ext(G_1) = \{ [(\alpha e_1+ \beta e_2) \wedge (\alpha e_3 + \beta e_4)] : \alpha, \beta \in \Rb\}
\end{align*}
and  
\begin{align*}
\Ext(\Lambda) \subset \{ [(\alpha e_1+ \beta e_3) \wedge (\alpha e_2 + \beta e_4)] : \alpha, \beta \in \Rb\}.
\end{align*}
This description implies that $\overline{\Ext(G_1)}$ and $\overline{\Ext(\Lambda)}$ are disjoint and $\Gamma$-invariant sets. Moreover since $\Lambda \leq G_2$ is a cocompact lattice there exists some $\lambda \in \Lambda$ so that $\wedge^2 \lambda$ has a unique eigenvalue of maximum absolute value (see~\cite{P1994}). Then part (4) of Theorem~\ref{thm:extreme} implies that $\Ext(\Lambda) \neq \emptyset$. So suppose that $e \in \Ext(\Lambda)$. 

Now up to a projective isomorphism we can assume that $\Omega$ is a convex subset of the affine chart 
\begin{align*}
\Mb = \left\{ \begin{bmatrix} \Id_2 \\ X \end{bmatrix} : X \in M_{2,2}(\Rb) \right\}.
\end{align*}
and $ e=\begin{bmatrix} \Id_2 & 0\end{bmatrix}^t \in \partial \Omega$. Then by Corollary~\ref{cor:rescaling} there exists $\gamma_n \in \Gamma$ and $t_n \rightarrow \infty$ so that 
\begin{align*}
\varphi = \lim_{n \rightarrow \infty} \begin{bmatrix} \Id_2 & 0 \\ 0 & e^{t_n} \Id_2 \end{bmatrix} \gamma_n 
\end{align*}
exists in $\PGL_{4}(\Rb)$ and $\varphi(\Omega) = \mathcal{T}\mathcal{C}_e \Omega$. In particular, $\Omega$ is invariant under the one-parameter group 
\begin{align*}
\varphi^{-1} \left\{ \begin{bmatrix} \Id_p & 0 \\ 0 & e^t \Id_p \end{bmatrix} : t \in \Rb \right\}\varphi.
\end{align*}
This implies that $\varphi^{-1}(e) \in \Ext(G_1)$. But 
\begin{align*}
\gamma_n^{-1} \begin{bmatrix} \Id_2 & 0 \\ 0 & e^{-t_n} \Id_2 \end{bmatrix} e = \gamma_n^{-1} e \subset \Ext(\Lambda)
\end{align*}
and thus 
\begin{align*}
\varphi^{-1}(e) \in \Ext(G_1) \cap \overline{\Ext(\Lambda)}.
\end{align*}
This is a contradiction. 
\end{proof}

We rule out case (3) above by proving the following:

\begin{lemma}
$G^0$ is a simple Lie group. 
\end{lemma}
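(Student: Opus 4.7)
The plan is to follow the strategy used in the previous lemma to rule out case (3): compute the sets $\Ext(G_1)$ and $\Ext(G_2)$, show they are disjoint and non-empty, and derive a contradiction from the rescaling theorem. Suppose for contradiction that we are in case (4) of Theorem~\ref{thm:reduction}, so $G^0 = G_1 \cdot G_2$ with two commuting $\SL_2$-type subgroups, each normal in $G^0$. After replacing $\Gamma$ by a finite index subgroup we may assume $\Gamma \leq G^0$, so that $\Ext(G_1)$ and $\Ext(G_2)$ are $\Gamma$-invariant subsets of $\Ext(\Omega)$.

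First, by the same calculation using Theorem~\ref{thm:extreme}(4) as in the previous lemma, one obtains
\begin{align*}
\Ext(G_1) &\subset \{[(\alpha e_1 + \beta e_2) \wedge (\alpha e_3 + \beta e_4)] : [\alpha:\beta] \in \Pb^1\}, \\
\Ext(G_2) &\subset \{[(\alpha e_1 + \beta e_3) \wedge (\alpha e_2 + \beta e_4)] : [\alpha:\beta] \in \Pb^1\}.
\end{align*}
Comparing Plücker coordinates one checks $\overline{\Ext(G_1)} \cap \overline{\Ext(G_2)} = \emptyset$. Both sets are non-empty: each $G_i \cong \SL_2$ contains a hyperbolic element $g$ with $\wedge^2 g$ having a unique eigenvalue of largest absolute value, and Corollary~\ref{cor:ext_attracting} produces an $\Rc$-extreme point in $E^+(\wedge^2 g)$.

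Next I would pick $e \in \Ext(G_2)$ and, after a projective change of coordinates, assume $\Omega \subset \Mb$ and $e = \begin{bmatrix} \Id_2 \\ 0 \end{bmatrix}$. Corollary~\ref{cor:rescaling} yields $\gamma_n \in \Gamma$ and $t_n \to \infty$ with
\[
\varphi = \lim_n \begin{bmatrix} \Id_2 & 0 \\ 0 & e^{t_n} \Id_2 \end{bmatrix} \gamma_n \in \PGL_4(\Rb),
\]
satisfying $\varphi(\Omega) = \Tc\Cc_e\Omega$, and $\Omega$ invariant under $\{\varphi^{-1} a_t \varphi : t \in \Rb\}$ with $a_t = \begin{bmatrix} \Id_2 & 0 \\ 0 & e^t \Id_2 \end{bmatrix}$. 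In case (3), $G^0 = G_1$ forced this one-parameter group into $G_1$, producing $\varphi^{-1}(e) \in \Ext(G_1) \cap \overline{\Ext(G_2)}$ and hence a contradiction. In case (4), $G^0 = G_1 \cdot G_2$ is larger, and a direct computation using $a_t \in G_2$, $G_1$ centralizing $G_2$, and writing $\gamma_n = g_{1,n} g_{2,n}$ with $g_{i,n} \in G_i$ gives $\gamma_n^{-1} a_t \gamma_n = g_{2,n}^{-1} a_t g_{2,n} \in G_2$, so $\varphi^{-1} a_t \varphi \in G_2$. This only puts $\varphi^{-1}(e) \in \Ext(G_2)$, which is consistent with $e \in \Ext(G_2)$ and gives no immediate contradiction.

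The main obstacle is to close the argument despite this coincidence. The plan is to also rescale at a point $e' \in \Ext(G_1)$ to produce a second one-parameter subgroup, this time lying in $G_1$, and then combine both constructions using transitivity of $G^0$ on $\Omega$ and the $\Rc$-properness of the tangent cones (Theorem~\ref{thm:extreme}) to force $\Ext(G_1)$ and $\Ext(G_2)$ to meet, contradicting their disjointness established above. The delicate step is tracking how elements of $G^0$ relate tangent cones at different boundary points while respecting the affine structure of $\Mb$; this coupling is notably harder than in case (3), because the discrete partner $\Lambda$ from there is replaced here by the connected subgroup $G_2$, which produces symmetry of the same type as $e$ itself under rescaling and so never directly violates the $\Ext(G_1)$–$\Ext(G_2)$ dichotomy.
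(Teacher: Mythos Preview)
Your proposal has a genuine gap: you correctly identify that the rescaling argument from the previous lemma breaks down here, because the one-parameter group $\varphi^{-1}a_t\varphi$ now lands in $G_2$ rather than $G_1$, and then you offer only a vague plan to ``combine both constructions'' at extreme points of $\Ext(G_1)$ and $\Ext(G_2)$. You never carry this out, and it is not clear how it would close: rescaling at $e'\in\Ext(G_1)$ will, by the same symmetry, produce a one-parameter subgroup in $G_1$ and place $\varphi'^{-1}(e')\in\Ext(G_1)$, which is again consistent. Nothing in your outline forces the two disjoint sets $\overline{\Ext(G_1)}$ and $\overline{\Ext(G_2)}$ to meet.

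The paper takes a completely different and much shorter route that avoids extreme points and rescaling entirely. It argues via compact stabilizers and a dimension count: the maximal compact connected subgroup $K_1\times K_2$ of $G_1\times G_2$ (each $K_i$ a copy of $\SO(2)$) has dimension $2$ and one checks directly that it has no fixed point in $\Gr_2(\Rb^4)$. On the other hand, for any $x\in\Omega$ the identity component $K_x$ of the stabilizer is compact (Proposition~\ref{prop:proper}), hence conjugate into $K_1\times K_2$, and since $\dim(G_1\times G_2)=6$ while $\dim\Omega=4$ one gets $\dim K_x\ge 2$, forcing $gK_xg^{-1}=K_1\times K_2$ for some $g$. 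But then $K_1\times K_2$ fixes $gx$, a contradiction. This bypasses the obstacle you ran into.
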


\begin{proof} Suppose not, then by the remarks above $G_1 \times G_2$ has finite index in $\Aut(\Omega)$. We may also assume that $\Gamma = \Gamma_1 \times \Gamma_2$ for some cocompact lattices $\Gamma_1 \leq G_1$ and $\Gamma_2 \leq G_2$.

Define the subgroups
\begin{align*}
K_1 = \left\{ \begin{bmatrix} A & 0 \\ 0 & A \end{bmatrix} : A \in \SO(2) \right\}
\end{align*}
and 
\begin{align*}
K_2=\left\{ \begin{bmatrix} a \Id_2 & b \Id_2 \\ c\Id_2 & d\Id_2 \end{bmatrix} : \begin{pmatrix} a & b \\ c & d \end{pmatrix} \in \SO(2)  \right\}.
\end{align*}
Then $K_1 \times K_2 \leq G_1 \times G_2$ is a maximal compact connected subgroup. Moreover the action of $K_1 \times K_2$ on $\Gr_2(\Rb^4)$ has no fixed points. 

Next let $K_x \leq \Aut(\Omega)$ be the connected component of the stabilizer of some $x \in \Omega$. Since $\Aut(\Omega)$ acts properly on $\Omega$ (see Proposition~\ref{prop:proper}), $K_x$ is a compact subgroup. Moreover, since $G^0 = G_1 \times G_2$, we see that $K_x \leq G_1 \times G_2$. Thus, since maximal compact subgroups are conjugate in semisimple Lie groups, there exists some $g \in G_1 \times G_2$ so that 
\begin{align*}
gK_x g^{-1} \leq K_1 \times K_2.
\end{align*}
But $\dim (K_1 \times K_2) =2$. Moreover
\begin{align*} 
6 - \dim(K_x) = \dim( G_1 \times G_2 / K_x) \leq \dim(\Omega) = 4
\end{align*}
so $\dim K_x \geq 2$. Thus $gK_x g^{-1} = K_1 \times K_2$. This contradicts the fact that $K_1 \times K_2$ has no fixed points in $\Gr_2(\Rb^4)$.
\end{proof} 

\section{Finishing the proof of Theorem~\ref{thm:main}}\label{sec:final_step}

Theorem~\ref{thm:reduction}, Theorem~\ref{thm:no_center}, Theorem~\ref{thm:no_unipotent}, and Theorem~\ref{thm:p_2} reduce the proof of Theorem~\ref{thm:main} to the following:

\begin{theorem}\label{thm:final}
Suppose $p > 1$ and $\Omega \subset \Gr_p(\Rb^{2p})$ is a bounded convex open subset of some affine chart of $\Gr_p(\Rb^{2p})$.
If the connected component of the identity of $\Aut(\Omega)$ is a simple Lie group with trivial center which acts transitively on $\Omega$, then $\Omega$ is projectively isomorphic to $\Bc_{p,p}$. 
\end{theorem}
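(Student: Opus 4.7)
The plan is to identify $\Omega = G/K$ as the Riemannian symmetric space of non-compact type associated to $G := \Aut_0(\Omega)$, then use the classification of simple real Lie algebras together with their irreducible representations to pin down $G \cong \PSO(p,p)$, and finally identify the embedding with $\Bc_{p,p}$.

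First, since $G$ acts transitively on $\Omega$ with compact stabilizer $K$ (by Proposition~\ref{prop:proper}), we have $\Omega \cong G/K$. Convexity makes $\Omega$ contractible, so the long exact sequence of the fibration $K \to G \to G/K$ forces $\pi_0(K)=1$, i.e.\ $K$ is connected. Combined with Remark~\ref{rmk:max_cpct} (the stabilizer has finite index in a maximal compact subgroup), this forces $K$ to be a maximal compact subgroup, so $\Omega = G/K$ is the Riemannian symmetric space of non-compact type associated to $G$. Lifting $G\subset \PGL_{2p}(\Rb)$ to $\widehat G^{0}\subset \SL^{\pm}_{2p}(\Rb)$, one then shows that the natural $2p$-dimensional real representation of $\widehat G^{0}$ is irreducible: if $\Rb^{2p}$ split into several $\widehat G^{0}$-components, the centralizer argument in the proof of Theorem~\ref{thm:reduction} would either produce a non-scalar element of $\PGL_{2p}(\Rb)$ centralizing $G$ (contradicting Theorem~\ref{thm:no_center}) or, using multiplicities, contradict the equality $\vcd(\Gamma) = p^{2}$ as in Lemma~\ref{lem:vcd}.

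The main step, and the main obstacle, is the classification. I would enumerate simple non-compact real Lie algebras $\mathfrak{g}$ admitting a faithful irreducible real representation $V$ of dimension $2p$ with $\dim(\mathfrak{g}/\mathfrak{k}) = p^2$ (where $\mathfrak{k}$ is the maximal compact subalgebra). For $\mathfrak{g} = \mathfrak{so}(r,s)$ in its standard representation, the constraints $r+s = 2p$ and $rs = p^2$ force $r=s=p$ by the equality case of AM-GM, producing exactly the target $\mathfrak{so}(p,p)$. All other classical families ($\mathfrak{sl}_n$ over $\Rb$, $\Cb$, or $\Hb$; $\mathfrak{sp}_{2n}(\Rb)$; $\mathfrak{sp}(r,s)$; $\mathfrak{su}(r,s)$), paired with their fundamental, exterior-power, symmetric-power, and spinor representations, are ruled out by direct dimension comparisons between $\dim V = 2p$ and $\dim(\mathfrak{g}/\mathfrak{k}) = p^2$. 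The exceptional real Lie algebras are similarly eliminated since their minimal nontrivial representations occur in dimensions incompatible with the required symmetric space dimension. Low-dimensional coincidences such as $\mathfrak{so}(3,3)\cong \mathfrak{sl}_4(\Rb)$, under which the standard $\mathfrak{so}(3,3)$-representation on $\Rb^{6}$ matches $\wedge^{2}$ of the standard $\mathfrak{sl}_4$-representation, yield the same embedded group and so do not produce additional solutions.

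Once $\mathfrak{g}\cong \mathfrak{so}(p,p)$ with $V$ its standard representation, the embedding $G\hookrightarrow \PGL_{2p}(\Rb)$ is unique up to conjugation since the $2p$-dimensional irreducible representation of $\mathfrak{so}(p,p)$ is unique up to equivalence. The action of $\PSO(p,p)$ on $\Gr_p(\Rb^{2p})$ has exactly two open orbits, namely the sets of $p$-planes on which the invariant signature-$(p,p)$ quadratic form is positive or negative definite; each is $G$-equivariantly diffeomorphic to $G/K$, and via the Cayley-type projective transformation used in Step 2 of the introduction each is projectively isomorphic to $\Bc_{p,p}$. Since $\Omega$ must coincide with one such open $G$-orbit of dimension $p^{2}=\dim \Gr_p(\Rb^{2p})$, we conclude that $\Omega$ is projectively isomorphic to $\Bc_{p,p}$, completing the proof.
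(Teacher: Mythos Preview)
Your approach is broadly correct but takes a genuinely different route from the paper, and in a couple of places leans on hypotheses or assertions that need more care.

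\textbf{Comparison with the paper.} The paper does not attempt a full enumeration of simple Lie algebras and their small irreducible representations. Instead it extracts a sharp geometric constraint: after conjugating so that the point-stabilizer $K$ lies in $\PSO(2p)$ and fixes $[e_1\wedge\cdots\wedge e_p]$, one has $K \le \{\operatorname{diag}(A,B): A,B\in\SO(p)\}$, hence $\dim K \le p(p-1)$. The $KAK$-decomposition then forces $\rank(G^0)\ge p$, i.e.\ $\rank(G^0)\ge \sqrt{\dim(G^0/K)}$. A glance at the classification shows the only simple groups satisfying this are $\PSL_{d+1}(\Rb)$ and $\PSO(d,d)$; the first is ruled out because its maximal compact $\PSO(d+1)$ is simple and would have to inject into $\PSO(p)$, which is incompatible with the required dimension. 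This bypasses entirely the case-by-case analysis you propose, and in particular never needs irreducibility of the $2p$-dimensional representation as an input. Your enumeration is in principle feasible, but you only assert that the other classical and exceptional families are ``ruled out by direct dimension comparisons''; carrying this out honestly is tedious (for instance, the coincidence $\mathfrak{sl}_4(\Rb)\cong\mathfrak{so}(3,3)$ you mention is one of several low-rank checks, and for $p=4$ the half-spin representations of $\mathfrak{so}(4,4)$ also have dimension $8$ and must be handled via triality).

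\textbf{A gap in your irreducibility argument.} You justify irreducibility of $\Rb^{2p}$ under $\widehat G^0$ by appealing to Theorem~\ref{thm:no_center} and the $\vcd(\Gamma)$ computation of Lemma~\ref{lem:vcd}. But Theorem~\ref{thm:final} as stated does not assume the existence of a discrete cocompact $\Gamma$; it only assumes that $\Aut_0(\Omega)$ is simple with trivial center and acts transitively. You can manufacture a cocompact lattice in $G^0$ by Borel's theorem and then feed it into Theorem~\ref{thm:no_center}, but this step should be made explicit. The paper's argument avoids this altogether: it pins down $K$ first, and the normalization $\phi|_K=\Id$ forces the restriction of the $2p$-dimensional representation to $K$ to be $(\mathrm{std},1)\oplus(1,\mathrm{std})$, which already determines the representation of $\mathfrak{so}(2p,\Cb)$ uniquely.

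Your final identification of $\Omega$ via the two open $\PSO(p,p)$-orbits on $\Gr_p(\Rb^{2p})$ is a clean geometric alternative to the paper's direct computation $\Omega = G^0\cdot x_0 = \Bc_{p,p}$; both are fine.
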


For the rest of the section suppose that $\Omega$ satisfies the hypothesis of Theorem~\ref{thm:final}. As in Section~\ref{sec:initial}, let $G:=\Aut(\Omega)$ and let $G^0$ be the connected component of the identity of $G$. Also let $e_1, \dots, e_{2p} \in \Rb^{2p}$ be the standard basis. 

Throughout the argument we will replace $\Omega$ with translates $g\Omega$ for some $g \in \PGL_{2p}(\Rb)$.  This will have the effect of replacing $G$ with $gGg^{-1}$. 

Fix some $x_0 \in \Omega$ and let $K \leq G^0$ be the connected component of the stabilizer of $x_0$. By Remark~\ref{rmk:max_cpct}, $K$ is a finite index subgroup of some maximal compact subgroup of $G^0$. Moreover, since $K$ is compact, by translating $\Omega$ we may assume that $K \leq \PSO(2p)$. Then since $\PSO(2p)$ acts transitively on $\Gr_{p}(\Rb^{2p})$ we can translate $\Omega$ and assume that $x_0 = [e_1 \wedge \dots \wedge e_p]$. Then 
\begin{align*}
K \leq \left\{ \begin{bmatrix} A & 0 \\ 0 & B \end{bmatrix} : A, B \in \SO(p)\right\}.
\end{align*}
In particular, $\dim(K) \leq p(p-1)$. 

\begin{lemma}
With the notation above, $\rank(G^0) \geq p$.
\end{lemma}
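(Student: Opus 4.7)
The plan is to exploit the restricted root space decomposition of the simple Lie algebra $\gL := \textrm{Lie}(G^0)$, combined with the dimensional bounds on $\pL$ and $\kL$ that the setup provides. Since $G^0$ acts transitively on $\Omega$ with $K$ the identity component of the stabilizer of $x_0$, passing to Lie algebras yields a Cartan decomposition $\gL = \kL \oplus \pL$ with $\dim \pL = \dim \Omega = p^2$ and $\dim \kL = \dim K \leq p(p-1)$ (the latter from the inclusion $K \leq \SO(p) \times \SO(p)$ established above).

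I would then choose a maximal abelian subspace $\mathfrak{a} \subset \pL$ of dimension $r := \rank_{\Rb}(G^0)$ and consider the restricted root space decomposition $\gL = \mathfrak{m} \oplus \mathfrak{a} \oplus \bigoplus_{\alpha \in \Sigma} \gL_\alpha$, where $\mathfrak{m} := Z_\kL(\mathfrak{a})$ and $\Sigma \subset \mathfrak{a}^* \setminus \{0\}$ is the set of restricted roots. The Cartan involution $\theta$ swaps $\gL_\alpha$ and $\gL_{-\alpha}$, so for each positive root $\alpha$ the $\theta$-invariant subspace $\gL_\alpha \oplus \gL_{-\alpha}$ splits as the sum of the equidimensional $(\pm 1)$-eigenspaces of $\theta$, each of dimension $m_\alpha := \dim \gL_\alpha$. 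With $m := \sum_{\alpha \in \Sigma^+} m_\alpha$, this yields the standard identities $\dim \pL = r + m$ and $\dim \kL = \dim \mathfrak{m} + m$.

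Inserting the bounds $\dim \pL = p^2$ and $\dim \kL \leq p^2 - p$ and subtracting gives $\dim \mathfrak{m} \leq (p^2 - p) - (p^2 - r) = r - p$. Since $\dim \mathfrak{m} \geq 0$, this forces $r \geq p$, and since the absolute rank is bounded below by the real rank, we conclude $\rank(G^0) \geq p$. There is no genuine obstacle here: the whole argument is a dimensional consequence of the restricted root space decomposition---a standard tool from real semisimple Lie theory---with the geometric input being precisely the sharp bounds $\dim \pL = p^2$ and $\dim \kL \leq p^2 - p$ supplied by the transitive action on $\Omega$ and the placement of $K$ inside $\SO(p) \times \SO(p)$.
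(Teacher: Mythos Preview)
Your proof is correct and uses the same two inputs as the paper---$\dim\pL = p^2$ and $\dim\kL \leq p(p-1)$---but extracts the rank bound by a different device. The paper argues geometrically: from the $KAK$ decomposition one has $\Omega = KA\cdot x_0$, so the map $K\times A \to \Omega$ is surjective and hence $\dim K + \dim A \geq p^2$, giving $\rank(G^0) = \dim A \geq p$ directly. You instead pass through the restricted root space decomposition to obtain the identity $\dim\pL - \dim\kL = r - \dim\mathfrak{m}$, from which $r \geq p$ follows. Both arguments are short; the paper's is marginally more elementary (no root spaces needed), while yours makes the role of $\mathfrak{m}$ explicit and in fact yields the slightly sharper statement $r \geq p + \dim\mathfrak{m}$. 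Your closing remark about absolute versus real rank is unnecessary: in the paper $\rank(G^0)$ already means the real rank (it is the dimension of $A$ in the $KAK$ decomposition).
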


\begin{proof} Using the Cartan decomposition there exists a connected abelian group $A \leq G^0$ so that $\dim(A) = \rank(G^0)$ and $KAK = G^0$. In particular, in the matrix model of $\Gr_p(\Rb^{2p})$ 
\begin{align*}
\Omega = KAK \cdot \begin{bmatrix} \Id_p & 0 \end{bmatrix}^t = K A \cdot \begin{bmatrix} \Id_p & 0 \end{bmatrix}^t.
\end{align*}
Thus we must have 
\begin{equation}
\label{eq:rank}
\dim(K) + \dim(A) \geq \dim(\Omega) = p^2.
\end{equation}
Since $\dim(K) \leq p(p-1)$ we then have 
\begin{align*}
\rank(G^0) =\dim(A) \geq p.
\end{align*}
\end{proof}

\begin{lemma}
With the notation above, $G^0$ is isomorphic to $\PSO(p,p)$. 
\end{lemma}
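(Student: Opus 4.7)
The plan is to combine the constraints on $G^0$ accumulated so far with the classification of simple real Lie algebras. At this stage we know: $G^0$ is noncompact (since $\dim\Omega = p^2 > 0$), simple with trivial center, its real rank is at least $p$, and the lifted representation $\wh{G}^0 \hookrightarrow \SL_{2p}^\pm(\Rb)$ is irreducible over $\Rb$ (by the analysis of Section~\ref{sec:initial}). Moreover $K$, being compact and of finite index in a maximal compact subgroup (Remark~\ref{rmk:max_cpct}), satisfies $\dim G^0 = \dim K + p^2 \leq 2p^2-p$, and the quotient $G^0/K$ carries the structure of a Riemannian symmetric space of noncompact type (a $G^0$-invariant Riemannian metric is obtained by averaging any one over $K$; the noncompact, simple, trivial-center structure of $G^0$ then forces the symmetric space classification). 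Hence all invariants of $\Omega$ are those of a standard symmetric space of dimension exactly $p^2$.

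The core of the proof is to enumerate the simple real Lie algebras $\mathfrak{g}$ that admit a faithful irreducible real representation of dimension $2p$, have real rank at least $p$, and whose associated symmetric space has dimension $p^2$. Using the classification of simple real Lie algebras and the standard tables of low-dimensional irreducible representations (e.g.\ as in Onishchik-Vinberg or Knapp), the list of plausible candidates reduces to $\mathfrak{sl}_{2p}(\Rb)$, $\mathfrak{sp}_{2p}(\Rb)$, and $\mathfrak{so}(p,p)$, each with its natural defining representation. Other options --- such as $\mathfrak{sl}_n(\Rb)$ with $n<2p$ via an exterior power, complex or quaternionic real forms, spin representations of $\mathfrak{so}(m,n)$ with $m+n < 2p$, and the exceptional Lie algebras --- either violate the real-rank bound, admit no faithful $2p$-dimensional real irreducible representation, or coincide at the Lie-algebra level with one of the three main candidates (as in the low-dimensional isomorphism $\mathfrak{sl}_4(\Rb) \cong \mathfrak{so}(3,3)$ relevant when $p=3$).

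Finally, the two non-target candidates are eliminated by a dimension count of the associated symmetric space: $\SL_{2p}(\Rb)/\SO(2p)$ has dimension $2p^2+p-1$ and $\Sp_{2p}(\Rb)/U(p)$ has dimension $p^2+p$, both of which strictly exceed $p^2 = \dim\Omega$ whenever $p \geq 2$. This forces $\mathfrak{g}^0 \cong \mathfrak{so}(p,p)$, and since $G^0$ has trivial center we conclude $G^0 \cong \PSO(p,p)$. The main obstacle in this plan is executing the classification step cleanly: while elementary, it demands systematic exclusion of candidates across the full list of noncompact real simple Lie algebras and careful handling of low-dimensional coincidences between classical groups.
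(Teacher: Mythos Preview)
Your approach is correct and arrives at the same conclusion, but it differs from the paper's in how the classification is filtered and how the spurious candidates are eliminated.

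The paper does \emph{not} invoke the irreducibility of the $2p$-dimensional representation at this stage. Instead it extracts from $\rank(G^0)\geq p$ and $\dim(G^0/K)=p^2$ the single inequality $\rank(G^0)\geq\sqrt{\dim(G^0/K)}$, and observes (by inspection of the tables of irreducible symmetric spaces) that only the families $\PSL_{d+1}(\Rb)$ and $\PSO(d,d)$ satisfy it. To rule out $\PSL_{d+1}(\Rb)$ the paper then exploits the concrete embedding $K\leq\{\mathrm{diag}(A,B):A,B\in\SO(p)\}$: since $K\cong\PSO(d+1)$ is simple, one of the two projections to $\PSO(p)$ is injective, forcing $\dim K\leq p(p-1)/2$; this contradicts $p^2\leq\rank(G^0)+\dim K$.

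Your route instead uses the irreducibility of $\wh{G}^0\hookrightarrow\SL_{2p}(\Rb)$ together with $\rank\geq p$ to land on the three concrete candidates $\mathfrak{sl}_{2p}(\Rb)$, $\mathfrak{sp}_{2p}(\Rb)$, $\mathfrak{so}(p,p)$, and then kills the first two by comparing $\dim(G^0/K)$ with $p^2$. The elimination step is cleaner in your version, but the price is that the reduction step requires a genuine enumeration of simple real Lie algebras possessing an irreducible real representation of dimension $2p$ with real rank at least $p$ (i.e.\ $\rank\geq(\dim V)/2$). This is doable---the key observation is that over $\Cb$ only the standard representations of types $A$, $C$, $D$ survive apart from low-dimensional coincidences and the triality case $\mathfrak{so}_8$, and then among real forms only the split ones meet the rank bound---but it is more work than the paper's one-inequality filter. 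Both arguments are valid; the paper's is shorter to execute once one accepts the symmetric-space tables, while yours makes more direct use of the representation-theoretic constraint already established in Section~\ref{sec:initial}.
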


\begin{proof}
Now 
\begin{align*}
\dim \left( G^0/K \right) = \dim(\Omega) = p^2
\end{align*}
and 
\begin{align*}
\rank(G^0) \geq p.
\end{align*}
In particular
\begin{align*}
\rank(G^0) \geq \sqrt{ \dim \left( G^0/K \right)}.
\end{align*}
The only two simple Lie groups of non-compact type and with trivial center with this property are $\PSL_{d+1}(\Rb)$ for $d \geq 3$ and $\PSO(d,d)$ for $d\geq2$ (see the classification of simple Lie groups in~\cite[Chapter X]{H1978}). 

If $G^0$ is isomorphic to $\PSL_{d+1}(\Rb)$ then $K$ is isomorphic to $\PSO(d+1)$. In particular $K$ is a simple Lie group and
\begin{align*}
\dim K = \frac{(d+1)(d)}{2}
\end{align*}
Next consider the natural projections, 
\begin{align*}
\pi_1, \pi_2 : K \rightarrow \PSO(p)
\end{align*}
given by 
\begin{align*}
\pi_1 \left( \begin{bmatrix} A & 0 \\ 0 & B \end{bmatrix} \right) = A \text{ and } \pi_2 \left( \begin{bmatrix} A & 0 \\ 0 & B \end{bmatrix} \right) = B
\end{align*}
Now since $K$ is simple either $(\pi_1 \times \pi_2):K \rightarrow \PSO(p) \times \PSO(p)$ is trivial or injective. But
\begin{align*}
\ker (\pi_1 \times \pi_2) \leq \left\{ \Id_{2p}, \begin{bmatrix} \Id_p & \\ & -\Id_p \end{bmatrix},\begin{bmatrix} -\Id_p & \\ & \Id_p \end{bmatrix} \right\}.
\end{align*}
so $\pi_1 \times \pi_2$ is injective. Thus at least one $\pi_i$ has non-trivial image. Then by the simplicity of $K$ we see that $K \cong \pi_i(K) \leq \PSO(p)$. So 
\begin{align*}
\dim K \leq \frac{p(p-1)}{2}
\end{align*}
and so
\begin{align*}
 (d+1)(d) \leq p(p-1).
 \end{align*}
 Thus $d \leq p+1$. But then we have a contradiction, because by Equation~\ref{eq:rank}, we have
 \begin{align*}
 p^2 \leq \rank(G^0) + \dim(K) \leq d + \frac{p(p-1)}{2} \leq p+1  + \frac{p(p-1)}{2} = \frac{p^2+p}{2}+1
\end{align*}
which is only true when $p=2$. Then $d=p+1 = 3$, but 
\begin{align*}
\dim \PSL_{4}(\Rb) / \PSO(4) = 9 \neq 4 = \dim \Omega
\end{align*}
so this case is impossible. 

Thus we must have that $G^0$ is isomorphic to $\PSO(p,p)$. 
\end{proof}

Now the inclusion $G^0 \leq \PGL_{2p}(\Rb)$ induces a representation $\phi: \PSO(p,p) \rightarrow \PGL_{2p}(\Rb)$. Notice that replacing $\Omega$ with $g\Omega$ for some $g \in \PGL_{2p}(\Rb)$ has the effect of replacing $\phi$ with $\Ad(g) \circ \phi$. 

At this point there is a number of ways to deduce that this representation is conjugate to the standard inclusion, but we will use the representation theory of $\SO(2p,\Cb)$ because it is appears explicitly in standard references (for instance~\cite{FH1991}).

Now since $K$ has finite index in a maximal compact subgroup of $G^0 \cong \PSO(p,p)$ and 
\begin{align*}
K \leq \left\{ \begin{bmatrix} A & 0 \\ 0 & B \end{bmatrix} : A, B \in \SO(p)\right\}.
\end{align*}
so we see that
\begin{align*}
K = \left\{ \begin{bmatrix} A & 0 \\ 0 & B \end{bmatrix} : A, B \in \SO(p)\right\}.
\end{align*}
Then since maximal compact subgroups are conjugate in $G^0$ we may translate $\Omega$ to assume that 
\begin{align*}
\phi( P(\SO(p) \times \SO(p))) = P( \SO(p) \times \SO(p)).
\end{align*}
Now if $K_1 = P(\SO(p) \times \{\Id_p\})$ and $K_2 = P(\SO(p) \times \{\Id_p\})$ then, using the simplicity of $K_1,K_2$ and the fact that $\phi(K_1), \phi(K_2)$ commute, we see that 
\begin{align*}
\{ \phi(K_1), \phi(K_2)\} = \{K_1, K_2\}.
\end{align*}
So by translating $\Omega$ we may assume that $\phi(K_1) = K_1$ and $\phi(K_2) = K_2$. Now each $K_i$ is isomorphic to $\SO(p)$. 

\begin{lemma}
If $f:\SO(p) \rightarrow \SO(p)$ is an automorphism then there exists some $h \in \mathrm{O}(p)$ so that $f(k) = h k h^{-1}$ for all $k \in \SO(p)$. So we can translate $\Omega$ and assume that $\phi(k)=k$ for all $k \in K_1 \cup K_2$. 
\end{lemma}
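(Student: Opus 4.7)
The plan is to reduce the lemma to the classification of Lie algebra automorphisms of $\mathfrak{so}(p)$ and then lift back via the exponential.

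First, I would argue that $f$ is continuous. For $p \ge 3$, $\SO(p)$ is a connected semisimple Lie group with finite center, so any abstract group automorphism is automatically continuous by a classical theorem of van der Waerden (see e.g.\ Hochschild, \emph{The Structure of Lie Groups}); for $p = 2$ the lemma is invoked only for automorphisms induced by conjugation in $\PGL_{2p}(\Rb)$, so continuity is automatic there. Hence $f$ is a Lie group automorphism, and differentiating at the identity yields an automorphism $df \in \Aut(\mathfrak{so}(p))$.

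Next, I would invoke the identification $\Aut(\SO(p)) = \Ad(\mathrm{O}(p))$. For $p$ odd the Dynkin diagram (type $B_{(p-1)/2}$) has no nontrivial symmetry, so $\Aut(\mathfrak{so}(p)) = \Inn(\mathfrak{so}(p)) = \Ad(\SO(p))$. For $p$ even the diagram $D_{p/2}$ has a nontrivial involution, realized on $\mathfrak{so}(p)$ by $\Ad(h_0)$ for $h_0 := \mathrm{diag}(-1, 1, \ldots, 1) \in \mathrm{O}(p) \setminus \SO(p)$. In the exceptional case $p = 8$ the triality $S_3$ Lie algebra automorphisms of $\mathfrak{so}(8)$ lift to $\mathrm{Spin}(8)$ but cyclically permute the three nontrivial elements of $Z(\mathrm{Spin}(8)) \cong (\mathbb{Z}/2)^2$; they therefore do not preserve $\ker(\mathrm{Spin}(8) \to \SO(8))$ and do not descend to automorphisms of $\SO(8)$. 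In every case there exists $h \in \mathrm{O}(p)$ with $df(X) = h X h^{-1}$ for all $X \in \mathfrak{so}(p)$.

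Finally, because $\SO(p)$ is compact and connected, the exponential $\exp : \mathfrak{so}(p) \to \SO(p)$ is surjective, and for any $X \in \mathfrak{so}(p)$
\begin{align*}
f(\exp X) = \exp(df(X)) = \exp(h X h^{-1}) = h(\exp X) h^{-1},
\end{align*}
yielding $f(k) = h k h^{-1}$ for every $k \in \SO(p)$. The main subtlety lies in the second step: one must check, particularly in the $p = 8$ case, that no Lie algebra automorphism of $\mathfrak{so}(p)$ escapes $\Ad(\mathrm{O}(p))$ once lifted to $\SO(p)$, which is resolved by the action on $Z(\mathrm{Spin}(8))$ described above.
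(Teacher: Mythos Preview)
Your proof is correct and follows essentially the same route as the paper: differentiate to get a Lie algebra automorphism, invoke the Dynkin diagram classification of $\Aut(\mathfrak{so}(p))/\Inn(\mathfrak{so}(p))$, and observe that the nontrivial outer class (when it exists) is realized by conjugation by an element of $\mathrm{O}(p)\setminus\SO(p)$.

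The differences are ones of explicitness rather than strategy. You supply a continuity argument (van der Waerden for $p\geq 3$, context for $p=2$), which the paper tacitly assumes; you make the lift from $\mathfrak{so}(p)$ back to $\SO(p)$ explicit via surjectivity of the exponential, which the paper leaves implicit; and for the delicate $p=8$ case you give an actual reason---the order-three triality automorphisms of $\mathfrak{so}(8)$ cyclically permute the three nontrivial elements of $Z(\mathrm{Spin}(8))\cong(\Zb/2\Zb)^2$ and hence do not fix $\ker(\mathrm{Spin}(8)\to\SO(8))$---whereas the paper simply cites \cite[Section 20.3]{FH1991}. Your treatment is thus somewhat more self-contained, but the underlying argument is the same.
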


\begin{proof}
Suppose $f:\SO(p) \rightarrow \SO(p)$ is an automorphism. Then the induced map on Lie algebras $d(f): \Lso(p) \rightarrow \Lso(p)$ is an automorphism. By~\cite[Proposition D.40]{FH1991} there is a group isomorphism between $\Aut(\Lso(p))/\Inn(\Lso(p))$ and the automorphisms of the Dynkin diagram of $\Lso(p)$. 

When $p=2n+1$, the Dynkin diagram is $B_n$ which has trivial automorphism group. Thus when $p$ is odd every automorphism is inner. 

When $p=2n$ the Dynkin diagram is $D_n$. Now $D_n$ has a non-trivial automorphism which is induced by the map
\begin{align*}
k \rightarrow hkh^{-1}
\end{align*}
where $h \in \mathrm{O}(p)$ and $\det h=-1$. When $n \neq 4$, the automorphism group of $D_n$ is $\Zb/2\Zb$ and so this is the only non-trivial automorphism. When $n=4$, the automorphism group is of $D_n$ is isomorphic to the symmetric group on $3$ elements, however it is well known that $\Aut(\SO(8))$ only induces two of them (see for instance~\cite[Section 20.3]{FH1991}).

\end{proof}

Now let $d(\phi): \Lso(p,p) \rightarrow \Lsl_{2p}(\Rb)$ be the corresponding Lie algebra representation. We can complexity to obtain a representation $d(\phi): \Lso(2p,\Cb) \rightarrow \Lsl_{2p}(\Cb)$. But then by the classification of irreducible representations of $\SO(2p,\Cb)$ (see for instance~\cite[Chapter 19]{FH1991}) we see that there exists $g \in \SL_{2p}(\Cb)$ so that 
\begin{align*}
\Ad(g) d(\phi)  = \iota
\end{align*}
where $\iota: \Lso(2p,\Cb) \hookrightarrow \Lsl_{2p}(\Cb)$ is the standard inclusion representation. Since 
\begin{align*}
g^{-1}\begin{pmatrix} X_1 & 0 \\ 0 & X_2 \end{pmatrix} g=d(\phi)\begin{pmatrix} X_1 & 0 \\ 0 & X_2 \end{pmatrix} =  \begin{pmatrix} X_1 & 0 \\ 0 & X_2 \end{pmatrix} 
\end{align*}
for all $X_1,X_2 \in \Lso(p)$ it is easy to see that 
\begin{align*}
g = \begin{pmatrix} \alpha \Id_p & 0 \\ 0 & \alpha^{-1} \Id_p \end{pmatrix}
\end{align*}
for some $\alpha \in \Cb^*$. Now 
\begin{align*}
g \begin{pmatrix} A & B \\ C & D \end{pmatrix} g^{-1} = \begin{pmatrix} A & \alpha^2 B \\ \alpha^{-2} C & D \end{pmatrix} 
\end{align*}
and $g d(\phi)(\Lso(p,p))g^{-1} = \Lso(p,p)$. So $\alpha^2 \in \Rb$. Thus $\alpha = \lambda i$ for some $\lambda \in \Rb^*$. But then if 
\begin{align*}
g_0 = -i g = \begin{pmatrix} \lambda \Id_p & 0 \\ 0 & -\lambda^{-1} \Id_p \end{pmatrix}
\end{align*}
we see that 
\begin{align*}
\Ad(g_0) d(\phi) = \Ad(g) d(\phi) = \iota.
\end{align*}
So if we replace $\Omega$ by $g_0 \Omega$ we see that  $\phi: \PSO(p,p) \hookrightarrow \PGL_{2p}(\Rb)$ is the standard inclusion representation and so $G^0 = \PSO(p,p)$. 

Finally 
\begin{align*}
\Omega = G^0 \cdot x_0 = \PSO(p,p) \cdot \begin{bmatrix} \Id_p \\ 0 \end{bmatrix} = \Bc_{p,p}
\end{align*}
and the theorem is proven.

\part{Appendices}

\appendix

\section{Proof of Theorem~\ref{thm:completeness}}\label{sec:proof_complete}

In this section we prove that (1) implies (2) in Theorem~\ref{thm:completeness}:

\begin{theorem}
Suppose $\Mb \subset \Gr_p(\Rb^{p+q})$ is an affine chart and $\Omega \subset \Mb$ is an open convex set. If $\Omega$ is $\Rc$-proper then $K_{\Omega}$ is a complete length metric on $\Omega$.
\end{theorem}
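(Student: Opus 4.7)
The plan is to verify the length-metric property and establish completeness via a boundary-divergence lower bound that uses $\Rc$-properness precisely where needed.

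First, I verify that $K_\Omega$ is a length metric. Each finite value $\rho_\Omega(x_i, x_{i+1})$ is realized by the Hilbert length of a concrete projective-line segment contained in $\Omega$, so any chain in the definition of $K_\Omega$ corresponds to a genuine piecewise-projective path in $\Omega$ of equal total length. Together with the continuity of $K_\Omega$ from Proposition~\ref{prop:basic_d}(5), a standard $\epsilon$-midpoint subdivision argument then upgrades $K_\Omega$ from a pseudo-metric defined by infima of chain sums to a length metric.

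The heart of the proof is a supporting-hyperplane lower bound on $\rho_\Omega$. For any $\xi \in \partial\Omega \cap \Mb$, convexity of $\Omega$ in $\Mb$ produces an affine supporting hyperplane $H \subset \Mb$ with $\xi \in H$ and $\Omega$ contained in $\{d_H > 0\}$, where $d_H$ denotes the affine distance to $H$. The key pointwise claim is that
\[
\rho_\Omega(x, y) \;\geq\; \bigl| \log d_H(x) - \log d_H(y) \bigr|
\]
for every pair $x, y \in \Omega$ lying on a common rank-one projective line $\ell$. By Lemma~\ref{lem:rank_one_lines} the affine trace $\ell \cap \Mb$ is an affine line on which $d_H$ is affine; the $\Rc$-proper hypothesis guarantees $\ell \cap \Omega$ is a proper (possibly half-bounded) interval $(a, b)$ in that affine line, and the inclusion $\Omega \subset \{d_H > 0\}$ forces any zero of $d_H|_\ell$ to lie outside the open interval $(a, b)$. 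A direct cross-ratio manipulation, treated case-by-case by the position of this zero, yields the inequality. Telescoping along chains then gives the global bound
\[
K_\Omega(x_0, y) \;\geq\; \bigl| \log d_H(x_0) - \log d_H(y) \bigr|,
\]
which diverges as $y \to \xi$ in $\Omega$.

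Completeness follows from a compactness argument. A $K_\Omega$-Cauchy sequence $\{x_n\}$ is $K_\Omega$-bounded, and by compactness of $\Gr_p(\Rb^{p+q})$ some subsequence converges in $\Gr_p(\Rb^{p+q})$ to a point $\xi \in \overline{\Omega}$. If $\xi \in \Omega$, the length structure and Proposition~\ref{prop:basic_d}(5) upgrade this to $K_\Omega$-convergence; if $\xi \in \partial\Omega \cap \Mb$, the supporting-hyperplane bound contradicts $K_\Omega$-boundedness.

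The main obstacle is the remaining case $\xi \in \overline{\Omega} \setminus \Mb$, which can occur because $\Omega$ is not assumed bounded in $\Mb$. For this I would pick $g \in \PGL_{p+q}(\Rb)$ carrying $\xi$ into a new affine chart $\Mb' = g\Mb$ and transfer the analysis to $g\Omega$ near the finite point $g\xi$ via Proposition~\ref{prop:basic_d}(1); since $g\Omega$ need not be convex in $\Mb'$, the supporting-hyperplane argument cannot be applied verbatim, and one must instead exploit the projective invariance of rank-one lines and of the $\Rc$-proper hypothesis to construct a local separating hypersurface of the form $Z_\eta$ (in the notation preceding Theorem~\ref{thm:extreme}) through $g\xi$ and replay the cross-ratio estimate against this hypersurface. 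Verifying that such a local hypersurface exists and yields the same divergence bound is the technical crux of the appendix.
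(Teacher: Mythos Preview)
Your supporting-hyperplane inequality $K_\Omega(x,y)\geq|\log d_H(x)-\log d_H(y)|$ is correct and gives a clean proof that $K_\Omega(x_0,y)\to\infty$ as $y$ approaches any finite boundary point. However, there are two genuine gaps. First, you never establish that $K_\Omega$ is a metric rather than a pseudo-metric: your hyperplane bound vanishes whenever $x-y$ is parallel to $H$, and if $\Omega$ contains the entire affine line $x+\Rb(x-y)$ then \emph{every} supporting hyperplane is parallel to $x-y$, so the bound gives nothing. Since $\Rc$-properness only excludes rank-one affine lines, this situation genuinely arises. Second, your handling of limits $\xi\in\overline\Omega\setminus\Mb$ is a program, not a proof: passing to a new chart destroys convexity (as you note), and the $Z_\eta$ idea would require identifying a specific $\eta$, proving separation, and redoing the cross-ratio estimate against a non-affine hypersurface---none of which you carry out.

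The paper resolves both issues with one device: the function $\delta_\Omega(x;v)=\inf\{\|y-x\|:y\in\partial\Omega\cap(x+\Rb v)\}$ for rank-one $v$, which is finite by $\Rc$-properness and continuous by convexity. Its boundedness on compacta yields $\rho_\Omega(z_1,z_2)\geq c\,\|z_1-z_2\|$ for rank-one segments inside any fixed ball, which immediately gives positivity. For limits at infinity the paper stays in the original chart and writes $\Omega$ affinely as $\Rb^d\times\Omega'$ with $\Omega'$ containing no affine lines (here $\Rc$-properness forces $d<\dim\Mb$); the ordinary Hilbert metric $H_\Omega\leq K_\Omega$ confines the $\Omega'$-coordinate along any $K_\Omega$-bounded curve, and since $\delta_\Omega$ is translation-invariant in the $\Rb^d$-factor one gets a uniform Lipschitz lower bound along chains, confining the $\Rb^d$-coordinate as well. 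The point you are missing is that the higher-rank affine lines inside $\Omega$---exactly what breaks your hyperplane argument for positivity---are also what must be isolated to handle escape to infinity, and the affine splitting does both at once.
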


We will use some basic properties of the Hilbert metric $H_{\Cc}$ on a convex set $\Cc \subset \Rb^d$. In particular we will use:
\begin{enumerate}
\item (equivariance) If $A \in \Aff(\Rb^d)$ then $H_{A \Cc}(A x, A y) = H_{\Cc}(x,y)$, 
\item (properness) If $x \in \partial \Cc$ and $x_n \in \Cc$ is a sequence with $x_n \rightarrow x$ then 
\begin{align*}
H_{\Cc}(x_0, x_n) \rightarrow \infty,
\end{align*}
\item (completeness) If $\Cc$ contains no affine lines then $H_{\Cc}$ is a complete metric, 
\item If $\Cc = \Rb^d \times \Cc^\prime$ then 
\begin{align*}
H_{\Cc}((x_1, y_1), (x_2, y_2)) = H_{\Cc^\prime}(y_1, y_2).
\end{align*}
\end{enumerate}
All these properties follow immediately from the cross ratio definition of the Hilbert metric.

\begin{proof}
Identify $\Mb$ with the set of $q$-by-$p$ matrices and let $\Mb_1 \subset \Mb$ be the subset of rank one matrices. Define a function $\delta_{\Omega}: \Omega \times \Mb_1 \rightarrow \Rb_{\geq 0}$ by 
\begin{align*}
\delta_{\Omega}(x;v) = \inf \{ \norm{y-x} : y \in \partial \Omega \cap (x+\Rb v) \}.
\end{align*}
Since $\Omega$ is $\Rc$-proper, we must have that $\delta_{\Omega}(x;v) < \infty$ for all $x \in \Omega$ and $v \in \Mb_1$. Moreover, since $\Omega$ is convex, $\delta_{\Omega}$ is a continuous function. 

We will first show that $K_{\Omega}$ is a metric, using Proposition~\ref{prop:basic_d} we only need show that $K_{\Omega}(x,y) > 0$ for $x,y \in \Omega$ distinct. Now we can find $\epsilon >0$ such that the closed Euclidean ball 
\begin{align*}
B_{\epsilon}(x) = \{ z \in \Mb : \norm{x-z} \leq \epsilon \} 
\end{align*} 
is contained in $\Omega$ but $y \notin B_{\epsilon}(x)$. Since $\delta_{\Omega}$ is continuous, there exists $M >0$ such that 
\begin{align*}
\delta_{\Omega}(z;v) \leq M
\end{align*}
 for all $z \in B_{\epsilon}(x)$ and $v \in \Mb_1$. 
 
We claim that if $[z_1, z_2] \subset B_{\epsilon}(x)$ then $\rho_{\Omega}(z_1, z_2) \geq 1/(\epsilon+M) \norm{z_1-z_2}$. If $z_2-z_1 \notin \Mb_1$ then $\rho_\Omega(z_1, z_2) = \infty$ so we may assume that $z_2 - z_1 \in \Mb_1$. Then let $(a,b) = \overline{z_1 z_2} \cap \Omega$ labelled so that $a, z_1, z_2, b$ is the ordering along the line. By relabeling we may assume that $\norm{a-z_1} = \delta_{\Omega}(z_1, z_1-z_2) \leq M$. Then 
 \begin{align*}
 \rho_{\Omega}(z_1, z_2)
 & =  \abs{ \log\frac{ \norm{z_1-a}\norm{z_2-b}}{\norm{z_1-b}\norm{z_2-a}}} \geq \log \frac{ \norm{z_2-a}}{\norm{z_1-a}}  \\
 & = \int_{\norm{z_1-a}}^{\norm{z_2-a}} \frac{dt}{t} \geq \frac{1}{\norm{z_2-a}} \left( \norm{z_2-a} - \norm{z_1-a} \right).
 \end{align*}
 Since $z_1, z_2, a$ are all collinear and $\norm{z_1 - z_2} \leq \epsilon$ we then have 
  \begin{align*}
 \rho_{\Omega}(z_1, z_2) \geq \frac{1}{M+\epsilon}\norm{z_1-z_2}.
 \end{align*}
 Now we wish to show that $K_{\Omega}(x,y) > 0$. We claim that 
 \begin{align*}
  \rho_{\Omega}(x,a_1) + \sum_{i=1}^{n-1} \rho_{\Omega}(a_i,a_{i+1}) + \rho_{\Omega}(a_n,y) \geq \frac{\epsilon}{M+\epsilon}.
 \end{align*}
 for any $a_1, \dots, a_n \in \Omega$. This will imply that $d_{\Omega}(x,y) > 0$. Now by definition if $a,b \in \Mb$ and $c\in [a,b]$  then 
 \begin{align*}
 \rho_\Omega(a,b)+\rho_\Omega(b,c) = \rho_\Omega(a,c).
 \end{align*}
So without loss of generality there exists $1 \leq \ell < n$ such that $a_1, \dots, a_\ell \in B_{\epsilon}(x)$ and $a_{\ell+1} \in \partial B_{\epsilon}(x)$. Then by the above calculation 
 \begin{align*}
   \rho_{\Omega}(x,a_1) + \sum_{i=1}^{\ell} \rho_{\Omega}(a_i,a_{i+1}) \geq \frac{1}{M+\epsilon} \left( \norm{x-a_1} + \sum_{i=1}^{\ell} \norm{a_i-a_{i+1}} \right) \geq  \frac{\epsilon}{M+\epsilon}.
 \end{align*}
  This shows that $K_{\Omega}$ is a metric. 
 
 We will next show that $K_{\Omega}$ is a length metric. This follows from the fact that if $x, y \in \Omega$ and $x-y \in \Mb_1$ then 
 \begin{align*}
 \rho_{\Omega}(x,y) = \rho_{\Omega}(x,z) + \rho_{\Omega}(z,y)
 \end{align*}
 for any $z\in [x,y]$. Thus when $x-y \in \Mb_1$, there is a curve of length at most $\rho_{\Omega}(x,y)$ joining $x$ to $y$. Then by definition for any $x,y \in \Omega$ there exists a sequence of curves $\sigma_n$ joining $x$ to $y$ and whose length converges to $K_{\Omega}(x,y)$. 
 
Next we show that $K_{\Omega}$ is proper, that is for any $x_0 \in \Omega$ and $R \geq 0$ the closed metric balls $B=\{ x \in \Omega : K_{\Omega}(x,x_0) \leq R\}$ are compact. Let $x_n \in B$ be a subsequence, we will show that a subsequence of $x_n$ converges in $B$. By passing to a subsequence we can suppose that $x_n \rightarrow x \in \Mb$ or $x_n \rightarrow \infty$. 

If $x_n \rightarrow x \in \Mb$ and $x \in \Omega$ then $x \in B$ by part (5) of Proposition~\ref{prop:basic_d}. Otherwise $x \in \partial \Omega$. Let $H_\Omega$ be the Hilbert metric on $\Omega$, then $H_\Omega \leq K_\Omega$ by Kobayashi's construction of the Hilbert metric (described in Subsection~\ref{subsec:hilbert}).  So
\begin{align*}
K_{\Omega}(x_0,x_n) \geq H_{\Omega}(x_0,x_n) \rightarrow \infty
\end{align*}
which is a contradiction. 

Finally suppose that $x_n \rightarrow \infty$. If $\Omega$ contains no affine lines then $H_{\Omega}$ is a proper metric and so
\begin{align*}
K_{\Omega}(x_0,x_n) \geq H_{\Omega}(x_0,x_n) \rightarrow \infty.
\end{align*}
If $\Omega$ is not proper, then we can identify $\Mb$ with $\Rb^{D}$ where $D = pq$ and find an affine map $\Phi \in \Aff(\Rb^D)$ so that $\Phi \Omega = \Rb^d \times \Omega^\prime$ where $\Omega^\prime$ is a proper convex set and $d \leq D$. Notice that $H_{\Omega}(z_1,z_2) = H_{\Phi \Omega}(\Phi z_1, \Phi z_2)$ for all $z_1, z_2 \in \Omega$ but the metric $K_{\Phi \Omega}$ and $K_{\Omega}$ have no clear relationship because $\Phi$ will in general not preserve the rank one lines. Since $\Omega$ is $\Rc$-proper we must have that $d < D$. Let $\pi: \Rb^D \rightarrow \Rb^{D-d}$ be the projection onto the second factor. Next let $\sigma_n:[0,1] \rightarrow \Omega$ be a curve joining $x_0$ to $x_n$ with $K_\Omega$-length less than $R+\epsilon$. 

We claim that the set $\overline{\{\pi(\Phi\sigma_n(t)) : n \in \Nb, t \in [0,1] \}}$ is a compact subset of $\Omega^\prime$. This follows from the fact that 
\begin{align*}
R+\epsilon \geq K_{\Omega}(x_0,\sigma_n(t))&  \geq H_{\Omega}(x_0,\sigma_n(t)) = H_{\Phi \Omega}(\Phi x_0, \Phi \sigma_n(t)) \\
& = H_{\Omega^\prime}(\pi (\Phi x_0),\pi(\Phi \sigma_n(t)))
\end{align*}   
and the fact that $H_{\Omega^\prime}$ is a proper metric on $\Omega^\prime$. So if $x_n = \Phi^{-1}(y_n, z_n)$, we must have $y_n \rightarrow \infty$. But then notice that 
\begin{align*}
\delta_{\Omega}(x+a; v) =  \delta_{\Omega}(x; v) 
\end{align*}
for all $a \in \Phi^{-1}(\Rb^d \times \{0\})$ and $v \in \Mb_1$. And so there exists $M \geq 0$ such that 
\begin{align*}
\delta_{\Omega}(\sigma_n(t);v) \leq M
\end{align*}
for all $n \in \Nb$, all $t \in [0,1]$, and $v \in \Mb_1$. But then arguing as before we see that
\begin{align*}
\length(\sigma_n) \geq \frac{1}{M} \norm{x_0-x_n}.
\end{align*}
Since $x_n \rightarrow \infty$ and $\length(\sigma_n) < R+\epsilon$ we have a contradiction. 

Finally we observe that $K_{\Omega}$ is a complete metric on $\Omega$. If $(x_n)$ is a Cauchy sequence then we can pass to a subsequence so that 
\begin{align*}
\sum_{n=1}^{\infty} \norm{x_n-x_{n+1}} = R < \infty.
\end{align*}
But then $x_n \in \{ x \in \Omega: d_{\Omega}(x_1,x) \leq R\}$ which is a compact subset of $\Omega$. 
\end{proof}

\section{Proof of Theorem~\ref{thm:haus_conv}}\label{sec:proof_of_haus}

In this section we prove Theorem~\ref{thm:haus_conv}:

\begin{theorem}\label{thm:haus_conv_app}
Let $\Mb$ be an affine chart of $\Gr_p(\Rb^{p+q})$ and suppose $\Omega_n \subset \Mb$ is a sequence of $\Rc$-proper convex open sets converging to a $\Rc$-proper convex open set $\Omega \subset \Mb$ in the local Hausdorff topology. Then 
\begin{align*}
K_{\Omega}(x,y) = \lim_{n \rightarrow \infty} K_{\Omega_n}(x,y)
\end{align*}
for all $x,y \in \Omega$ uniformly on compact sets of $\Omega \times \Omega$.
\end{theorem}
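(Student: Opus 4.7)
The plan is to prove the two inequalities $\limsup_{n\to\infty} K_{\Omega_n}(x,y)\le K_\Omega(x,y)$ and $\liminf_{n\to\infty} K_{\Omega_n}(x,y)\ge K_\Omega(x,y)$ separately, with uniformity on compacts following from the uniformity of the intermediate estimates.

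The basic building block is a local convergence statement for $\rho$: if $x,y\in\Omega$ both lie on some rank-one projective line $\ell\in\Lc$, then $\rho_{\Omega_n}(x,y)\to\rho_\Omega(x,y)$, uniformly for $(x,y)$ ranging in a compact set of such pairs. This reduces to continuity of the cross-ratio in its four arguments: the two boundary points of the segment $\ell\cap\Omega_n$ exist by $\Rc$-properness and lie a positive distance from $x,y$, and they converge to the endpoints of $\ell\cap\Omega$ by convexity together with local Hausdorff convergence. The upper bound is then immediate: given $\epsilon>0$, choose a piecewise-rank-one path $x=x_0,\dots,x_N=y$ in $\Omega$ with $\sum\rho_\Omega(x_i,x_{i+1})<K_\Omega(x,y)+\epsilon$; the path lies in a compact subset of $\Omega$, hence in $\Omega_n$ for all large $n$, and the local convergence gives $K_{\Omega_n}(x,y)\le\sum\rho_{\Omega_n}(x_i,x_{i+1})\to\sum\rho_\Omega(x_i,x_{i+1})$.

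The lower bound is the main obstacle. Given a sequence of nearly minimizing piecewise-rank-one paths $\sigma_n$ in $\Omega_n$, the number of segments may grow without bound, so one cannot directly pass to a limit. My plan is to confine the paths to a fixed compact subset of $\Omega$ via comparison with the Hilbert metric: one has $H_{\Omega_n}\le K_{\Omega_n}$ by Kobayashi's construction (Subsection~\ref{subsec:hilbert}), and $H_{\Omega_n}\to H_\Omega$ on compacts of $\Omega$ because the Hilbert metric is an explicit cross-ratio in the ambient convex body. Consequently, points of bounded $K_{\Omega_n}$-distance from $x$ lie in bounded Hilbert balls, which for $n$ large are contained in a fixed compact set $C\subset\Omega$.

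On $C$, the rank-one distance functions $\delta_{\Omega_n}(z;v)=\inf\{\|w-z\|:w\in\partial\Omega_n\cap(z+\Rb v)\}$ (for rank-one $v$) are uniformly bounded above and away from zero in $n$, by local Hausdorff convergence and $\Rc$-properness, so one obtains two-sided comparisons $c_1\|a-b\|\le\rho_{\Omega_n}(a,b)\le c_2\|a-b\|$ on short rank-one segments contained in $C$, in the spirit of the proof of Theorem~\ref{thm:completeness}. Subdividing each $\sigma_n$ so that every segment has Euclidean length at most $1/k$ yields, after arc-length reparametrization, a uniformly Lipschitz curve valued in a fixed compact subset of $\Omega$; an Arzel\`a--Ascoli argument combined with a diagonal in $k$ extracts a limit Lipschitz curve $\sigma:[0,L]\to\Omega$. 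Applying the local convergence $\rho_{\Omega_n}\to\rho_\Omega$ segment by segment along finer partitions bounds the $K_\Omega$-length of $\sigma$ from above by $\liminf_{n\to\infty} K_{\Omega_n}(x,y)$, and uniformity in $(x,y)$ on compact subsets of $\Omega\times\Omega$ is automatic because each of the estimates above is uniform there.
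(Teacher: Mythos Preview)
Your upper bound argument is correct and essentially matches the paper.

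For the lower bound, the confinement step has a genuine gap. You propose to trap the near-optimal chains via the Hilbert metric: $H_{\Omega_n}\le K_{\Omega_n}$, and $H_{\Omega_n}$-balls should sit in a fixed compact $C\subset\Omega$. But $\Omega$ is only assumed $\Rc$-proper, not bounded; it may contain affine lines of rank $\ge 2$. For instance the cone $\{X\in M_{p,p}(\Rb): X+X^t>0\}$ is $\Rc$-proper yet contains every skew-symmetric line, so its Hilbert pseudometric degenerates in those directions and Hilbert balls are not compact. This case is not exotic: it is exactly the tangent cone that appears in Corollary~\ref{cor:rescaling}, which is the whole point of the theorem. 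So $H_{\Omega_n}$-boundedness gives no control in those directions, and a rank-one zigzag can run off to infinity while keeping bounded $K_{\Omega_n}$-length as far as your argument can see.

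The paper confines the chains differently. It first proves a multiplicative comparison $(1-\epsilon)\rho_{\Omega_n}\le\rho_\Omega\le(1+\epsilon)\rho_{\Omega_n}$ uniformly on any fixed compact $K'\subset\Omega$. It then takes $K'$ to be a large $K_\Omega$-ball, which is compact by Theorem~\ref{thm:completeness}, and bootstraps: if a near-optimal $\Omega_n$-chain from $x$ to $y$ first hits $\partial K'$ at some $a_\ell$, applying the $\rho$-comparison to the initial portion (which lies in $K'$) gives $K_\Omega(x,a_\ell)\le(1+\epsilon)(K_{\Omega_n}(x,y)+\delta)$, contradicting the choice of radius of $K'$. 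Once the chain is trapped in $K'$, the same multiplicative comparison of $\rho$ applied segment by segment yields $K_\Omega(x,y)\le(1+\epsilon)K_{\Omega_n}(x,y)$ directly. No Arzel\`a--Ascoli or limit curve is needed, and note that your bi-Lipschitz comparison $c_1\|a-b\|\le\rho_{\Omega_n}(a,b)\le c_2\|a-b\|$ would only yield $K_\Omega\le(c_2/c_1)K_{\Omega_n}$ with a fixed constant, not $(1+\epsilon)$; it is the multiplicative $\rho$-comparison that gives the sharp inequality.
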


\begin{lemma} With the notation above, for any compact subset $K \subset \Omega$ and $\epsilon > 0$ there exists $N >0$ so that
\begin{align*}
(1-\epsilon) \rho_{\Omega_n}(x,y) \leq \rho_{\Omega}(x,y) \leq (1+\epsilon) \rho_{\Omega_n}(x,y)
\end{align*}
for all $x,y \in K$ and $n \geq N$. 
\end{lemma}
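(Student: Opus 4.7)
The plan is to express $\rho_\Omega(x,y)$ as a logarithmic cross-ratio along the rank-one projective line through $x$ and $y$, establish uniform convergence of the corresponding endpoints under local Hausdorff convergence $\Omega_n \to \Omega$, and then deduce the multiplicative bound by a direct calculation. First, if $\dim(x \cap y) < p - 1$, both $\rho_\Omega(x,y) = \infty$ and $\rho_{\Omega_n}(x,y) = \infty$, so the inequalities are vacuous. Let $K^{*} := \{(x,y) \in K \times K : \dim(x \cap y) \geq p - 1\}$, a compact set. For $(x,y) \in K^{*}$ with $x \neq y$, parametrize the unique rank-one line through $x,y$ in $\Mb$ by $\gamma(t) = x + t(y-x)$. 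Convexity of $\Omega$ and $\Omega_n$ in $\Mb$ makes $\gamma^{-1}(\Omega) = (\alpha, \beta)$ and $\gamma^{-1}(\Omega_n) = (\alpha_n, \beta_n)$ open intervals containing $[0,1]$ in their interiors (for $n$ large, since $K \subset \Omega_n$ eventually). Writing $u = -\alpha$, $v = \beta - 1$, $u_n = -\alpha_n$, $v_n = \beta_n - 1$, the preceding remark gives
\[
\rho_\Omega(x,y) = \log\!\left(1 + \tfrac{1}{u}\right) + \log\!\left(1 + \tfrac{1}{v}\right), \qquad \rho_{\Omega_n}(x,y) = \log\!\left(1 + \tfrac{1}{u_n}\right) + \log\!\left(1 + \tfrac{1}{v_n}\right),
\]
with the convention $\log(1 + 1/\infty) = 0$. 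Positivity and continuity of $\delta_\Omega$ from Appendix~\ref{sec:proof_complete}, restricted to the compact set of pairs $(z, w)$ with $z \in K$ and $w \in M_{q,p}(\Rb)$ a unit rank-one matrix, give a constant $c > 0$ with $u\|y-x\|, v\|y-x\| \geq c$ on $K^{*}$.

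Next I would show that, uniformly in $(x,y) \in K^{*}$, the Euclidean quantities $|u - u_n|\|y-x\|$ and $|v - v_n|\|y-x\|$ tend to zero as $n \to \infty$ (with the natural interpretation when an endpoint is at infinity). Introducing $d^\pm_\Omega(z; w)$ as the Euclidean distance from $z$ to $\partial \Omega$ along $\pm w$, this amounts to uniform convergence $d^\pm_{\Omega_n} \to d^\pm_\Omega$ on the compact set $K \times \{\text{unit rank-one matrices}\}$. The proof is a standard subsequence argument: if convergence fails with an $\epsilon$-gap at $(z_k, w_k, n_k)$, pass to a limit $(z_\infty, w_\infty)$, then use that a point in $\Omega$ at distance $d^\pm_\Omega(z_\infty; w_\infty) - \epsilon/2$ from $z_\infty$ enters $\Omega_{n_k}$ for $k$ large (by local Hausdorff convergence plus convexity), while a point at distance $d^\pm_\Omega(z_\infty; w_\infty) + \epsilon/2$ lies outside $\overline{\Omega}_{n_k}$ for $k$ large, to reach a contradiction.

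Finally I would convert the endpoint convergence into the multiplicative estimate. A direct calculation using the mean value theorem on the explicit formula for $\rho_\Omega$ yields
\[
\big|\rho_{\Omega_n}(x,y) - \rho_\Omega(x,y)\big| \leq C \cdot \bigl(|u - u_n|/u^2 + |v - v_n|/v^2\bigr)
\]
for $u_n, v_n$ close enough to $u, v$, with $C$ an absolute constant. Using $u, v \geq c/\|y-x\|$ together with $\log(1+1/s) \geq c'/s$ for $s$ in the relevant range, the right-hand side is bounded by $C'' \cdot \eta \cdot \rho_\Omega(x,y)$, where $\eta = \sup_{(x,y) \in K^{*}} \max(|u - u_n|\|y-x\|, |v - v_n|\|y-x\|) \to 0$. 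Taking $\eta < \epsilon/C''$ then yields the claimed inequalities. The delicate regime is when $x$ approaches $y$ along $K^{*}$: both $\rho_\Omega$ and $\rho_{\Omega_n}$ shrink to zero and the target bound becomes tight; the $\Rc$-properness hypothesis, via the uniform lower bound $u\|y-x\|, v\|y-x\| \geq c$, is precisely what ensures that the multiplicative control persists in this limit.
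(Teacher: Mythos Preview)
Your argument is correct and shares the paper's key mechanism---local Hausdorff convergence forces the endpoints of rank-one chords through $K$ to converge---but the packaging differs. The paper runs a short proof by contradiction: assuming a subsequence $(x_k,y_k,n_k)$ with ratio outside $[1-\epsilon,1+\epsilon]$, one passes to a limit $(x,y)\in C$, uses endpoint convergence $a_k\to a$, $b_k\to b$ to show $\rho_{\Omega_{n_k}}(x_k,y_k)\to\rho_\Omega(x,y)$, and combines this with continuity of $\rho_\Omega|_C$ to force the ratio to $1$. Your direct quantitative route via the explicit cross-ratio formula and the mean value theorem is longer, but it makes the role of $\Rc$-properness (through the uniform lower bound $u\|y-x\|,\,v\|y-x\|\ge c$) transparent and handles the $x\to y$ regime more explicitly than the paper's terse ``which is a contradiction'' does.

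One technical point deserves tightening. $\Rc$-properness only forbids \emph{both} endpoints from lying at infinity, so one of $u,v$ may be infinite when $\Omega$ is unbounded along a rank-one direction; there your mean-value bound $|u-u_n|/u^2$ and the phrase ``natural interpretation'' are not quite enough. On that locus you instead need to show $\log(1+1/v_n)$ is small relative to $\log(1+1/u)$, which follows once you note $\min(u,v)\|y-x\|\le\sup\delta_\Omega<\infty$ on the relevant compact set (so $u\|y-x\|$ is bounded above there) and establish $v_n\|y-x\|\to\infty$ uniformly where $v=\infty$.
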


\begin{proof} Fix $K \subset \Omega$ compact and $\epsilon > 0$. Let 
\begin{align*}
C = \{ (x,y) \in K \times K : \dimension(x \cap y) \geq p-1\} \subset K \times K.
\end{align*}
Then $C$ is exactly the pairs of points in $K$ where $\rho_\Omega$ is finite. Since $\Omega$ is convex the function $\rho_{\Omega}|_C$ is continuous. Now suppose, for a contradiction that there exists $n_k \rightarrow \infty$ so that $x_k \neq y_k$, $(x_k, y_k) \in C$, and
\begin{align*}
\frac{\rho_{\Omega}(x_k,y_k)}{\rho_{\Omega_{n_k}}(x_k,y_k)} \notin [1-\epsilon, 1+\epsilon].
\end{align*}
By passing to a subsequence we can suppose that $x_k \rightarrow x$ and $y_k \rightarrow y$. Let 
\begin{align*}
\{ a_k,b_k\} = \partial \Omega_{n_k} \cap \overline{x_k y_k}
\end{align*}
ordered so that 
\begin{align*}
 \rho_{\Omega_{n_k}}(x_k, y_k) = \log \frac{ \abs{x_k-b_k}\abs{y_k-a_k}}{\abs{x_k-a_k}\abs{y_k-b_k}}.
 \end{align*}
 By passing to another subsequence we can suppose that $a_k \rightarrow a$ and $b_k \rightarrow b$. Now since $\Omega_n$ converges to $\Omega$ in the local Hausdorff topology we see that $a,b \in \partial \Omega \cap \overline{xy}$. In particular, 
 \begin{align*}
 \rho_{\Omega}(x, y) = \log \frac{ \abs{x-b}\abs{y-a}}{\abs{x-a}\abs{y-b}} = \lim_{k \rightarrow \infty} 
\log \frac{ \abs{x_k-b_k}\abs{y_k-a_k}}{\abs{x_k-a_k}\abs{y_k-b_k}} = \lim_{k \rightarrow \infty} \rho_{\Omega_{n_k}}(x_k, y_k).
 \end{align*}
 However, by continuity of $\rho_\Omega$ on $C$
  \begin{align*}
 \rho_{\Omega}(x, y) = \lim_{k \rightarrow \infty} \rho_{\Omega}(x_k, y_k)
 \end{align*}
which is a contradiction. 
\end{proof}

\begin{proof}[Proof of Theorem~\ref{thm:haus_conv_app}]
Now suppose that $K \subset \Omega$ is compact. Then we can pick $R > 0$ and $x_0 \in \Omega$ so that $K \subset \{ x \in \Omega: d_{\Omega}(x,x_0) \leq R\}$. Let 
\begin{align*}
K^\prime = \left\{ x \in \Omega: K_{\Omega}(x,x_0) \leq (1+\epsilon)^2 (R+1)+R +\epsilon \right\}.
\end{align*}
Next pick $N >0$ so that 
\begin{align*}
(1-\epsilon) \rho_{\Omega_n}(x,y) \leq \rho_{\Omega}(x,y) \leq (1+\epsilon) \rho_{\Omega_n}(x,y)
\end{align*}
for all $x,y \in K^\prime$ and $n \geq N$. Now we claim that 
\begin{align*}
K_{\Omega_n}(x,y) \leq (1+\epsilon) K_{\Omega}(x,y)
\end{align*}
for $x,y \in K$ and $n \geq N$. For $x,y \in K$ and $\delta \in (0,1)$ pick $x=a_0, a_1, \dots, a_m=y$ so that 
\begin{align*}
 \rho_{\Omega}(x,a_1) + \rho_{\Omega}(a_1,a_2) + \dots + \rho_{\Omega}(a_{m-1}, y) \leq K_{\Omega}(x,y) + \delta.
 \end{align*}
 Then $a_0, \dots, a_m \in K^\prime$ and so 
 \begin{align*}
 \rho_{\Omega_n}(x,a_1) + \rho_{\Omega_n}(a_1,a_2) + \dots + \rho_{\Omega_n}(a_{m-1}, y) \leq (1+\epsilon)(K_{\Omega}(x,y) + \delta)
 \end{align*}
for $n \geq N$. Since $\delta>0$ was arbitrary we see that 
\begin{align*}
K_{\Omega_n}(x,y) \leq (1+\epsilon) K_{\Omega}(x,y)
\end{align*}
for $x,y \in K$ and $n \geq N$. 

Now suppose $n \geq N$,  $x,y \in K$, $\delta \in (0,1)$, and $x=a_0, a_1, \dots, a_m =y \in \Omega_n$ so that
\begin{align*}
 \rho_{\Omega_n}(x,a_1) + \rho_{\Omega_n}(a_1,a_2) + \dots + \rho_{\Omega_n}(a_{m-1}, y) \leq K_{\Omega_n}(x,y) + \delta.
 \end{align*}
 If $a_0, a_1, \dots, a_m  \in K^\prime$ then we immediately see that 
 \begin{align*}
 K_{\Omega}(x,y) \leq \rho_{\Omega}(x,a_1) + \rho_{\Omega}(a_1,a_2) + \dots + \rho_{\Omega}(a_{m-1}, y) \leq (1+\epsilon) (K_{\Omega_n}(x,y) + \delta)
 \end{align*}
 Otherwise we can assume that there is some $a_\ell$ so that $a_{\ell} \in \partial K^\prime$. Then $K_{\Omega}(a_\ell, x_0) = (1+\epsilon)^2 (R+1)+R +\epsilon$ and so 
  \begin{align*}
(1+\epsilon)^2 (R+1) +\epsilon & \leq K_{\Omega}(x_0,a_{\ell})-K_{\Omega}(x_0,x) \leq K_{\Omega}(x,a_{\ell})\\
&  \leq \rho_{\Omega}(x,a_1) + \rho_{\Omega}(a_1,a_2) + \dots + \rho_{\Omega}(a_{\ell-1}, a_\ell) \\
& \leq (1+\epsilon) (K_{\Omega_n}(x,y) + \delta) \leq (1+\epsilon)\Big( (1+\epsilon)K_{\Omega}(x,y) + 1 \Big)\\
& \leq (1+\epsilon)^2 (R+1)
 \end{align*}
 which is a contradiction. Thus $a_0, a_1, \dots, a_m  \in K^\prime$ and 
 \begin{align*}
 K_{\Omega}(x,y) \leq (1+\epsilon) (K_{\Omega_n}(x,y) + \delta)
 \end{align*}
 Since $\delta \in (0,1)$ was arbitrary we see that 
 \begin{align*}
 K_{\Omega}(x,y) \leq (1+\epsilon) K_{\Omega_n}(x,y).
 \end{align*}
\end{proof}

\bibliographystyle{alpha}
\bibliography{hilbert}

\end{document}